\documentclass[11pt]{article}

\usepackage{epsfig,amsmath}
\usepackage{graphicx}
\usepackage{amssymb}
\usepackage{enumerate}
\usepackage{bbm}
\usepackage{array}
\usepackage{lscape}
\usepackage{a4wide}
\usepackage{paralist}
\usepackage{booktabs}

\usepackage[usenames, dvipsnames]{color}

\usepackage{hyperref}

\numberwithin{equation}{section}
\newtheorem{lemma}{Lemma}[section]
\newtheorem{proposition}[lemma]{Proposition}
\newtheorem{theorem}[lemma]{Theorem}
\newtheorem{corollary}[lemma]{Corollary}
\newtheorem{definition}[lemma]{Definition}
\newtheorem{remark}[lemma]{Remark}

\DeclareMathOperator{\CR}{CR}

\DeclareMathOperator{\SLE}{SLE}
\DeclareMathOperator{\CLE}{CLE}

\newenvironment{proof}{{\em Proof.}}{\hspace*{\fill} $\square$  }
\newenvironment{proofof}[1]{{\em Proof of #1.}}{\hspace*{\fill} $\square$ }

\newcommand{\R}{\mathbb{R}}
\newcommand{\D}{\mathbb{D}}
\newcommand{\C}{\mathbb{C}}

\newcommand{\N}{\mathbb{N}}
\newcommand{\Z}{\mathbb{Z}}
\newcommand{\I}{\mathbbm{1}}
\newcommand{\HH}{\mathbb{H}}
\newcommand{\e}{\operatorname{e}}
\newcommand{\im}{\operatorname{i}}

\newcommand{\eps}{\varepsilon}
\newcommand{\epn}{{\eps_n}}

\newcommand{\ka}{\kappa}
\newcommand{\kp}{{\kappa'}}

\newcommand{\cL}{\mathcal{L}}

\newcommand{\lcS}{\mathcal{D}}
\newcommand{\lcB}{\mathbf{D}}
\newcommand{\lcTB}{\mathrm{D}}
\newcommand{\Loop}{\mathcal{L}}
\newcommand{\bub}{\cB}
\newcommand{\loops}{\frk{cle}}
\newcommand{\LQG}{\frk{lqg}}
\newcommand{\BM}{\frk{be}}

\newcommand{\aryb}{\begin{eqnarray*}}
	\newcommand{\arye}{\end{eqnarray*}}
\def\alb#1\ale{\begin{align*}#1\end{align*}}
\newcommand{\eqb}{\begin{equation}}
	\newcommand{\eqe}{\end{equation}}
\newcommand{\eqbn}{\begin{equation*}}
	\newcommand{\eqen}{\end{equation*}}

\newcommand{\BB}{\mathbbm}
\newcommand{\op}{\operatorname}

\newcommand{\frk}{\mathfrak}
\newcommand{\eqD}{\overset{d}{=}}

\newcommand{\ep}{\epsilon}
\newcommand{\rta}{\rightarrow}

\newcommand{\wh}{\widehat} 
\newcommand{\wt}{\widetilde}

\def\cS{\mathcal{S}}

\def\cQ{\mathcal{Q}}

\def\cO{\mathcal{O}}

\def\cL{\mathcal{L}}

\def\cI{\mathcal{I}}

\def\cD{\mathcal{D}}
\def\cC{\mathcal{C}}
\def\cB{\mathcal{B}}
\def\cA{\mathcal{A}}

\newcommand{\cart}{Carath\'{e}odory }
\newcommand{\fl}{\mathfrak{l}}

\newcommand{\nina}[1]{{#1}}

\newcommand{\corr}[1]{{#1}} 

\begin{document}

	\title{{Brownian half-plane excursion and critical Liouville quantum gravity}}

\date{}

\author{
	\begin{tabular}{c}Juhan Aru\\[-5pt]\small EPFL \end{tabular}\; 
	\begin{tabular}{c}Nina Holden\\[-5pt]\small ETH Z\"urich\end{tabular}\; 
	\begin{tabular}{c}Ellen Powell\\[-5pt]\small  Durham University \end{tabular}
	\begin{tabular}{c}Xin Sun\\[-5pt]\small  University of Pennsylvania \end{tabular}
}
\setcounter{tocdepth}{2}
\maketitle

\begin{abstract}
	In a groundbreaking work, Duplantier, Miller and Sheffield showed that subcritical Liouville quantum gravity (LQG) coupled with Schramm-Loewner evolutions (SLE) can be obtained by gluing together a pair of Brownian motions. In this paper, we study the counterpart of their result in the critical case via a limiting argument. In particular, we prove that as one sends $\kappa' \downarrow 4$ in the subcritical setting, the space-filling SLE$_{\kappa'}$ in a disk degenerates to the CLE$_4$ exploration introduced by Werner and Wu, along with a collection of i.i.d.\ coin tosses indexed by the branch points of the exploration. Furthermore, in the same limit, we observe that although the pair of initial Brownian motions collapses to a single one, one can  still extract two different independent Brownian motions $(A,B)$ from this pair,  such that the Brownian motion $A$ encodes the LQG distance from the CLE loops to the boundary of the disk and the Brownian motion $B$ encodes the boundary lengths of the CLE$_4$ loops. In contrast to the subcritical setting, the pair $(A,B)$ does not determine the CLE-decorated LQG surface. Our paper also contains a discussion of relationships to random planar maps, the conformally invariant CLE$_4$ metric, and growth fragmentations. 
	
\end{abstract}

\tableofcontents

\section{Introduction}

The most classical object of random planar geometry is probably the two-dimensional
	Brownian motion together with its variants. Over the past 20 years,
	a plenitude of other interesting random geometric objects have been discovered and studied.
	Among those we find Liouville quantum gravity (LQG) surfaces
	\cite{DS11} and conformal loop ensembles (CLE) \cite{SW12,Sh09}. LQG surfaces
	aim to describe the fields appearing in the study of 2D Liouville
	quantum gravity and can be viewed as canonical models for random surfaces. They can be mathematically defined in terms of
	volume forms \cite{DS11,RV14,Kah85} (used in this paper), but recently also in
	terms of random metrics \cite{GM19,DDDF20}. CLE is a random collection
	of loops that correspond conjecturally to interfaces of the $q$-state Potts
	model and the FK random cluster model in the continuum limit 
	(see e.g.\ \cite{CLEPERC}).

In this paper we study a coupling of LQG measures, CLE and Brownian
motions, taking a form of the kind first discovered in \cite{DMS14}. On the one hand we consider a ``uniform''
exploration of a conformal loop ensemble, $\CLE_4$, drawn on top of an independent LQG surface known as the critical LQG disk. On the other hand, we take a seemingly simpler object: the Brownian half plane excursion. In this coupling one component of the Brownian excursion encodes the branching structure of the CLE$_4$ exploration, together with a certain (LQG surface dependent) distance of CLE$_4$ loops from the boundary. The other component of the Brownian excursion encodes the LQG boundary lengths of the discovered CLE$_4$ loops. 

{Our result can be viewed as the critical ($\kp=4$) analog of Duplantier-Miller-Sheffield’s mating of trees theorem for $\kp  > 4$, \cite{DMS14}. The original mating of trees theorem first observes that the quantum boundary length process defined by a space-filling SLE$_\kp$ curve drawn on a subcritical LQG surface is given by a certain correlated planar Brownian motion. Moreover, it says that one can take the two components of this planar Brownian motion, glue each one to itself (under its graph) to obtain two continuum random trees, and then mate these trees along their branches to obtain both the LQG surface and the space-filling SLE curve wiggling between the trees in a measurable way. This theorem has had far-reaching consequences and applications, for example to the study of random planar maps and their limits \cite{HS19,GM-RW,GM-conv}, SLE and CLE \cite{GHM-KPZ, MSWfrag,ahs-int,AS-CLE}, and LQG itself \cite{MSTBMI,ARS}. See the survey \cite{GHS19} for further applications.

Obtaining a critical analog of the mating of trees theorem was one of the main aims of this paper. The problem one faces is that the above-described picture degenerates in many ways as $\kp\downarrow 4$ (e.g.\ the correlation of the Brownian motions tends to one and the Liouville quantum gravity measure converges to the zero measure). However, it is known that the LQG measure can be renormalized in a way that gives meaningful limits \cite{APS18two}, and the starting point of the current project was the observation that the pair of Brownian motions can be renormalized via an affine transformation to give something meaningful as well.}

Still, not all the information passes nicely to the limit, and in particular extra randomness appears. Therefore, our limiting coupling is somewhat different in nature to that of \cite{DMS14} (or \cite{AG19} for the finite volume case of quantum disks). Most notably, one of the key results of \cite{DMS14,AG19} is that the CLE decorated LQG determines the Brownian motions, and vice versa. In our case neither statement holds in the same way; see Section \ref{sec:meas} for more details. For example, to define the Brownian excursion from the branching CLE$_4$ exploration, one needs 
a binary variable at every branching event to decide on an ordering of the branches.

{We believe that in addition to completing the critical version of Duplantier-Miller-Sheffield's  mating of trees theorem, the results of this paper are intriguing in their own right. Moreover, as explained below, this article opens the road for several interesting questions in the realm of SLE theory, about LQG-related random metrics, in the setting of random planar maps decorated with statistical physics models, and about links to growth-fragmentation processes.}

\subsection{Contributions}

Since quite some set up is required to describe our results for $\kappa=4$ precisely, we postpone the detailed statement to Theorem \ref{thm_main}. Let us state here a caricature version of the final statement. Some of the objects appearing in the statement will also be precisely defined only later, yet should be relatively clear from their names.

\begin{theorem}
	Let:
	\begin{itemize}
			\item  $\LQG$ be the field of a critical quantum disk together with associated critical LQG measures \corr{(see Section \ref{sec:LQG})}; 
		\item $\loops$ denote the uniform space-filling $\SLE_4$ in the unit disk \corr{parametrized by critical LQG mass}, which is defined in terms of a uniform $\CLE_4$ exploration plus a collection of independent coin tosses \corr{(see Section \ref{sec:ucle4})};
		\item and  $\BM$ describe a Brownian (right) half plane excursion $(A, B)$ 
		\corr{(see Section \ref{sec:Bfs})}.
	\end{itemize}
	Then one can couple $(\loops, \LQG, \BM)$ such that $\loops$ and $\LQG$ are independent, $A$ encodes a certain quantum distance for $\CLE_4$ loops from the boundary, and $B$ encodes the quantum boundary lengths of the $\CLE$ loops. Moreover $(\loops, \LQG)$ determines $\BM$, but the opposite does not hold.
	\label{thm:caricature}	
\end{theorem}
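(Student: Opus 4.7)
The plan is to obtain the coupling by taking a limit as $\kp \downarrow 4$ of Duplantier-Miller-Sheffield's mating of trees theorem \cite{DMS14}. For each $\kp > 4$ one has a $\gamma$-LQG disk (with $\gamma = 4/\sqrt{\kp}$) decorated by an independent space-filling $\SLE_{\kp}$, together with a two-sided Brownian motion $(L^{\kp},R^{\kp})$ whose coordinates are the left and right quantum boundary lengths of the SLE parametrized by quantum area. The pair $(L^{\kp},R^{\kp})$ has correlation $\rho_{\kp} = -\cos(4\pi/\kp)$, which tends to $1$ as $\kp \downarrow 4$, so both the Brownian pair and the LQG measure degenerate in the naive limit. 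In parallel with the renormalization of the LQG measure from \cite{APS18two}, the key idea is to apply an affine transformation of the form
\[
A^{\kp} \;=\; c_{\kp}\,(L^{\kp} - R^{\kp}), \qquad B^{\kp} \;=\; \tfrac{1}{2}(L^{\kp} + R^{\kp}),
\]
with $c_{\kp} \asymp (1-\rho_{\kp})^{-1/2}$, so that $(A^{\kp},B^{\kp})$ converges to a pair of independent Brownian motions.

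The first major step is to prove joint convergence as $\kp \downarrow 4$ of the triple (renormalized LQG disk, space-filling $\SLE_{\kp}$, transformed Brownian pair $(A^{\kp},B^{\kp})$) to the objects in Theorem \ref{thm:caricature}. The LQG side is supplied by \cite{APS18two}. For the SLE side, one must show that the space-filling $\SLE_{\kp}$, built via branching in the Miller-Sheffield imaginary geometry framework, converges to the uniform space-filling $\SLE_4$ of Werner-Wu, which is a branching $\CLE_4$ exploration equipped with an independent fair coin at each branching event; the coins arise precisely because in the critical limit the imaginary-geometry ordering of the two sides degenerates to a $50/50$ choice. Once these convergences are in place, the coordinate interpretations follow naturally: $B$ comes from $\tfrac12(L^{\kp}+R^{\kp})$ and records the quantum boundary length of the CLE$_4$ loops, while $A$ arises from the transverse fluctuation $L^{\kp}-R^{\kp}$ and in the critical limit matches the LQG distance from loops to the boundary in the sense of \cite{APS18two}. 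The cone-excursion description of $(L^{\kp},R^{\kp})$ on the subcritical side passes through this affine change of variables to yield a Brownian half-plane excursion $(A,B)$ in the limit.

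For the measurability claim, the forward direction is essentially built in: given $(\loops,\LQG)$ the quantum boundary lengths and the loop-to-boundary distances are both deterministic functionals of the pair, and so $\BM$ is a measurable function of $(\loops,\LQG)$. Conversely, $\BM$ cannot determine $(\loops,\LQG)$ because of the independent coin tosses introduced at each branching event: at each branching one can flip the coin without changing the boundary length profile or the distance profile, producing an explicit measure-preserving transformation of $(\loops,\LQG)$ under which $\BM$ is invariant. This gives non-measurability in the reverse direction.

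The main obstacle is the joint convergence step. Marginal convergence of the LQG measure and of the branching exploration to $\CLE_4$ are accessible, but one must upgrade to \emph{joint} convergence with the Brownian encoding and correctly identify the limiting coin-toss structure as independent of the continuous data. A second, more quantitative difficulty is calibrating the normalizing constant $c_{\kp}$ precisely so that its limit lines up with the critical LQG renormalization of \cite{APS18two}, producing a standard Brownian half-plane excursion with the claimed geometric interpretations rather than merely a Brownian excursion up to affine equivalence. Everything else is essentially bookkeeping modulo these two points.
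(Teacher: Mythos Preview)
Your overall strategy---obtaining the critical coupling as a $\kp\downarrow 4$ limit of the Duplantier--Miller--Sheffield picture, with the correlated Brownian pair replaced by an affine transform that survives the limit---is exactly the route the paper takes. However, you have the affine transformation backwards. In the paper (Section~\ref{sec:Bfs}) one sets $A^\eps = a_\eps(L^\eps+R^\eps)$ and $B^\eps = R^\eps - L^\eps$ with $a_\eps\to 0$: the \emph{sum} is scaled down and the \emph{difference} is left alone. With the paper's normalisation of the LQG measures (an extra factor of $(2\eps)^{-1}$, see Section~\ref{sec:LQG}), the variance of $(L^\eps,R^\eps)$ is of order $\eps^{-2}$ (Remark~\ref{rmk:varZ}), so $L^\eps+R^\eps$ blows up while $R^\eps-L^\eps$ stays bounded. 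Your choice $A^{\kp}=c_\kp(L^{\kp}-R^{\kp})$, $B^{\kp}=\tfrac12(L^{\kp}+R^{\kp})$ fails concretely: since the cone excursion starts at $(L_0,R_0)=(0,1)$, your starting point is $(A_0,B_0)=(-c_\kp,\tfrac12)\to(-\infty,\tfrac12)$, so there is no limiting half-plane excursion. Correspondingly, your geometric interpretations are swapped: it is the difference $B=R-L$ whose (signed) jumps record the LQG boundary lengths of the CLE$_4$ loops, while the scaled sum $A$ encodes the quantum natural distance to the boundary (via local time at its backward running infima, see \eqref{eq:Al}). The reference to \cite{APS18two} for the latter is misplaced, as that paper concerns convergence of measures, not any notion of distance.

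Your non-measurability argument also has a gap. Flipping a coin at a branching event changes the \emph{order} in which the two separated components are explored, and hence changes the time-parametrised process $X=(A,B)$: the sub-excursions corresponding to the two components get interchanged in time. So $\BM$ is \emph{not} invariant under coin flips; indeed Theorem~\ref{thm_main} shows that $\BM$ is measurable with respect to $(\loops,\LQG)$, and the coin tosses are part of $\loops$. The source of non-measurability in the converse direction identified in the paper (Section~\ref{sec:meas}) is instead the freedom to \emph{rotate} the conformal welding: when two LQG sub-surfaces are glued along their boundaries one can shift the gluing by a fixed amount of LQG boundary length, producing a genuinely different pair $(\loops,\LQG)$ with the same law and the same associated $\BM$.
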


\begin{figure}[h]
	\centering
	\includegraphics[width=\textwidth]{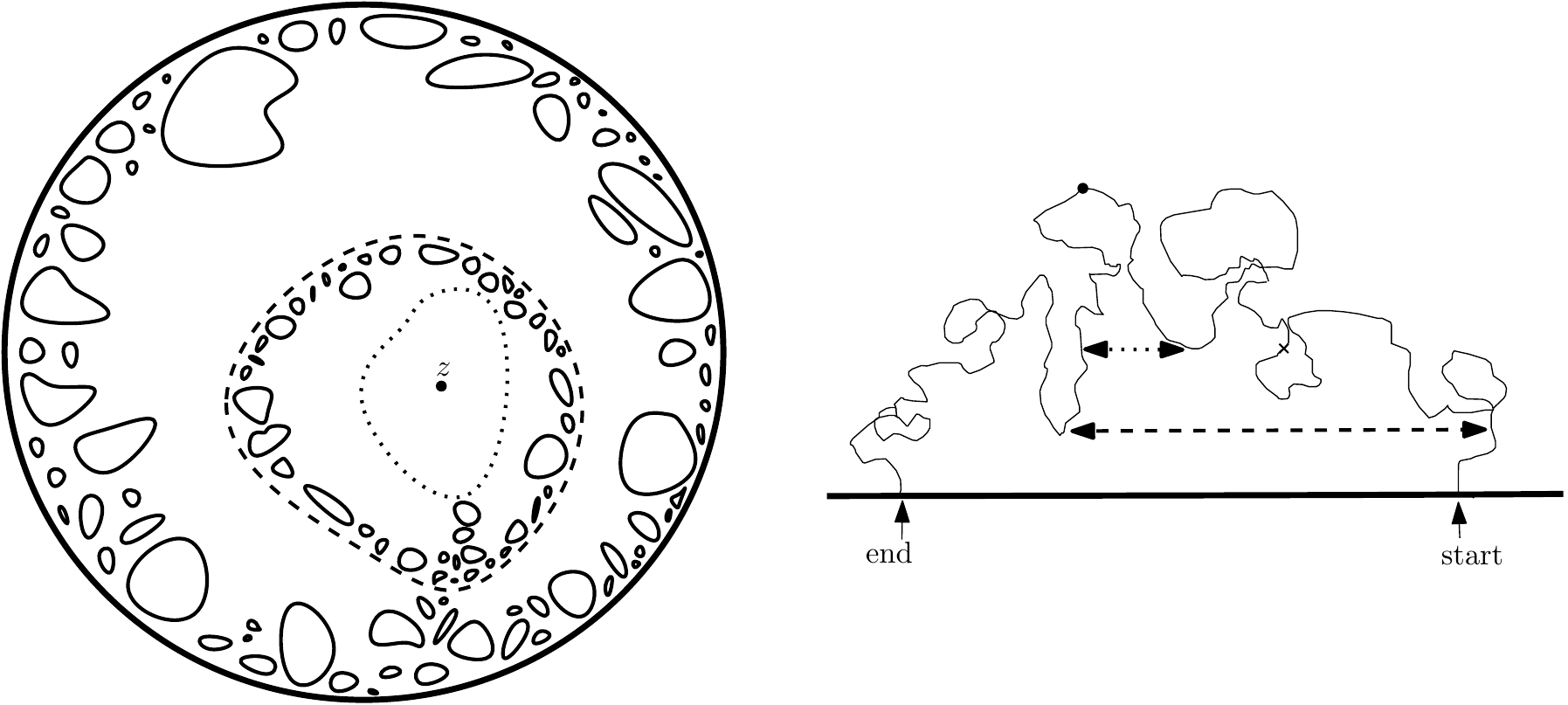}
	\caption{A simplistic sketch of the correspondence in Theorem \ref{thm:caricature}. \textbf{On the left:} all the outermost $\CLE_4$ loops discovered by the space filling SLE$_4$ before the dashed loop surrounding $z$ is discovered, together with all of the second-level nested CLE$_4$ loops discovered before the dotted loop surrounding $z$ is discovered. \textbf{On the right:} the corresponding half-planar Brownian excursion, with the coordinate axes switched for ease of viewing. The sub-excursion marked by the dashed (resp. dotted) line -  i.e., the portion of Brownian path starting and ending at the endpoints of this line - corresponds to the exploration within the dashed (resp. dotted) loop. The lengths of these lines are the Liouville quantum gravity lengths of the corresponding loops, and the duration of the sub-excursions are their Liouville quantum gravity areas. The time that $z$ is visited is marked by a dot, and the time that the dotted loop is discovered is marked by a cross. When the dotted loop is discovered, a coin is tossed to determine which of the two disconnected yet-to-be explored domains is visited first by the space filling SLE$_4$; in this example, the component containing $z$ is visited second. See also Figure \ref{fig:correspondence_2} below.} \label{fig:correspondence}
\end{figure}
\begin{figure}[h]
	\centering
	\includegraphics[width=\textwidth]{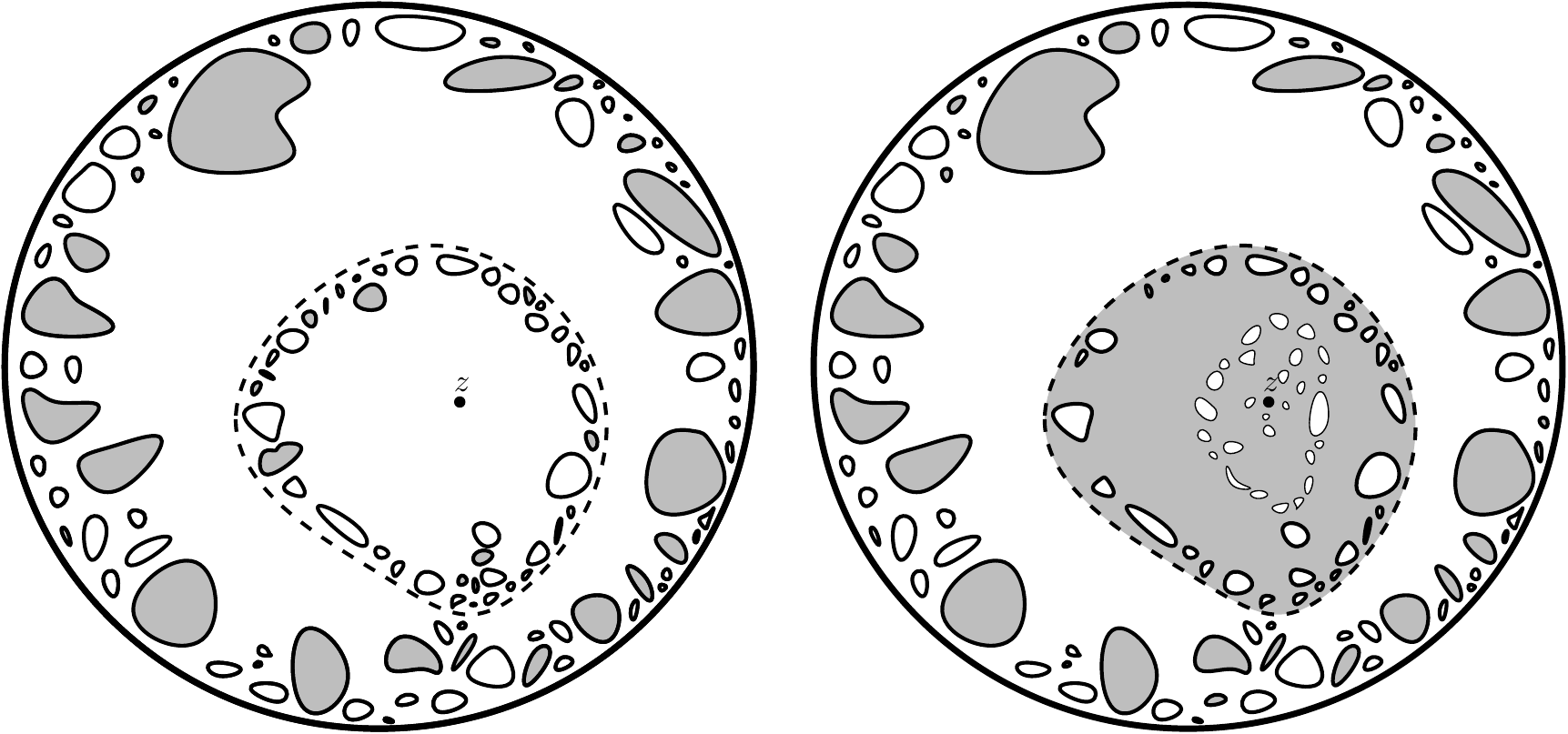}
	\caption{An illustration of the subset of the unit disk, shaded gray, that has been explored by the space-filling SLE$_4$ at two different times. \textbf{On the left:} at the time that the second level CLE$_4$ loop surrounding $z$ is discovered (marked by a cross on the right of Figure \ref{fig:correspondence}). \textbf{On the right:} at the time that $z$ is reached (marked by a dot on the right of Figure \ref{fig:correspondence}). Note that, although this is not apparent from the sketch, the explored subset of the unit disk at any given time is actually a connected set.}
		\label{fig:correspondence_2}
\end{figure}

In terms of limit results, we for example prove the following:
\begin{itemize}
	\item We show that a $\SLE_\kp(\kp-6)$ in the disk converges to the uniform CLE$_4$ exploration introduced by Werner and Wu, \cite{WW13}, as $\kp\downarrow 4$ (Proposition \ref{prop:convfullbranch}). Here an extra level of randomness appears in the limit, in the sense that new CLE$_4$ loops in the exploration are always added at a uniformly chosen point on the boundary, in contrast to the $\kp>4$ case where the loops are traced by a continuous curve.
	\item Using a limiting argument, we also show in Section \ref{sec:conv_order} how to make sense of a ``uniform'' space-filling SLE$_4$ exploration, albeit no longer defined by a continuous curve. Again extra randomness appears in the limit: contrary to the  $\kp > 4$ case, the nested uniform CLE$_4$ exploration does not uniquely determine this space-filling SLE$_4$.
	\item Perhaps less surprisingly but nonetheless not without obstacles, we show that the nested CLE$_\kp$ in the unit disk converges to the nested CLE$_4$ with respect to Hausdorff distance (Proposition \ref{prop:cleloopconv}). We also show that after dividing the associated quantum gravity measures  
	by $(4-2\gamma)$, a $\gamma$-Liouville quantum gravity disk converges to a critical Liouville quantum gravity disk.
\end{itemize}

In terms of connections and open directions, let us very briefly mention a few examples and refer to Section \ref{secGF} for more detail. 

\begin{itemize}
	\item First, as stated above in Theorem \ref{thm:caricature}, $(\loops, \LQG)$ determines $\BM$, but the opposite does not hold. A natural question is whether there is another natural mating-of-trees type theorem for $\kappa=4$ where one has measurability \corr{in} both directions.
	\item Second, our coupling sheds light on recent work of A\"id\'ekon and Da Silva \cite{ADS}, who identify a (signed) growth fragmentation embedded naturally in  the Brownian half plane excursion. The cells in this growth fragmentation correspond to very natural observables in our exploration.
	\item Third, as we have already mentioned, one of the coordinates in our Brownian excursion encodes a certain LQG distance of CLE$_4$ loops from the boundary. It is reasonable to conjecture that this distance should be related to the CLE$_4$ distance defined in \cite{WW13} via a Lamperti transform.\footnote{We thank N.\ Curien for explaining this relation to us.}
	\item Fourth, several interesting questions can be asked in terms of convergence of discrete models. Critical FK-decorated planar maps and stable maps are two immediate candidates.
\end{itemize}

\subsection{Outline}

The rest of the article is structured as follows. In Section 2, after reviewing background material on branching SLE and CLE, we will prove the convergence of the  $\SLE_\kp(\kp-6)$ exploration in the disk to the uniform CLE$_4$ exploration, and also show the convergence of the nested CLE with respect to Hausdorff distance. In Section 3, we use the limiting procedure to give sense to a notion of space-filling SLE$_4$. In Section 4, we review the basics of Liouville quantum gravity surfaces and of the mating of trees story, and prove convergence of the Brownian motion functionals appearing in \cite{DMS14,AG19} after appropriate normalization. We also finalize a certain proof of Section 3, which is interestingly (seemingly) much easier to prove in the mating of trees context. Finally, in Section 5 we conclude the proof of joint convergence of Brownian motions, space-filling SLE and LQG. This allows us to state and conclude the proof of our main theorem. We finish the paper with a small discussion on connections, and an outlook on several interesting open questions.

Throughout, $\gamma\in(\sqrt{2},2]$ is related to parameters $\kappa,\kappa',\eps$ by 
\begin{equation}
	\label{eq:parameters}
	\kappa=\gamma^2,\quad \kappa'=16/\kappa,\quad\eps=2-\gamma.
\end{equation}

\subsection{Acknowledgements}
J.A.\ was supported by Eccellenza grant 194648 of the Swiss National Science Foundation. N.H.\ was supported by grant 175505 of the Swiss National Science Foundation, along with Dr.\ Max Rössler, the Walter Haefner Foundation, and the ETH Zürich Foundation. E.P.\ was supported by grant 175505 of the Swiss National Science Foundation. X.S.\ was supported by the NSF grant DMS-2027986 and the NSF Career grant DMS-2046514. J.A.\ and N.H.\ were both part of SwissMAP. We all thank Wendelin Werner and ETH for their hospitality.
We also thank Elie A\"id\'ekon, Nicolas Curien, William Da Silva, Ewain Gwynne, Laurent M\'enard, Avelio Sep\'ulveda, and Samuel Watson for useful discussions. \corr{Finally, we thank the anonymous referee for their careful reading of this article, and helping to improve the exposition in numerous places}.

\section{Convergence of branching SLE$_\kp$ and CLE$_\kp$ as $\kp\downarrow 4$} \label{sec:conv_clesle}

\subsection{Background on branching SLE and conformal loop ensembles}
\label{sec:bg}

 \subsubsection{Spaces of domains}
 Let $\lcS$ be the space of $\lcTB=\{\lcTB_t\, ; \, t\ge 0\}$ such that:
 \begin{itemize}
 	\item  for every $t\ge 0$, $0\in {\lcTB}_t\subset \lcS$ and ${\lcTB}_t$ is simply connected planar domain;
 	\item $\lcTB_t\subset \lcTB_s$ for all $0\le s < t<\infty;$ 
 	\item  for every $t\ge 0$, if $f_t=f_t[{\lcTB}]$ is the unique conformal map from $\D$ to ${\lcTB}_t$ that sends $0$ to $0$ and has $f_t'(0)>0$, then $f_t'(0)=\CR(0;\lcTB_t)=e^{-t}$.
 \end{itemize}
We also write $g_t=g_t[{\lcTB}]$ for the inverse of $f_t$.

Recall that a sequence of simply connected domains $(U^n)_{n\ge 0}$ containing $0$ are said to converge to a simply connected domain $U$ in the Carath\'{e}odory topology (viewed from $0$) if we have $f_{U^n}\to f_U$ uniformly in $r\D$ for any $r<1$, where $f_{U^{n}}$ (respectively $f_U$) are the unique conformal maps from $\D$ to $U^{n}$ (respectively $U$) sending $0$ to $0$ and with positive real derivative at $0$.  \cart convergence viewed from $z\ne 0$ is defined in the analogous way.

We equip $\lcS$ with the natural extension of this topology: that is, we say that a sequence $({\lcTB}^{n})_{n\ge 0}$ in $\lcS$ converges to ${\lcTB}$ in $\lcS$ if for any $r<1$ and $T\in [0,\infty)$
\begin{equation}\label{eq:cartconvdef} \sup_{t\in [0,T]}\sup_{z\in r\D}|f^{n}_t(z)-f_t(z)|\to 0\end{equation}
as $n\to \infty$, where $f_t^{n}=f_t[\lcTB^{n}]$ and $f_t=f_t[\lcTB]$.  With this topology, $\lcS$ is a metrizable and separable space: see for example \cite[Section 6.1]{QLE}.

\subsubsection{Radial Loewner chains}
In order to introduce radial SLE, we first need to recall the definition of a (measure-driven) radial Loewner chain. Such chains are closely related to the space $\lcS$, as we will soon see. 
 If $\lambda$ is a 
measure on $[0,\infty) \times \partial \D$ whose marginal on $[0,\infty)$ is Lebesgue measure, we define the radial Loewner equation driven by $\lambda$ via
\begin{equation}\label{eq:loew_int}
	g_t(z)=\int_{[0,t]\times \partial \D} g_s(z)\frac{u+g_s(z)}{u-g_s(z)} \, d\lambda(s,u) ; \quad \quad g_0(z)=z
\end{equation}
for $z\in \D$ and $t\ge 0$. It is known (see for example \cite[Proposition 6.1]{QLE}) that for any such $\lambda$, \eqref{eq:loew_int} has a unique solution $g_t(z)$ for each $z\in \D$, defined until time $t_z:=\sup\{t\ge 0: g_t(z)\in \D\}$. Moreover, if one defines $\lcTB_t:=\{z\in \D: t_z<t\}$, then $\lcTB=\{\lcTB_t\, , \, t\ge 0\}$ is an element of $\lcS$,  and $g_t$ from \eqref{eq:loew_int} is equal to $g_t[\lcTB]=(f_t[\lcTB])^{-1}$ for each $t$. We call $\lcTB$ the radial Loewner chain driven by $\lambda$. 

Note that if one restricts to measure of the form $\lambda(A,dt)=\delta_{W(t)}(A) \, dt$ with $W:[0,\infty)\to \partial \D$ piecewise continuous, this defines the more classical notion of a radial Loewner chain. In this case we can rewrite the radial Loewner equation as \begin{equation}
	\label{eqn:rad_loewner}
	\partial_t g_t(z)= g_t(z) \corr{\frac{W_t+g_t(z)}{W_t-g_t(z)}}; \;\; z\in \D,\, t\le t_z:=\inf\{s: g_s(z)=W_s\}
\end{equation} 
and we refer to the corresponding Loewner chain as the radial Loewner evolution with driving function $W$. In fact, this is the case that we will be interested in when defining radial $\SLE_\kp(\kp-6)$ for $\kp>4$.

\begin{remark}\label{rmk:dconv_cconv}Let us further remark that if $(\lambda^n)$ are a sequence of driving measures as above, such that $\lambda^n$ converges weakly \corr{(i.e.\, with respect to the weak topology on measures)} to some $\lambda$ on $[0,T]\times \partial \D$ for every $T$, then the corresponding Loewner chains $(\lcTB^n),\lcTB$ are such that $\lcTB^n\to \lcTB$ in $\lcS$ (\cite[Proposition 6.1]{QLE}). \corr{In particular, one can check that if $\lambda^n(A,dt)=\delta_{W^n(t)}(A) \, dt$ and $\lambda(A,dt)=\delta_{W(t)}(A)\, dt$ for some piecewise continuous functions $W^n:[0,\infty)\to \partial \D$, and $W:[0,\infty)\to \partial \D$ 
	then the corresponding Loewner chains converge in $\lcS$
	if for any $T>0$ fixed and $F:[0,T]\times \partial \D\to \R$ bounded and continuous, we have  
	\begin{equation}\label{eq:lambdanlambda}\lambda^n(F)=\int_0^T\int_{\partial \D} F(u,t) \delta_{W^n(t)}(u)dt = \int_0^T F(W^n(t),t) \, dt \to \lambda(F)=\int_0^T F(W(t),t) \, dt\end{equation} as $n\to \infty$.}
\end{remark}

\begin{remark}\label{rmk:stopped_loewner}
In what follows we will sometimes need to consider evolving domains $\{\lcTB_t\, ; \, t\in [0,S]\}$ that satisfy the conditions to be an element of $\lcS$ up to some finite time $S$. In this case we may extend the definition of $\lcTB_t$ for $t\ge S$ by setting $\lcTB_t=f_S(e^{-(t-S)}\D)$, where $f_S:\D\to \lcTB_S$ is the unique conformal map sending $0\to 0$ and with $f_S'(0)=e^{-S}$.With this extension, $\lcTB=\{\lcTB_t \, ; \, t\ge 0\}$ defines an element of $\lcS$. 
	
	If we have a sequence of such objects, then we say that they converge to a limiting object in $\lcS$ if and only if these extensions converge. We will use this terminology without further comment in the rest of the article.
\end{remark}
\subsubsection{Radial $\SLE_\kp(\kp-6)$} \label{sec:slek}

Let $\kp\in (4,8)$, and recall the relationship \eqref{eq:parameters} between $\kp\in (4,8)$ and $\eps\in (0,2-\sqrt{2})$. Although the use of $\eps$ is somewhat redundant at this point, we do so to avoid redefining certain notations later on. 

Let $B$ be a standard Brownian motion, and let $\theta^\eps_0=\{(\theta_0^\eps)_t\, ; \, t>0\}$ be the unique $B$-measurable process taking values in $[0,2\pi]$, with $(\theta^\eps_0)_0=x\in [0,2\pi]$, that is instantaneously reflecting at $\{0,2\pi\}$, and that solves the SDE
\begin{equation}\label{eq:sde_theta} d(\theta^\eps_0)_t = \sqrt{\kp}dB_t+ \frac{\kp-4}{2}\cot\left(\frac{(\theta^\eps_0)_t}{2}\right) \, dt\end{equation}
on time intervals for which $(\theta^\eps_0)_t\ne \{0,2\pi\}$. The existence and pathwise uniqueness of this process is shown in \cite[Proposition 3.15 \& Proposition 4.2]{Sh09}. It follows from the strong Markov property of Brownian motion that $\theta^\eps_0$ has the strong Markov property. We let $\tau^\eps_0$ be the first hitting time of $2\pi$ by $\theta_0^\eps$.

Associated to $\theta^\eps_0$, we can define a process $W^\eps_0$, taking values on $\partial \D$, by setting
\begin{equation}\label{def:Wfromtheta} (W_0^\eps)_t = \exp\left(\im\, ((\theta_0^\eps)_t- \int_0^t \cot\left((\theta_0^\eps)_s/2\right) \, ds)\right) \quad t\ge 0.\end{equation} 
This indeed gives rise to a continuous function $W_0^\eps$ in time (see e.g.\ \cite{Sh09,MSW14}) and using this as the driving function in the 
 radial Loewner equation \eqref{eqn:rad_loewner}
 defines a radial $\SLE_\kp(\kp-6)$ in $\D$ from $1$ to $0$, with a force point at $e^{-ix}$ (recall that $(\theta_0^\eps)_0=x$). We denote this by $({\lcB}^\eps_0)=\{({\lcB}^\eps_0)_t\, ; \, t\ge 0\}$ which is an element of $\lcS$. In fact, there almost surely exists a continuous non self-intersecting curve $\eta^\eps_0:[0,\infty)\to \D$ such that $({\lcB}^\eps_0)_t$ is the connected component of $\D\setminus \eta^\eps_0[0,t]$ containing $0$ for all $t$ \cite{RS05,MSIG1}. 

Usually we will start with $x=0$, and then we say that the force point is at $1^-$: everything in the above discussion remains true in this case, see \cite{Sh09}. In this setting we refer to ${\lcB}^\eps_0$ and/or $\eta^\eps_0$ (interchangeably) as simply a radial $\SLE_{\kp}(\kp-6)$ targeted at $0$.

The time $\tau^\eps_0$ corresponds to the first time that $0$ is surrounded by a counterclockwise loop: see Figure \ref{fig:tau}.  To begin, we will just consider the SLE stopped at this time. We write $${\lcTB}^\eps_0=\{({\lcTB}^\eps_0)_t\, ; \, t\ge 0\}:=\{({\lcB}^\eps_0)_{\tau^\eps \wedge t}\, ; \, {t\ge 0}\}$$ for the corresponding element of $\lcS$ (see Remark \ref{rmk:stopped_loewner}).

\begin{figure}[h]
	\centering
	\includegraphics[width=\textwidth]{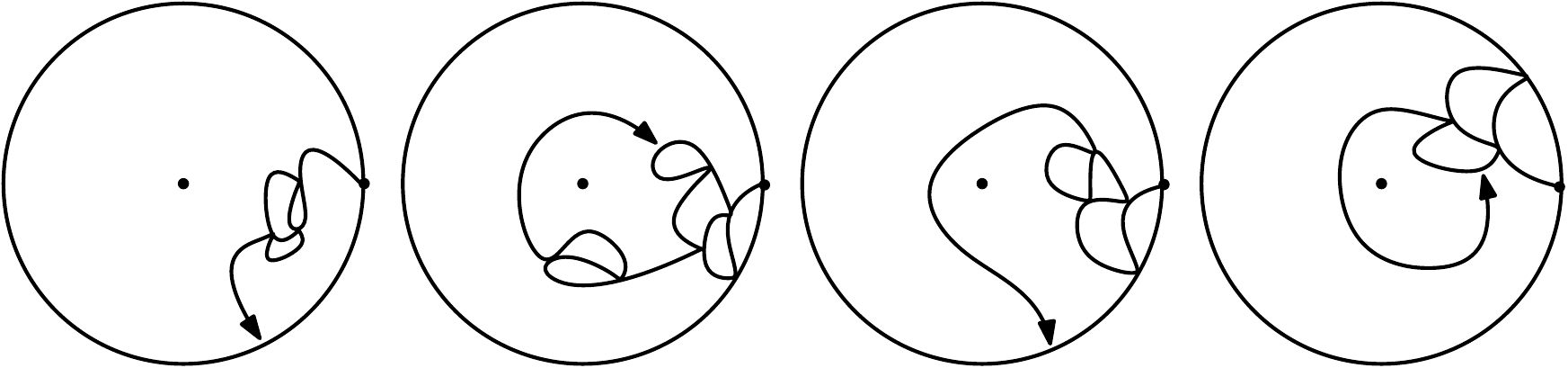}
	\caption{From left to right, the process $\theta^\eps_0$ does the following at the illustrated time: hits 0, hits 0, hits neither 0 nor $2\pi$, hits $2\pi$. The rightmost image is, therefore, an illustration of the time $\tau_0^\eps$.}\label{fig:tau}
\end{figure}

\subsubsection{An approximation to radial SLE$_{\kp}(\kp-6)$}
We will make use of the following approximations $({\lcTB}^{\eps,n}_0)_{n\in \N}$ to ${\lcTB}^\eps_0$ in $\lcS$ (in order to show convergence to {the} CLE$_4$ {exploration}). Fixing $\eps$, and taking the processes $\theta_0^\eps$ and $W_0^\eps$ as above, the idea is to remove intervals of time where $\theta_0^\eps$ is making tiny excursions away from $0$, and then define $\lcTB^{\eps,n}_0$ to be the radial Loewner chain whose driving function is equal to $W_0^\eps$, but with these times cut out. 

More precisely, we set $T_0^{\eps,n}:=0;$ and \corr{inductively define}
\begin{align*} {R}_1^{\eps,n} & =\inf \{t\ge T_0^{\eps,n}\hspace{-0.1cm}:\hspace{-0.1cm} (\theta^\eps_0)_t\ge 2^{-n}\};  \\ S_1^{\eps,n} & =\sup\{t\le R_1^{\eps,n}\hspace{-0.1cm}:\hspace{-0.1cm} (\theta^\eps_0)_t=0\}; \\ T_1^{\eps,n} & =\inf\{t\ge R_1^{\eps,n}\hspace{-0.1cm}:\hspace{-0.1cm} (\theta^\eps_0)_t=0\}; \\
R_2^{\eps,n} & =\inf \{t\ge T_1^{\eps,n}\hspace{-0.1cm}:\hspace{-0.1cm}(\theta^\eps_0)_t\ge 2^{-n}\} ;   \\ S_2^{\eps,n} & =\sup\{t\le R_2^{\eps,n}\hspace{-0.1cm}:\hspace{-0.1cm} (\theta^\eps_0)_t=0\} ; \\	T_2^{\eps,n} & =\inf\{t\ge R_2^{\eps,n}\hspace{-0.1cm}:\hspace{-0.1cm} (\theta^\eps_0)_t=0\}; \end{align*}
etc.\ so the intervals $[S_i^{\eps,n},T_i^{\eps,n}]$ for $i\ge 1$ are precisely the intervals on which $\theta^\eps_0$ is making an excursion away from $0$ whose maximum height exceeds $2^{-n}$. Call the $i$th one of these excursions $e_i^{\eps,n}$.
Also set $ \Lambda^{\eps,n}:= \sup\{j: S_j^{\eps,n}\le \tau^\eps_0\}$ and \begin{equation*}l_i^{\eps,n}:= T_i^{\eps,n}-S_i^{\eps,n} \text{ for } i<\Lambda^{\eps,n} \text{ ; } l_{\Lambda^{\eps,n}}^{\eps,n}=\tau^\eps_0-\corr{S}_{\Lambda^{\eps,n}}^{\eps,n} \text{ ; } L_i^{\eps,n}=\sum\nolimits_{1\le j \le i} l_j^{\eps,n} \text{ for } 1\le i \le \Lambda^{\eps,n}.\end{equation*}

Now we define $$(W_0^{\eps,n})_t=(W_0^\eps)_{S_i^{\eps,n}+(t-L_{i-1}^{\eps,n})}, \text{ for } t\in [L_{i-1}^{\eps,n},L_i^{\eps,n})\text{ and } 1\le i \le \Lambda^{\eps,n},$$ and set $\lcTB_0^{\eps,n}$ to be the radial Loewner chain with driving function $W_0^{\eps,n}$. This is defined up to time $\tau_0^{\eps,n}:=L_{\Lambda^{\eps,n}}^{\eps,n}$.

We will show in Section \ref{sec:conv_bt} that ${\lcTB}^{\eps,n}_0 \to {\lcTB}^\eps_0$ in $\lcS$ as $n\to \infty$ (see Lemma \ref{lem:ngoodapprox}).
\vspace{0.2cm}

\subsubsection{Uniform $\CLE_4$ exploration {targeted at the origin}}\label{sec:ucle4}

Now suppose that we replace $\kp$ with $4$, so that the solution $\theta_0$ of \eqref{eq:sde_theta} is simply a (speed $4$) Brownian motion reflected at $\{0,2\pi\}$. Then the integral in \eqref{def:Wfromtheta} does not converge, but it is finite for any single excursion of $\theta_0$.\footnote{That is, if $\lambda$ is the Brownian excursion measure then the integral is finite for $\lambda$-almost all excursions, see \cite[Section 2]{WW13}).} For any $n\in \N$ if we define $\tau^n_0$,  $\Lambda^{n}$ and $(S_i^{n},T_i^{n},l_i^{n},L_i^{n})_{i\ge 1}$ as in the sections above, we can therefore define a process ${\lcTB}^{n}_0$ in $\lcS$ via the following procedure: 
\begin{itemize}
	\item sample random variables $(X_i^{n})_{i\ge 1}$ uniformly and independently on $\partial \D$;
	\item define $(W_0^n)_t$ for $t\in [0,\tau_0^n)$ by setting 
	\begin{equation}\label{def:excu4} (W_0^n)_t= X_i^{n}\exp\left(\im ((\theta_0)_{t+S_i^{n}} - \int_{S_i^{n}}^{t+S_i^{n}} \cot((\theta_0)_s/2) \,ds)\right) \end{equation} for  $t\in [L_{i-1}^n,L_i)$ and $ 1\le i\le \Lambda^n$;
	\item let $\lcTB^n$ be the radial Loewner chain with driving function $W_0^n$.
\end{itemize}

With these definitions we have that ${\lcTB}^{n}_0\Rightarrow {\lcTB}_0$ in $\lcS$  as $n\to \infty$, where the limit process is the \emph{uniform CLE$_4$ exploration} introduced in \cite{WW13}, and run until the outermost CLE$_4$ loop surrounding $0$ is discovered. 

More precisely, the uniform CLE$_4$ exploration towards $0$ in $\D$ can be defined as follows. One starts with a Poisson point process $\{(\gamma_j, t_j)\, ; \, j\in J\}$ with intensity given by $M$ times Lebesgue measure, where $M$ is the SLE$_4$ bubble measure rooted uniformly over the unit circle: see \cite[Section 2.3.2]{SWWDCD}. In particular, for each $j$, $\gamma_j$ is a simple continuous loop rooted at some point in $\partial \D$. We define $\mathrm{int}(\gamma_j)$ to be the connected component of $\D\setminus \gamma_j$ that intersects $\partial \D$ only at the root, and set $\tau=\inf \{t: t=t_j \text{ with } 0 \in \mathrm{int}(\gamma_j)\}$ so that for all $t_j<\tau$, $\mathrm{int}(\gamma_j)$ does not contain the origin. Therefore, for each such $j$ we can associate a unique conformal map $f_j$ from $\D$ to the connected component of $\D\setminus \gamma_j$ containing $0$ to $\D$, such that $f_j(0)=0$ and $f_j'(0)>0$. For any $t\le \tau$ it is then possible to define  (for example by considering only loops with some minimum size and then letting this size tend to $0$, see again \cite{WW13,SWWDCD}) $f_t$ to be the composition $\circ_{t_j< t} f_{t_j}$, where the composition is done in reverse chronological order of the $t_j$s. The process
\begin{equation}\label{pppcle}\{\lcTB'_t \, ; \, t\le \tau\}:=\{f_t(\D) \, ; \, t\le \tau\}\end{equation}
is then a process of simply connected subdomains of $\D$ containing $0$, which is decreasing in the sense that $\lcTB'_t\subseteq \lcTB'_s$ for all $0\le s\le t \le \tau$. This is the description of the uniform $\CLE_4$ exploration towards $0$ most commonly found in the literature. Note that with this definition, time is parameterized according to the underlying Poisson point process, and entire loops are ``discovered instantaneously''. 

Since we are considering processes in $\lcS$, we need to reparameterize $\lcTB'$ by $-\log \CR$ seen from the origin. By definition, for each $j\in J$, $\gamma_j$ is a simple loop rooted at a point in $\partial \D$ that does not surround $0$. If we declare the loop to be traversed counterclockwise, we can view it as a curve $c_j:[0,f_j'(0)]\to \D$ parameterized so that $\CR(0;\D\setminus c_j)=e^{-t}$ for all $t$ (the choice of direction means that $\mathrm{int}(\gamma_j)$ is surrounded by the left-hand side of $c_j$). We then define $\lcTB$ to be the unique process in $\lcS$ such that for each $j\in J$ with $t_j\le \tau$, and all $t\in [-\log f_{t_j}'(0),- \log f_{t_j}'(0)-\log f_j'(0)]$, $\lcTB_t$ is the connected component of $f_{t_j}(\D\setminus c_j[0,t-\log f_{t_j}'(0)])$  containing $0$. In other words, $\lcTB$ is a reparameterization of $\lcTB'$ by $-\log \CR$ seen from $0$, where instead of loops being discovered instantaneously, they are traced continuously in a counterclockwise direction. The process is defined until time $\tau_0:=-\log \CR(0;f_{\tau}(D\setminus \gamma_{\tau} ))$, at which point the origin is surrounded by a loop (the law of this loop is that of the outermost loop surrounding the origin in a nested CLE$_4$ in $\D$).

With this definition, the same argument as in \cite[Section 4]{WW13} shows that ${\lcTB}^{n}_0\Rightarrow {\lcTB}_0$ in $\lcS$  as $n\to \infty$. Moreover, this convergence in law holds jointly with the convergence $\tau_0^n\Rightarrow \tau_0$ (in particular, $\tau_0$ has the law of the first time that a reflected Brownian motion started from $0$ hits $\pi$, as was already observed in \cite{SSW09}).

The $\CLE_4$ exploration can be continued after this first loop exploration time $\tau_0$ by iteration. More precisely, given the process up to time $\tau_0$, one next samples an independent $\CLE_4$ exploration in the interior of the discovered loop containing $0$, but now with loops traced clockwise instead of counterclockwise. When the next level loop containing $0$ is discovered, the procedure is repeated, but going back to counterclockwise tracing. Continuing in this way, we define the whole uniform CLE$_4$ exploration targeted at $0$: ${\lcB}_0=\{({\lcB}_0)_t \, ; \, t\ge 0\}$. Note that by definition $\lcTB_0$ is then just the process $\lcB_0$, stopped at time $\tau_0$.
\begin{remark}
	\label{rmk:slek_markov}
	The ``clockwise/counterclockwise'' switching defined above is consistent with what happens in the the $\SLE_\kp(\kp-6)$ picture when $\kp>4$. Indeed, it follows from the Markov property of $\theta_0^\eps$ (in the $\kp>4$ case) that after time $\tau_0^\eps$, the evolution of $\theta$ until it next hits $0$ is independent of the past and   equal in law to $(2\pi-\theta_0^\eps(t))_{t\in [0,\tau_0^\eps]}$. This implies that the future of the curve after time $\tau_0^\eps$ has the law of an $\SLE_\kp(\kp-6)$ in the connected component of the remaining domain containing $0$, but now with force point starting infinitesimally counterclockwise from the tip, until $0$ is surrounded by a clockwise loop.
	This procedure alternates, just as in the $\kp=4$ case.
\end{remark}

\subsubsection{{Exploration of the (nested) CLE}}\label{sec:sletocle}

In the previous subsections, we have seen how to construct $\SLE_\kp(\kp-6)$ processes, denoted by ${\lcB}^\eps_0$ ($\eps=\eps(\kappa')$) from $1$ to $0$ in $\D$, and that these are generated by curves $\eta^\eps$. We have also seen how to construct a uniform $\CLE_4$ exploration, ${\lcB}_0$, targeted at $0$ in $\D$. The $0$ in the subscripts here is to indicate that $0$ is a special \emph{target point}. But we can also define the law of an $\SLE_\kp(\kp-6)$, or a $\CLE_4$ exploration process, targeted at any point $z$ in the unit disk. To do this we simply take the law of $\phi(\lcB^\eps_0)$ or $\phi(\lcB_0)$, where $\phi:\D\to \D$ is the unique conformal map sending $0$ to $z$ and $1$ to $1$. We will denote these processes by $({\lcB}^\eps_z),{\lcB}_z$, where the $({\lcB}^\eps_z)$ are also clearly generated by curves $\eta^\eps_z$ for $\eps>0$. By definition, the time parameterization for $\lcB_z^\eps$ is such that $-\log \CR(z; (\lcB_z^\eps)_t)=t$ for all $t, z, \eps$ (similarly for $\lcB_z$).

In fact, both $\SLE_\kp(\kp-6)$ and the uniform $\CLE_4$ exploration satisfy a special \emph{target invariance} property: see for example \cite{SW05} {for $\SLE_\kp(\kp-6)$} and \cite[Lemma 8]{WW13} for CLE$_4$. This means that they can be targeted at a countable dense set of point in $\D$ simultaneously, in such a way that for any distinct $z,w\in \D$, the processes targeted at $z$ and $w$ agree (modulo time reparameterization) until the first time that $z$ and $w$ lie in different connected components of the yet-to-be-explored domain. We will choose our dense set of points to be $\mathcal{Q}:=\mathbb{Q}^2\cap \D$, and for $\eps>0$ refer to the coupled process $({\lcB}^\eps_z)_{z\in \mathcal{Q}}$ (or $(\eta^\eps_z)_{z\in \mathcal{Q}}$) as the \emph{branching $\SLE_\kp$} in $\D$. Similarly we refer to the coupled process $({\lcB}_z)_{z\in \mathcal{Q}}$ as the \emph{branching $\CLE_4$ exploration} in $\D$.

Note that in this setting we can associate a process $\theta_z^\eps$ to each $z \in \cQ$: we consider the image of $\lcB_z^\eps$ under the unique conformal map from $\D\to\D$ sending $z\mapsto 0$ and $1\mapsto1$, 
and define $\theta_z^\eps$ to be the unique process such that this new radial Loewner chain is related to $\theta_z^\eps$ via equations \eqref{def:Wfromtheta} and \eqref{eqn:rad_loewner}. Note that $\theta_z^\eps$ has the same law as $\theta_0^\eps$ for each fixed $z$ (by definition), but the above procedure produces a coupling of $\{\theta_z^\eps\, ; \, z\in \cQ\}$.

{We will make use of the following property connecting chordal and radial SLE (that is closely related to target invariance).}

\begin{lemma}[Theorem 3, \cite{SW05}] \label{lem:radial_chordal}
	Consider the radial $\SLE_{\kappa'}(\kappa'-6)$ with force point at $e^{-\im x}$ for $x\in (0,2\pi)$, stopped at the first time that $\e^{-\im x}$ and $0$ are separated. Then its law coincides (up to a time change) with that of a chordal SLE$_\kp$ from $1$ to $\e^{\im x}$ in $\D$, stopped at the equivalent time.
\end{lemma}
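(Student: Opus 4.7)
The plan is to compare the two processes by computing both driving functions in the same (radial) coordinate system centered at $0$, and then invoking pathwise uniqueness of the SDE \eqref{eq:sde_theta}. Since both objects are Loewner chains in $\D$ growing from $1$, they are determined by their driving measures, so it suffices to show that a time-change of the chordal SLE$_{\kp}$ from $1$ to $e^{\im x}$ produces the radial driving function \eqref{def:Wfromtheta} associated with $\theta_0^\eps$ started from $x$, up to the disconnection time.

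First, I would take $\eta$ a chordal SLE$_{\kp}$ from $1$ to $e^{\im x}$ in $\D$, parametrized via half-plane capacity after mapping to $\HH$ (say sending $1\mapsto 0$ and $e^{\im x}\mapsto\infty$), so that the half-plane driving function is $\sqrt{\kp}$ times a Brownian motion. For each $t$ up to the first time that $\eta$ disconnects $0$ from $e^{\im x}$ in $\D$, let $\tilde g_t:\D\setminus K_t \to \D$ be the conformal map fixing $0$ with $\tilde g_t'(0)>0$, and write $\widetilde W_t=\tilde g_t(\eta(t))\in\partial\D$, $O_t=\tilde g_t(e^{\im x})\in \partial\D$ for the radial images of the tip and the target. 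After the time change making $\tilde g_t'(0)=e^{-t}$, $\widetilde W_t$ becomes the driving function of a \emph{radial} Loewner chain, and the statement reduces to identifying the law of the argument process $\theta_t:=\arg(\widetilde W_t/O_t)\in(0,2\pi)$.

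To identify that law, I would apply Itô's formula to the explicit composition of conformal maps implementing the passage from chordal $\HH$-coordinates to radial $\D$-coordinates centered at $0$, together with the time change. The martingale part gives $\sqrt{\kp}\,dB_t$ for some Brownian motion $B$; the drift comes from two sources: (a) the Itô correction from the conformal change of coordinates applied to the half-plane driving Brownian motion, and (b) the deterministic Loewner flow of the target $O_t$ under the inverse capacity parametrization. The algebra (well known for SLE$_{\kappa}(\rho)$ coordinate changes, cf.\ the computations underlying \cite{Sh09}) yields exactly
\alb
d\theta_t \;=\; \sqrt{\kp}\,dB_t \;+\; \tfrac{\kp-4}{2}\cot(\theta_t/2)\,dt,
\ale
which is \eqref{eq:sde_theta}. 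By the strong existence and pathwise uniqueness quoted in Section~\ref{sec:slek}, $\theta_t$ is equal in law to $\theta_0^\eps$ started from $x$, and hence the radial driving functions (and so the Loewner chains) agree in law up to the disconnection time.

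The main obstacle is bookkeeping the two simultaneous coordinate changes (conformal map and time change) and checking that the cotangent drift really emerges with the correct coefficient $(\kp-4)/2$ rather than something else; one has to be careful that the time change introduces an extra factor in the drift that combines with the Itô correction to produce precisely $\rho=\kp-6$ rather than any other force value. A subtler, but benign, point is that the identification only holds up to the stopping time at which $e^{\im x}$ and $0$ become separated: beyond this time $O_t$ is no longer well-defined on $\partial \D$ and $\theta_t$ would leave $(0,2\pi)$, which matches the fact that both processes are only being compared up to this time in the statement.
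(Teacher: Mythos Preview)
The paper does not prove this lemma at all; it simply quotes it as Theorem~3 of \cite{SW05} and uses it as a black box. Your sketch is essentially the coordinate-change computation that \cite{SW05} carries out (and that underlies the target-invariance results referenced in the paper), so the approach is the right one and there is nothing substantively different to compare. One small bookkeeping point: with your convention $\theta_t=\arg(\widetilde W_t/O_t)$ and the target placed at $e^{\im x}$ as written in the statement, the initial value is $\theta_0=\arg(e^{-\im x})$, which lands at $2\pi-x$ rather than $x$ in $(0,2\pi)$; this is just the sign/orientation ambiguity already present in the paper's own conventions (compare the force point at $e^{-\im x}$ versus the chordal target at $e^{\im x}$), and does not affect the argument once you fix a consistent choice.
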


We remark that from $(\eta^\eps_z)_{z\in \mathcal{Q}}$, we can a.s.\ define a curve $\eta^\eps_a$ for any fixed $a\in \overline{\D}$, by taking the a.s.\ limit (with respect to the supremum norm on compacts of time) of the curves $\eta^\eps_{a_k}$, where $a_k\in \mathcal{Q}$ is a  sequence tending to $a$ as $k\to \infty$. This curve has the law of an $\SLE_{\kappa'}(\kp-6)$ from $1$ to $a$ in $\D$ \cite[Section 2.1]{MSW14}. Let us caution at this point that such a limiting construction does not work simultaneously for all $a$. Indeed, there are a.s.\ certain exceptional points $a$, the set of which a.s.\ has Lebesgue measure zero, for which the limit of $\eta^\eps_{a_k}$ does not exist for some sequence $a_k\rta a$. See Figure \ref{fig:loopdef}. 
\medskip

Let us now explain how, for each $\kp\in(4,8)$, we can use the branching $\SLE_\kp$ to define a (nested) $\CLE_\kp$. The \emph{conformal loop ensemble} $\CLE_{\kappa'}$ in $\D$ is a collection of non-crossing (nested) loops in the disk, \cite{SW12}, whose law is invariant under M\"{o}bius transforms $\D\to \D$. The ensemble can therefore be defined in any simply connected domain by conformal invariance, and the resulting family of laws is conjectured (in some special cases proved, e.g.\ \cite{CN08,Smi10,BH19,GMS19,KS19}) to be a universal scaling limit for collections of interfaces in critical statistical physics models.

\begin{figure}
	\centering
	\includegraphics[width=.9\textwidth]{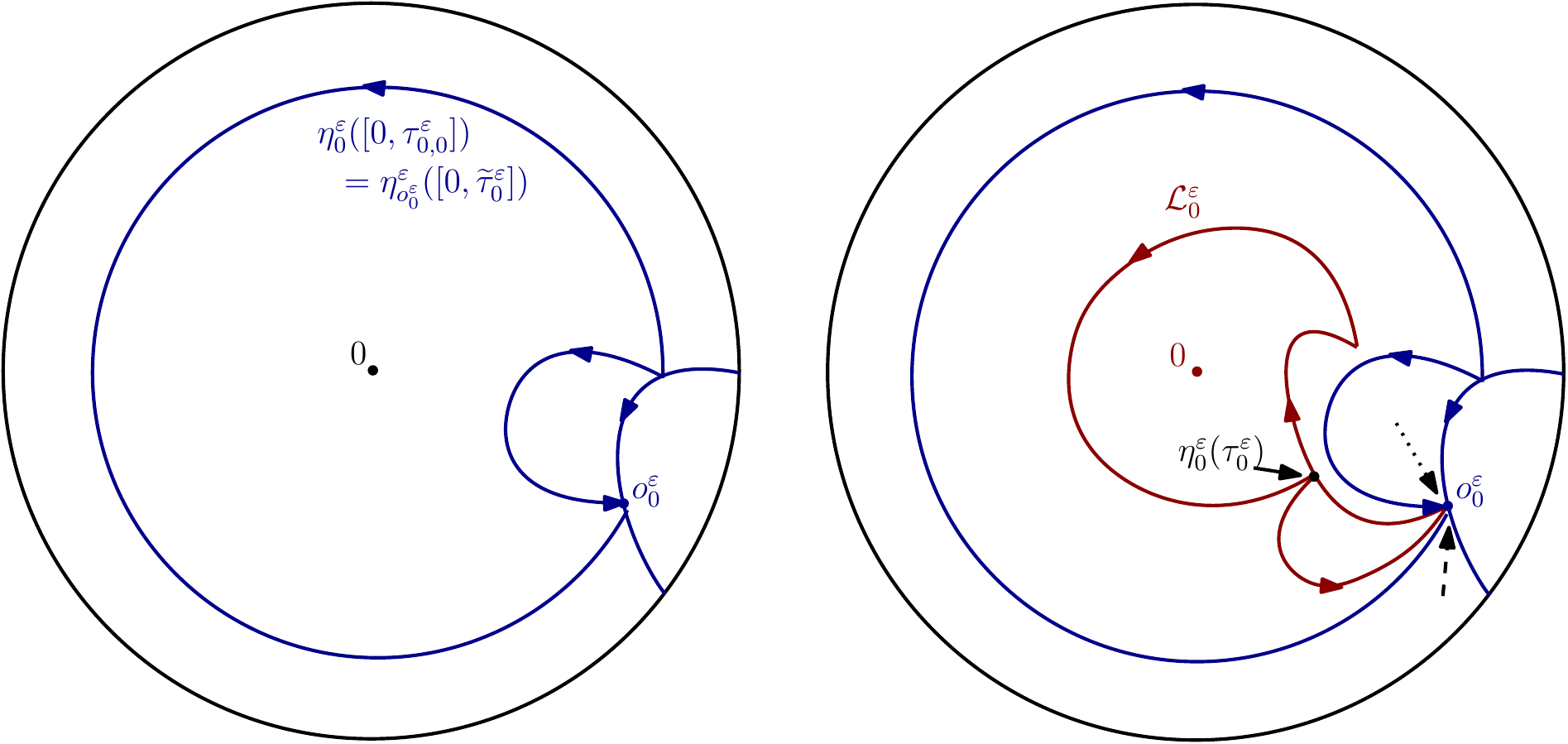}
	\caption{\textbf{On the left:} the curve $\eta_0^\eps$ (in blue) is run up to time $\tau_{0,0}^\eps$ (the last time that $\theta_0^\eps$ hits $0$ before hitting $2\pi$). The point $\eta_0^\eps(\tau_{0,0}^{\eps})$ is defined to be $o_0^\eps$ and we have that $\eta_0^\eps([0,\tau_{0,0}^{\eps}])=\eta_{o_0^\eps}^\eps([0,\wt{\tau}_0^\eps])$ for some time $\wt{\tau}_0^\eps$. \textbf{On the right:} the outermost CLE$_\kp$ loop $\cL_0^\eps$ containing $0$ (marked in red) is defined to be $\eta_{o_0^\eps}^\eps([\wt{\tau}_0^\eps,\infty])$. 
Note that we have a choice about how to define $\eta^{\eps}_{o^\eps_0}$: if we take it to be a limit of $\eta^\eps_{a_k}$ where $a_k\to o_0^\eps$ along the dotted line, this will be different to if $a_k\to o_0^\eps$ along the dashed line. We choose the definition that makes ${o^\eps_0}$ into a double point for $\eta^{\eps}_{o^\eps_0}$.}
	\label{fig:loopdef}
\end{figure} 
For $z\in \mathcal{Q}$, the procedure to define $\Loop^\eps_{z}$, the outermost $\CLE_\kp$ loop containing $z$, goes as follows:
\begin{itemize}
	\item 
	Let $\tau^\eps_{z}$ be the first time that $\theta^\eps_z$ hits $2\pi$, and let $\tau^\eps_{0,z}$ be the last time before this that $\theta^\eps_z$ is equal to $0$.\label{def:tauz}
	\item Let $o^\eps_z=\eta_z^\eps(\tau^\eps_{0,z})$. In fact, the point $o^\eps_z$ is one of the exceptional points for which the limit of $\eta^\eps_{a_k}$ does not exist for all sequences $a_k\rta o^\eps_z$, so it is not immediately clear how to define $\eta^\eps_{o_z^\eps}$, see Figure \ref{fig:loopdef}. However, the limit \emph{is} well defined if we insist that the sequence $a_k\to o_z^\eps$ is such that $0$ and $a_k$ are separated by $\eta_z^\eps$ at time $\tau_z^\eps$ for each $k$. 
	\item Define $\eta^\eps_{o_z^\eps}$ to be the limit of the curves $\eta^\eps_{a_k}$ as $k\to \infty$. In particular the condition on the sequence $a_k$ means that $o_z^\eps$ is a.s.\ a
	 double point of $\eta^\eps_{o_z^\eps}$. 
	  With this definition of $\eta^\eps_{o_z^\eps}$, it follows that \[\eta^\eps_z([0,\tau^\eps_{0,z}])=\eta^\eps_{o^\eps_z}([0,\wt{\tau}^{\eps}_z]) \text{ a.s.\  for some } \wt{\tau}^{\eps}_z\ge 0.\]
	\item Set $\Loop^\eps_z:=\eta^\eps_{o^\eps_z}([\wt{\tau}^{\eps}_z,\infty))$.   
\end{itemize}

We write $\bub^\eps_z$ for the connected component of $\D\setminus \Loop^\eps_z$ containing $z$: note that this is equal to $({\lcB}^\eps_z)_{\tau^\eps_z}$. We will call this the (outermost) $\CLE_{\kp}$ \emph{interior bubble} containing $z$.

We define the sequence of nested $\CLE_\kp$ loops $(\Loop^\eps_{z,i})$ for $i\ge 1$ by iteration (so $\Loop^\eps_z=:\Loop^\eps_{z,1}$), and denote the corresponding sequence of nested domains (interior bubbles) containing $z$ by $(\bub^\eps_{z,i})_{i\ge 1}$. More precisely, the $i$th loop is defined inside $\bub^\eps_{z,i-1}$ in the same way that the first loop is defined inside $\D$, after mapping $\bub^\eps_{z,i-1}$ conformally to $\D$ and considering the curve $\eta^\eps_z([\tau_z^\eps,\infty))$ rather than $\eta^\eps_z$.

The uniform $\CLE_4$ exploration defines a nested $\CLE_4$ in a similar but less complicated manner: see \cite{WW13}. For any $z\in \cQ$, to define $\Loop_z$ (the outermost $\CLE_4$ loop containing $z$) we consider the Loewner chain ${\lcTB}_z$  and define the times $\tau_z$ and $\tau_{0,z}$ (according to $\theta_z$) as in the $\kp>4$ case. Then between times $\tau_{0,z}$ and $\tau_z$ the Loewner chain ${\lcTB}_z$ is tracing a simple loop - starting and ending at a point $o_z$. This loop is what we define to be $\Loop_z$. 
We define $\bub_z$ to be the interior of $\Loop_z$: note that this is also equal to $({\lcB}_z)_{\tau_z}.$ Finally, we define the nested collection of $\CLE_4$ loops containing $z$ and their interiors by iteration, denoting these by $(\bub_{z,i},\Loop_{z,i})_{i\ge 1}$ (so $\bub_{z,1}:=\bub_z$ and $\Loop_{z,1}:=\Loop_z$).

\subsubsection{Space-filling SLE} \label{sec:sf_sle}

Now, for $\kappa'\in (4,8)$ we can also use the branching SLE$_\kp$, $(\eta^\eps_z)_{z\in \cQ}$, to define a space-filling curve $\eta^\eps$ known as space-filling SLE$_\kp$. This was first introduced in \cite{MSIG4,DMS14}\corr{; see also \cite[Appendix A.3]{BG20} for the precise definition of the space-filling \emph{loop} that we will use.} {The presentation here closely follows \cite{GHS19}.}

In our definition, the branches of $(\eta^\eps_z)_{z\in \cQ}$ 
are all $\SLE_\kp(\kp-6)$ processes started from the point $1$, and with force points initially located infinitesimally clockwise from $1$. This means that the associated space-filling SLE$_\kp$ will be a so-called \emph{counterclockwise space-filling $\SLE_\kp$ loop} from $1$ to $1$ in $\D$.\footnote{Variants of this process, e.g.\ chordal/whole-plane versions, a clockwise version, and version with another starting point, can be defined by modifying the definition of the branching SLE, see e.g.\ \cite{GHS19,AG19}.} 

Given an instance $(\eta^\eps_z)_{z\in \cQ}$ of a branching SLE$_\kp$, to define the associated space-filling SLE$_\kp$, we start by defining an ordering on the points of $\cQ$. For this we use a coloring procedure.  First, we color the boundary of $\D$ blue. 
Then, for each $z\in \cQ$, we can consider the branch $\eta^\eps_z$ of the branching SLE$_\kp$ targeted towards $z$. We color the left hand side of $\eta^\eps_z$ red, and the right hand side of $\eta^\eps_z$ blue. Whenever $\eta^\eps_z$ disconnects one region of $\D$ from another, we can then label the resulting connected components as \emph{monocolored} or \emph{bicolored}, depending on whether the boundaries of these components are made up of one or two colors, respectively. 

\begin{figure}\label{fig:spacefilling}
	\centering
	\includegraphics[width=.5\textwidth]{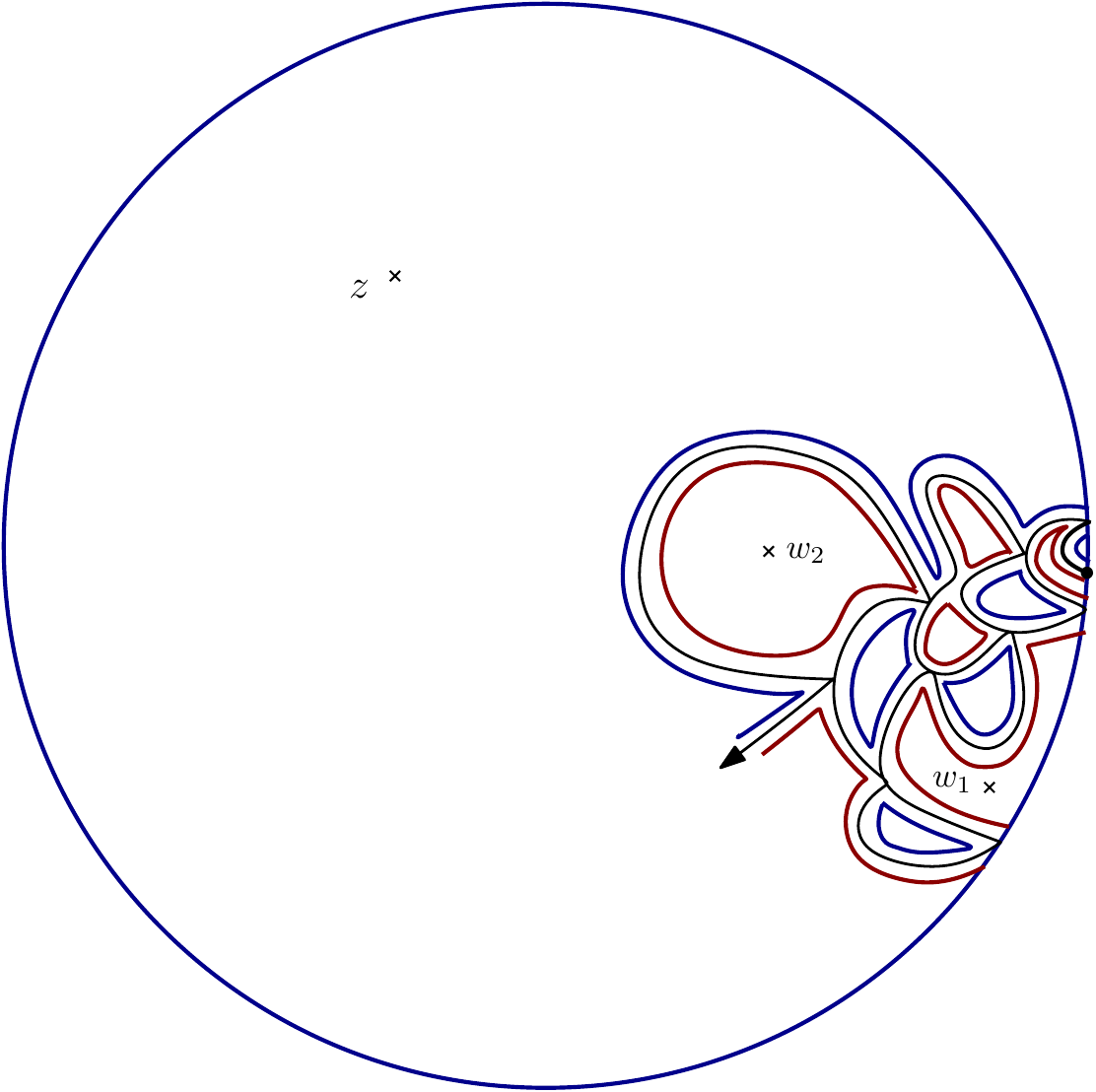}
	\caption{Constructing the ordering from the space-filling $\SLE_\kp$. When $z$ and $w_1$ are separated, the connected component containing $z$ has entirely blue boundary, while the connected component containing $w_1$ has red and blue on its boundary $\Rightarrow$ $z$ comes before $w_1$ in the ordering. By contrast, when $z$ and $w_2$ are separated, $w_2$ is in a monocolored component and $z$ is not{, which implies that} $z$ comes after $w_2$ in the ordering. So $w_1\prec z \prec w_2$ in this example.}
\end{figure}

For $z$ and $w$ distinct elements of $\cQ$, we know (by definition of the branching SLE) that $\eta^\eps_z$ and $\eta^\eps_w$ will agree until the first time that $z$ and $w$ are separated. When this occurs, it is not hard to see that precisely one of $z$ or $w$ will be in a newly created monocolored component. If this is $z$ we declare that $z\prec w$, and otherwise that that $w\prec z$. In this way, we define a consistent ordering $\prec$ on $\cQ$. See Figure \ref{fig:spacefilling}.

It was shown in \cite{MSIG4} that there is a unique continuous space-filling curve $\eta^\eps$, parametrized by Lebesgue area, that  visits the points of $\cQ$ in this order. This is the counterclockwise space-filling SLE$_\kp$ loop (we will tend to parametrize it differently in what follows, but will discuss this later). We make the following remarks. 

\begin{itemize}\setlength\itemsep{0em}
	\item We can think of $\eta^\eps$ as a version of ordinary $\SLE_{\kappa'}$ that iteratively fills in bubbles, or disconnected components, as it creates them. The ordering means that it will fill in monocolored components first, and come back to bicolored components only later.
	\item The word counterclockwise in the definition refers to the fact that the boundary of $\partial \D$ is covered up by $\eta^\eps$ in a counterclockwise order.
\end{itemize}

\subsection{Convergence of the SLE$_\kp(\kp-6)$ branches} \label{sec:conv_bt}
In this subsection and the next, we will show that for any $z\in \mathcal{Q}$, we have the joint convergence, in law as $\kp\downarrow 4$ of:
\begin{itemize}
	\item the $\SLE_{\kappa'}(\kp-6)$ branch towards $z$ to the $\CLE_4$ exploration branch towards $z$; and 
	\item the nested $\CLE_{\kappa'}$ loops surrounding $z$ to the nested $\CLE_4$ loops surrounding $z$.
\end{itemize}
The present subsection is devoted to proving the first statement. 
	
Let us assume without loss of generality that our target point $z$ is the origin. We first consider the radial $\SLE_\kp(\kp-6)$ branch targeting $0$, $\lcTB_0^\eps$, up until the first time $\tau_0^\eps$ that $0$ is surrounded by a counterclockwise loop. The basic result is as follows.
\begin{proposition}\label{prop:sletocleconv}
	$({\lcTB}_0^\eps,\tau_0^\eps)\Rightarrow ({\lcTB}_0,
	\tau_0)$ in $\lcS\times \R$ as $\eps\downarrow 0$.
\end{proposition}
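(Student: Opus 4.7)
The plan is to factor the convergence through the cutoff approximations introduced in Section \ref{sec:slek}. For each fixed $n$ I will first establish $(\lcTB_0^{\eps,n}, \tau_0^{\eps,n}) \Rightarrow (\lcTB_0^n, \tau_0^n)$ as $\eps\downarrow 0$, then combine this with Lemma \ref{lem:ngoodapprox} (asserting $\lcTB_0^{\eps,n}\to \lcTB_0^\eps$ as $n\to \infty$) and the CLE$_4$-side statement $\lcTB_0^n \Rightarrow \lcTB_0$ already discussed in Section \ref{sec:ucle4}. The starting point for the fixed-$n$ step is the SDE-level convergence $(\theta_0^\eps, \tau_0^\eps) \Rightarrow (\theta_0, \tau_0)$, where $\theta_0$ is reflected Brownian motion at speed $4$ on $[0,2\pi]$. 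Since the drift $\tfrac{\kp-4}{2}\cot(\theta/2)$ in \eqref{eq:sde_theta} vanishes pointwise as $\kp\downarrow 4$ away from $\{0,2\pi\}$, this is standard, handled by stopping the processes above height $\delta>0$, sending $\eps\downarrow 0$, and then $\delta\downarrow 0$. For fixed $n$, it in turn gives joint convergence of the macroscopic excursion data $(S_i^{\eps,n}, T_i^{\eps,n}, l_i^{\eps,n}, \Lambda^{\eps,n}, e_i^{\eps,n})$, using that $\theta_0$ a.s.\ crosses the level $2^{-n}$ transversally.

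The key new step is to identify the limiting law of the ``gluing rotations'' appearing in $W_0^{\eps,n}$. Since $(\theta_0^\eps)$ vanishes at both $T_{i-1}^{\eps,n}$ and $S_i^{\eps,n}$, the definition \eqref{def:Wfromtheta} forces
\[
X_i^{\eps,n} \;:=\; \frac{(W_0^\eps)_{S_i^{\eps,n}}}{(W_0^\eps)_{T_{i-1}^{\eps,n}}} \;=\; \exp\!\Bigl(-\im \!\int_{T_{i-1}^{\eps,n}}^{S_i^{\eps,n}} \!\cot\!\tfrac{(\theta_0^\eps)_s}{2}\, ds\Bigr),
\]
and these $X_i^{\eps,n}$ are precisely the rotations applied between consecutive macroscopic excursions in $W_0^{\eps,n}$. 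Matching with the CLE$_4$ driving function $W_0^n$ from Section \ref{sec:ucle4}, one sees that $W_0^{\eps,n}$ will agree in law with $W_0^n$ (jointly with the excursion data) provided the $X_i^{\eps,n}$ converge to i.i.d.\ uniforms on $\partial \D$ that are moreover independent of the macroscopic excursion data. Conditional independence across $i$ follows from the strong Markov property of $\theta_0^\eps$ at the times $T_{i-1}^{\eps,n}$, since each $X_i^{\eps,n}$ is measurable with respect to the path on $[T_{i-1}^{\eps,n}, S_i^{\eps,n}]$. For the marginal uniformity, the integral in the exponent is finite for every $\kp>4$ (the Bessel dimension associated to $\theta_0^\eps$ near $0$ is $d=3-8/\kp >1$, so $\int 1/\theta$ is a.s.\ finite on the relevant intervals), but diverges in absolute value as $\eps\downarrow 0$ while retaining a smooth density on $\R$; a standard equidistribution / density-flattening argument then forces its value modulo $2\pi$ to be asymptotically uniform.

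Combining these ingredients with the continuity of the radial Loewner map (Remark \ref{rmk:dconv_cconv}) yields the fixed-$n$ convergence $\lcTB_0^{\eps,n} \Rightarrow \lcTB_0^n$ in $\lcS$ together with $\tau_0^{\eps,n} \Rightarrow \tau_0^n$. To pass to the full statement I will verify tightness of $(\lcTB_0^\eps, \tau_0^\eps)_\eps$ and run a diagonal argument: Lemma \ref{lem:ngoodapprox}, viewed as a bound depending only on the law of a single Brownian path (whose law does not depend on $\eps$), can be made to give an approximation error $d_{\lcS \times \R}((\lcTB_0^{\eps,n}, \tau_0^{\eps,n}), (\lcTB_0^\eps, \tau_0^\eps))$ tending to $0$ as $n\to \infty$ uniformly in $\eps$; together with the fixed-$n$ limit and the CLE$_4$-side convergence, this identifies every subsequential limit as $(\lcTB_0, \tau_0)$. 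The hard part is the uniformization in the second step: for each $\eps>0$ the $X_i^{\eps,n}$ are deterministic functionals of a single Brownian motion, yet in the limit they must be i.i.d.\ uniforms, independent of $\theta_0$. This is exactly the extra ``coin-toss'' randomness highlighted in the introduction, and showing joint (not merely marginal) convergence to i.i.d.\ uniforms in a way compatible with the excursion data is the main technical content of the argument.
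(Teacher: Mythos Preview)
Your overall architecture matches the paper's exactly: factor through the level-$n$ approximations, prove $(\lcTB_0^{\eps,n},\tau_0^{\eps,n})\Rightarrow(\lcTB_0^n,\tau_0^n)$ for fixed $n$ as $\eps\downarrow 0$, and combine with the uniform-in-$\eps$ approximation of Lemma \ref{lem:ngoodapprox} and the known $\lcTB_0^n\Rightarrow\lcTB_0$. The decomposition of the fixed-$n$ step into ``convergence of macroscopic excursions'' plus ``convergence of the gluing rotations $X_i^{\eps,n}$ to i.i.d.\ uniforms'' is also exactly what the paper does (its Lemmas \ref{lem:bigexsame} and \ref{lem:uniform}).

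The genuine gap is your treatment of the uniformity of the $X_i^{\eps,n}$. You write that the integral $\int_{T_{i-1}^{\eps,n}}^{S_i^{\eps,n}}\cot((\theta_0^\eps)_s/2)\,ds$ ``diverges in absolute value as $\eps\downarrow 0$ while retaining a smooth density on $\R$; a standard equidistribution / density-flattening argument then forces its value modulo $2\pi$ to be asymptotically uniform.'' Neither the divergence nor the density-flattening is justified, and there is no ``standard'' argument that applies here: the integrand blows up, the domain of integration is a random interval bounded by a non-stopping time $S_i^{\eps,n}$, and nothing a priori controls the regularity of the resulting law. This is precisely the step the paper identifies as the main technical content (Section \ref{subsec:whiz}), and it proves it by a concrete computation rather than a soft argument. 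Specifically, the paper shows directly that the characteristic function $\mathbb{E}[\exp(\im u\int_0^{S_1^{\eps,n}}\cot((\theta_0^\eps)_s/2)\,ds)]\to 0$ for every $u\ne 0$ (Lemma \ref{lem:uniform_equation}). The proof first reduces, via the Girsanov comparison of Remark \ref{rmk:theta_bessel_compare} and a monotonicity trick, to the analogous statement for a Bessel($3-8/\kp$) process (Lemma \ref{lem:bes_uniform}); then for the Bessel process it uses the It\^o/PPP decomposition of excursions, Campbell's formula, and Brownian scaling to write the characteristic function explicitly and show it is bounded by the reciprocal of a quantity diverging like $(\delta-1)^{-1}$ as the Bessel dimension $\delta\downarrow 1$. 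That computation is the heart of the proof and cannot be replaced by the heuristic you give.

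A smaller point: you call the convergence $(\theta_0^\eps,\tau_0^\eps)\Rightarrow(\theta_0,\tau_0)$ ``standard''. The paper does not prove or use full process-level convergence; it works instead with convergence of the conditioned excursion measures $m_n^\eps\to m_n$ (Lemma \ref{lem:mn_conv}), obtained via the explicit Bessel excursion description and the Radon--Nikodym comparison. This is cleaner than trying to control the reflected SDE globally, and it feeds directly into Lemma \ref{lem:bigexsame}.
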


By Remark \ref{rmk:slek_markov} and the iterative definition of the $\CLE_4$ exploration towards $0$, the convergence for all time follows immediately from the above:

\begin{proposition} \label{prop:convfullbranch}
	${\lcB}^\eps_0\Rightarrow {\lcB}_0$ in $\lcS$ as $\eps\downarrow 0$.
\end{proposition}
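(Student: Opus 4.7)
The plan is to deduce the full convergence from Proposition \ref{prop:sletocleconv} by iterating the Markov decomposition described in Remark \ref{rmk:slek_markov}. Let $\tau^\eps_{(n)}$ denote the $n$-th time at which $0$ is surrounded by a loop in $\lcB^\eps_0$ (so $\tau^\eps_{(1)} = \tau^\eps_0$), and write $D^\eps_n := (\lcB^\eps_0)_{\tau^\eps_{(n)}}$ for the connected component of the yet-to-be-explored region containing $0$ at that time. Let $\phi^\eps_n : \D \to D^\eps_n$ be the unique conformal map with $\phi^\eps_n(0)=0$ and $(\phi^\eps_n)'(0)>0$. By Remark \ref{rmk:slek_markov}, conditionally on $\mathcal{F}_{\tau^\eps_{(n)}}$, the process $t \mapsto (\phi^\eps_n)^{-1}((\lcB^\eps_0)_{\tau^\eps_{(n)}+t})$ is an independent radial $\SLE_{\kp}(\kp-6)$ exploration in $\D$ targeted at $0$, with the force point starting infinitesimally on the opposite side from the previous stage, run until the next loop surrounds $0$. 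The same concatenation structure is the very definition of $\lcB_0$ when $\eps=0$.

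Step 1 (base case). Proposition \ref{prop:sletocleconv} gives $(\lcTB^\eps_0, \tau^\eps_0) \Rightarrow (\lcTB_0, \tau_0)$ in $\lcS \times \R$.

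Step 2 (inductive step). Suppose that the convergence of the process stopped at time $\tau^\eps_{(n)}$ has been established, jointly with $\tau^\eps_{(n)}$. By Skorokhod's representation theorem we may assume the convergence is almost sure. Carathéodory convergence of $D^\eps_n$ to $D_n$ (seen from $0$) then yields uniform convergence of $\phi^\eps_n$ to $\phi_n$ on every compact subset of $\D$. Sampling an independent copy of the stopped process and applying Proposition \ref{prop:sletocleconv} to it, the Markov decomposition above expresses $\lcB^\eps_0$ restricted to $[\tau^\eps_{(n)}, \tau^\eps_{(n+1)}]$ as the image under $\phi^\eps_n$ of this independent stopped exploration (with orientation flipped, which is preserved in the limit since orientation is a discrete datum). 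A straightforward continuity argument—based on the fact that post-composition with a sequence of conformal maps converging in the Carathéodory sense sends convergent elements of $\lcS$ to convergent elements of $\lcS$, up to the deterministic time-change induced by $\log |(\phi^\eps_n)'(0)| \to \log|\phi_n'(0)|$—then propagates the convergence to time $\tau^\eps_{(n+1)}$, jointly with $\tau^\eps_{(n+1)}$. By induction, the stopped processes converge for every $n$.

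Step 3 (conclusion). The increments $\tau^\eps_{(n+1)} - \tau^\eps_{(n)}$ are i.i.d.\ copies of $\tau^\eps_0$, and the convergence $\tau^\eps_0 \Rightarrow \tau_0$ (with $\tau_0$ equal in law to the hitting time of $\pi$ by a reflected Brownian motion) gives a uniform-in-$\eps$ lower bound on $\E[\tau^\eps_0 \wedge 1]$; by the law of large numbers it follows that $\tau^\eps_{(n)} \to \infty$ almost surely, uniformly in $\eps$ in the sense that for every $T$ one has $\Pr(\tau^\eps_{(n)} \le T) \to 0$ as $n\to\infty$, uniformly in $\eps$. Since convergence in $\lcS$ is precisely uniform convergence on $[0,T]$ for each fixed $T$ in the sense of \eqref{eq:cartconvdef}, the convergence on $[0, \tau^\eps_{(n)}]$ for every $n$ together with this uniform tail bound yields $\lcB^\eps_0 \Rightarrow \lcB_0$ in $\lcS$.

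The most delicate point is Step 2: namely, that the ``concatenation'' operation—take a radial Loewner chain $\lcTB$ in $\D$, stop it at $\tau$, take an independent chain $\widetilde{\lcTB}$, push it forward by $\phi:=f_\tau[\lcTB]$, and glue—is continuous in $\lcS$ with respect to joint Carathéodory input. This reduces to the fact that the driving measure of the concatenated chain is the concatenation of the two driving measures transported by the continuously varying conformal maps, so that Remark \ref{rmk:dconv_cconv} applies; the only subtlety is checking continuity at the gluing time itself, which is handled by the fact that the driving function $W^\eps_0$ is continuous at $\tau^\eps_0$ with limiting value on $\partial \D$ depending continuously on the stopped chain.
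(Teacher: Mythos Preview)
Your proof is correct and follows exactly the same approach as the paper: the paper simply states that Proposition \ref{prop:convfullbranch} ``follows immediately'' from Proposition \ref{prop:sletocleconv} together with Remark \ref{rmk:slek_markov} and the iterative definition of the $\CLE_4$ exploration towards $0$. You have fleshed out precisely this iteration argument, supplying the Skorokhod/concatenation-continuity details and the uniform-in-$\eps$ tail bound $\tau^\eps_{(n)}\to\infty$ that the paper leaves implicit.
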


Our proof of Proposition \ref{prop:sletocleconv} will go through the approximations $\lcTB_0^{\eps,n}$ and $\lcTB_0^n$. Namely, we will show that for any \emph{fixed} level $n$ of approximation, $\lcTB_0^{\eps,n}\to \lcTB_0^n$ as $\eps\downarrow 0$, equivalently $\kp\downarrow 4$. Broadly speaking this holds since the macroscopic excursions of the underlying processes $\theta_0^\eps$ converge, and in between these macroscopic excursions we can show that the location of the tip of the curve distributes itself uniformly on the boundary of the unexplored domain. We combine this with the fact that the approximations $\lcTB_0^{\eps,n}$ converge to $\lcTB^\eps_0$ as $n\to \infty$, \emph{uniformly} in $\eps$, to obtain the result. 

The heuristic explanation for the mixing of the curve tip on the boundary is that the force point in the definition of an $\SLE_\kp(\kp-6)$ causes the curve to ``whizz'' around the boundary more and more quickly as $\kp\downarrow 4$. This means that in any fixed amount of time (e.g., between macroscopic excursions), it will forget its initial position and become uniformly distributed in the limit. {Making this heuristic rigorous is the main technical step of this subsection, and is achieved in Subsection \ref{subsec:whiz}.}

\subsubsection{Excursion measures converge as $\kp\downarrow 4$}
The first step towards proving Proposition \ref{prop:sletocleconv} is to describe the sense in which the underlying process $\theta^\eps_0$ for the $\SLE_{\kappa'}(\kappa'-6)$ branch converges to the process $\theta_0$ for the CLE$_4$ exploration. It is convenient to formulate this in the language of excursion theory; see Lemma \ref{lem:mn_conv} below.

To begin we observe, and record in the following remark, that when $\theta^\eps_0$ is very small, it behaves much like a Bessel process of a certain dimension.   
\begin{remark}
	\label{rmk:theta_bessel_compare}
	Suppose that $(\theta_0^\eps)_0=0$. By Girsanov's theorem, if the law of $\{(\theta^\eps_0)_t \, ; \, t\ge 0\} $ is weighted by the martingale $$\exp(Z_t^\eps-\frac{\langle Z^\eps \rangle_t}{2})\; ; \; Z^\eps_t:=\frac{\kappa'-4}{\sqrt{\kappa'}} \int_0^t (\frac{1}{(\theta^\eps_0)_s}-\frac{1}{2}\cot(\frac{(\theta^\eps_0)_s}{2})) \, dB_s ,$$ the resulting law of 
	$\{(\theta^\eps_0)_t\, ; \, {t\le \tau^\eps_0} \}$ is that of $\sqrt{\kp}$ times a Bessel process of dimension $\delta(\kp)=3-8/\kp$. \corr{	Note that for $y\in [0,2\pi)$, $(1/y-
	(1/2)\cot(y/2))$ is positive and increasing, and that for $y\in [0,\pi]$,  $y/12\le (1/y-(1/2)\cot(y/2)) \le y/6$, so in particular the integral in the definition of $Z_t^\ep$ is well-defined.} 
\end{remark}

Now, observe that by the Markov property of $\theta^\eps_0$, we can define its associated (infinite) excursion measure on excursions from $0$. {We define $m^\eps$ to be the image of this measure under the operation of stopping excursions if and when they reach height $2\pi$.}

For $n\ge 0$, we write $m^\eps_n$ for $m^\eps$ restricted to excursions 
with maximum height exceeding $2^{-n}$, and normalized to be a probability measure. It then follows from the strong Markov property that the excursions of $\theta_0^\eps$ during the intervals $[S_i^{\eps,n},T_i^{\eps,n}]$ are independent samples from $m_n^\eps$, and $\Lambda^{\eps,n}$ is the index of the first of these samples that actually reaches height $2\pi$. We also write $m^\eps_*$ for the measure $m^\eps$ restricted to excursions that reach $2\pi$, again normalized to be a probability measure.

Finally, we consider the excursion measure on excursions from $0$ for Brownian motion.  We denote the image of this measure, after stopping excursions when they hit $2\pi$, by $m$. Analogously to above, we write $m_n$ for $m$ conditioned on the excursion exceeding height $2^{-n}$. We write $m_\star$ for $m$ conditioned on the excursion reaching height $2\pi$.

The measures $m,(m^\eps)_\eps$ are supported on the excursion space 
\[ E = \{ e\in C(\R_+,[0,2\pi])\, ; \, e(0)=0, \zeta(e):=\sup\{s>0: e(s)\in (0,2\pi)\}\in (0,\infty)\}\]
on which we define the  distance \[d_E(e,e')=\sup_{t\ge 0} |e(t)-e'(t)| + |\zeta(e)-\zeta(e')|.\]

\begin{lemma}\label{lem:mn_conv}
	For any $n\ge 0$, $m_n^\eps \to m_n$ in law as $\eps\to 0$, with respect to $d_E$. The same holds with $(m_\star^\eps,m_\star)$  in place of $(m_n^\eps, m_n)$.
\end{lemma}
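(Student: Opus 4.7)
I will prove the convergence $m^\eps_n \to m_n$; the $m^\eps_\star \to m_\star$ case is analogous, using the symmetry of the SDE \eqref{eq:sde_theta} under $\theta \leftrightarrow 2\pi - \theta$ to exchange the roles of the boundary points $0$ and $2\pi$ (so one uses a threshold $a$ close to $2\pi$ instead of close to $0$). The overall strategy is to fix an intermediate level $a \in (2^{-n}, \pi/2)$, decompose each excursion at its first hitting time $T_a$ of level $a$, handle the pre- and post-$T_a$ pieces separately, recombine via the strong Markov property, and finally send $a \downarrow 2^{-n}$.

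Post-$T_a$, conditional on $T^\eps_a < \infty$, the trajectory is a strong Markov process started from $a$ and killed on hitting $\{0, 2\pi\}$. On the safe interval $[a/2, 2\pi - a/2]$ the drift $\tfrac{\kp-4}{2}\cot(\cdot/2)$ is smooth, bounded, and tends to $0$ uniformly as $\eps \to 0$, so standard SDE-convergence arguments (e.g.\ via the martingale problem) give weak convergence of this piece, in the uniform-on-compacts topology, to the corresponding killed piece of a speed-$4$ reflected Brownian motion, which is precisely the post-$T_a$ piece of an excursion drawn from $m_n$. Continuity of the hitting time of $\{0,2\pi\}$ on the path then upgrades this to convergence of the excursion duration, giving convergence in the metric $d_E$.

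For the pre-$T_a$ piece, I invoke Remark \ref{rmk:theta_bessel_compare}: up to $T^\eps_a$ the law of $\theta^\eps_0$ is absolutely continuous with respect to that of $\sqrt{\kp}$ times a Bessel process of dimension $\delta(\kp) = 3 - 8/\kp$ run up to its first hitting of $a$, with Radon--Nikodym density $\exp(Z^\eps_{T^\eps_a} - \tfrac{1}{2}\langle Z^\eps \rangle_{T^\eps_a})$. Since $\theta^\eps \le a < \pi$ throughout this interval, the integrand $1/\theta - \tfrac{1}{2}\cot(\theta/2)$ is bounded (by $\pi/6$, per the remark), and the prefactor $(\kp-4)/\sqrt{\kp}$ tends to $0$, so the Girsanov density converges to $1$ in $L^1$ by a standard exponential-martingale estimate. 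On the Bessel side, the law of a Bessel process of dimension $\delta$ started from $0$ and run up to its first hitting of $a$, conditional on this event occurring, depends continuously on $\delta \in (0, 2)$ (this follows from classical results, e.g.\ via the Williams time-reversal expressing the conditioned excursion in terms of a Bessel of dimension $4-\delta$ started from $a$); as $\delta(\kp) \downarrow 1$ it converges to the law of $|B|$ up to first hitting $a$, which after rescaling by $\sqrt{\kp} \to 2$ matches the pre-$T_a$ piece under $m_n$.

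Combining the pre- and post-$T_a$ convergences via the strong Markov property, and noting that the ratio of excursion masses $m^\eps(\max > a)/m^\eps(\max > 2^{-n})$ converges to the analogous ratio for $m$ (again by Bessel comparison), yields weak convergence of $m^\eps_n$ restricted to $\{T_a < \infty\}$ to $m_n$ restricted to $\{T_a < \infty\}$, for each fixed $a > 2^{-n}$. A short uniform tightness argument on the initial segments of excursions (again via Bessel domination) lets one send $a \downarrow 2^{-n}$ and conclude. The main technical obstacle is the pre-$T_a$ analysis: one must simultaneously control the Girsanov density and verify continuity in $\delta$ of the conditioned Bessel excursion laws --- neither is deep, but both require some care, since the conditioning event has vanishing probability in the strict excursion limit, so one is working with ratios and $h$-transforms rather than with the raw (infinite) excursion measures.
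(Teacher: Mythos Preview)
Your approach shares the paper's two key tools --- the Girsanov comparison with a Bessel process from Remark~\ref{rmk:theta_bessel_compare}, and continuity of Bessel excursion laws in the dimension --- but your post-$T_a$ step has a real gap. The post-$T_a$ piece is the diffusion \eqref{eq:sde_theta} started from $a$ and killed at $\{0,2\pi\}$, and this process is \emph{not} confined to any ``safe interval'' such as $[a/2,2\pi-a/2]$: it will typically wander back down arbitrarily close to $0$ (and, if it is to reach $2\pi$, arbitrarily close to $2\pi$) before being killed, and on those visits the drift $\tfrac{\kp-4}{2}\cot(\cdot/2)$ is unbounded. So ``standard SDE convergence'' only gives you the process stopped at first exit from a compact subinterval $[\eta,2\pi-\eta]$; passing from this to convergence of the full killed process together with its lifetime requires a uniform-in-$\eps$ control of the time spent and the excursions made near each endpoint. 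You have not supplied that control, and it is exactly the substantive part of the argument.

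The paper avoids this by decomposing at the opposite end: it stops the excursion at its first hitting of $2\pi-a$, a level near the \emph{top}. The point is that the Girsanov integrand $1/y - \tfrac12\cot(y/2)$ of Remark~\ref{rmk:theta_bessel_compare} is bounded on all of $[0,2\pi-a]$ (it is positive and increasing there), so a single Bessel comparison handles the entire excursion stopped at $2\pi-a$, including all of its returns near $0$. Only a genuinely small leftover near $2\pi$ then remains --- short in both duration and sup-norm as $a\to 0$ --- and it is dispatched via the $\theta\leftrightarrow 2\pi-\theta$ symmetry plus one more Bessel comparison. In your scheme, by contrast, the post-$T_a$ piece carries the bulk of the excursion, so isolating the tiny pre-$T_a$ part gains nothing: you would still need to control both boundaries inside the post-$T_a$ piece, which is at least as much work as the paper's route.
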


\begin{proof}
	For $a>0$, set $E^a = \{ e\in C(\R_+,[0,2\pi-a])\, ; \, e(0)=0, \zeta^a(e):=\sup\{s>0: e(s)\in (0,2\pi-a)\}\in (0,\infty)\}$, and equip it with the metric $d_{E^a}(e,e')=\sup_{t\ge 0} |e(t)-e'(t)| + |\zeta^a(e)-\zeta^a(e')|$.
	Set $\delta=\delta(\kp(\eps))$, recalling the definition $\delta(\kp)=8-3/\kp$.  {We first state and prove the analogous result for Bessel processes.}
	\begin{lemma}\label{bconv}
		Let $b^\eps$ be a sample from the Bessel-$\delta$ excursion measure away from $0$, conditioned on exceeding height $2^{-n}$, and stopped on the subsequent first hitting of $0$ or $2\pi-a$. Let $b$ be a sample from the Brownian excursion measure with the same conditioning and stopping.\footnote{Of course this depends on $a$, but we drop this from the notation for simplicity.} Then for any $a>0$, $b^\eps\Rightarrow b$ as $\eps\downarrow 0$, in the space $(E^a,d_{E^a})$.
	\end{lemma} 
	\begin{proofof}{Lemma \ref{bconv}} \corr{For} any $\eps\in (0,2-\sqrt{2})$, $b^\eps$ can be sampled (see 
		\cite[Section 3]{DMS14}) by:
		\begin{itemize}
			\item first sampling $X^\eps$ from the probability measure on $[2^{-n},\infty)$ with density proportional to $x^{\delta-3} dx$;
			\item then running a Bessel-$(4-\delta)$ process from $0$ to $X^\eps$;
			\item stopping this process at $2\pi-a$ if $X^\eps\ge 2\pi-a$; or 
			\item placing it back to back with the time reversal of an independent Bessel-$(4-\delta)$ from $0$ to $X^\eps$ if $X^\eps<2\pi-a$.
		\end{itemize}  Since the time for a Bessel-$(4-\delta)$ to leave $[0,a']$ converges to $0$ as $a'\to 0$ uniformly in $\delta<3/2$, and for any $a'<2^{-n}$, a Bessel-$(4-\delta)$ from $a'$ to $y$ converges in law to a Bessel$-3$ from $a'$ to $y$ as $\kp\downarrow 4$, uniformly in $y\in [2^{-n},2\pi]$, this shows that $b^\eps\Rightarrow b$ in $(E^a,d_{E^a})$.\end{proofof}
	\medskip 
	
{Now we continue the proof of Lemma \ref{lem:mn_conv}.}	Recalling the Radon--Nikodym derivative of Remark \ref{rmk:theta_bessel_compare} (note that $\kp-4\to 0$ as $\eps\downarrow 0$), we conclude that if $e^\eps$ and $e$ are sampled from $m_n^\eps$ and $m_n$ respectively, and stopped upon hitting $\{0,2\pi-a\}$ for the first time after hitting $2^{-n}$, then $e^\eps\to e$ in law as $\eps\downarrow 0$, in the space $(E^a,d_{E^a})$. 
	
	To complete the proof, it therefore suffices to show (now without stopping $e^\eps$ or $e$) that \[\zeta(e^\eps)-\zeta^a(e^\eps)\to 0 \;\;\; \text{ and } \;\;\; \sup_{t\in (\zeta^a(e^\eps),\zeta(e^\eps))} |e^\eps(t)-2\pi|\to 0\] as $a\to 0$, uniformly in $\eps$ (small enough). But by symmetry, if $\zeta^a(e^\eps)<\zeta(e^\eps)$ then $2\pi-e^\eps$ from time $\zeta^a(e^\eps)$ onwards has the law of $\theta^\eps$ started from $a$ and stopped upon hitting $0$ or $2\pi$. As $a\to 0$ the probability that this process remains in $[0,\pi]$ tends to $1$ uniformly in $\eps$, and then we can use the same Radon--Nikodym considerations to deduce the result. The final statement of Lemma \ref{lem:mn_conv} can be justified in exactly the same manner.
\end{proof}

\subsubsection{Strategy for the proof of Proposition \ref{prop:sletocleconv}}
With Lemma \ref{lem:mn_conv} in hand the strategy to prove Proposition \ref{prop:sletocleconv} is to establish the following two lemmas: 

\begin{lemma}\label{lem:ngoodapprox} Let $F$ be a continuous bounded function on $\lcS\times [0,\infty)$. 
	Then $\mathbb{E}[F({\lcTB}^{\eps,n}_0,\tau^{\eps,n}_0)]\to \mathbb{E}[F({\lcTB}^\eps_0,\tau^\eps_0)]$ 
	as $n\to \infty$, uniformly in $\kp\in (4,8)$, equivalently $\eps\in (0,2-\sqrt{2})$. 
\end{lemma}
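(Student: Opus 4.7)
The plan is to couple $\lcTB_0^\eps$ and $\lcTB_0^{\eps,n}$ on a single probability space using one realization of $\theta_0^\eps$: with this coupling, $W_0^{\eps,n}$ differs from $W_0^\eps$ only through the excision of time intervals on which $\theta_0^\eps$ is making excursions of maximum height at most $2^{-n}$. The total length of these excised intervals is
\[A^{\eps,n} := \tau_0^\eps - \tau_0^{\eps,n}.\]
Since $F$ is bounded and continuous on $\lcS \times [0,\infty)$, the lemma reduces to showing
\[\sup_{\eps \in (0,2-\sqrt{2})} \mathbb{P}\!\left[A^{\eps,n} > \eta\right] \xrightarrow[n\to\infty]{} 0 \quad \text{for every } \eta > 0, \qquad (\star)\]
together with a compatibility statement: on $\{A^{\eps,n} \le \eta\}$ the chain $\lcTB_0^{\eps,n}$ is $\lcS$-close to $\lcTB_0^\eps$, with error vanishing in $\eta$.

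The hard step will be $(\star)$. My proposed route is to reduce to estimates on a Bessel process of dimension $\delta(\kp) = 3 - 8/\kp \in (1,2)$ via Remark \ref{rmk:theta_bessel_compare}. On small excursions the integrand $y \mapsto 1/y - \cot(y/2)/2$ appearing in $Z^\eps$ is comparable to $y/12$, hence $O(2^{-n})$, so the stochastic exponential $\exp(Z^\eps - \langle Z^\eps\rangle/2)$ has $L^2$ moments bounded uniformly in $\eps \in (0,2-\sqrt{2})$ when restricted to small-excursion pieces. By Cauchy--Schwarz, $(\star)$ then reduces to bounding the analogue of $\mathbb{E}[A^{\eps,n}]$ for $\sqrt{\kp}$ times a Bessel-$\delta(\kp)$ process run until it hits $2\pi$. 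Standard Bessel excursion theory gives excursion-measure mass of order $h^{\delta-2}$ for the set $\{M>h\}$, and mean duration of excursions conditioned on $M \in [h,2h]$ of order $h^2$, so summing over dyadic heights $h = 2^{-k}$ with $k \ge n$ yields
\[\mathbb{E}[A^{\eps,n}] \;\lesssim\; \int_0^{2^{-n}} h^{\delta-1}\, dh \;=\; \tfrac{1}{\delta}\, 2^{-n\delta} \;\leq\; 2^{-n},\]
uniformly in $\delta \in (1,2)$, i.e.\ uniformly in $\kp \in (4,8)$.

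The compatibility step should then follow from Remark \ref{rmk:dconv_cconv}. Interpreting $W_0^\eps$ and $W_0^{\eps,n}$ as driving measures on $[0,T] \times \partial \D$ and using the convention of Remark \ref{rmk:stopped_loewner} beyond the stopping times, the difference between their integrals against any $G \in C_b([0,T] \times \partial \D)$ is expressible, via the block-wise change of variable $u \mapsto S_i^{\eps,n} + (u - L_{i-1}^{\eps,n})$ on the $i$-th big-excursion block, as an integral supported on the excised small-excursion intervals plus a contribution from the time shift within each block. Both terms are bounded by $\|G\|_\infty \cdot A^{\eps,n} + \omega_G(A^{\eps,n})$, with $\omega_G$ the modulus of continuity of $G$ in its time variable. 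By $(\star)$ these vanish in probability uniformly in $\eps$, so the driving measures converge uniformly in $\eps$, and thus by Remark \ref{rmk:dconv_cconv} the chains converge in $\lcS$, giving the joint uniform convergence of $(\lcTB_0^{\eps,n}, \tau_0^{\eps,n})$ claimed in the lemma.

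The main obstacle is keeping all constants uniform as $\kp \downarrow 4$: the exponent $\delta$ approaches $1$, the marginal value for integrability of the Bessel excursion measure near zero, and one must verify that neither the Bessel bound $\mathbb{E}[A^{\eps,n}] \lesssim 2^{-n\delta}$ nor the Girsanov transfer inject $\kp$-dependent constants that blow up. The saving feature is structural: the deviation between the $\theta_0^\eps$-SDE and the Bessel-$\delta(\kp)$ SDE scales with $\kp - 4$, so the Radon--Nikodym derivative stays stably close to $1$ on small excursions precisely in the critical limit.
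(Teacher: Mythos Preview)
Your strategy is essentially the paper's: couple via a single $\theta_0^\eps$, reduce to showing the excised time $A^{\eps,n}=\tau_0^\eps-\tau_0^{\eps,n}\to 0$ in probability uniformly in $\eps$ (your compatibility argument via Remark~\ref{rmk:dconv_cconv} is correct, modulo the harmless factor $\tau_0^\eps$ in front of $\omega_G$), then transfer to a Bessel process by Girsanov, apply Cauchy--Schwarz, and use excursion theory to get $\mathbb{E}^{\mathbb{Q}}[A^{\eps,n}]\lesssim 2^{-n\delta}$.

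The gap is in the Girsanov step. You write that the integrand $y\mapsto 1/y-\tfrac12\cot(y/2)$ is $O(2^{-n})$ \emph{on small excursions}, and conclude that the stochastic exponential has uniformly bounded $L^2$ moments ``when restricted to small-excursion pieces''. But the Cauchy--Schwarz bound requires $\mathbb{Q}^\eps[\exp(-2Z^\eps_T+\langle Z^\eps\rangle_T)]$ for the \emph{full} stopping time $T=\tau_0^\eps$, and $Z^\eps$ integrates the drift mismatch over the entire path, not just the small excursions. On the final big excursion $\theta_0^\eps$ approaches $2\pi$, where $1/y-\tfrac12\cot(y/2)$ blows up like $(2\pi-y)^{-1}$; so $\langle Z^\eps\rangle_{\tau_0^\eps}$ is not a priori controlled, and the factor $(\kp-4)^2$ in front does not obviously compensate. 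The paper deals with this by (i) replacing $\tau_0^\eps$ by $\sigma^\eps$, the first hit of $\pi$, on which the integrand is deterministically bounded by $\pi/6$; (ii) introducing a time cutoff $\{\sigma^\eps\le S\}$ and using the uniform tightness $\mathbb{P}(\sigma^\eps>S)\to 0$ from \cite{SSW09}; and (iii) observing that the number of visits to $\pi$ before $\tau_0^\eps$ is geometric with parameter bounded away from $0$ uniformly in $\eps$ (by Lemma~\ref{lem:mn_conv}), so controlling small-excursion time up to $\sigma^\eps$ suffices. Once you insert these three ingredients your argument goes through and matches the paper's.
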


\begin{proof} Fix $\eps$ as above, and let us assume that the processes ${\lcTB}^{\eps,n}_0$ as $n$ varies and ${\lcTB}^\eps_0$ are coupled together in the natural way: using the same underlying $\theta^\eps_0$ and $W^\eps_0$. \corr{By Remark \ref{rmk:dconv_cconv}, in particular \eqref{eq:lambdanlambda}, it suffices to prove that 	\begin{equation}\label{eq:tente}
			\tau^{\eps,n}_0\to \tau^{\eps}_0
		\end{equation}
		in probability as $n\to \infty$, uniformly in $\eps$. In other words, to show that the time spent by $\theta_0^\eps$ in excursions of maximum height less than $2^{-n}$ (before first hitting $2\pi$) goes to $0$ uniformly in $\eps$ as $n\to \infty$.}

\corr{To do this, let us consider the total (i.e., cumulative) duration $C^{\eps,n}$ of such excursions of $\theta_0^\eps$, before the the first time $\sigma^{\eps}$ that $\theta_0^\eps$ reaches $\pi$. The reason for restricting to this time interval is to make use of the final observation in Remark \ref{rmk:theta_bessel_compare}: that the integrand in the definition of $Z^\eps$ is deterministically bounded up to time $\sigma^\eps$. This will allow us to transfer the question to one about Bessel processes. And, indeed, since the number of times that $\theta_0^\eps$ will reach $\pi$ before time $\tau_0^\eps$ is a geometric random variable with success probability uniformly bounded away from $0$ (due to Lemma \ref{lem:mn_conv}),  it is enough to show that $C^{\eps,n}$ tends to $0$ in probability as $n\to \infty$, uniformly in $\eps$.}

\corr{For this, we first notice that by Remark \ref{rmk:theta_bessel_compare},  for any $a,S>0$ we can write 
	\begin{equation*}
		\mathbb{P}(C^{\eps,n}>a)\le \mathbb{P}(\sigma^\eps>S)+\mathbb{Q}^\eps(\exp(-Z_{\sigma^\eps}^\eps+\tfrac{1}{2}\langle Z^\eps \rangle_{\sigma^\eps}) \I_{\{C^{\eps,n}>a\}}\I_{\{\sigma^\eps\le S\}})
	\end{equation*} where $Z^\eps$ is as defined in Remark \ref{rmk:theta_bessel_compare} and under $\mathbb{Q}^\eps$, $\theta_0^\eps$ has the law of  $\sqrt{\kp}$ times a Bessel process of dimension $\delta(\kp)=3-8/\kp$. Since $\mathbb{P}(\sigma^\eps>S)\to 0$ as $S\to \infty$, uniformly in $\eps$ (this is proved for example in \cite{SSW09}), it suffices to show that for any fixed $S$, the second term in the above equation tends to $0$ uniformly in $\eps$ as $n\to \infty$. 

To this end, we begin by using Cauchy--Schwarz to obtain the upper bound
\begin{equation*}
	\mathbb{Q}^\eps(\exp(-Z_{\sigma^\eps}^\eps+\tfrac{1}{2}\langle Z^\eps \rangle_{\sigma^\eps} \I_{\{C^{\eps,n}>a\}}\I_{\{\sigma^\eps\le S\}})\big)^2\le 
\mathbb{Q}^\eps(\exp(-2Z_{\sigma^\eps}^\eps+\langle Z^\eps \rangle_{\sigma^\eps}) \I_{\{\sigma^\eps\le S\}}) \mathbb{Q}^\eps( \I_{\{C^{\eps,n}>a\}}).
\end{equation*}
Then, because we are on the event that $\sigma^\eps\le S$, and the integrand in the definition of $Z^\eps$ is deterministically bounded up to time $\sigma^\eps$, we have that 
$\mathbb{Q}^\eps(\exp(-2Z_{\sigma^\eps}^\eps+\langle Z^\eps \rangle_{\sigma^\eps}) \I_{\{\sigma^\eps\le S\}}) \le c$ for some constant $c=c(S)$ not depending on $\eps$.  So it remains to show that the $\mathbb{Q}^\eps$ expectation of $C^{\eps,n}$, 
goes to $0$ uniformly in $\eps$ as $n\to \infty$.} 

\corr{Recall that  under $\mathbb{Q}^\eps$, $\theta_0^\eps$ has the law of  $\sqrt{\kp}$ times a Bessel process of dimension $\delta(\kp)=3-8/\kp$. Now, by  \cite[Theorem 1]{PitmanYor} we can construct a dimension $\delta(\kp)$ Bessel process by concatenating excursions from a Poisson point process $\Lambda$ with intensity $\int_0^{\infty} x^{\delta-3} \nu_\delta^x \, dx$ times Lebesgue measure on $E\times \R$,  where $\nu_\delta^x$ is a probability measure on Bessel excursions with maximum height $x$ for each $x>0$. Moreover, by Brownian scaling, $\nu_\delta^x(e)=\nu_\delta^1(e_x)$, $e_x(s)=x^{-1}e(x^{2}s)$ for $0\le s \le \zeta(e_x)=x^{-2}\zeta(e)$. (For proofs of these results, see for example \cite{PitmanYor}). 

Now, if we let $T=\inf\{t:(e,t)\in \Lambda \text{ and } \sup e(s) \ge \pi\}$, then  conditionally on $T$, we can write $C^{\kp, n}$ as the sum of the excursion lifetimes $\zeta(e)$ over points $(e,t)$ in a (conditionally independent) Poisson point process with intensity $$\int_0^{2^{-n}} x^{\delta-3} \nu_\delta^x \, dx \times \mathrm{Leb}([0,T]).$$ Note that by definition of the Poisson point process, $T$ is an exponential random variable with associated parameter $\int_\pi^\infty x^{\delta-3} \, dx$, and so has uniformly bounded expectation in $\kp$. Since Brownian scaling also implies that $\nu_\delta^x(\zeta(e)) =x^2\nu_\delta^{1}(\zeta(e_x))$ for excursions $e$,
Campbell's formula yields that the expectation of $C^{\kp,n}$ is of order $2^{-n\delta}$. This indeed converges uniformly to $0$ in $\delta\ge 1$  (equivalently $\kp,\eps$), which completes the proof.} 
\end{proof}
 
\begin{lemma}\label{lem:levelnconv} 
	For any fixed $n\in \N$, $({\lcTB}^{\eps,n}_0,\tau^{\eps,n}_0)$ converges to $({\lcTB}^n_0,\tau^n_0)$ in law as $\eps \downarrow 0$, with respect to the Carath\'{e}odory $\times$ Euclidean topology.
\end{lemma}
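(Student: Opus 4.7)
The plan is to establish joint convergence of the driving measures of $\lcTB_0^{\eps,n}$ and of the stopping times $\tau_0^{\eps,n}$; by Remark \ref{rmk:dconv_cconv} this will yield the required convergence in $\lcS \times [0,\infty)$. First I would apply the strong Markov property of $\theta_0^\eps$ at the times $T_i^{\eps,n}$ to see that, for fixed $n$, the sequence of macroscopic excursions of $\theta_0^\eps$ (those of height exceeding $2^{-n}$) is i.i.d.\ with common law $m_n^\eps$, until the first one that reaches $2\pi$, which has law $m_\star^\eps$. Lemma \ref{lem:mn_conv}, together with the fact that $\Lambda^{\eps,n}$ is geometric with success probability $m_n^\eps(\text{reach } 2\pi)$ tending to $m_n(\text{reach } 2\pi)$, then implies joint convergence in law of this finite random sequence of excursions, and hence of $\tau_0^{\eps,n}$, to the corresponding objects for $\theta_0$.

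The main content of the lemma concerns the starting point on $\partial \D$ of each macroscopic-excursion piece of the driving function. By definition, for $\lcTB_0^n$ these starting points are prescribed to be the i.i.d.\ uniform variables $(X_i^n)$, so I must show that for $\lcTB_0^{\eps,n}$ the corresponding starting points $(W_0^\eps)_{S_i^{\eps,n}}$ become asymptotically i.i.d.\ uniform on $\partial \D$ as $\eps\downarrow 0$, and also asymptotically independent of the sequence of macroscopic excursions. The point to exploit is that between two consecutive macroscopic excursions $[T_i^{\eps,n}, S_{i+1}^{\eps,n}]$, the angular position of $W_0^\eps$ evolves only by $-\int \cot(\theta_0^\eps/2)\, ds$, and during this interval $\theta_0^\eps$ is making small excursions near $0$ where the $\cot$ singularity drives the accumulated rotation to be large.

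The main obstacle is the precise proof that this rotation becomes asymptotically uniform modulo $2\pi$. To handle it, I would first use Remark \ref{rmk:theta_bessel_compare} to transfer the question to Bessel excursions of dimension $\delta = 3-8/\kp$; then, by It\^o excursion theory, the small excursions of $\theta_0^\eps$ during a macroscopic-excursion-free interval form a Poisson point process whose intensity is controlled by the Bessel excursion measure restricted to heights below $2^{-n}$. A scaling calculation shows that each individual excursion of height $h$ contributes an amount of order $h$ to the rotation integral, while the intensity of such excursions per unit time is of order $h^{\delta - 3}\,dh$, giving an expected rotation of order $\int_0^{2^{-n}} h^{\delta - 2}\,dh$ per unit time. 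This diverges as $\delta \downarrow 1$ (i.e.\ as $\eps\downarrow 0$), which is the quantitative input behind the ``whizzing'' heuristic.

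To upgrade divergence of the mean to asymptotic uniformity modulo $2\pi$, I would apply Campbell's formula to write the characteristic function $\mathbb{E}[\e^{\im k R^\eps}]$ of the rotation $R^\eps$ accumulated in a given inter-macroscopic interval as the exponential of an integral against the Poisson intensity of Bessel excursions, and show that its modulus tends to zero for every non-zero integer $k$; by L\'evy's criterion this is equivalent to convergence of $R^\eps \bmod 2\pi$ to the uniform distribution. Independence across distinct inter-macroscopic intervals, and from the macroscopic excursions themselves, follows from the strong Markov property at the times $T_i^{\eps,n}$. Combining these ingredients, the driving measure of $\lcTB_0^{\eps,n}$ converges weakly on $[0,T]\times \partial \D$ (for every $T$) to the driving measure of $\lcTB_0^n$; a final appeal to Remark \ref{rmk:dconv_cconv} then yields the desired convergence in $\lcS \times [0,\infty)$.
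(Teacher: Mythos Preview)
Your proposal is correct and follows essentially the same route as the paper: reduce to convergence of driving functions via Remark \ref{rmk:dconv_cconv}, split into convergence of the sequence of macroscopic excursions (Lemma \ref{lem:mn_conv}) and asymptotic uniformity of the rotation accumulated between them, and establish the latter by passing to Bessel excursions (Remark \ref{rmk:theta_bessel_compare}) and computing the characteristic function of the rotation via Campbell's formula for the Poisson point process of small excursions. The paper organizes this as Lemmas \ref{lem:bigexsame}, \ref{lem:uniform}, \ref{lem:uniform_equation}, and \ref{lem:bes_uniform}, with one small additional trick you do not mention explicitly: to make the Bessel comparison clean (i.e.\ to control the discrepancy between $\cot(\theta/2)$ and $2/\theta$), the paper first observes that $|\mathbb{E}[\exp(\im u\int_0^{S_1^{\eps,n}}\cot((\theta_0^\eps)_s/2)\,ds)]|$ is monotone in $n$, so it suffices to prove the vanishing for an arbitrarily large $m\ge n$, where the excursions are uniformly small and the absolute-continuity argument from Remark \ref{rmk:theta_bessel_compare} is easy to control.
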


\begin{proofof}{Proposition \ref{prop:sletocleconv}} This follows by combining Lemma \ref{lem:ngoodapprox} and  
Lemma \ref{lem:levelnconv}, plus the fact that $(\lcTB_0^n,\tau_0^n)\Rightarrow (\lcTB_0,\tau_0)$ as $n\to \infty$. \end{proofof}

\subsubsection{Convergence at a fixed level of approximation as $\kp\downarrow 4$}\label{subsec:whiz}
The remainder of this section will now be devoted to proving Lemma \ref{lem:levelnconv}.
This is slightly trickier, and so we will break down its proof further into Lemmas \ref{lem:bigexsame} and \ref{lem:uniform} below.

 Let us first set-up for the statements of these lemmas. For $\kp\in (4,8)$ (equiv. $\eps\in (0,2-\sqrt{2})$) we set $X_i^{\eps,n}=(W_0^\eps)_{S_i^{\eps,n}}$ for $1\le i\le \Lambda^{\eps,n}$ and then write  $$\mathbf{X}^{\eps,n}=(X_1^{\eps,n},X_2^{\eps,n},\cdots, X_{\Lambda^{\eps,n}}^{\eps,n}).$$ For the $\CLE_4$ case, we write 
$$\mathbf{X}^{n}=(X_1^{n},X_2^{n},\cdots, X_{\Lambda^{n}}^{n})$$
where the $X^n$ are as defined in Section \ref{sec:ucle4}.
Also recall the definition of the excursions
$(e_i^{\eps,n})_{1\le i \le \Lambda^{\eps,n}}$ of $\theta^\eps$ above height $2^{-n}$. Define the corresponding excursions $(e_i^n)_{i\le \Lambda^n}$ for the uniform $\CLE_4$ exploration, and denote
$$\mathbf{e}^{\eps,n}=(e_1^{\eps,n},e_2^{\eps,n},\cdots, e_{\Lambda^{\eps,n}}^{\eps,n}), \quad \mathbf{e}^{n}=(e_1^{n},e_2^{n},\cdots, e_{\Lambda^{n}}^{n}).$$

Thus, $\mathbf{X}^{\eps,n}, \mathbf{X}^n$ live in the space of sequences of finite length, taking values in $\partial \D$. We equip this space with topology such that $\mathbf{X}^{(n)}\to \mathbf{X}$ as $n\to \infty$ iff the vector length of $\mathbf{X}^{(n)}$ is equal to the length of $\mathbf{X}$ for all $n\ge N_0$ large enough, and such that every component of $\mathbf{X}^{(n)}$ 
\corr{(for $n\ge N_0$)} converges to the corresponding component of $\mathbf{X}$ with respect to the Euclidean distance. Similarly, $\mathbf{e}^{\eps,n}, \mathbf{e}^n$ live in the space of sequences of finite length, taking values in the space $E$ of excursions away from $\{0,2\pi\}$. 

We equip this sequence space with topology such that $\mathbf{e}^{(k)}\to \mathbf{e}$ as $k\to \infty$ iff the vector length of $\mathbf{e}^{(k)}$ is equal to the vector length of $\mathbf{e}$ for all $k$ large enough, together with component-wise convergence with respect to $d_E$. 
\begin{lemma} \label{lem:bigexsame} 
	For any $n\in \N$, $(\mathbf{e}^{\eps,n},\tau^{\eps,n})\Rightarrow (\mathbf{e}^n,\tau^n)$ as $\eps\to 0$.
\end{lemma}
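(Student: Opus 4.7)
The plan is to exploit the product structure of $\mathbf{e}^{\eps,n}$ that comes from the strong Markov property of $\theta_0^\eps$ at the stopping times $S_i^{\eps,n}$ (and likewise for $\theta_0$ at $S_i^n$). Define the endpoint map $\mathrm{end}\colon E \to \{0,2\pi\}$ by $\mathrm{end}(e):=e(\zeta(e))$, and set $p^\eps := m^\eps_n(\mathrm{end}=2\pi)$ together with $\tilde m^\eps_n := m^\eps_n(\,\cdot\,\mid \mathrm{end}=0)$. By the strong Markov property applied at the $S_i^{\eps,n}$, the vector $\mathbf{e}^{\eps,n}$ admits the following equivalent sampling description: draw $\Lambda^{\eps,n}$ as a $\mathrm{Geometric}(p^\eps)$ variable on $\{1,2,\ldots\}$ and, jointly independently, sample $e_1^{\eps,n},\ldots,e_{\Lambda^{\eps,n}-1}^{\eps,n}$ i.i.d.\ from $\tilde m^\eps_n$ and $e_{\Lambda^{\eps,n}}^{\eps,n}$ from $m^\eps_\star$. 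The identical description holds in the $\CLE_4$ case, with $p,\tilde m_n,m_\star$ replacing the $\eps$-objects. The task therefore reduces to passing each of these three ingredients to the limit and then reassembling them.

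The key observation is that $\mathrm{end}$ is continuous with respect to $d_E$: if $d_E(e_k,e)\to 0$ then both $\zeta(e_k)\to\zeta(e)$ and $\sup_t|e_k(t)-e(t)|\to 0$, and continuity of the limit path at the interior point $\zeta(e)$ gives $e_k(\zeta(e_k))\to e(\zeta(e))$. Since the codomain $\{0,2\pi\}$ is discrete, both $\{\mathrm{end}=0\}$ and $\{\mathrm{end}=2\pi\}$ are clopen subsets of $E$, hence continuity sets for every probability measure. Applying the Portmanteau theorem to the weak convergence $m^\eps_n\Rightarrow m_n$ from Lemma~\ref{lem:mn_conv} therefore immediately yields $p^\eps\to p$; and since for a continuous bounded $f$ the product $f\cdot \I_{\{\mathrm{end}=0\}}$ is still continuous and bounded, the same theorem (together with $1-p^\eps\to 1-p>0$) gives $\tilde m^\eps_n\Rightarrow \tilde m_n$. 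Combined with the already-stated $m^\eps_\star\Rightarrow m_\star$ from Lemma~\ref{lem:mn_conv}, we have convergence of all three building blocks.

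It remains to reassemble the pieces via independence. Since $p^\eps\to p\in(0,1)$, the integer-valued geometric variables satisfy $\Lambda^{\eps,n}\Rightarrow\Lambda^n$; via Skorokhod we may couple so that $\Lambda^{\eps,n}=\Lambda^n$ with probability tending to $1$, and on this event the vector lengths of $\mathbf{e}^{\eps,n}$ and $\mathbf{e}^n$ coincide, whereupon coordinate-wise convergence (from $\tilde m^\eps_n\Rightarrow\tilde m_n$ and $m^\eps_\star\Rightarrow m_\star$) delivers convergence in the sequence-valued topology. Finally $\tau^{\eps,n}=\sum_{i=1}^{\Lambda^{\eps,n}}\zeta(e_i^{\eps,n})$ is a continuous functional of $(\Lambda^{\eps,n},\mathbf{e}^{\eps,n})$, since $\zeta$ is $1$-Lipschitz in $d_E$ and the sum has an a.s.\ fixed number of terms under the coupling; this yields the joint convergence $(\mathbf{e}^{\eps,n},\tau^{\eps,n})\Rightarrow(\mathbf{e}^n,\tau^n)$. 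Beyond the strong Markov decomposition and Lemma~\ref{lem:mn_conv}, the only real substance is the clopen-ness of $\{\mathrm{end}=j\}$, which is what bypasses the naive worry that $\{\mathrm{end}=2\pi\}$ carries positive limit mass and so a priori need not behave well under weak convergence; I expect this to be the main conceptual hurdle to state cleanly.
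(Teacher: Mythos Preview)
Your proof is correct and follows essentially the same approach as the paper, which simply states that the lemma ``is a direct consequence of Lemma~\ref{lem:mn_conv} and the definition of $\tau^{\eps,n},\tau^n$.'' You have carefully unpacked this one-line justification: the paper already records (just below the definition of $m^\eps_n$) that the excursions $e_i^{\eps,n}$ are i.i.d.\ samples from $m^\eps_n$ with $\Lambda^{\eps,n}$ the index of the first one reaching $2\pi$, and your observation that $\mathrm{end}$ is $d_E$-continuous (hence $\{\mathrm{end}=0\}$ and $\{\mathrm{end}=2\pi\}$ are clopen continuity sets) is exactly the clean way to pass the conditioning through the weak convergence of Lemma~\ref{lem:mn_conv}.
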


\begin{proof} This is a direct consequence of Lemma \ref{lem:mn_conv} and the definition of $\tau^{\eps,n},\tau^n$. \end{proof}

\begin{lemma}\label{lem:uniform} 
	For any $n\in \N$, $\mathbf{X}^{\eps,n}\to \mathbf{X}^n$ in law as $\eps\to 0$.
\end{lemma}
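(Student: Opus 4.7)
The strategy is to isolate the random rotation that produces the uniform limit. Using \eqref{def:Wfromtheta} and $(\theta_0^\eps)_{S_i^{\eps,n}}=0$, we have
\[
\frac{X_{i+1}^{\eps,n}}{X_{i}^{\eps,n}}=\exp\Bigl(-\im\!\int_{S_i^{\eps,n}}^{S_{i+1}^{\eps,n}}\!\cot((\theta_0^\eps)_s/2)\,ds\Bigr)=e^{-\im A_i^\eps}\cdot e^{-\im Z_i^\eps},
\]
where $A_i^\eps:=\int_{S_i^{\eps,n}}^{T_i^{\eps,n}}\cot((\theta_0^\eps)_s/2)\,ds$ depends only on the big excursion $e_i^{\eps,n}$, and $Z_i^\eps:=\int_{T_i^{\eps,n}}^{S_{i+1}^{\eps,n}}\cot((\theta_0^\eps)_s/2)\,ds$ depends only on the small excursions of $\theta_0^\eps$ during $[T_i^{\eps,n},S_{i+1}^{\eps,n}]$. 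By the strong Markov property of $\theta_0^\eps$ at each $T_i^{\eps,n}$, the variables $\{Z_i^\eps\}_i$ are mutually conditionally independent given $(\mathbf{e}^{\eps,n},\Lambda^{\eps,n})$, and jointly independent of $\mathbf{e}^{\eps,n}$. Combined with Lemma \ref{lem:bigexsame}, the lemma reduces to showing $e^{-\im Z_i^\eps}\Rightarrow\mathrm{Unif}(\partial\D)$ as $\eps\downarrow 0$ for each fixed $i$.

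We plan to show the reduction by computing the characteristic function $\mathbb{E}[e^{\im k Z_i^\eps}]$ at each fixed nonzero integer $k$. Using Remark \ref{rmk:theta_bessel_compare}, under a Girsanov-weighted measure whose density is $L^2$-bounded on bounded intervals (since the martingale coefficient $(\kappa'-4)/\sqrt{\kappa'}$ tends to $0$), the process $r:=\theta_0^\eps/\sqrt{\kappa'}$ is a Bessel process of dimension $\delta:=3-8/\kappa'$. By It\^o excursion theory and \cite[Theorem 1]{PitmanYor}, the small excursions of $r$ (those of maximum height below $H_0:=2^{-n}/\sqrt{\kappa'}$) during $[T_i^{\eps,n},S_{i+1}^{\eps,n}]$ form a Poisson point process of intensity $h^{\delta-3}\nu_\delta^h(de)\,dh$ indexed by the local time of $r$ at $0$, which runs up to an independent exponential $L_i^*$ with parameter $\lambda^\eps=H_0^{\delta-2}/(2-\delta)$. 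Brownian scaling for Bessel excursions yields that an excursion of maximum height $h$ contributes $hC_r/\sqrt{\kappa'}$ to $Z_i^\eps$, where $C_r$ is a bounded-moment functional of the normalized max-$1$ excursion. Campbell's formula then gives
\[
\mathbb{E}[e^{\im k Z_i^\eps}]=\frac{\lambda^\eps}{\lambda^\eps-(\im A_k^\eps+B_k^\eps)},
\]
where Taylor expansion yields $B_k^\eps\approx-k^2\mathbb{E}[C_r^2]H_0^\delta/(2\kappa'\delta)$, bounded as $\eps\downarrow 0$, while the dominant imaginary part $A_k^\eps\approx k\mathbb{E}[C_r]H_0^{\delta-1}/(\sqrt{\kappa'}(\delta-1))$ diverges like $(\delta-1)^{-1}\sim(\kappa'-4)^{-1}$ (it comes from $\int_0^{H_0}h^{\delta-2}\,dh$). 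Consequently
\[
|\mathbb{E}[e^{\im k Z_i^\eps}]|^2=(\lambda^\eps)^2/((\lambda^\eps-B_k^\eps)^2+(A_k^\eps)^2)\to 0
\]
for every $k\ne 0$, and Fourier inversion on $\partial\D$ gives the desired uniform convergence of $e^{-\im Z_i^\eps}$.

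The main obstacle is to make the Campbell/Laplace computation rigorous: the Taylor bounds on the inner characteristic-function integrals must be justified uniformly in $h\in(0,H_0]$ (via moment bounds on the Bessel excursion functional $C_r$, uniform for $\delta$ near $1$), and the Girsanov change of measure between $\theta_0^\eps$ and $\sqrt{\kappa'}r$ must be transferred back via a Cauchy--Schwarz argument analogous to the one in the proof of Lemma \ref{lem:ngoodapprox}. The divergence rate $A_k^\eps\sim k(\kappa'-4)^{-1}$ is the quantitative form of the ``whizzing around $\partial\D$'' heuristic mentioned after Proposition \ref{prop:convfullbranch}: the mean angular winding of $W_0^\eps$ over each interval of small excursions blows up like $(\kappa'-4)^{-1}$, and, reduced modulo $2\pi$ against the random local-time scale $L_i^*$, it forces uniformity.
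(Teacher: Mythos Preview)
Your approach is essentially the same as the paper's. Both arguments reduce the lemma to showing that the angular accumulation of $W_0^\eps$ over an interval of small excursions (your $Z_i^\eps$; the paper's $\int_0^{S_1^{\eps,n}}\cot((\theta_0^\eps)_s/2)\,ds$, which has the same law by the strong Markov property) becomes uniform on $\partial\D$. Both then pass to a Bessel process via the Girsanov change of measure from Remark~\ref{rmk:theta_bessel_compare}, use the Poisson point process structure of Bessel excursions together with Campbell's formula to compute the characteristic function, and conclude by showing that the imaginary part of the Campbell exponent diverges like $(\delta-1)^{-1}$ while the real part stays bounded. The paper packages this as Lemma~\ref{lem:uniform_equation} (the reduction) and Lemma~\ref{lem:bes_uniform} (the Bessel computation), and it uses one device you omit: a monotonicity observation that $|\mathbb{E}[\exp(\im u\int_0^{S_1^{\eps,n}}\cot((\theta_0^\eps)_s/2)\,ds)]|$ is increasing in $n$, which allows the Bessel comparison to be carried out only for very small excursions (arbitrarily large $m$) where the approximation $\cot(y/2)\approx 2/y$ is sharp. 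Your Cauchy--Schwarz transfer would work too, but the monotonicity trick is cleaner.
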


This second lemma will take a bit more work to prove. However, we can immediately see how the two together imply Lemma \ref{lem:levelnconv}:\\

\begin{proofof}{Lemma \ref{lem:levelnconv}}
	Lemmas \ref{lem:bigexsame} and \ref{lem:uniform} imply that the driving functions of $\lcTB^{\eps,n}_0$ converge in law to the driving function of $\lcTB^n_0$ with respect to the Skorokhod topology. This implies the result by Remark \ref{rmk:dconv_cconv}.
\end{proofof}\\

Our new goal is therefore to prove Lemma \ref{lem:uniform}. The main ingredient is the following (recall that $S_1^{\eps,n}$ is the start time of the first excursion of $\theta_0^\eps$ away from $0$ that reaches height $2^{-n}$).

\begin{lemma}\label{lem:uniform_equation}
	For any $u\ne 0$ and $n\in \N$ fixed, 
	\begin{equation}\label{eqn:ftuniform}
\corr{\mathbb{E}[\, X_1^{\eps,n}\, ]} =	\mathbb{E}[\,\exp(\im u \int_0^{S_1^{\eps,n}}\cot((\theta^\eps_0)_s/2) \, ds)\,]\to 0 \text{ as } \eps \downarrow 0.
	\end{equation}
\end{lemma}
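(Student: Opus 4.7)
The first equality should be read as giving the Fourier coefficients of $X_1^{\eps,n}$: since $(\theta_0^\eps)_{S_1^{\eps,n}}=0$, the definition \eqref{def:Wfromtheta} yields
\[
X_1^{\eps,n}=\exp\!\Bigl(-\im \int_0^{S_1^{\eps,n}}\cot((\theta_0^\eps)_s/2)\,ds\Bigr),
\]
so that for each integer $u\ne 0$, $\mathbb{E}[(X_1^{\eps,n})^u]$ equals the right-hand side of \eqref{eqn:ftuniform} (up to a sign in the exponent). It therefore suffices to prove that the characteristic function of the real random variable $I^\eps:=\int_0^{S_1^{\eps,n}}\cot((\theta_0^\eps)_s/2)\,ds$ tends to $0$ at every $u\ne 0$.

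The first step is to decompose $I^\eps$ using It\^o excursion theory. During $[0,S_1^{\eps,n}]$, by definition, $\theta_0^\eps$ makes only ``small'' excursions from $0$, namely those that stay strictly below height $2^{-n}$. These form a Poisson point process on excursion space with intensity $dN_<^\eps(e)\otimes d\ell$, where $\ell$ is local time at $0$ and $N_<^\eps$ is the excursion measure $N^\eps$ of $\theta_0^\eps$ restricted to excursions not reaching $2^{-n}$. The endpoint $S_1^{\eps,n}$ corresponds to an independent exponential local time with rate $\lambda^\eps:=N^\eps(\{\sup e\ge 2^{-n}\})$. Writing $\Lambda(e):=\int_0^{\zeta(e)}\cot(e(s)/2)\,ds$, so that $I^\eps=\sum_i\Lambda(e_i)$ over small excursions $e_i$, the exponential formula and integration in $\ell$ yield
\[
\mathbb{E}[e^{\im u I^\eps}]=\frac{\lambda^\eps}{\lambda^\eps+\int_E\bigl(1-e^{\im u\Lambda(e)}\bigr)\,dN_<^\eps(e)}.
\]
Since Lemma \ref{lem:mn_conv} implies that $\lambda^\eps$ stays uniformly bounded above and away from $0$ as $\eps\downarrow 0$, the task reduces to showing
\[
\Bigl|\int_E\bigl(1-e^{\im u\Lambda(e)}\bigr)\,dN_<^\eps(e)\Bigr|\to\infty \qquad \text{as } \eps\downarrow 0.
\]

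The main step is to establish this divergence. The plan is to replace $N_<^\eps$ by the excursion measure of $\sqrt{\kp}\cdot\mathrm{BES}(\delta(\kp))$ with $\delta(\kp)=3-8/\kp\downarrow 1$: by Remark \ref{rmk:theta_bessel_compare}, the Girsanov density is of the form $\exp(Z-\langle Z\rangle/2)$ whose integrand $\tfrac{1}{\theta}-\tfrac12\cot(\theta/2)$ is bounded on $(0,\pi)$, so on the event that the excursion is contained in $[0,\pi]$ (which is true for $n\ge 2$) the densities of $N^\eps_<$ and the corresponding Bessel excursion measure are comparable, uniformly in $\eps$. By Pitman--Yor (as used in Lemma \ref{lem:ngoodapprox}), the Bessel-$\delta$ excursion measure decomposes as $\int_0^\infty h^{\delta-3}\,\nu^h_\delta(de)\,dh$ with $\nu_\delta^h$ the law of a normalized Bessel-$\delta$ excursion of maximum height $h$. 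Brownian scaling gives $\Lambda(e)\stackrel{d}{=}h\,\Lambda(e_1)$ under $\nu_\delta^h$, where $e_1\sim\nu_\delta^1$ and $\Lambda(e_1)$ is almost surely finite and integrable for $\delta>1$ close to $1$.

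Thus for small $h$, $1-e^{\im u\Lambda(e)}=-\im u\Lambda(e)+O(h^2)$, and the imaginary part of the integral is asymptotic to
\[
-u\,c_\delta\int_0^{2^{-n}} h^{\delta-2}\,dh\;=\;-\frac{u\,c_\delta\,(2^{-n})^{\delta-1}}{\delta-1},
\]
where $c_\delta=\int\Lambda(e_1)\,d\nu_\delta^1(e_1)$ converges to a finite positive constant as $\delta\downarrow 1$. Since $\delta=\delta(\kp)\downarrow 1$ as $\eps\downarrow 0$, the right-hand side diverges, while the real-part integrand is $O(h^{\delta-1})$ near $0$, hence stays bounded. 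Therefore the modulus of the denominator in the above expression for $\mathbb{E}[e^{\im u I^\eps}]$ tends to $\infty$, which gives the lemma.

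The main obstacle is the analytic control in the fourth paragraph: rigorously justifying the Taylor expansion of $1-e^{\im u\Lambda(e)}$ uniformly in $e$ under $\nu_\delta^h$ for small $h$, handling the remainder (which behaves like $h^2$ only in an integrability-of-$\Lambda^2$ sense, not pointwise), and controlling the contribution of excursions of height $h$ bounded away from $0$, where no cancellation can be claimed but the measure is finite for $\delta>1$. The Girsanov comparison also requires care near excursion endpoints, but since excursions in $N_<^\eps$ are bounded below $2^{-n}<\pi/2$, the density stays well-behaved throughout.
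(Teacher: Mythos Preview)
Your approach is essentially the same as the paper's: both use It\^o excursion theory and the exponential/Campbell formula to express the characteristic function as $\lambda^\eps/(\lambda^\eps+\psi^\eps)$ with $\psi^\eps=\int(1-e^{\im u\Lambda(e)})\,dN_<^\eps$, then show $|\psi^\eps|\to\infty$ by comparing to the Bessel-$\delta(\kp)$ excursion measure and exploiting the divergence of $\int_0^{2^{-n}} h^{\delta-2}\,dh$ as $\delta\downarrow 1$.

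The one structural difference worth noting is that the paper first observes (by independence of the pieces of $I^\eps$) that $|\mathbb{E}[e^{\im u I^\eps}]|$ is \emph{increasing} in $n$, so it suffices to prove the statement for arbitrarily large $m\ge n$. This monotonicity trick cleanly sidesteps the ``main obstacle'' you flag: by taking $m$ large, all excursions are confined to a tiny neighborhood of $0$, so both the functional $\cot(\theta/2)$ and the Girsanov density are uniformly close to their Bessel counterparts $2/\theta$ and $1$, and a simple triangle-inequality argument transfers the Bessel result (your Lemma~\ref{lem:bes_uniform} analogue) to $\theta_0^\eps$. Your direct comparison at fixed $n$ should also work, but requires handling simultaneously the change of functional ($\cot(\theta/2)\to 2/\theta$, without which the scaling $\Lambda(e)\stackrel{d}{=}h\Lambda(e_1)$ fails) and the change of measure, plus the remainder control you mention; the monotonicity trick makes all of this unnecessary. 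A minor point: Lemma~\ref{lem:mn_conv} concerns \emph{normalized} measures, so it does not directly give boundedness of $\lambda^\eps$; but since the formula $\lambda^\eps/(\lambda^\eps+\psi^\eps)$ is scale-invariant, what you actually need is $|\psi^\eps/\lambda^\eps|\to\infty$, which is what the paper proves.
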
 

For the proof of Lemma \ref{lem:uniform_equation}, we are going to make use of Remark \ref{rmk:theta_bessel_compare}. That is, the fact that $\theta^\eps_0$ behaves very much like $\sqrt{\kp}$ times a Bessel process of dimension $\delta=3-8/\kp\in (1,2)$. The Bessel process is much more convenient to work with (in terms of exact calculations), because of its scaling properties. Indeed, for Bessel processes we have the following lemma:

{\begin{lemma}\label{lem:bes_uniform}
		Let $\wt \theta^\eps$ be $\sqrt{\kp}=\sqrt{\kp(\eps)}$ times a Bessel process of dimension $3-8/\kp$ (started from $0$) and $\wt S^{\eps,m}$ be the start time of the first excursion in which it exceeds $2^{-m}$. Then for $u\ne 0$, $$| \mathbb{E}[\exp\big(2\im u \int_0^{\wt{S}^{\eps,m}} (\wt \theta^\eps_s)^{-1} \, ds\big) ]|\to 0$$ as $\eps\downarrow 0$ for any $m$ large enough. 
\end{lemma}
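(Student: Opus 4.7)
The plan is to apply Bessel scaling to reduce the statement to the vanishing of a single characteristic function, and then use It\^o's excursion theory to give a closed-form expression whose modulus can be shown to tend to zero.

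First, I write $\wt\theta^\eps = \sqrt{\kp}\, R^\eps$, with $R^\eps$ a Bessel process of dimension $\delta = 3-8/\kp \in (1,2)$ started from $0$. The standard Brownian scaling $\lambda^{-1}R^\eps(\lambda^2 \cdot)\stackrel{d}{=}R^\eps$, applied with $\lambda = 2^{-m}/\sqrt{\kp}$, yields
\[
\int_0^{\wt S^{\eps,m}} (\wt\theta^\eps_s)^{-1}\,ds \;\stackrel{d}{=}\; \frac{2^{-m}}{\kp}\, Z^\eps, \qquad Z^\eps := \int_0^{T^\eps_1} (R^\eps_s)^{-1}\, ds,
\]
where $T^\eps_1$ is the start time of the first excursion of $R^\eps$ away from $0$ reaching height $1$. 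Setting $\alpha := 2u\cdot 2^{-m}/\kp \neq 0$, it suffices to show $\mathbb{E}[\exp(\im \alpha Z^\eps)] \to 0$ as $\delta \downarrow 1$.

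Next I would apply It\^o's excursion theory. Writing $M(e)$ for the maximum height of an excursion $e$, the excursions of $R^\eps$ away from $0$ form a Poisson point process with intensity $n_\delta\otimes d\ell$, where $n_\delta$ is the Bessel-$\delta$ excursion measure and $\ell$ denotes local time at $0$. The local time $L^\eps$ accumulated up to $T^\eps_1$ is exponential with rate $\mu_\delta := n_\delta(\{M \geq 1\})<\infty$, and conditionally on $L^\eps$ we have $Z^\eps = \sum_i \phi(e_i)$, where $(e_i)$ is a PPP of intensity $L^\eps \cdot n_\delta|_{\{M<1\}}$ and $\phi(e):=\int_0^{\zeta(e)} e_s^{-1}\,ds$. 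The compound-Poisson structure then gives $\mathbb{E}[e^{\im\alpha Z^\eps}\mid L^\eps]=\exp(L^\eps\Psi^\eps(\alpha))$ with
\[
\Psi^\eps(\alpha) := \int (e^{\im\alpha\phi(e)}-1)\, n_\delta|_{\{M<1\}}(de) \;=:\; A^\eps + \im B^\eps,
\]
and averaging over $L^\eps\sim\mathrm{Exp}(\mu_\delta)$ yields $|\mathbb{E}[e^{\im\alpha Z^\eps}]| = \mu_\delta/\sqrt{(\mu_\delta-A^\eps)^2+(B^\eps)^2}$.

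The key step is then to show that $|B^\eps|\to\infty$ while $A^\eps$ and $\mu_\delta$ stay bounded. By Bessel scaling, $n_\delta$ restricted to excursions of max height $h$ is obtained from excursions of max height $1$ by rescaling space by $h$ (time by $h^2$); the density of $M$ under $n_\delta$ is proportional to $h^{\delta-3}$; and $\phi$ scales linearly in $h$. For $m$ large enough that $|\alpha|<1$, Taylor expanding $\sin(\alpha\phi)=\alpha\phi+O((\alpha\phi)^3)$ (using $\phi\lesssim h$ for excursions of max height $h$) gives a leading contribution
\[
B^\eps \;\asymp\; \alpha \int_0^1 h\cdot h^{\delta-3}\,dh \;\asymp\; \frac{\alpha}{\delta-1}\;\to\;\infty.
\]
The parallel analysis based on $\cos(\alpha\phi)-1=-\tfrac12(\alpha\phi)^2+\cdots$ shows $A^\eps=O(\alpha^2)$ via the bounded integral $\int_0^1 h^{\delta-1}\,dh$, and $\mu_\delta$ tends to a finite positive limit. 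Substituting, $|\mathbb{E}[e^{\im\alpha Z^\eps}]|\to 0$.

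The main obstacle will be making the last step rigorous. Specifically, one must verify that the ``universal'' excursion-integral constants (the $n_\delta$-averages of $\phi$ and $\phi^2$ over excursions of max height $1$) stay bounded, and bounded away from zero, as $\delta\downarrow 1$---even though the analogous integral over an entire Bessel-$1$ trajectory diverges---and one must control the higher-order Taylor errors uniformly against the infinite measure $n_\delta|_{\{M<1\}}$. This is the reason the lemma requires $m$ large enough: to ensure $|\alpha|$ is small, so that the linearization of $\sin(\alpha\phi)$ dominates for every excursion contributing to the dominant term.
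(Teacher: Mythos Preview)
Your proposal is correct and follows essentially the same route as the paper's proof: both arguments use It\^o's excursion description of the Bessel process as a Poisson point process, apply Campbell's formula to compute the characteristic function as $\mu_\delta/(\mu_\delta-\Psi^\eps(\alpha))$, bound the real part $A^\eps$ via $1-\cos x\le x^2/2$ together with the scaling $\phi(e)=h\,\phi(e_1)$ and the finiteness of $\nu_\delta^1(\phi^2)$, and show the imaginary part $B^\eps$ diverges like $\alpha/(\delta-1)$ using $\sin x\sim x$ and the integral $\int_0^1 h^{\delta-2}\,dh$. The only cosmetic difference is that you begin by applying Bessel scaling to normalize the excursion-height threshold to $1$ (absorbing both $\sqrt{\kp}$ and $2^{-m}$ into $\alpha$), whereas the paper keeps the threshold at $2^{-m}$ and absorbs only $\sqrt{\kp}$ into $u$; the subsequent estimates are identical in substance. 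The obstacles you flag at the end---uniform control in $\delta$ of $\nu_\delta^1(\phi)$ and $\nu_\delta^1(\phi^2)$, and uniformity of the linearization $y^{-1}\nu_\delta^1(\sin(\alpha y\phi))\to\alpha\,\nu_\delta^1(\phi)$---are exactly the points the paper has to address (via a direct computation and an appeal to the excursion-convergence of Lemma~\ref{lem:mn_conv}, respectively).
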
 }

\noindent (The assumption that $m$ is sufficiently large here is made simply for convenience of proof.)\\

\begin{proof} 
	By changing the value of $u$ appropriately, we can instead take $\wt \theta^{\eps}$ to be a Bessel process of dimension $\delta(\kp)=3-8/\kp$ (i.e., we forget about the multiplicative factor of $\sqrt{\kp}$). Note that $\delta(\kp)\in (1,2)$ for $\kp<8$ and $\delta(\kp) \downarrow 1$ as $\kp\downarrow 4$. By standard It\^{o} excursion theory, $\wt \theta^\eps$ can be formed by gluing together the excursions of a Poisson point process $\Lambda$
	with intensity $\nu_{\delta(\ka)}\times \text{Leb}_{[0,\infty)}$, where $\nu_\delta$ is the Bessel-$\delta$ excursion measure. \corr{As mentioned previously, it is a classical result that we can decompose $\nu_\delta(\cdot)=\int_0^\infty x^{\delta-3}\nu_\delta^x(\cdot) \, dx$ (there is a multiplicative constant that we can set to one without loss of generality) where $\nu_\delta^x$ is a probability measure on excursions with maximum height exactly $x$ for each $x>0$ and that moreover by Brownian scaling, $\nu_\delta^x(e)=\nu_\delta^1(e_x)$, $e_x(s)=x^{-1}e(x^{2}s)$ for $0\le s \le \zeta(e_x)=x^{-2}\zeta(e)$.} 
	
	Let \begin{equation}\label{eq:Tkm} T^\kp_m \, \corr{\overset{(d)}{=}} \, \text{Exp}\left(\frac{(2^{-m})^{\delta-2}}{2-\delta} \right)\end{equation} be the smallest $t$ such that $(e,t)$ is in the Poisson process for some $e$ with $\sup(e)> 2^{-m}$. \corr{Then conditionally on $T_m^{\kp}$, the collection of points $(e,t)$ in the Poisson process with $t\le T_m^{\kp}$ is simply a Poisson process $\Lambda{(T_m^\kp)}$ with intensity $\int_0^{2^{-m}} x^{\delta-3}\nu_\delta^x \times \mathrm{Leb}([0,T_m^{\kp}])$. So, if for any given excursion $e\in E$, we define $$f(e)=\int_0^{\zeta(e)}\frac{1}{e(s)} \, ds$$
	(setting $f(e)=\infty$ if the interval diverges), we have 
		\begin{equation}\label{eq:campbell} \mathbb{E}(\e^{2 \im u \int_0^{\wt{S}^{\eps,m}} (\wt \theta_s^\eps)^{-1}\, ds} \, | \, T_m^\kp ) = \mathbb{E}(\e^{2\im u \sum_{(e,t)\in \Lambda{(T_m^\kp)}} f(e)} \, | \, T_m^\kp)=\exp\big(T_m^\kp \int_0^{2^{-m}} x^{\delta-3}\nu_\delta^x(1-\e^{2\im u f(e)}) \big) \end{equation}
		where in the final equality we have applied Campbell's formula for the Poisson point process $\Lambda{(T_m^\kp)}$.}

\corr{The real part of $1-\e^{2 \im u f(e)}$ is bounded above by $2 u^2 f(e)^2$. Then using the Brownian scaling property of $\nu_\delta^x$ explained before, we can bound $\nu_\delta^x(\Re (1-\e^{2 \im u f(e)}))$ by $u^2 x^2\nu_\delta^1(f^2)$. Using the fact that $\nu_\delta^1(f^2) < \infty$, which can be obtained from a direct calculation, it follows that $\int_0^{2^{-m}} x^{\delta-3}\nu_\delta^x(\Re (1-\e^{2 \im u f(e)})) \, dx< (2-\delta)^{-1} 2^{-m(\delta-2)}$ for all $m\ge M_0 = M_0(u)$, where $M_0<\infty$ does not depend on $\delta<3/2$ (say). This allows us to take expectations over $T_m^{\kp}$ in \eqref{eq:campbell} (recall the distribution of $T_m^\kp$ from \eqref{eq:Tkm}) to obtain that
	\begin{align}\label{eq:boundcampbell} \left|\mathbb{E}(\e^{2 \im u \int_0^{\wt{S}^{\eps,m}} (\wt \theta_s^\eps)^{-1}\, ds})\right| & = \left|1-2^{m(\delta-2)}(2-\delta) \int_0^{2^{-m}} x^{\delta-3} \nu_\delta^x((1-\cos(2uf(e))+\im \sin(2uf(e)))) \, dx \right|^{-1} \nonumber \\
		& \le \left|2^{m(\delta-2)}(2-\delta) \int_0^{2^{-m}} x^{\delta-3} \nu_\delta^x(\sin(2u f(e))) \, dx  \right|^{-1} \nonumber \\
		& \le \left|(2-\delta) \int_0^{1} y^{\delta-3} \nu_\delta^{2^{-m}y}(\sin(2u f(e))) \, dy  \right|^{-1}
	\end{align}
for all $m\ge M_0$ and $\delta\in (1,3/2)$.}
	
	\corr{We now fix $u\ne 0$ and $m\ge M_0$ for the rest of the proof. Our aim is to show that the final expression in \eqref{eq:boundcampbell} above converges to $0$ as $\delta\downarrow 1$ (equivalently $\eps\downarrow 0$). To do this, we use the Brownian scaling property of $\nu_\delta^x$ again to write $\nu_{\delta}^{2^{-m}y}(\sin(2uf(e)))=\nu_\delta^1 (\sin(2^{-m+1}uyf(e)))$ for each $y$. We also observe that $$y^{-1}\nu_\delta^1(\sin(2^{-m+1}uyf(e)))\to \nu_\delta^1(2^{-m+1}uf(e))$$ as $y\downarrow 0$, which follows by dominated convergence since  $\sin(z)/z\to 1$ as $z\downarrow 0$. Moreover (by Lemma \ref{lem:mn_conv}, say) the convergence is uniform in $\delta$. This means that for some $Y_{u,m}\in(0,1)$ and $ k_{u,m}<\infty$ depending only on $u$ and $m$, we have that
	$$|\nu_\delta^1(\sin(2^{-m+1}uyf(e)))\ge  k_{u,m} y \; \text{ for all } y\ge Y_{u,m}.$$
	It follows that 
	\begin{align*}\left|(2-\delta) \int_0^{1} y^{\delta-3} \nu_\delta^{2^{-m}y}(\sin(2u f(e))) \, dy  \right|
&	\ge  (2-\delta)k_{u,m}\int_0^{Y_{u,m}} y^{\delta-2} \, dy -(2-\delta)\int_{Y_{u,m}}^1 y^{\delta-3} \, dy \\
& \ge  \frac{k_{u,m}Y_{u,m}^{\delta-1}}{\delta-1}-(1-Y_{u,m}^{\delta-2}). \end{align*}
	for all $\delta\in (1,3/2)$. Since this expression converges to $\infty$ as $\delta\downarrow 1$, and the final term in \eqref{eq:boundcampbell} is its reciprocal, the proof is complete.}
\end{proof}\\

With this in hand, the proof of Lemma \ref{lem:uniform_equation} follows in a straightforward manner.\\

\begin{proofof}{Lemma \ref{lem:uniform_equation}}
	In order to do a Bessel process comparison and make use of Lemma \ref{lem:bes_uniform}, we need to replace the fixed $n$ in \eqref{eqn:ftuniform} by some $m$ which is very large (so we are only dealing with time intervals where $\theta^\eps_0$ is tiny). However, this is not a problem, since for $m\ge n$ we can write 
	\[ \int_0^{S_1^{\eps,n}} \cot((\theta^\eps_0)_s/2) \, ds = \int_0^{S_1^{\eps,m}} \cot((\theta^\eps_0)_s/2) \, ds + \int_{S_1^{\eps,m}}^{S_1^{\eps,n}} \cot((\theta^\eps_0)_s/2) \, ds 
	\]
	where  the two integrals are independent. This means that 
	$|\mathbb{E}[\,\exp(i u \int_0^{S_1^{\eps,n}}\cot((\theta^\eps_0)_s/2) \, ds)\,]|$ is actually increasing in $n$ for any fixed $\eps$, so proving \eqref{eqn:ftuniform}
	for $m>n$ also proves it for $n$.
	
	So we can write, for any $m\ge n$
	\[ |\mathbb{E}[\,\exp(\im u \int_0^{S_1^{\eps,n}}\cot((\theta^\eps_0)/2) \, ds)\,]|  \le |\mathbb{E}[\,\exp(\im u \int_0^{S_1^{\eps,m}}\cot((\theta^\eps_0)_s/2) \, ds)\,]| \] which is, by the triangle inequality, less than
\begin{equation*}\label{eqn:boundcomparebessel}
	\corr{ \left| \mathbb{E}[\exp\big(2\im u \int_0^{\wt{S}^{\eps,m}} (\wt \theta^\eps_s)^{-1} \, ds\big) ]\right| +	\left|\mathbb{E}[\,\exp\big(\im  u \int_0^{S_1^{\eps,m}}\cot((\theta^\eps_0)_s/2) \, ds\big)\,]-\mathbb{E}[\exp\big(2\im u \int_0^{\wt S^{\eps,m}} (\wt \theta^{\eps} _s)^{-1} \, ds\big) ]\right|. }
	\end{equation*}
	Now, \corr{using that $(1/y-
		(1/2)\cot(y/2))\downarrow 0$ as $y\downarrow 0$, and an argument almost identical to the first half of the proof of Lemma \ref{lem:ngoodapprox}}, the second term above 
	converges to $0$ as $m\to \infty$, uniformly in $\eps$. Since Lemma \ref{lem:bes_uniform} says that the first term converges to 0 as $\eps\to 0$ for any $m$ large enough, this completes the proof.  \end{proofof}\\

\begin{proofof}{Lemma \ref{lem:uniform}}  Equation \eqref{eqn:ftuniform} implies  that the law of $X_1^{\eps,n}$ converges to the uniform distribution on the unit circle as $\kp\downarrow 4$. 
	The full result then follows by the Markov property of $\theta^\eps_0$.
\end{proofof}

\subsubsection{Summary}
So, we have now tied up all the loose ends from the proof of Proposition \ref{prop:sletocleconv}. Recall that this proposition asserted the convergence in law of a single $\SLE_{\kappa'}(\kp-6)$ branch in $\D$, targeted at $0$, to the corresponding uniform CLE$_4$ exploration branch.  
Let us conclude this subsection by noting that the same result holds when we change the target point.

 For $z\in \D$ not necessarily equal to $0$, we define 
$\lcS_z$ to be the space of evolving domains whose image after applying the conformal map $f(w)=(w-z)/(1-\bar{z}w)$ from $\D\to \D$, $z\mapsto 0$, lies in $\lcS$.

From the convergence in Proposition \ref{prop:convfullbranch}, plus the target invariance of radial $\SLE_\kp(\kp-6)$ and the uniform CLE$_4$ exploration, it is immediate that:

\begin{corollary}\label{cor:convfullbranch} For any $z\in \cQ$, $({\lcB}_z^\eps,\tau_z^\eps)\Rightarrow ({\lcB}_z,\tau_z)$ in $\lcS_z\times \R$ as $\eps\to 0$.
\end{corollary}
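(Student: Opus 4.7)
The approach is to reduce to the $z=0$ case established in Proposition \ref{prop:convfullbranch}, by invoking target invariance. Fix $z\in\cQ$ and let $\phi:\D\to\D$ be the unique Möbius map sending $0\mapsto z$ and $1\mapsto 1$. By the definition of targeted branches in Section \ref{sec:sletocle}, the law of $\lcB_z^\eps$ equals that of the pushforward $\phi(\lcB_0^\eps)$, up to the deterministic time reparameterization induced by the conformal covariance of conformal radius, $\CR(z;\phi(U))=|\phi'(0)|\CR(0;U)$; the analogous identification relates $\lcB_z$ to $\lcB_0$ on the $\CLE_4$ side. Under this identification, $\tau_z^\eps$ and $\tau_z$ are related to $\tau_0^\eps$ and $\tau_0$ by the same deterministic shift $t\mapsto t-\log|\phi'(0)|$.

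The key observation is that pushforward by the fixed Möbius map $\phi$ defines a continuous map $\lcS\to\lcS_z$. Indeed, in terms of the centered uniformizing maps $f_t$ characterizing an element of $\lcS$, applying $\phi$ corresponds to post-composition with $\phi$ followed by a smooth re-centering via a Möbius rotation of $\D$ depending smoothly on $f_t(0)$; since $\phi$ extends smoothly to $\overline{\D}$, this operation preserves the convergence defined by \eqref{eq:cartconvdef}. Combined with the trivial continuity of the deterministic time shift, an application of the continuous mapping theorem to Proposition \ref{prop:convfullbranch} then yields $\lcB_z^\eps\Rightarrow\lcB_z$ in $\lcS_z$.

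For the joint convergence with $\tau_z^\eps$, one observes that Proposition \ref{prop:sletocleconv} delivers joint convergence $(\lcTB_0^\eps,\tau_0^\eps)\Rightarrow(\lcTB_0,\tau_0)$, and that the iteration procedure of Section \ref{sec:ucle4} used to extend this to Proposition \ref{prop:convfullbranch} preserves the joint structure, since $\tau_0^\eps$ is precisely the stopping time produced at the first iteration. The continuous mapping theorem applied to the pair (pushforward by $\phi$, deterministic time shift) then delivers $(\lcB_z^\eps,\tau_z^\eps)\Rightarrow(\lcB_z,\tau_z)$ in $\lcS_z\times\R$. The only step requiring any care is the continuity of the pushforward by $\phi$, and this is essentially automatic given the smoothness of $\phi$ on $\overline{\D}$.
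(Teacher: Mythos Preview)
Your proof is correct and takes essentially the same approach as the paper: reduce to the case $z=0$ via the defining conformal map $\phi$ and apply the continuous mapping theorem, together with the observation that the joint convergence $(\lcB_0^\eps,\tau_0^\eps)\Rightarrow(\lcB_0,\tau_0)$ follows from Proposition~\ref{prop:sletocleconv} and the iterative construction. One small remark: under the $\lcS_z$ convention used in the paper (where $\lcS_z$ is the $\phi$-pushforward of $\lcS$ and $f\circ\phi$ is a rotation of $\D$), no time shift is actually needed---$(\lcB_z^\eps)_t=\phi((\lcB_0^\eps)_t)$ directly and $\tau_z^\eps=\tau_0^\eps$---so your deterministic shift by $-\log|\phi'(0)|$ is superfluous (though harmless for the argument).
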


\corr{Recall that $\tau_{0,z}^\eps$  is the last time that $\theta_z^\eps$  hits $0$ before first hitting $2\pi$ and $[\tau_{0,z},\tau_z]$ is the time interval during which $\lcB_z$ traces the outermost CLE$_4$ loop surrounding $z$. Notice that $\tau_z^\eps-\tau_{0,z}^\eps$ is equal to the length of the excursion $
	\mathrm{e}_{\Lambda^{\eps,n}}^{\eps,n}$ and similarly $\tau_z-\tau_{0,z}$ is the length of the excusion $\mathrm{e}_{\Lambda^n}$ (for every $n$), so that by Lemma \ref{lem:bigexsame} the following extension holds.}

\begin{corollary}
	\label{rmk:convfullbranch}
For any fixed $z\in \cQ$ $$({\lcB}_z^\eps, \tau_z^\eps,\tau_{0,z}^\eps
	)\Rightarrow ({\lcB}_z,\tau_z, \tau_{0,z}
	)$$ as $\eps\to 0$.
\end{corollary}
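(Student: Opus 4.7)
By target invariance of the branching $\SLE_{\kp}(\kp-6)$ and of the branching uniform CLE$_4$ exploration, it suffices to prove the result for $z=0$ and then push forward by the conformal map $\phi:\D\to\D$ sending $0\mapsto z$ and $1\mapsto 1$. The heart of the matter is the identity
\[
\tau_0^\eps - \tau_{0,0}^\eps \;=\; \zeta(e_{\Lambda^{\eps,n}}^{\eps,n}),
\]
which holds for \emph{every} integer $n\geq 0$: the final excursion of $\theta_0^\eps$ before it first hits $2\pi$ certainly reaches height $2^{-n}$, and so it is exactly the $\Lambda^{\eps,n}$-th macroscopic excursion at every level. The analogous identity $\tau_0-\tau_{0,0}=\zeta(e_{\Lambda^n}^n)$ holds in the limit.

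The plan is to upgrade the joint convergence established in Section \ref{subsec:whiz} so as to include the duration of this terminal excursion. Specifically, Lemmas \ref{lem:bigexsame} and \ref{lem:uniform} combine (as in the proof of Lemma \ref{lem:levelnconv}) to give, at every fixed level $n$,
\[
(\lcTB_0^{\eps,n},\, \tau_0^{\eps,n},\, \mathbf{e}^{\eps,n}) \;\Rightarrow\; (\lcTB_0^{n},\, \tau_0^{n},\, \mathbf{e}^{n}) \qquad \text{as } \eps\downarrow 0,
\]
and the continuous mapping theorem (applied to the functional that extracts the duration of the final component of an excursion vector) gives the joint convergence also of $(\lcTB_0^{\eps,n},\tau_0^{\eps,n},\zeta(e_{\Lambda^{\eps,n}}^{\eps,n}))$. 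Lemma \ref{lem:ngoodapprox}, which bounds the discrepancy between $(\lcTB_0^{\eps,n},\tau_0^{\eps,n})$ and $(\lcTB_0^\eps,\tau_0^\eps)$ uniformly in $\eps$, then lets me swap the $\eps\downarrow 0$ and $n\to\infty$ limits in the standard way to replace $(\lcTB_0^{\eps,n},\tau_0^{\eps,n})$ by $(\lcTB_0^\eps,\tau_0^\eps)$ in the joint limit. Using the key identity in the form $\tau_{0,0}^\eps=\tau_0^\eps-\zeta(e_{\Lambda^{\eps,n}}^{\eps,n})$ (and its analogue in the limit), a final application of the continuous mapping theorem rewrites the third coordinate as $\tau_{0,0}^\eps$, yielding
\[
(\lcTB_0^\eps,\, \tau_0^\eps,\, \tau_{0,0}^\eps) \;\Rightarrow\; (\lcTB_0,\, \tau_0,\, \tau_{0,0}).
\]

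Finally, to promote the stopped chain $\lcTB_0^\eps$ to the full branching exploration $\lcB_0^\eps$ (as required by the statement), I iterate the above using the strong Markov property of $\theta_0^\eps$ at $\tau_0^\eps$ (Remark \ref{rmk:slek_markov}) together with the parallel iterative construction of the uniform CLE$_4$ exploration past $\tau_0$; this is precisely the argument already used to pass from Proposition \ref{prop:sletocleconv} to Proposition \ref{prop:convfullbranch}. Transferring by $\phi$ then gives the result for arbitrary $z\in\cQ$. The only genuinely delicate step is the diagonal argument in the previous paragraph: the weak-convergence statement of Lemma \ref{lem:ngoodapprox} is exactly strong enough to permit interchanging the $\eps\downarrow 0$ and $n\to\infty$ limits, so I expect no new probabilistic estimate will be required.
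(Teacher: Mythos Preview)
Your proposal is correct and follows essentially the same approach as the paper: the key identity $\tau_0^\eps-\tau_{0,0}^\eps=\zeta(e_{\Lambda^{\eps,n}}^{\eps,n})$ (independent of $n$) together with Lemma~\ref{lem:bigexsame} is exactly what the paper invokes in its one-line justification. You spell out the diagonal argument via Lemma~\ref{lem:ngoodapprox} and the promotion from $\lcTB_0^\eps$ to $\lcB_0^\eps$ more carefully than the paper does, but no new idea is involved.
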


\subsection{Convergence of the CLE$_\kp$ loops} \label{sec:conv_loops}

Recall that for $z\in \cQ$, $\Loop_z^\eps$ (resp. $\Loop_z$) denotes the outermost $\CLE_{\kappa'}$ loop (resp. CLE$_4$ loop) containing $z$ and $\bub_z^\eps$ (resp. $\bub_z$) denotes the connected component of the complement of $\Loop_z^\eps$ (resp. $\Loop_z$) containing $z$. By definition we have 
\begin{equation}\label{eq:bub_loewner} \bub_z^\eps = ({\lcB}^\eps_z)_{\tau^\eps_z} \text{ and } \bub_z=({\lcB}_z)_{\tau_z},
	\end{equation}
where $\{(\lcB_z^\eps)_t\, ; \, t\ge 0\}$ and $\{(\lcB_z)_t\, ;  \, t\ge0\}$ {are processes in $\cD_z$} describing radial $\SLE_\kp(\kp-6)$ processes and a uniform $\CLE_4$ exploration, respectively, towards $z$. See Section \ref{sec:sletocle} for more details.

{In this subsection we will prove the convergence of $\Loop_z^\eps\Rightarrow \Loop_z$ with respect to the Hausdorff distance. That this might be non-obvious is illustrated by the following difference: in the limit $\partial \lcB_z = \Loop_z$, whereas this is not at all the case for $\eps > 0$. Nevertheless, we have:}

\begin{proposition}\label{prop:singleloopconv}For any $z\in \cQ$ 
	\label{prop:cleloopconv} $$({\lcB}^\eps_z, \Loop^\eps_z, \bub^\eps_z) \Rightarrow ({\lcB}_z, \Loop_z, \bub_z) $$ as $\eps\downarrow 0$, with respect to the product topology generated by ($\lcS_z$ $\times$ Hausdorff $\times$ \cart viewed from $z$) convergence. 
\end{proposition}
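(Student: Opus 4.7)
The plan is to upgrade the convergence in Corollary~\ref{rmk:convfullbranch} to almost sure convergence via Skorokhod, then extract the three required convergences one at a time, using properties of Loewner chains and of Jordan curves. First, by Corollary~\ref{rmk:convfullbranch}, we have $(\lcB_z^\eps, \tau_z^\eps, \tau_{0,z}^\eps) \Rightarrow (\lcB_z, \tau_z, \tau_{0,z})$ as $\eps \downarrow 0$, so we may work on a probability space on which this convergence is almost sure. The first coordinate $\lcB_z^\eps \to \lcB_z$ in $\lcS_z$ is then immediate.

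For the bubble coordinate, I would use that convergence in $\lcS_z$ means, by definition, uniform Carath\'eodory convergence of the time-sliced domains on compact intervals. Combined with $\tau_z^\eps \to \tau_z$ and the identity $\bub_z^\eps = (\lcB_z^\eps)_{\tau_z^\eps}$ from \eqref{eq:bub_loewner}, and using that Loewner chains in $\lcS_z$ depend continuously in time in the Carath\'eodory sense, this yields $\bub_z^\eps \to \bub_z = (\lcB_z)_{\tau_z}$ in Carath\'eodory sense viewed from $z$.

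For the loop coordinate, the key input is that the limit $\Loop_z$ is almost surely a Jordan curve, since CLE$_4$ loops are simple (as recalled in Section~\ref{sec:ucle4}); in particular $\partial \bub_z = \Loop_z$ and $\bub_z$ is a Jordan domain. A classical result in complex analysis (e.g.\ Pommerenke, \emph{Boundary Behaviour of Conformal Maps}) states that Carath\'eodory convergence of simply connected domains to a Jordan domain automatically upgrades to Hausdorff convergence of the boundaries. This gives $\partial \bub_z^\eps \to \Loop_z$ in Hausdorff distance. Since $\bub_z^\eps$ is a connected component of $\D \setminus \Loop_z^\eps$, we also have $\partial \bub_z^\eps \cap \D \subseteq \Loop_z^\eps$, so the lower Hausdorff bound $\Loop_z \subseteq \liminf_\eps \Loop_z^\eps$ comes for free.

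The main obstacle will be the reverse inclusion $\limsup_\eps \Loop_z^\eps \subseteq \Loop_z$: one must rule out portions of $\Loop_z^\eps$ lying in pockets of $\D \setminus \Loop_z^\eps$ other than $\bub_z^\eps$, pockets which can be created by self-touchings of the CLE$_{\kappa'}$ loop. My plan for this step is to appeal to Lemma~\ref{lem:radial_chordal}, so that the SLE evolution tracing $\Loop_z^\eps$ is, up to reparametrisation, a chordal SLE$_{\kappa'}$ in an appropriate domain, and then to use convergence of chordal SLE$_{\kappa'}$ to chordal SLE$_4$ as $\kappa' \downarrow 4$ (via driving-function / Loewner-chain convergence, in the same spirit as Section~\ref{sec:conv_bt}). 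Since SLE$_4$ is a simple curve, this should force the diameters of the self-touching pockets of $\Loop_z^\eps$ to tend to $0$, giving $d_H(\Loop_z^\eps, \partial \bub_z^\eps) \to 0$ in probability. Combined with the boundary Hausdorff convergence above, this closes the argument.
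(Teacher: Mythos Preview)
Your argument has a genuine gap at the crucial step. The claimed ``classical result'' that Carath\'eodory convergence to a Jordan domain implies Hausdorff convergence of the boundaries is \emph{false}. A clean counterexample: let $D_n = (1/2)\D \cup ([1/2,3/4]\times[-1/n,1/n])$. Then $D_n \to (1/2)\D$ in the Carath\'eodory sense viewed from $0$ (the thin appendage disappears in the kernel), yet $\partial D_n$ contains points near $(3/4,0)$ for every $n$, so $\partial D_n$ does not converge in Hausdorff distance to $\partial((1/2)\D)$. This is exactly the ``bottleneck'' phenomenon the paper flags in Figure~\ref{fig:cart_prob}: Carath\'eodory convergence does not see thin fingers, which is precisely what one must rule out here. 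Your backup plan via Lemma~\ref{lem:radial_chordal} and chordal $\SLE_{\kappa'}\to\SLE_4$ is also problematic: the loop $\Loop_z^\eps$ is defined (Section~\ref{sec:sletocle}) as $\eta^\eps_{o_z^\eps}([\wt\tau_z^\eps,\infty))$ for a \emph{random} target $o_z^\eps$ obtained via a delicate limiting procedure, not as a single chordal $\SLE_{\kappa'}$ between fixed boundary points, so neither Lemma~\ref{lem:radial_chordal} nor driving-function convergence applies directly; and driving-function convergence would in any case only yield Carath\'eodory information, not control on pocket diameters.

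The paper takes a completely different route. It reduces to showing that any Hausdorff subsequential limit $\Loop$ of $\Loop_0^\eps$ equals $\Loop_0$ (Lemma~\ref{lem:keyforcleloopconv}). The inclusion $\Loop\subseteq\C\setminus\bub_0$ is the easy direction (Lemma~\ref{cor:loopconvinclusion}, via Corollary~\ref{cor:cart_inclusion}). For the hard direction, rather than trying to show $\Loop\supseteq\Loop_0$ directly, the paper proves that the \emph{law} of $\Loop$ equals that of $\Loop_0$ (Lemma~\ref{lem:convlooplaw}); combined with the one-sided inclusion this forces equality almost surely. To identify the law, the argument passes to \emph{whole-plane} $\CLE_\kappa$ and exploits its inversion invariance under $z\mapsto 1/z$ (Lemma~\ref{lem:wpcle_props}): this yields Carath\'eodory convergence both from $0$ and from $\infty$ for successive nested loops (Lemma~\ref{lem:wholeplane}), effectively sandwiching the Hausdorff limit between two Carath\'eodory limits. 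The whole-plane statement is then transferred back to the disk via the Markov property. This two-sided sandwich is what replaces the missing complex-analytic lemma in your approach.
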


Given \eqref{eq:bub_loewner}, and that we already know the convergence of $\lcB_z^\eps$ as $\eps\downarrow 0$, the proof of Proposition \ref{prop:cleloopconv} boils down to the following lemma. 
\begin{lemma} 
	\label{lem:keyforcleloopconv}
	Suppose that $({\lcB}_0, \Loop, \bub_0)$ is a subsequential limit in law of $({\lcB}_0^\eps, \Loop_0^\eps, \bub_0^\eps)$ as $\eps\downarrow 0$ (with the topology of Proposition \ref{prop:cleloopconv}). Then we have $\Loop=\Loop_0$ a.s. 
\end{lemma}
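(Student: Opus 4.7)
The plan is to prove the two containments $\Loop_0 \subseteq \Loop$ and $\Loop \subseteq \Loop_0$, noting that in the limit $\Loop_0 = \partial \bub_0$ because CLE$_4$ loops are simple. By Skorokhod's representation theorem we may assume that $(\lcB_0^\eps, \Loop_0^\eps, \bub_0^\eps) \to (\lcB_0, \Loop, \bub_0)$ almost surely on a common probability space.

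For the containment $\Loop_0 \subseteq \Loop$: by the construction of the uniform CLE$_4$ exploration in Section \ref{sec:ucle4}, the loop $\Loop_0$ is exactly the trace of the Loewner chain $\lcB_0$ over the time interval $[\tau_{0,0}, \tau_0]$. For each $\eps > 0$, the trace of $\lcB_0^\eps$ over $[\tau_{0,0}^\eps, \tau_0^\eps]$ is the arc of $\Loop_0^\eps$ traced as the SLE$_\kp$ excursion closes $\bub_0^\eps$ counterclockwise, and is in particular contained in $\Loop_0^\eps$. The $\lcS_0$-convergence $\lcB_0^\eps \to \lcB_0$ together with the convergence $(\tau_{0,0}^\eps, \tau_0^\eps) \to (\tau_{0,0}, \tau_0)$ (Corollary \ref{rmk:convfullbranch}) yields Hausdorff convergence of these two traces, whence $\Loop_0 \subseteq \Loop$.

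For the reverse containment $\Loop \subseteq \partial \bub_0$: take $x \in \Loop$ and $x^\eps \in \Loop_0^\eps$ with $x^\eps \to x$. If $x \in \bub_0$, then a neighborhood of $x$ lies in $\bub_0^\eps$ for $\eps$ small by \cart convergence (inherited from $\lcB_0^\eps \to \lcB_0$ in $\lcS_0$), which contradicts $x^\eps \in \Loop_0^\eps \subset \D \setminus \bub_0^\eps$. The delicate case is $\mathrm{dist}(x, \overline{\bub_0}) > 0$. Here I would exploit the fact that $\Loop_0^\eps$ is the trace, after time $\wt\tau_0^\eps$, of the radial SLE$_\kp(\kp-6)$ curve $\eta^\eps_{o^\eps_0}$ targeted at the root $o^\eps_0 \in \partial \bub^\eps_0$. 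Passing to a further subsequence, $o^\eps_0 \to o_0 \in \partial \bub_0$, and by a continuity-in-target extension of Proposition \ref{prop:sletocleconv} the curves $\eta^\eps_{o^\eps_0}$ converge to the CLE$_4$ exploration branch $\lcB_{o_0}$ targeted at $o_0$. The trace of this limit branch from the corresponding entry time is exactly $\Loop_0 = \partial \bub_0$, contradicting $x^\eps \to x$ with $x$ at positive distance from $\partial \bub_0$.

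The main obstacle is this second containment: for $\eps > 0$ the non-simple loop $\Loop_0^\eps$ extends outside $\partial \bub_0^\eps$ through ``outward lobes'' produced by self-touchings of the SLE$_\kp$ curve, and neither $\lcB_0^\eps \to \lcB_0$ in $\lcS_0$ nor the \cart convergence $\bub_0^\eps \to \bub_0$ directly controls them. Moreover, the continuity-in-target step is delicate because $o^\eps_0$ is precisely one of the exceptional points of $\eta^\eps_0$ highlighted after Lemma \ref{lem:radial_chordal}, where $\lim_k \eta^\eps_{a_k}$ along generic sequences $a_k \to o^\eps_0$ fails to exist. Equivalently, one needs a uniform-in-$\eps$ estimate showing that the diameters of the connected components of $\D \setminus \Loop^\eps_0$ other than $\bub^\eps_0$ tend to zero as $\eps \downarrow 0$ — the manifestation at the level of loops of the heuristic that CLE$_\kp$ loops become simple in the limit $\kp \downarrow 4$.
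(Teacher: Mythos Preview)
Your argument has a genuine gap in both directions, and the paper takes a completely different route.

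For the containment $\Loop_0\subseteq\Loop$: your claim that ``$\lcS_0$-convergence together with convergence of the times yields Hausdorff convergence of the traces over $[\tau_{0,0}^\eps,\tau_0^\eps]$'' is not justified. Convergence in $\lcS_0$ is \cart convergence of the domains $(\lcB_0^\eps)_t$ viewed from $0$, and this does \emph{not} imply Hausdorff convergence of the hulls or of the curve traces---this is exactly the bottleneck pathology illustrated in Figure~\ref{fig:cart_prob}. The limiting exploration $\lcB_0$ is not even generated by a continuous curve globally, and while during $[\tau_{0,0},\tau_0]$ it traces the simple loop $\Loop_0$, you would need an actual curve-level convergence statement (of Kemppainen--Smirnov type, but for $\SLE_\kp(\kp-6)$) to pass Hausdorff limits, and no such statement is available here.

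For the other direction, the sub-case $x\in\bub_0$ is fine and amounts to the paper's Lemma~\ref{cor:loopconvinclusion}. But the ``delicate case'' $\dist(x,\overline{\bub_0})>0$ is the entire content of the lemma, and your proposed fix---a continuity-in-target extension of Proposition~\ref{prop:sletocleconv} at the moving exceptional point $o_0^\eps$---is precisely what is not available, as you yourself note. The equivalent reformulation (diameters of the non-$\bub_0^\eps$ components of $\D\setminus\Loop_0^\eps$ shrink uniformly in $\eps$) is likewise not something one can read off from the tools in the paper.

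The paper avoids both difficulties by a different idea. It keeps only the easy inclusion $\Loop\subseteq\C\setminus\bub_0$ (Lemma~\ref{cor:loopconvinclusion}) and then proves that $\Loop$ and $\Loop_0$ have the \emph{same law} (Lemma~\ref{lem:convlooplaw}), from which equality follows. The equality-in-law is obtained by passing to whole-plane $\CLE_\kp$ and using its inversion invariance (Lemma~\ref{lem:wpcle_props}): in the whole plane one has both an interior domain $I^\eps$ (\cart from $0$) and an exterior domain $E^\eps$ (\cart from $\infty$) for each loop, and the symmetry lets one prove \cart convergence of \emph{both}, trapping the Hausdorff limit of the loop between $\partial I$ and $\partial E$, which coincide for $\CLE_4$. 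A Markov-property argument then transfers this back to the disk. This two-sided trapping is the substitute for the outward-lobe control that your direct approach lacks.
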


\begin{proofof}{Proposition \ref{prop:cleloopconv} given Lemma \ref{lem:keyforcleloopconv}}
	By conformal invariance we may assume that $z=0$. 
	Observe that by Corollary \ref{cor:convfullbranch}, we already know that $({\lcB}_0^\eps, \bub_0^\eps)\Rightarrow ({\lcB}_0, \bub_0)$  as $\eps\to 0$, with respect to the product ($\lcS$ $\times$ \cart) topology. Indeed, if one takes a sequence $\eps_n$ converging to $0$, and a coupling of $({\lcB}_0^{\eps_n},\tau_0^{\eps_n})_{n\in \N}$ and $({\lcB}_0,\tau_0)$ so that $({\lcB}_0^{\eps_n},\tau_0^{\eps_n})\to ({\lcB}_0,\tau_0)$ a.s.\ as $n\to \infty$, it is clear due to \eqref{eq:bub_loewner} that each $\bub_0^{\eps_n}$ also converges to $\bub_0$ a.s. Also note that $(\Loop_0^\eps)$ is tight in $\eps$ with respect to the Hausdorff topology, since all the sets in question are almost surely contained in $\overline{\D}$. Thus $({\lcTB}_0^\eps, \bub_0^\eps, \Loop_0^\eps)$ is tight in the desired topology, and the limit is uniquely characterized by the above observation and Lemma \ref{lem:keyforcleloopconv}. This yields the proposition.
\end{proofof}

\subsubsection{Strategy for the proof of Lemma \ref{lem:keyforcleloopconv}}

At this point, we know the convergence in law of $({\lcB}_0^\eps, \bub_0^\eps)\to ({\lcB}_0, \bub_0)$ as $\eps\downarrow 0$, and we know that $\bub_0^\eps$ is the connected component of $\D\setminus \Loop_0^\eps$ containing $0$ for every $\eps$. Given a subsequential limit $({\lcB}_0, \bub_0, \Loop)$ in law of  $({\lcB}_0^\eps,\bub_0^\eps, \Loop_0^\eps)$, the difficulty in concluding that $\cL=\cL_0$ lies in the fact that \cart convergence (which is what we have for $\bub_0^\eps$) does not ``see'' bottlenecks: see Figure \ref{fig:cart_prob}.

To proceed with the proof, we first show that any part of the supposed limit $\Loop$ that does not coincide with $\Loop_0$ must lie \emph{outside} of $\bub_0$.
\begin{lemma}\label{cor:loopconvinclusion}
	With the set up of Lemma \ref{lem:keyforcleloopconv}, we have $\Loop\subseteq \C\setminus \bub_0$ almost surely.
\end{lemma}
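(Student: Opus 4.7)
The plan is to argue by contradiction, leveraging the elementary fact that for every $\eps>0$ the loop $\Loop_0^\eps$ and the bubble $\bub_0^\eps$ are disjoint (the latter being, by definition, an open connected component of $\D\setminus\Loop_0^\eps$). The conceptual point is that \cart convergence of simply connected domains viewed from $0$ is strong enough to prevent accumulation of boundary points of $\bub_0^\eps$ inside the kernel $\bub_0$: every compact subset of $\bub_0$ is eventually contained in $\bub_0^\eps$, and hence disjoint from $\Loop_0^\eps$.

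First I would pass to a Skorokhod coupling along the subsequence $\eps_n\downarrow 0$ realising the joint convergence, so that $(\lcB_0^{\eps_n},\Loop_0^{\eps_n},\bub_0^{\eps_n})\to(\lcB_0,\Loop,\bub_0)$ almost surely in the product topology of Proposition~\ref{prop:cleloopconv}; this is legitimate since each factor space is Polish. By Corollary~\ref{cor:convfullbranch} together with \eqref{eq:bub_loewner}, $\CR(0;\bub_0^{\eps_n})=\e^{-\tau_0^{\eps_n}}\to \e^{-\tau_0}>0$ almost surely, so the limiting kernel $\bub_0$ is a genuine open simply connected domain containing $0$, and \cart convergence of $\bub_0^{\eps_n}$ to $\bub_0$ is equivalent to kernel convergence.

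Fix an outcome in this full-measure event and suppose for contradiction that some $x\in\Loop\cap\bub_0$ exists. Since $\bub_0$ is open, pick $r>0$ with $\overline{B(x,r)}\subseteq\bub_0$. Kernel convergence then forces $\overline{B(x,r)}\subseteq\bub_0^{\eps_n}$ for all sufficiently large $n$, which combined with $\Loop_0^{\eps_n}\cap\bub_0^{\eps_n}=\emptyset$ yields $\Loop_0^{\eps_n}\cap\overline{B(x,r)}=\emptyset$ for all such $n$. On the other hand, Hausdorff convergence $\Loop_0^{\eps_n}\to\Loop$ together with $x\in\Loop$ produces points $x_n\in\Loop_0^{\eps_n}$ with $x_n\to x$, so $x_n\in B(x,r)$ eventually, giving the desired contradiction.

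I do not expect a genuine obstacle here: the only point needing care is the non-degeneracy of the limit $\bub_0$, handled above via the convergence of conformal radii. The subtler issue flagged by the authors — that \cart convergence is blind to the bottleneck geometry of $\bub_0^\eps$ — is precisely what makes the \emph{other} half of Lemma~\ref{lem:keyforcleloopconv} (ruling out that $\Loop$ could differ from $\Loop_0$ in the exterior of $\bub_0$) delicate, and will presumably require a different argument exploiting properties specific to the $\CLE$ construction.
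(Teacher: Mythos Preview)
Your proof is correct and follows essentially the same approach as the paper: the paper packages the Skorokhod coupling plus kernel-theorem plus Hausdorff-convergence argument into an abstract statement (Corollary~\ref{cor:cart_inclusion}) and then invokes it, whereas you unwind that same argument directly in this specific instance. The extra care you take over non-degeneracy of $\bub_0$ is fine but not strictly needed, since $\bub_0$ is already known to be the (a.s.\ nontrivial) interior of the outermost $\CLE_4$ loop around $0$.
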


Once we have this ``one-sided" result, it suffices to prove that the laws of $\Loop$ and $\Loop_0$ coincide:

\begin{lemma}\label{lem:convlooplaw}
	Suppose that $\cL$ is as in Lemma \ref{lem:keyforcleloopconv}. Then the law of $\cL$ is equal to the law of $\cL_0$.
\end{lemma}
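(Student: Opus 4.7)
The plan is to show the stronger statement that $\cL = \partial \bub_0 = \cL_0$ almost surely, under any subsequential coupling in which $\Loop_0^\eps \to \cL$ in Hausdorff distance jointly with the convergences of Corollary \ref{cor:convfullbranch} and the Carath\'eodory convergence $\bub_0^\eps \to \bub_0$ (which gives equality in law and indeed more). Since Lemma \ref{cor:loopconvinclusion} already provides $\cL \subseteq \overline{\D}\setminus\bub_0$, the task splits into verifying the two inclusions $\partial\bub_0 \subseteq \cL$ and $\cL \subseteq \overline{\bub_0}$.

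For the first inclusion, I note that $\cL$ is a connected closed subset of $\overline{\D}$, being a Hausdorff limit of connected compact sets in the compact space $\overline{\D}$. Moreover, since each $\Loop_0^\eps$ separates $0$ from $\partial\D$ in $\overline{\D}$, a standard compactness/uniform continuity argument shows $\cL$ also separates $0$ from $\partial\D$: otherwise a continuous path from $0$ to $\partial\D$ avoiding $\cL$ would also avoid $\Loop_0^\eps$ for small $\eps$, which is impossible. Almost surely $\bub_0$ is a Jordan domain containing $0$ (as the interior of a $\CLE_4$ loop), so $\partial\bub_0$ is a Jordan curve. A connected closed subset of $\overline{\D}\setminus\bub_0$ that separates $0$ from $\partial\D$ must contain the entire inner boundary $\partial\bub_0$: indeed, a missing arc of $\partial\bub_0$ would yield a path from $0$ to $\partial\D$ crossing $\partial\bub_0$ only through that arc and hence avoiding $\cL$.

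For the second inclusion $\cL \subseteq \overline{\bub_0}$, the approach is to leverage the stronger $\lcS$-convergence $\lcB_0^\eps \to \lcB_0$ rather than only Carath\'eodory convergence of the time-$\tau_0^\eps$ slices. For a fixed $z \in \D\setminus\overline{\bub_0}$, since $\overline{\bub_0} = \overline{\lcB_0(\tau_0)}$ there exists $t_z < \tau_0$ with $z \notin \overline{\lcB_0(t_z)}$, so $z$ lies strictly in the region already explored by $\lcB_0$ before time $\tau_0$. The joint convergence $(\lcB_0^\eps,\tau_0^\eps) \to (\lcB_0,\tau_0)$ in $\lcS\times\R$ should then yield that, with high probability, $z$ has already been swallowed by $\lcB_0^\eps$ strictly before time $\tau_0^\eps$, at a definite positive distance from the frontier $\partial\bub_0^\eps = \Loop_0^\eps$. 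Running this argument along a countable dense $z \in \D\setminus\overline{\bub_0}$ would give $\cL \cap (\D\setminus\overline{\bub_0}) = \emptyset$ almost surely, and the part of $\cL$ on $\partial\D$ can be controlled separately (e.g.\ using that $\Loop_0^\eps$ meets $\partial\D$ only at controllable exceptional points, or by choosing the Carath\'eodory limit to be viewed from $0$ and noting that $\partial\D\cap\partial\bub_0$ has a well-defined structure).

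The main obstacle will be the uniformity-in-$\eps$ step in the second inclusion: converting the Carath\'eodory-type statement that $z \notin \bub_0^\eps$ into the quantitative statement that $z$ lies at \emph{uniformly positive} distance from $\Loop_0^\eps$. This is precisely the bottleneck phenomenon that distinguishes Carath\'eodory and Hausdorff convergence in general. Overcoming it will likely require a uniform regularity estimate for $\bub_0^\eps$ as $\eps \downarrow 0$ --- for instance a uniform John-type bound on $\bub_0^\eps$, or a direct argument exploiting the fact that $z$, once swallowed at time $t_z < \tau_0^\eps$, is separated from the tip by an $\SLE_{\kp}$ arc that is unlikely to accumulate back on $z$ during the short remaining interval $[t_z,\tau_0^\eps]$. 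This last step is the real technical heart of the proof; everything else is topological and follows from the convergences already established.
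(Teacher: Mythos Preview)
Your approach differs fundamentally from the paper's and has two problems, one that you flag and one that you do not.

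First, the argument for $\partial\bub_0\subseteq\cL$ is wrong as stated. The claim that a connected closed subset of $\overline{\D}\setminus\bub_0$ separating $0$ from $\partial\D$ must contain $\partial\bub_0$ is false: take $\bub_0=\{|z|<1/4\}$ and the circle $\{|z|=1/2\}$. Your reasoning breaks because after the path crosses the missing arc of $\partial\bub_0$ it still has to reach $\partial\D$ through the exterior, where $\cL$ may sit. (This step \emph{would} work once you have $\cL\subseteq\overline{\bub_0}$, since then $\cL\subseteq\partial\bub_0$ and a proper closed subset of a Jordan curve cannot separate its two complementary components --- but you need the second inclusion first, not after.)

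Second, the inclusion $\cL\subseteq\overline{\bub_0}$ is precisely the bottleneck obstruction of Figure~\ref{fig:cart_prob}, and your sketch does not close it. Beyond the uniformity issue you acknowledge, the premise that every $z\in\D\setminus\overline{\bub_0}$ is swallowed at some $t_z<\tau_0$ is already false: during $[\tau_{0,0},\tau_0)$ the loop $\Loop_0$ is being traced as a \emph{simple arc} from $o_0$, which does not disconnect the simply connected domain $(\lcB_0)_{\tau_{0,0}}$, so the entire region $(\lcB_0)_{\tau_{0,0}}\setminus\overline{\bub_0}$ is swallowed exactly at $\tau_0$, not before. Moreover, for $\kp>4$ the loop $\Loop_0^\eps$ is not contained in any $\partial(\lcB_0^\eps)_t$, so $\lcS$-convergence of $\lcB_0^\eps$ gives no direct grip on where $\Loop_0^\eps$ lies. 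The paper circumvents all of this by passing to whole-plane $\CLE$: inversion invariance under $z\mapsto 1/z$ (Lemma~\ref{lem:wpcle_props}) converts \cart convergence seen from $0$ into the complementary \cart convergence seen from $\infty$, and together these pin down the Hausdorff limit of the whole-plane loop (Lemmas~\ref{lem:nestedcleconv}--\ref{lem:wholeplane}). The disk result is then read off via the Markov property of whole-plane $\CLE$. No uniform John-type regularity is needed.
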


The first lemma follows almost immediately from the \cart convergence of $\bub_0^\eps\to \bub_0$ (see the next subsection). To prove the second lemma, we use the fact that $\CLE_\kappa$ for $\kappa\in (0,8)$ is \emph{inversion invariant}: more correctly, a \emph{whole plane} version of $\CLE_\kappa$ is invariant under the mapping $z\mapsto 1/z$. Roughly speaking, this means that for whole plane CLE, we can use inversion invariance to obtain the complementary result to Lemma \ref{cor:loopconvinclusion}, and deduce Hausdorff convergence in law of the analogous loops. We then have to do a little work, using the relation between whole plane CLE and CLE in the disk (a Markov property), to translate this back to the disk setting and obtain Lemma \ref{lem:convlooplaw}. 
\begin{figure}
	\centering
	\includegraphics[width=0.5\textwidth]{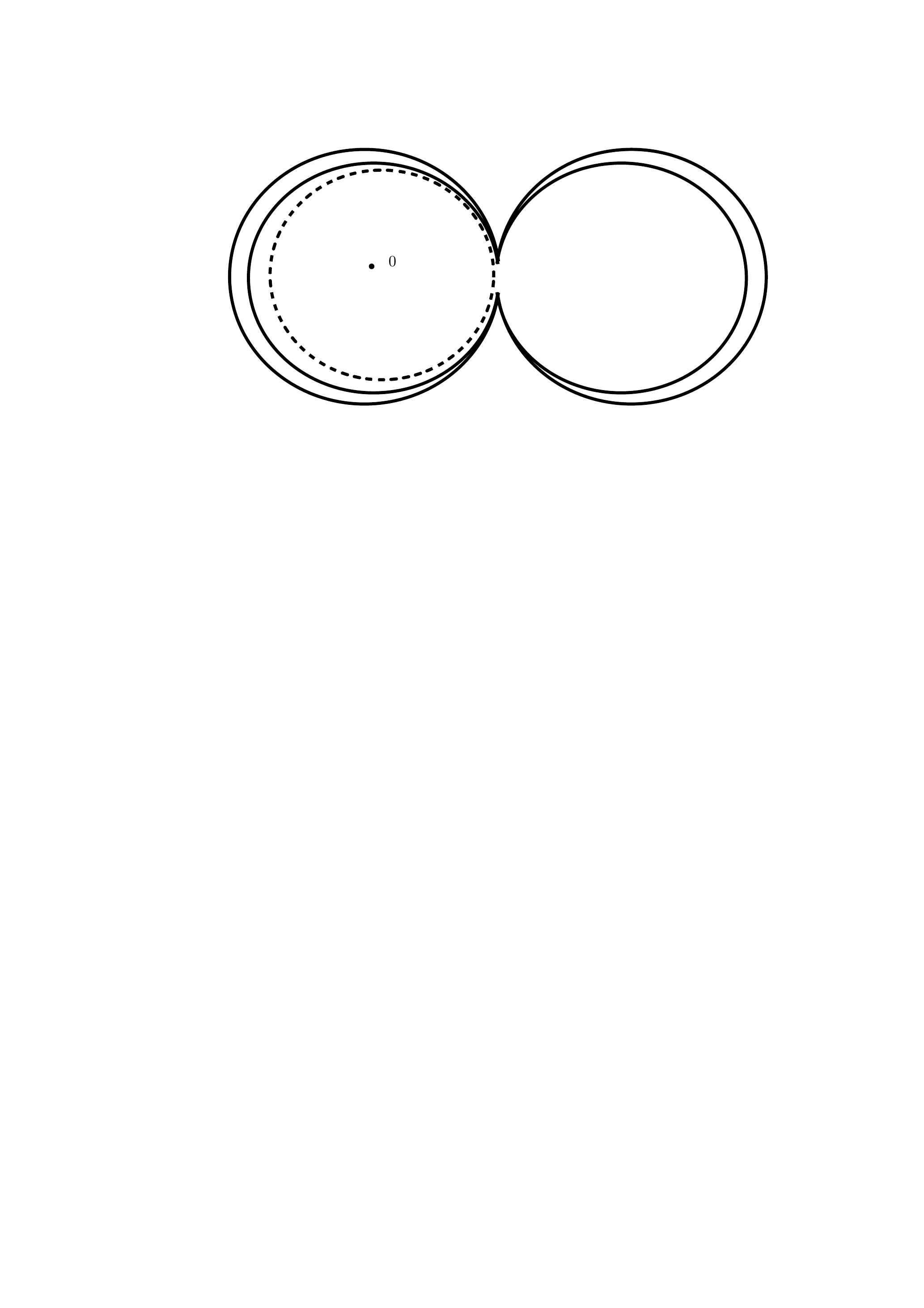}
	\caption{The sequence of domains enclosed by the thick black curves will converge in the \cart sense (viewed from $0$), but \emph{not} in the Hausdorff sense, to the dotted domain. This is the type of behavior that must be ruled out to deduce convergence of CLE loops (in the Hausdorff sense) from convergence of the radial SLE (in the \cart sense).}\label{fig:cart_prob}
\end{figure}

\subsubsection{Preliminaries on \cart convergence}
We first record the following standard lemma concerning \cart convergence, that will be useful in what follows.

\begin{lemma}[\cart kernel theorem]
	Suppose that $(U_n)_{n\ge 1}$ is a sequence of simply connected domains containing $0$, and for each $n$, write $V_n$ for the connected component of the interior of $\cap_{k\ge n} U_k$ containing $0$. Define the \emph{kernel} of $(U_n)_{n\ge 1}$ to be $\cup_n V_n$ if this is non-empty, otherwise declare it to be $\{0\}$. 
	
	Suppose that $(U_n)_{n\ge 1}$ and $U$ are simply connected domains containing $0$. Then $U_n\to U$ with respect to the \cart topology (viewed from 0) if and only if every subsequence of the $U_n$ has kernel $U$.\label{lem:kernel}
\end{lemma}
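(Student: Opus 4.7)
My overall plan is to combine the standard normal family argument for conformal maps with the two definitions of convergence (Carathéodory and kernel). The key tool will be that if $f_n : \D \to U_n$ are the conformal maps with $f_n(0) = 0$ and $f_n'(0) > 0$, then by Koebe's $1/4$ theorem we have $B(0, f_n'(0)/4) \subset U_n$ and, by Koebe distortion, the $f_n$ form a normal family on $\D$ as soon as $f_n'(0)$ stays bounded above (and the inverses $g_n = f_n^{-1}$ form a normal family on any fixed subdomain of $U$ as soon as $f_n'(0)$ stays bounded below). Convergence in $\cD$ reduces, by Remark~\ref{rmk:dconv_cconv} and the definition in~\eqref{eq:cartconvdef}, to uniform-on-compacts convergence of $f_n \to f_U$ on $\D$. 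Note it suffices to establish the theorem for the sequence itself; the subsequence formulation then follows because Carathéodory convergence passes to subsequences, and because the assumption on subsequences is preserved when passing to a sub-subsequence.

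For the forward direction, suppose $U_n \to U$ in the Carathéodory sense. To see $U$ is contained in the kernel, fix any $z \in U$ and a small closed disk $\overline{B(z,\delta)} \subset U$. Then $f_U^{-1}(\overline{B(z,\delta)})$ is a compact subset of $\D$, so $f_n \to f_U$ uniformly on it, and therefore $\overline{B(z,\delta/2)} \subset U_n$ for all sufficiently large $n$; in particular $z$ lies in $V_n$ for such $n$ and hence in the kernel. Conversely, suppose $z$ lies in the kernel, so $z \in V_n$ for some $n$, i.e., there is a neighborhood $W \ni z$ contained in $U_k$ for every $k \ge n$ and connected to $0$ inside $\cap_{k \ge n} U_k$. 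The inverse maps $g_k$ are then defined on $W$, have $|g_k| < 1$, and $g_k(0) = 0$; by Montel they have a locally uniform subsequential limit $g$ on $W$. Since $f_k \to f_U$ on compacts in $\D$, the limit $g$ satisfies $f_U \circ g = \mathrm{id}$ on $W$, so $g(W) \subset \D$ and thus $z = f_U(g(z)) \in U$.

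For the reverse direction, assume every subsequence of $(U_n)$ has kernel $U$. Two things need ruling out before extracting conformal-map limits: first, that $f_n'(0)$ does not blow up (otherwise the kernel of some subsequence would be too large or undefined) and second, that it does not shrink to $0$ (which would force the kernel to be $\{0\} \ne U$). Both follow from the kernel hypothesis applied to subsequences, combined with Koebe's $1/4$ theorem giving $f_n'(0)/4 \le \mathrm{dist}(0, \partial U_n)$ as an upper bound, and the fact that any compact $K \subset U$ must sit inside $U_n$ eventually (so $f_n'(0)$ is bounded below). Once $f_n'(0)$ is pinched between two positive constants, the $f_n$ form a normal family on $\D$; take any subsequence along which $f_{n_k} \to f$ locally uniformly. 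By Hurwitz, $f$ is either constant or univalent, and the normalization $f'(0) > 0$ together with the lower bound on $f_n'(0)$ rules out constancy. The image $f(\D)$ then has to coincide with the kernel of $(U_{n_k})$ by the forward direction applied to $f_{n_k} \to f$, and by hypothesis this equals $U$; uniqueness of the conformal map normalized at $0$ forces $f = f_U$. All subsequential limits thus coincide, so $f_n \to f_U$ locally uniformly, which is the desired Carathéodory convergence. The main obstacle I expect is the bookkeeping in this last step: cleanly upgrading ``every subsequence has a sub-subsequence converging to $f_U$'' into actual convergence, and simultaneously ensuring one has not accidentally assumed the $U_n$ are uniformly bounded in a place where only the kernel hypothesis is available.
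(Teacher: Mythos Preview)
The paper does not prove this lemma; it is quoted as the classical Carath\'eodory kernel theorem and used as a black box. Your argument is a correct rendition of the standard normal-family proof, so there is no conflict of approach to discuss.

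A couple of small points to tighten. In the forward direction, when showing $U\subset\text{kernel}$, you verify that a small disk around $z$ lies in $U_k$ for large $k$, but the kernel requires this disk to lie in the component of $\operatorname{int}\bigcap_{k\ge n}U_k$ \emph{containing $0$}; you should cover a path from $0$ to $z$ in $U$ by finitely many such disks to get connectedness. In the other inclusion, you should say explicitly why the Montel limit $g$ maps $W$ into $\D$ rather than $\overline\D$: since $g(0)=0$ and $|g|\le 1$ on the connected set $W$, the maximum principle forces $|g|<1$ everywhere, after which $f_U\circ g=\mathrm{id}$ follows from $f_k\to f_U$ on compacts. Finally, your phrasing ``Koebe's $1/4$ theorem giving $f_n'(0)/4\le\mathrm{dist}(0,\partial U_n)$ as an upper bound'' is slightly garbled: the inequality goes the right way, but it only bounds $f_n'(0)$ above once you invoke the kernel hypothesis (if $f_n'(0)\to\infty$ along a subsequence, Koebe forces the kernel of that subsequence to be $\C\ne U$). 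These are cosmetic; the skeleton is sound.
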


One immediate consequence of this is the following:

\begin{corollary}\label{cor:cart_inclusion}
	Suppose that $(K_n,D_n)\Rightarrow (K,D)$ as $n\to \infty$ for the product (Hausdorff $\times$ \cart topology), where for each fixed $n$, the coupling of $K_n$ and $D_n$ is such that $D_n$ is a simply connected domain with $0\in D_n$, and $K_n$ is a compact subset of $\C$ with $K_n\subseteq \C\setminus D_n$ almost surely. Then $K\subseteq \C\setminus D$ almost surely.
\end{corollary}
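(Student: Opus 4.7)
The statement is a deterministic fact dressed in probabilistic language, so my plan is to first reduce it to a pathwise statement via Skorokhod representation, and then derive a contradiction from the combination of the Carath\'{e}odory kernel theorem (Lemma \ref{lem:kernel}) and the Hausdorff convergence $K_n\to K$. The product space (Hausdorff on compacts of some large ball $\times$ $\cS$ with its Carath\'{e}odory topology) is metrizable and separable, so Skorokhod representation applies: on a suitable probability space we may assume that $(K_n,D_n)\to(K,D)$ almost surely. It therefore suffices to prove the pointwise claim: whenever $D_n,D$ are simply connected domains containing $0$ with $D_n\to D$ in the Carath\'{e}odory sense, $K_n,K$ are compacts with $K_n\to K$ in the Hausdorff sense, and $K_n\subseteq\C\setminus D_n$ for every $n$, then $K\subseteq \C\setminus D$.

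Suppose, for contradiction, that some $w\in K\cap D$. Since $w\in D$ and $D$ is the Carath\'{e}odory limit of $(D_n)$, Lemma \ref{lem:kernel} (applied to any subsequence, but in fact the full sequence already has kernel $D$) guarantees that $w$ lies in the kernel of $(D_n)$. By the definition recalled just before Lemma \ref{lem:kernel}, there exists $n_0$ and an open connected set $V_{n_0}\ni 0$ with $V_{n_0}\subseteq \bigcap_{k\ge n_0}D_k$ and $w\in V_{n_0}$. In particular there is some radius $r>0$ with $B(w,r)\subseteq V_{n_0}\subseteq D_k$ for every $k\ge n_0$. Combined with the hypothesis $K_k\subseteq \C\setminus D_k$, this forces
\[
K_k\cap B(w,r)=\emptyset\qquad \text{for every } k\ge n_0.
\]

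Now I invoke Hausdorff convergence. Since $w\in K$ and $K_k\to K$ in Hausdorff distance on a fixed compact set containing all $K_k$ and $K$ (these are all contained in $\overline{\D}$, or its closure for the localized version one needs), there exist points $w_k\in K_k$ with $w_k\to w$. But then $w_k\in B(w,r)$ for all sufficiently large $k$, contradicting the display above. Hence no such $w$ exists and $K\cap D=\emptyset$, i.e.\ $K\subseteq\C\setminus D$, almost surely.

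The argument has no real obstacle; the only subtlety to check is that the Skorokhod representation genuinely applies (the product topology is Polish on, say, compacts contained in a common ball, which is the setting of interest here since all sets live inside $\overline{\D}$), and that the kernel theorem yields an \emph{open ball} around $w$ inside $D_n$ for large $n$, not merely the single point $w$. Both are immediate from the formulations given. An alternative avoiding Skorokhod would be to note that the event $\{K\subseteq \C\setminus D\}$ is closed in the product topology (it says $\dist(K,\{0\}\cup\{z:z\in D\})$\ldots actually more cleanly: $\{K\cap D\ne\emptyset\}$ is open, by the same kernel argument) and apply the portmanteau theorem; but the Skorokhod route is the cleanest to write.
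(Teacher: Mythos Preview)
Your proof is correct and follows essentially the same route as the paper: reduce to almost sure convergence via Skorokhod representation, then combine the Carath\'{e}odory kernel theorem (Lemma \ref{lem:kernel}) with Hausdorff convergence. The only cosmetic difference is that you argue by contradiction at a single point $w\in K\cap D$, whereas the paper argues directly that $K\subseteq \C\setminus V_j$ for each $j$ and then takes the union $\cup_j V_j=D$; these are the same argument in different packaging.
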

\begin{proof} By Skorokhod embedding, \corr{we may assume without loss of generality that $(K_{n},D_{n})\to (K,D)$} almost surely as $n\to \infty$.
	
	 For $j\in \N$ write $V_{j}$ for the connected component of $\mathrm{int}(\cap_{k\ge j}D_{k})$ containing $0$. By assumption, $K_{n}\subseteq \C\setminus D_{n}$ for every $n$ almost surely, which means that $K_{n}\subseteq \C\setminus V_j$ for all $n\ge j$ almost surely. Since $K_{n}$ converges to $K$ in the Hausdorff topology, we have $K\subseteq \C\setminus V_j$ for each $j$, {which implies that} 
	$K\subseteq \C\setminus \cup_j V_j$ almost surely. Finally, because $D_{n}\to D$ in the \cart topology, the \cart kernel theorem gives that $\cup_j V_j=D$ almost surely. Hence $K\subseteq \C \setminus D$ almost surely, as desired.
\end{proof}

\noindent In particular:

\begin{proofof}{Lemma \ref{cor:loopconvinclusion}} This is a direct consequence of Corollary \ref{cor:cart_inclusion}.
\end{proofof}\\

Now, if $U_n\subseteq \C$ are such that $1/U_n:=\{z: 1/z\in U_n\}$ is a simply connected domain containing $0$ for each $n$, we say that $U_n\to U$ with respect to the \cart topology seen from $\infty$, iff $1/U_n\to 1/U$ with respect to the \cart topology seen from $0$. It is clear from this definition and the above arguments (or similar) that the following properties hold.

\begin{lemma}\label{lem:cartfrominfinityprops} Suppose that $U_n\in \C$ are simply connected domains such that $1/U_n$ is simply connected containing $0$ for each $n$. Then
	\begin{itemize}
		\item if $(U_n,K_n)\Rightarrow (U,K)$ jointly with respect the product (\cart seen from $\infty \times$ Hausdorff) topology, for some compact sets $K_n$ with $K_n\subseteq \C\setminus U_n$ for each $n$, then \corr{$K\subseteq \C\setminus U$} almost surely;
		\item if $(U_n, D_n)\Rightarrow (U,D)$ jointly with respect the product (\cart seen from $\infty \times$ \cart seen from $0$) topology, for some simply connected domains $\corr{\D}\supseteq D_n\ni 0$ with $D_n\subseteq \C\setminus \corr{U_n}$ for each $n$, then $D\subseteq \C\setminus \corr{U}$ almost surely.
	\end{itemize}
	
\end{lemma}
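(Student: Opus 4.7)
The plan is to reduce both items to the Carathéodory kernel theorem (Lemma \ref{lem:kernel}) via the inversion $z\mapsto 1/z$, which by definition converts Carathéodory convergence from $\infty$ into Carathéodory convergence from $0$. In both cases I would first apply Skorokhod embedding to assume almost sure convergence, exactly as in the proof of Corollary \ref{cor:cart_inclusion}, and then argue by contradiction by picking a point $z$ of the proposed intersection.

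For the first item, the argument should closely mirror Corollary \ref{cor:cart_inclusion}. Suppose $z\in K\cap U$. Since $K$ is a Hausdorff limit of compact subsets of $\C$ it is itself compact in $\C$, so $z\ne \infty$. Applying Lemma \ref{lem:kernel} to the sequence $1/U_n\to 1/U$ from $0$, one obtains a Euclidean neighborhood of $1/z$ lying in $1/U_k$ for all $k\ge j_0$; inverting back produces a neighborhood $N'$ of $z$ with $N'\subseteq U_k\subseteq \C\setminus K_k$ for every $k\ge j_0$. Hausdorff convergence $K_n\to K$ then forces some $z_n\in K_n$ with $z_n\to z$, which eventually enters $N'$, contradicting $N'\cap K_k=\emptyset$.

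For the second item, the same inversion reduction applies, but I would feed in the Cara-from-$0$ convergence $D_n\to D$ in place of Hausdorff convergence. If $z\in D\cap U$, the case $z=\infty$ is excluded by the hypothesis $D_n\subseteq \D$, which forces $D\subseteq \overline{\D}$. For $z\in \C$, one invokes the kernel theorem twice: once on $D_n\to D$ to produce a neighborhood $M\ni z$ with $M\subseteq D_k$ for $k\ge j_1$, and once on $1/U_n\to 1/U$ (then inverted) to produce a neighborhood $N'\ni z$ with $N'\subseteq U_k$ for $k\ge j_0$. The open set $M\cap N'$ is then a non-empty subset of $D_k\cap U_k=\emptyset$ for $k\ge \max(j_0,j_1)$, contradiction.

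I don't anticipate any genuine obstacle: the content is essentially bookkeeping that transports Corollary \ref{cor:cart_inclusion} through the inversion. The two small points that require care are (i) ruling out $z=\infty$ in each item (by compactness of $K$ in item one, and by $D_n\subseteq \D$ in item two), and (ii) remembering that the conclusion of the kernel theorem is not merely set inclusion but that $z$ lies in an \emph{open} neighborhood eventually contained in the approximating domains, which is precisely what makes the open-intersection argument in item two work.
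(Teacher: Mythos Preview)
Your argument is correct. For the first bullet point it matches the paper's approach: the paper simply says the claim follows from Corollary \ref{cor:cart_inclusion} after inverting, and your contradiction argument is just an unpacking of that.

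For the second bullet point your route differs from the paper's. The paper passes through Hausdorff limits of the \emph{boundaries}: it extracts a subsequential Hausdorff limit $\partial$ of $\partial D_n\subset\overline{\D}$, applies (the argument of) Corollary \ref{cor:cart_inclusion} twice to obtain $\partial\subset\C\setminus U$ and $\partial\subset\C\setminus D$, and then concludes $D\subset\C\setminus U$ from the fact that $D\ni 0$ and $U\ni\infty$ are open and simply connected with a set separating them. You instead apply the kernel theorem directly to both Carath\'eodory convergences, producing open neighborhoods $M\subseteq D_k$ and $N'\subseteq U_k$ of the hypothetical point $z\in D\cap U$ for large $k$, whose nonempty intersection contradicts $D_k\cap U_k=\emptyset$. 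Your argument is more elementary in that it avoids the detour through Hausdorff subsequential limits and the connectivity step; the paper's version has the virtue of reusing Corollary \ref{cor:cart_inclusion} almost verbatim. One small point you leave implicit: the inversion step in item two also requires $z\ne 0$, which holds because $z\in U$ forces $1/z\in 1/U\subset\C$.
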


\corr{\begin{proof}
		The first bullet point follows from Corollary \ref{cor:cart_inclusion} by considering $1/U_n,1/U$ and $1/K_n,1/K$. For the second, let us assume by Skorohod embedding that $(U_n,D_n)\to (U,D)$ almost surely in the claimed topology. Then the compact sets $\partial D_n:=\bar{D_n}\setminus D_n\subset \bar{\D}$ are tight for the Hausdorff topology, and hence have some subsequential limit $\partial$. (The argument of) Corollary \ref{cor:cart_inclusion} implies that $\partial \subset \C\setminus U$ and $\partial \subset \C\setminus D$ almost surely. Since $U$ is an open simply connected domain containing $\infty$ and $D$ is an open simply connected domain containing $0$, this implies that $D\subset \C\setminus U$ almost surely.
		\end{proof}}
\subsubsection{Whole plane CLE and conclusion of the proofs}

As mentioned previously, we would now like to use some kind of symmetry argument to prove Lemma \ref{lem:convlooplaw}. However, the symmetry we wish to exploit is not present for CLE in the unit disk, and so we have to go through an argument using \emph{whole plane} CLE instead. Whole plane CLE was first introduced in \cite{KW16} and is, roughly speaking, the local limit of CLE in (any) sequence of domains with size tending to $\infty$. The key symmetry property of whole plane CLE$_\kp$ that we will make use of is its invariance under applying the inversion map $z\mapsto 1/z$ (\cite{KW16,GMQ18}). More precisely:

\begin{lemma}\label{lem:wpcle_props}
	Let $\Gamma^\kp$ be a whole plane $\CLE_\kp$ with $\kp\in [4,8)$. 
	\begin{itemize}
		\item \emph{(Inversion invariance)} The image of $\Gamma^\kp$ under  $z\mapsto 1/z$ has the same law as $\Gamma^\kp$.
		\item \emph{(Markov property)} Consider the collection of loops in $\Gamma^\kp$ that lie entirely inside $\D$ and surround $0$.  Write $I_1^{\eps}$ (with $\eps=\eps(\kp)$ as usual) for the connected component containing $0$ of the complement of the 
		{outermost} loop in this collection. Write $\fl_2^\eps$ for the second {outermost}
		loop in this collection.  Then the image of $\fl_2^\eps$ under the conformal map $I_1^\eps\to \D$ sending $z$ to $0$ with positive derivative at $0$ has the same law as the outermost loop surrounding $0$ for a $\CLE_\kp$ in $\D$. 
	\end{itemize} 
\end{lemma}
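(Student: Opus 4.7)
For the inversion invariance (first bullet), my plan is to appeal directly to the construction of whole plane $\CLE_\kp$ in \cite{KW16} (for $\kp \in (4,8)$) and \cite{GMQ18} (for $\kp = 4$), where this symmetry is established. The underlying idea is that whole plane $\CLE_\kp$ arises as a translation-invariant local limit of $\CLE_\kp$ in large bounded domains; the conformal invariance of $\CLE_\kp$ in bounded domains, combined with this limiting procedure, yields invariance under the full M\"obius group of $\widehat{\C}$, and in particular under $z \mapsto 1/z$.

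For the Markov property (second bullet), the main input is the domain Markov property of $\CLE_\kp$: conditionally on any loop $L \in \Gamma^\kp$ with interior component $\mathrm{int}(L)$, the loops of $\Gamma^\kp$ contained in $\mathrm{int}(L)$ form a $\CLE_\kp$ in $\mathrm{int}(L)$. My plan is then as follows. First, local finiteness of $\CLE_\kp$ (see \cite{SW12}) guarantees that the collection of loops of $\Gamma^\kp$ that surround $0$ and lie entirely inside $\D$ is (totally ordered by inclusion and) almost surely such that both an outermost and a second outermost loop exist. Call the outermost such loop $L_1$, so that $I_1^\eps$ is the connected component of $\C \setminus L_1$ containing $0$. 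Applying the domain Markov property to $L_1$ shows that, conditionally on $L_1$, the loops of $\Gamma^\kp$ contained in $I_1^\eps$ form a $\CLE_\kp$ in $I_1^\eps$.

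Next, I claim that $\fl_2^\eps$ is the outermost loop of this inner $\CLE_\kp$ that surrounds $0$. Indeed, $\fl_2^\eps$ is a loop of $\Gamma^\kp$ surrounding $0$, entirely inside $\D$, and strictly contained in $L_1$; hence $\fl_2^\eps$ lies in $I_1^\eps$ and so belongs to the inner $\CLE_\kp$. If some other loop $L'$ of the inner $\CLE_\kp$ surrounded $0$ and strictly contained $\fl_2^\eps$, then $L'$ would be a loop of $\Gamma^\kp$ lying in $\D$, surrounding $0$, and strictly between $\fl_2^\eps$ and $L_1$ in the nested ordering, contradicting the definition of $\fl_2^\eps$ as the second outermost such loop. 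The conformal invariance of $\CLE_\kp$, applied to the normalized map $I_1^\eps \to \D$ sending $0 \mapsto 0$ with positive derivative at $0$, then identifies the law of the image of $\fl_2^\eps$ with that of the outermost loop surrounding $0$ in a $\CLE_\kp$ in $\D$, as desired.

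The main obstacle I anticipate is the careful treatment of the case $\kp = 4$, where $\CLE_4$ loops can touch each other and the boundary, so that invoking local finiteness and the domain Markov property requires extra care; I would rely on the construction of whole plane $\CLE_4$ and its properties from \cite{GMQ18} to handle this case uniformly with $\kp \in (4,8)$.
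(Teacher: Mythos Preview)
Your approach is the same as the paper's: both properties are cited directly from the literature (\cite{KW16} and \cite{GMQ18}), and your more detailed sketch of the Markov property argument is essentially what those references establish. However, you have the references and the cases swapped throughout.

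For the inversion invariance, \cite{KW16} (Kemppainen--Werner) treats \emph{simple} nested CLEs and so covers $\kp=4$, while \cite{GMQ18} (Gwynne--Miller--Qian) handles the non-simple range $\kp\in(4,8)$; you have written the opposite. Similarly, the paper cites \cite[Lemma 2.9]{GMQ18} for the Markov property when $\kp>4$ and \cite[Theorem 1]{KW16} when $\kp=4$. Your final paragraph inherits this confusion: it is for $\kp\in(4,8)$, not $\kp=4$, that CLE loops are non-simple and can touch one another and the boundary; $\CLE_4$ loops are simple and pairwise disjoint, so the ``extra care'' you anticipate is needed in the opposite regime. Also, \cite{SW12} constructs only the simple CLEs ($\kappa\in(8/3,4]$), so invoking it for local finiteness across the full range $\kp\in[4,8)$ is not quite right; for $\kp\in(4,8)$ one should appeal to the branching-SLE construction instead.
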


\begin{proof} The inversion invariance is shown in \cite[Theorem 1.1]{KW16} for $\kp=4$ and \cite[Theorem 1.1]{GMQ18} for $\kp\in (4,8)$. The Markov property follows from \cite[Lemma 2.9]{GMQ18} when $\kp>4$ and \cite[Theorem 1]{KW16} when $\kp=4$. \end{proof}\\

Let us now state the convergence result that we will prove for whole plane CLE$_\kp$ as $\kp\downarrow 4$, and show how it implies Lemma \ref{lem:convlooplaw}.

For $\eps>0$, we extend the above definitions and write $\frk{l}_1^\eps, \frk{l}_2^\eps$ for the largest and second largest whole plane $\CLE_{\kp}$ loops containing $0$, that are entirely contained in the unit disk. We let $I_i^\eps$ be the connected component of $\C\setminus \frk{l}_i^\eps$ containing $0$ for $i=1,2$ and let $E_i^\eps$ be the connected component containing $\infty$. When $\eps=0$ we write $\frk{l}_1,\frk{l}_2$ for the corresponding loops of a whole plane $\CLE_4$, and $I_1,E_1, I_2,E_2$ for the corresponding domains containing $0$ and $\infty$. Note that in this case we have $\overline{I_i}=\C\setminus E_i$ and $\overline{E_i}=\C\setminus I_i$ for $i=1,2$.
\begin{lemma}\label{lem:wholeplane}
	$(I_1^\eps, E_1^\eps, I_2^\eps, E_2^\eps)\Rightarrow (I_1, E_1, I_2, E_2)$ as $\eps\to 0$, with respect to the product \\ \cart  (seen from $(0,\infty,0,\infty)$ in the four coordinates) topology.
\end{lemma}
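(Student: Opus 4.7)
The plan is to reduce the convergence to Proposition~\ref{prop:singleloopconv} via the Markov property of whole plane CLE (second bullet of Lemma~\ref{lem:wpcle_props}). The heart of the argument is Hausdorff convergence of $\frk{l}_1^\eps$ to $\frk{l}_1$; once this is in hand, all remaining convergences follow essentially routinely.

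First I would prove $\frk{l}_1^\eps \Rightarrow \frk{l}_1$ in the Hausdorff topology on closed subsets of $\overline{\D}$. To this end, I approximate the whole plane CLE$_{\kappa'}$ by CLE$_{\kappa'}$ in a large disk $R\D$, using the local-limit construction of whole plane CLE (\cite{KW16,GMQ18}). For fixed $R$, the nested sequence of outermost CLE$_{\kappa'}$ loops around $0$ in $R\D$, together with their interiors, forms a Markov chain by the conformal Markov property of CLE in simply connected domains; iterated application of Proposition~\ref{prop:singleloopconv} (combined with conformal invariance to map each interior back to $\D$) then yields joint convergence of this chain in the Hausdorff $\times$ Carath\'eodory topology as $\eps\downarrow 0$. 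The loop $\frk{l}_1^{R,\eps}$, defined as the first loop in the chain contained in $\overline{\D}$, is a measurable functional of the chain, and a continuity argument -- excluding the null event that a loop lies exactly on $\partial\D$ in the limit -- gives $\frk{l}_1^{R,\eps}\Rightarrow \frk{l}_1^R$ as $\eps\downarrow 0$. A uniform-in-$\eps$ estimate showing that $\frk{l}_1^{R,\eps}=\frk{l}_1^\eps$ with probability tending to $1$ as $R\to\infty$ (uniformly in $\eps\in(0,2-\sqrt{2})$) then allows me to exchange the limits and conclude $\frk{l}_1^\eps\Rightarrow \frk{l}_1$.

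Given this, I would deduce $(I_1^\eps, E_1^\eps) \Rightarrow (I_1, E_1)$ in the product Carath\'eodory topology: since CLE$_4$ loops are a.s.\ Jordan curves and $\frk{l}_1$ separates $0$ from $\infty$, the Carath\'eodory kernel theorem (Lemma~\ref{lem:kernel}) together with its analog at $\infty$ (Lemma~\ref{lem:cartfrominfinityprops}) forces any subsequential Carath\'eodory limits of $I_1^\eps$ and $E_1^\eps$ to be the two (unique) simply connected components of $\C\setminus\frk{l}_1$ containing $0$ and $\infty$, respectively. To handle the second loop, I apply the Markov property: conditional on $I_1^\eps$, the image of $\frk{l}_2^\eps$ under the normalized conformal map $\psi^\eps\colon I_1^\eps\to\D$ has the law of the outermost CLE$_{\kappa'}$ loop around $0$ in $\D$, which converges by Proposition~\ref{prop:singleloopconv}. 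Combined with uniform convergence of $(\psi^\eps)^{-1}$ on compact subsets of $\D$ (which follows from $I_1^\eps\to I_1$ in Carath\'eodory), this yields $\frk{l}_2^\eps\Rightarrow \frk{l}_2$ in Hausdorff, jointly with the earlier convergences, and the Jordan-curve argument above then gives Carath\'eodory convergence of $(I_2^\eps, E_2^\eps)$.

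The hard step will be the first: unlike $\frk{l}_2^\eps$, the loop $\frk{l}_1^\eps$ is not produced by a direct Markov argument from the disk setting, so one has to approximate whole plane CLE by CLE in a finite domain and interchange the limits $R\to\infty$ and $\eps\downarrow 0$. The uniformity in $\eps$ of the local-limit approximation near $\kappa'=4$, together with the continuity of the stopping rule that selects $\frk{l}_1^{R,\eps}$ from the Markov chain, are the delicate technical points that need to be handled with care.
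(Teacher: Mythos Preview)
Your proposal has a genuine circularity. The key step --- invoking Proposition~\ref{prop:singleloopconv} to obtain Hausdorff convergence of the nested loops in $R\D$, and again to handle $\frk{l}_2^\eps$ via the Markov property --- uses a result that in the paper's logical structure \emph{depends on} Lemma~\ref{lem:wholeplane}. Indeed, Proposition~\ref{prop:singleloopconv} is proved via Lemma~\ref{lem:keyforcleloopconv}, which is obtained by combining Lemma~\ref{cor:loopconvinclusion} with Lemma~\ref{lem:convlooplaw}, and the proof of Lemma~\ref{lem:convlooplaw} is explicitly conditional on Lemma~\ref{lem:wholeplane}. So at the point where Lemma~\ref{lem:wholeplane} is being established, Hausdorff convergence of CLE loops is \emph{not} available; it is precisely the missing ingredient that Lemma~\ref{lem:wholeplane} is meant to unlock.

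The paper's route avoids this by working entirely in the Carath\'eodory topology, where convergence \emph{is} already known from Corollary~\ref{cor:convfullbranch}. Iteration plus the large-domain approximation of \cite{MWW15} (Lemmas~\ref{lem:nestedcleconv} and~\ref{lem:nestedwpcleconv}; this part of your outline is fine) gives $(I_1^\eps,I_2^\eps)\Rightarrow(I_1,I_2)$ in Carath\'eodory from $0$. The idea you are missing is then inversion invariance (Lemma~\ref{lem:wpcle_props}): the map $z\mapsto 1/z$ sends whole-plane CLE to itself and turns exteriors into interiors, so the \emph{same} argument yields $(E_1^\eps,E_2^\eps)\Rightarrow(E_1,E_2)$ in Carath\'eodory from $\infty$. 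To upgrade these two marginal convergences to a joint one, take any subsequential limit $(I_1,\hat E_1,I_2,\hat E_2)$: Lemma~\ref{lem:cartfrominfinityprops} gives $I_i\subset\C\setminus\hat E_i$, hence $\hat E_i\subset E_i$ a.s., and since $\hat E_i\eqD E_i$ one concludes $\hat E_i=E_i$ a.s. No loop-level Hausdorff convergence enters anywhere in this argument.
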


\begin{proofof}{Lemma \ref{lem:convlooplaw} given Lemma \ref{lem:wholeplane}} Suppose that $(I_1^\eps, \frk{l}_1^\eps)$ converges in law to $(I_1,\frk{l})$ along some subsequence, with respect to the product (\cart seen from 0 $\times$ Hausdorff) topology. By the above lemma, we can extend this convergence to the joint convergence of $(I_1^\eps, \frk{l}_1^\eps, E_2^\eps, I_2^\eps)\to (I_1,\frk{l},E_2, I_2)$. But then Corollary \ref{cor:cart_inclusion} and Lemma \ref{lem:cartfrominfinityprops} imply that $\frk{l}\subseteq \C\setminus I_2=\overline{E_2}$ and $\frk{l}\subseteq \C\setminus E_2= \overline{I_2}$ almost surely. This implies that $\frk{l}\subseteq \frk{l}_2=\partial(E_2)=\partial(I_2)$ almost surely. Moreover, it is not hard to see (using the definition of Hausdorff convergence) that $\frk{l_2}\setminus \frk{l}=\emptyset$, else $\frk{l}_2^\eps$ would not disconnect $0$ from $\infty$ for small $\eps$. So $\frk{l}=\frk{l}_2$ almost surely. 
	
	Now consider, for each $\eps$, the unique conformal map $g_1^\eps:I_1^\eps \to \D$ that sends $0\to 0$ and has $(g_1^\eps)'(0)>0$. Then the above considerations imply that if $g_1^\eps(\frk{l}_2^\eps)$ converges in law along some subsequence, with respect to the Hausdorff topology, then the limit must have the law of $g_1(\frk{l}_2)$, where $g_1:I_1\to \D$ is defined in the same way as $g_1^\eps$ but with $I^\eps_1$ replaced by $I_1$.  Since the law of $g_1^\eps(\frk{l}_2^\eps)$ is the same as that of $\Loop_0^\eps$ for every $\eps$ and the law of $g_1(\frk{l}_2)$ has the law of $\Loop_0$, this proves Lemma \ref{lem:convlooplaw}.\end{proofof}\\

\begin{proofof}{Lemma \ref{lem:keyforcleloopconv} and Proposition \ref{prop:cleloopconv}}
	Combining Lemmas \ref{cor:loopconvinclusion} and \ref{lem:convlooplaw} yields Lemma \ref{lem:keyforcleloopconv}. As explained previously, this implies Proposition \ref{prop:cleloopconv}.
\end{proofof}\\

So, we are left only to prove Lemma \ref{lem:wholeplane}, concerning whole plane $\CLE$. We will build up to this with a sequence of lemmas: first proving convergence of nested $\CLE$ loops in very large domains, then transferring this to whole plane CLE, and finally appealing to inversion invariance to obtain the result.

\begin{lemma}\label{lem:nestedcleconv}
	Fix $R>1$. For $\kp\in (4,8)$ and a $\CLE_{\kappa'}$ in $R\D$, denote by $(l_i^\eps)_{i\ge 1}$ the sequence of nested loops containing $0$, starting with the second smallest loop to \emph{fully} enclose the unit disk (set equal to the boundary of $R\D$ if only one or no loops in $R\D$ actually surround $\D$) {and such that $l_i^\eps$ surrounds $l_{i+1}^\eps$ for all $i$}. Write $(b_i^\eps)_{i\ge 1}$ for the connected components containing $0$ of the complements of the $(l_i^\eps)_{i\ge 1}$. Then $(b_i^\eps)_{i\ge 1}$ converges in law to its CLE$_4$ counterpart as $\eps \to 0$, with respect to the product \cart topology viewed from $0$.
\end{lemma}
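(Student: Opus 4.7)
\begin{proofsketch}
The plan is to iterate Proposition~\ref{prop:cleloopconv} using the Markov property of CLE to obtain joint convergence of the whole nested sequence of loops surrounding $0$ in $R\D$, and then to select $l_1^\eps$ and its descendants as a continuous functional of this sequence.

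Let $(\lambda_j^\eps, D_j^\eps)_{j\ge 1}$ denote the nested sequence of outermost-to-innermost CLE$_{\kappa'}(R\D)$ loops surrounding $0$, with $D_j^\eps$ the connected component of $R\D\setminus\lambda_j^\eps$ containing $0$. By conformal invariance of CLE, Proposition~\ref{prop:cleloopconv} applies in $R\D$ and gives convergence of $(\lambda_1^\eps,D_1^\eps)$ to the CLE$_4$ pair $(\lambda_1,D_1)$ in the product (Hausdorff $\times$ Carath\'eodory-from-$0$) topology. I would extend to all levels by induction on $j$: conditionally on $(\lambda_i^\eps,D_i^\eps)_{i\le j}$, the Markov property of CLE says that the CLE$_{\kappa'}$ inside $D_j^\eps$ is an independent CLE$_{\kappa'}(D_j^\eps)$, and the Carath\'eodory convergence $D_j^\eps\to D_j$ yields uniform-on-compacts convergence of the normalized conformal maps $\phi_j^\eps:\D\to D_j^\eps$. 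Applying Proposition~\ref{prop:cleloopconv} in the conformal image and transporting back gives convergence of $(\lambda_{j+1}^\eps,D_{j+1}^\eps)$ jointly with the previous levels, and hence of the whole sequence $(\lambda_j^\eps,D_j^\eps)_{j\ge 1}$.

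Next set $N^\eps:=\max\{j\ge 1:\D\subset D_j^\eps\}$, with $N^\eps=0$ if $\D\not\subset D_1^\eps$, and define $N$ analogously for CLE$_4$. Both are a.s.\ finite since $\mathrm{CR}(0;D_j)\to 0$ as $j\to\infty$. By construction $\lambda_{N^\eps}^\eps$ is the smallest loop enclosing $\D$, so $l_1^\eps=\lambda_{N^\eps-1}^\eps$ (or $l_1^\eps=\partial(R\D)$ when $N^\eps\le 1$) and more generally $b_i^\eps=D_{N^\eps+i-2}^\eps$, with the analogous identification without superscripts for CLE$_4$. Combining the joint convergence from the previous step with $N^\eps\to N$ (established under a Skorokhod coupling) then yields the desired Carath\'eodory convergence of $(b_i^\eps)_{i\ge 1}$.

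I expect the main obstacle to be precisely the a.s.\ stability $N^\eps\to N$, equivalently the stability of each event $\{\D\subset D_j\}$ under the joint limit. Since this event is open in the Hausdorff topology on loops, stability fails only on the boundary event that some $\lambda_j$ touches $\partial\D$ tangentially. Such non-tangency should follow from a rotation/scaling invariance plus Fubini argument applied to the one-parameter family of circles of radius $r\in(1,R)$: for a.e.\ $r$ the circle of radius $r$ meets no CLE$_4$ loop tangentially, and by conformal invariance one can reduce $\partial\D$ to a generic nearby circle. A cleaner alternative avoids a.s.\ stability altogether and uses Portmanteau with the observation that $\log\mathrm{CR}(0;D_j)$ has a continuous distribution, so $\{\D\subset D_j\}$ is a continuity set for the law of $(\lambda_j, D_j)$.
\end{proofsketch}
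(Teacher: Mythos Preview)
Your approach is essentially the same as the paper's. The paper also first establishes joint convergence of the full nested sequence of domains $(D_j^\eps)_{j\ge 1}$ in the product Carath\'eodory topology (it cites Corollary~\ref{cor:convfullbranch} and the iterative construction, which amounts to the same inductive step you wrote out via Proposition~\ref{prop:cleloopconv}), and then reduces to showing that the index $N^\eps$ of the smallest loop enclosing $\D$ converges a.s.\ to $N$ under a Skorokhod coupling.

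For that index stability, the paper argues exactly as in your first suggestion: it uses the kernel theorem (Lemma~\ref{lem:kernel}) together with the fact that the smallest $\CLE_4$ loop in $R\D$ containing $\D$ a.s.\ contains $(1+r)\D$ for some $r>0$, i.e., that no $\CLE_4$ loop touches $\partial\D$ tangentially. Your rotation/scaling/Fubini sketch is one way to justify that non-tangency.

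One small correction: your Portmanteau alternative does not work as stated. Continuity of the law of $\log\CR(0;D_j)$ is not the relevant quantity; the event $\{\D\subset D_j\}$ is governed by $\inf_{z\in\lambda_j}|z|$, not the conformal radius. You would need atomlessness of the law of $\inf_{z\in\lambda_j}|z|$ at the value $1$, which is precisely the non-tangency statement again, so this route does not actually avoid the issue.
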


\begin{proof}
	By Corollary \ref{cor:convfullbranch} and scale invariance of CLE, together with the iterative nature of the construction of nested loops, we already know that the sequence of nested loops in $R\D$ containing $0$, starting from the outermost one, converges as $\eps\to 0$, with respect to the product \cart topology viewed from $0$. Taking a coupling where this convergence holds a.s., it suffices to prove that the index of the smallest loop containing the unit disk also converges a.s. This is a straightforward consequence of the kernel theorem - Lemma \ref{lem:kernel} - plus the fact that the smallest $\CLE_4$ loop in $R\D$ that contains $\D$ actually contains $(1+r)\D$ for some strictly positive $r$ a.s.
\end{proof}

\begin{lemma} \label{lem:nestedwpcleconv}
	The statement of the above lemma holds true if we replace the CLEs in $R\D$ with whole plane versions.
\end{lemma}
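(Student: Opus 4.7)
The plan is to deduce the whole-plane convergence from Lemma \ref{lem:nestedcleconv} using the fact that, for each fixed $\kappa' \in [4,8)$, a whole plane $\CLE_{\kappa'}$ can be realized as a local limit of $\CLE_{\kappa'}$ in $R\D$ as $R \to \infty$ \cite{KW16,GMQ18}. More precisely, for any $r>0$, the collection of loops intersecting $r\D$ for a $\CLE_{\kappa'}$ in $R\D$ converges in law (as $R \to \infty$) to the analogous collection for the whole plane $\CLE_{\kappa'}$, in a natural topology on loop ensembles.

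First I would fix $N \geq 1$ and a large radius $r$, and define the event $\Omega_r^\eps = \{ b_1^\eps \subset r\D \}$, on which the entire prefix $(b_i^\eps)_{1 \le i \le N}$ is determined by the loops of whole plane $\CLE_{\kappa'}$ intersecting $r\D$ (since $b_i^\eps \subset b_1^\eps$ for all $i \ge 1$). By the local limit property, for any $\eta > 0$ one can choose $R = R(r, \eta, \eps)$ large enough that the joint law of $((b_i^\eps)_{1 \le i \le N}, \mathbf{1}_{\Omega_r^\eps})$ constructed from whole plane $\CLE_{\kappa'}$ is within $\eta$ in total variation of the analogous pair constructed from $\CLE_{\kappa'}$ in $R\D$. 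Combining this approximation with the disk-version convergence of Lemma \ref{lem:nestedcleconv}, and then sending first $R \to \infty$, then $\eps \to 0$, and finally $r \to \infty$, yields the claim via a standard three-epsilon argument (the test functions are bounded continuous with respect to the product \cart topology, which interacts well with the truncation on $\Omega_r^\eps$).

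The key missing ingredient, and the main obstacle, is a uniform-in-$\eps$ tightness bound of the form
\[
\lim_{r \to \infty} \sup_{\eps \in [0, 2-\sqrt{2})} \mathbb{P}\bigl( b_1^\eps \not\subset r\D \bigr) = 0,
\]
so that $\Omega_r^\eps$ has high probability independently of $\eps$. This asserts that the Euclidean diameter of the second outermost whole plane $\CLE_{\kappa'}$ loop surrounding $0$ and containing $\D$ is tight as $\eps \downarrow 0$. One route is to combine the scale invariance of whole plane $\CLE_{\kappa'}$ with the inversion invariance in Lemma \ref{lem:wpcle_props}: under $z \mapsto 1/z$, the event that this loop escapes $r\D$ translates into an event about small nested loops around $0$, which can be controlled uniformly in $\eps$ by applying Lemma \ref{lem:nestedcleconv} in a sufficiently large fixed disk $R_\star\D$ and transferring to the whole plane via the same local limit property. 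Once this uniform tightness is in place, the diagonal limit described above goes through and completes the proof.
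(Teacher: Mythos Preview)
Your three-epsilon scheme has a genuine gap in the order of limits. To use Lemma \ref{lem:nestedcleconv} you must fix $R$ and then let $\eps\to 0$; but then the term $|\mathbb{E}[F(\text{whole plane},\eps)]-\mathbb{E}[F(R\D,\eps)]|$ must be small \emph{uniformly in $\eps$} for $R$ large. Allowing $R=R(r,\eta,\eps)$ to depend on $\eps$, as you do, makes Lemma \ref{lem:nestedcleconv} inapplicable, and ``sending first $R\to\infty$, then $\eps\to 0$'' simply restates the whole-plane convergence you are trying to prove. The uniform tightness of $b_1^\eps$ that you isolate is necessary but not sufficient: even on $\Omega_r^\eps$, the conditional laws of the loops for the whole-plane and the disk-$R\D$ ensembles can differ in an $\eps$-dependent way, and nothing in your outline controls this. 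Your proposed route to the tightness bound is also circular, since it again invokes ``transferring to the whole plane via the same local limit property'', i.e.\ the very uniformity that is missing.

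The paper sidesteps this by citing a \emph{quantitative} coupling (Theorem 9.1 of \cite{MWW15}): for each $\kappa'$ one can couple $\CLE_{\kappa'}$ in $R\D$ with whole-plane $\CLE_{\kappa'}$ so that, with probability at least $1-C(R/r)^{-\alpha}$, a low-distortion conformal map matches the nested loops starting from the smallest one containing $r\D$. The crucial observation is that $C$ and $\alpha$ depend only on the law of the log conformal radius of the outermost $\CLE_{\kappa'}$ loop around $0$ in $\D$, which varies continuously in $\kappa'$ by \cite{SSW09}; hence $C,\alpha$ can be chosen uniformly for $\kappa'\in[4,6]$. This single uniform estimate gives exactly the missing interchange of limits and makes the passage from Lemma \ref{lem:nestedcleconv} to the whole plane immediate.
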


\begin{proof}
	For fixed $\kappa\in [4,8)$, let $\Gamma^\C$, $\Gamma^{R\D}$ denote whole plane CLE$_\kp$  and $\CLE_{\kappa'}$ on $R\D$ respectively. The key to this lemma is Theorem 9.1 in \cite{MWW15}, which states (in particular) that $\Gamma^{R\D}$ rapidly converges to $\Gamma^\C$ in the following sense. For some $C,\alpha > 0$, $\Gamma^{R\D}$ and $\Gamma^\C$ can be coupled so that for any $r>0$ and $R>r$, with probability at least $1-C(R/r)^{-\alpha}$, there is a conformal map $\varphi$ from some $D\supset (R/r)^{1/4}\D$ to $D'\supset (R/r)^{1/4}\D$, which maps the nested loops of $\Gamma^{R\D}$ - starting with the smallest containing $r\D$ - to the corresponding nested loops of $\Gamma^\C$, and has low distortion in the sense that $|\varphi'(z)-1|\le C(R/r)^{-\alpha}$ on $R^{1/4}\D$.
	
	In fact, it is straightforward to see that $C$ and $\alpha$ (which in principle depend on $\kappa$) may be chosen uniformly for $\kappa\in [4,6]$ (say). Indeed, {it follows from the proof in \cite{MWW15} that} they depend only on the law of the log conformal radius of the outermost loop containing $0$ for a $\CLE_\kp$ in $\D$, and this varies continuously in $\kappa$, {\cite{SSW09}}. Hence, the result follows by letting $R\to \infty$ in Lemma \ref{lem:nestedcleconv} {and noting that the second smallest loop containing $\D$ is contained in $r\D$ with arbitrarily high probability as $r\to \infty$, uniformly in $\kappa$.}
\end{proof}
\medskip

\begin{proofof}{Lemma \ref{lem:wholeplane}}
	Lemmas \ref{lem:nestedwpcleconv} and \ref{lem:wpcle_props} (inversion invariance) imply that $(I_1^\eps,I_2^\eps)\Rightarrow (I_1,I_2)$ and $(E_1^\eps, E_2^\eps)\Rightarrow (E_1, E_2)$ as $\eps\to 0$. This ensures that $(I_1^\eps, E_1^\eps, I_2^\eps, E_2^\eps)$ is tight in $\eps$, so we need only prove that if $(I_1, \hat{E}_1, I_2, \hat{E}_2)$ is a subsequential limit of $(I_1^\eps, E_1^\eps, I_2^\eps, E_2^\eps)$, then $\hat{E}_1=E_1=\mathrm{int}(\C\setminus I_1)$ and $\hat{E}_2=E_2=\mathrm{int}(\C\setminus I_2)$ almost surely. Note that $(\hat{E}_1,\hat{E}_2)$ has the same law as $(E_1, E_2)$, and since $I_1^\eps\subseteq \C\setminus E_1^\eps$ for all $\eps$, \corr{Lemma \ref{lem:cartfrominfinityprops}} implies that $I_1\subseteq \C\setminus \hat{E}_1$. In other words $ \hat{E}_1\subseteq E_1$ almost surely. Then because $\hat{E}_1$ and $E_1$ have the same law, we may deduce that they are equal almost surely. Similarly we see that $\hat{E}_2=E_2$ almost surely.
\end{proofof}

\subsubsection{Conclusion}
Recall that for $z\in \D$, $(\bub_{z,i}^\eps,\Loop_{z,i}^\eps)_{i\ge 1}$ (resp.  $(\bub_{z,i},\Loop_{z,i})_{i\ge 1}$) denotes the sequence of nested $\CLE_{\kappa'}$ (resp. $\CLE_4$) bubbles and loops containing $z$. 
By the Markov property and iterative nature of the construction, it is immediate from Proposition \ref{prop:cleloopconv} that: 

\begin{corollary}
	\label{cor:cleloopconv} For fixed $z\in \cQ$
	$$({\lcB}^\eps_z, (\Loop^\eps_{z,i})_{i\ge 1}, (\bub^\eps_{z,i})_{i\ge 1}) \Rightarrow ({\lcB}_z, (\Loop_{z,i})_{i\ge 1}, (\bub_{z,i})_{i\ge 1})$$ as $\eps\downarrow 0$, with respect to the product topology generated by ($\lcS_z$ $\times$ $\prod$ Hausdorff $\times$ $\prod$ \cart viewed from $z$) convergence.
\end{corollary}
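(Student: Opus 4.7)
The plan is to proceed by induction on the number of nested levels $N \geq 1$, establishing joint convergence of $(\lcB^\eps_z, (\Loop^\eps_{z,i})_{1 \le i \le N}, (\bub^\eps_{z,i})_{1 \le i \le N})$ to the CLE$_4$ analogue. The base case $N = 1$ is precisely Proposition \ref{prop:cleloopconv}. The inductive step relies on the Markov/iterative structure used to define the nested loops in Section \ref{sec:sletocle}: once the first $i$ loops have been traced, the continuation of the branching process inside $\bub^\eps_{z,i}$, conformally mapped to $\D$ via $\phi^\eps_i:\bub^\eps_{z,i}\to\D$ with $\phi^\eps_i(z) = 0$ and $(\phi^\eps_i)'(z) > 0$, is independent of the past and has the law of the original branching SLE$_{\kappa'}(\kappa'-6)$ (resp.\ uniform CLE$_4$ exploration) targeted at $0$ in $\D$.

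For the inductive step, assume convergence up to level $i$. By Skorokhod embedding I would fix a coupling realizing this convergence almost surely. The Carath\'eodory convergence $\bub^\eps_{z,i} \to \bub_{z,i}$ (from below in $\D$ around $z$) implies, via the Carath\'eodory kernel theorem (Lemma \ref{lem:kernel}), uniform convergence of $\phi^\eps_i \to \phi_i$ on compact subsets of $\bub_{z,i}$, and similarly of $(\phi^\eps_i)^{-1} \to \phi_i^{-1}$ on compacts of $\D$. Applying Proposition \ref{prop:cleloopconv} to the independent branching process inside $\bub^\eps_{z,i}$ (after mapping to $\D$ by $\phi^\eps_i$) yields joint convergence of the image of the $(i+1)$-st loop and bubble to their CLE$_4$ counterparts with respect to (Hausdorff $\times$ Carath\'eodory from $0$). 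Composing back by $(\phi^\eps_i)^{-1}$ then gives convergence of $(\Loop^\eps_{z,i+1}, \bub^\eps_{z,i+1})$ to $(\Loop_{z,i+1}, \bub_{z,i+1})$, jointly with the first $i$ levels and with $\lcB^\eps_z$.

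The main obstacle is transporting the Hausdorff convergence of loops through the sequence of conformal maps $(\phi^\eps_i)^{-1}$, since although these maps converge uniformly on compacts of $\D$, the loops in question touch $\partial\D$ at their root and may come arbitrarily close to the boundary of $\bub_{z,i}$. To handle this, I would exploit the fact that for $\eps = 0$ the limit loop $\Loop_{z,i+1}$ almost surely has positive distance from $\partial \bub_{z,i}$ except at the single root point (by the fact that nested CLE$_4$ loops a.s.\ stay strictly inside their parents except possibly at finitely many pinch points, which occur with probability zero for distinct levels). Combined with uniform convergence of $(\phi^\eps_i)^{-1}$ on compacts, this upgrades Hausdorff convergence of the images in $\overline{\D}$ to Hausdorff convergence of the loops themselves in $\overline{\bub_{z,i}}$. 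A similar argument, using that $\lcB^\eps_z$ beyond time $\tau^\eps_z$ is generated by the pushforward under $(\phi^\eps_i)^{-1}$ of the continuation process, transfers the $\cD_z$-convergence to the extended Loewner chain.

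Iterating this inductive step finitely many times gives convergence of any fixed number of levels, and passing from finite to infinite joint convergence in the product topology is automatic since convergence in a countable product is equivalent to coordinate-wise convergence.
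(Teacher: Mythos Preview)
Your approach is essentially the same as the paper's, which simply states that the result ``is immediate from Proposition~\ref{prop:cleloopconv}'' by the Markov property and iterative nature of the construction. You have filled in the details the paper leaves implicit: induction on the nesting level, Skorokhod coupling, conformal mapping to $\D$, applying the single-loop result, and mapping back.

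One small remark: your concern about the root point is unnecessary. For CLE$_4$, nested loops are almost surely \emph{strictly} contained in their parents --- the outermost loop in $\D$ does not touch $\partial\D$ at all, not even at a single point. So the image $\phi_i(\Loop_{z,i+1})$ is a.s.\ contained in $r\D$ for some (random) $r<1$, and your compact-containment argument goes through cleanly without any exceptional boundary contact to worry about. This actually makes the step you flagged as ``the main obstacle'' simpler than you suggest: once the image loops converge in Hausdorff distance to something compactly contained in $\D$, they are eventually contained in a common compact, and uniform convergence of $(\phi^\eps_i)^{-1}$ on compacts does the rest.

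A second minor point: you do not need to re-derive the $\lcS_z$-convergence of the full branch $\lcB^\eps_z$ via pushforward of the continuation, since Proposition~\ref{prop:convfullbranch} already gives $\lcB^\eps_z\Rightarrow\lcB_z$ for all time; the joint convergence with the bubbles $(\bub^\eps_{z,i})_{i}$ then follows because these are values of $\lcB^\eps_z$ at the (convergent) loop-closure times. Only the Hausdorff convergence of the loops genuinely requires the iterative argument through Proposition~\ref{prop:cleloopconv}.
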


\section{The uniform space-filling SLE$_4$} \label{sec:conv_order}

In this section we show that the ordering on points (with rational co-ordinates) in the disk, induced by space-filling $\SLE_\kp$ with $\kp>4$, converges to a limiting ordering as $\kp\downarrow 4$. We call this the uniform space-filling SLE$_4$.\footnote{This name is partially inspired from the fact that the process is constructed via a uniform CLE$_4$ exploration, and partly since, every time the domain of exploration is split into two components, the components are ordered uniformly at random.} Nonetheless, we can describe explicitly the law of this ordering, which for any two fixed points comes down to the toss of a fair coin. {As for $\kappa' > 4$, there would be other ways to define a space-filling SLE$_4$ process, by considering different explorations of CLE$_4$.}

Let us now recall some notation in order to properly state the result.  For $\eps\in (0,2-\sqrt{2})$ and $z,w\in \cQ$, we define $\cO_{z,w}^\eps$ to be the indicator function of the event that the space-filling $\SLE_{\kappa'}$ $\eta^\eps$ 
hits $z$ before $w$ (see Section \ref{sec:sf_sle}). By convention we set this equal to $1$ when $z=w$. 

To describe the limit as $\kp\downarrow 0$, we define $\cO=(\cO_{z,w})_{z,w\in \cQ}$  to be a collection of random variables, coupled with $(\lcB_z)_{z\in\cQ}$ such that \emph{conditionally given $(\lcB_z)_{z\in\cQ}$}: 
\begin{itemize}
	\itemsep0em
	\item $\cO_{z,z}=1$ for all $z\in \cQ$ a.s.; 
	\item  $\cO_{z,w}$ is a Bernoulli($\frac{1}{2}$) random variable for all $z,w\in\cQ$ with $z\ne w$;  
	\item $\cO_{z,w}=1-\cO_{w,z}$ for all $z,w\in\cQ$ with $z\ne w$ a.s.;
	\item for all $z,w_1,w_2\in \cQ$ with $z\ne w_1, w_2$, if $\lcB_z$ separates $z$ from $w_2$ at the same time as it separates $z$ from $w_1$ then $\cO_{z,w_1}=\cO_{z,w_2}$, otherwise $\cO_{z,w_1}$ and $\cO_{z,w_2}$ are independent. 
\end{itemize}
\begin{lemma}
	\label{lem:defo}There is a unique joint law on $((\lcB_z)_{z\in \cQ},\cO)$ satisfying the above requirements, and such that the marginal law of $(\lcB_z)_{z\in \cQ}$ is that of a branching uniform $\CLE_4$ exploration. With this law, $\cO$ a.s.\ defines an order on any finite subset of $\cQ$ by declaring that $z\preceq w$ iff  $\cO_{z,w}=1$.
\end{lemma}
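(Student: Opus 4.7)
The plan is to construct the coupling explicitly, by attaching to each ``separation event'' of the branching $\CLE_4$ exploration an independent fair coin, and then reading off $\cO_{z,w}$ from the coin at the event where $z$ and $w$ become separated. Concretely, for each ordered pair $z \ne w$ in $\cQ$, target invariance of the uniform $\CLE_4$ exploration (Section \ref{sec:sletocle}) provides a well-defined (random) time $\sigma_{z,w}$ at which the branches $\lcB_z$ and $\lcB_w$ first disagree; this corresponds to the completion of some $\CLE_4$ loop whose inside and outside (within the unexplored region at that instant) are precisely the two components separating $z$ from $w$. Grouping pairs separated at the same time gives a countable random index set $E$ of separation events, each equipped with a canonical pair of components. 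Conditionally on $(\lcB_z)_{z \in \cQ}$, I will sample an i.i.d.\ family $(\xi_e)_{e \in E}$ of $\Ber(\tfrac12)$ variables encoding a choice of ordering of the two components, and set $\cO_{z,w} = \xi_{e(z,w)}$ (with a sign convention ensuring antisymmetry) for $z \ne w$, together with $\cO_{z,z}=1$.

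Checking the four bullet points will be immediate from the construction: the first holds by convention, the second because each $\xi_e$ is Bernoulli, and the third (antisymmetry) because the encoding flips under swapping the two sides. For the fourth, $\lcB_z$ separates $z$ from $w_1$ at the same time as from $w_2$ exactly when these two pairs correspond to the same event $e$; in that case $w_1$ and $w_2$ both lie on the side of the newly completed loop that does not contain $z$, so $\cO_{z,w_1}=\cO_{z,w_2}=\xi_e$, while otherwise the two relevant coins are distinct and hence conditionally independent.

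For uniqueness, I will reduce to finite-dimensional distributions: for any finite $F \subset \cQ$ the conditional law of $(\cO_{z,w})_{z,w \in F}$ given $(\lcB_z)_{z \in F}$ is pinned down by the bullet points, since only finitely many separation events are relevant for $F$, the conditions force a single fair coin at each such event (shared across all $F$-pairs separated there), and coins at distinct events must be independent. Finally, the order property reduces to transitivity: for any three distinct $z, w, u \in \cQ$, the binary branching of the exploration forces exactly one of the three points to be separated first from the other two, and a short case analysis of the three possibilities together with antisymmetry shows that $\cO_{z,w}=\cO_{w,u}=1$ implies $\cO_{z,u}=1$.

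The main (mild) technical point, and the only place I anticipate having to be careful, is to confirm that the branching structure of the uniform $\CLE_4$ exploration is genuinely a binary tree of separation events in the above sense, so that the coin-tossing construction and the verification of the fourth bullet are unambiguous. This is a direct consequence of the Poisson point process construction of Section \ref{sec:ucle4}: loops are discovered one at a time, and each loop splits exactly one unexplored component into two.
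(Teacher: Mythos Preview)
Your approach is essentially the paper's: both construct the conditional law by assigning independent fair coins to separation events (the paper via an induction on finite point sets---partition by separation time from a fixed $z_1$, sample i.i.d.\ coins for the classes, then recurse inside each class---while you go directly to a global coin-per-event assignment), and both establish the order property by the same case analysis on which of three points is split off first.

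The uniqueness step, however, has a gap that the paper's argument shares. Your claim ``coins at distinct events must be independent'' does not follow from the four bullets as stated: bullet four only yields \emph{pairwise} conditional independence of $\cO_{z,w_1}$ and $\cO_{z,w_2}$ for a common base point $z$, and places no constraint on coins at two events that no single base point can distinguish. Concretely, take four points first split as $\{z,w\}$ versus $\{u,v\}$ and then each pair split at later events: every instance of bullet four reduces to $\cO_{z,w}\perp\cO_{z,u}(=\cO_{z,v})$ or $\cO_{u,v}\perp\cO_{u,z}(=\cO_{u,w})$ (and symmetric variants), but nothing relates $\cO_{z,w}$ to $\cO_{u,v}$, so setting $\cO_{z,w}=\cO_{u,v}$ while keeping all other coins i.i.d.\ still satisfies every bullet. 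The paper's inductive uniqueness has the same issue: the induction hypothesis pins down the within-class marginals but not their joint law across classes. The fix is to strengthen bullet four to joint independence across separation events, which is in any case what the branching property at positive $\eps$ actually delivers in the proof of Proposition~\ref{prop:convbranchingsleorder}; with that strengthening your construction and uniqueness argument go through verbatim.
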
 
We will prove the lemma in just a moment. The main result of this section is the following.

\begin{proposition}\label{prop:convbranchingsleorder}
	$(({\lcB}^{\eps}_z)_{z\in \mathcal{Q}},(\cO^\eps_{z,w})_{z,w\in \cQ})$ converges to $(({\lcB}_z)_{z\in \cQ},(\cO_{z,w})_{z,w\in \cQ})$, in law as $\eps\downarrow 0$, with respect to the product topology  $\left(\prod_{\cQ} \lcS_z \,\times \, \prod_{\cQ\times \cQ} \text{discrete}\right)$, where $(\cO_{z,w})_{z,w\in \cQ}$ is as defined in Lemma \ref{lem:defo}. 
\end{proposition}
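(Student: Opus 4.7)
\begin{proofsketch}
The plan has three phases: (i) upgrade Corollary \ref{cor:convfullbranch} to joint convergence of the full branching tree $(\lcB^\eps_z)_{z\in\cQ}$; (ii) extract a subsequential joint limit including the orderings; (iii) identify the conditional law of the limiting ordering given $(\lcB_z)_{z\in\cQ}$ and match it with Lemma \ref{lem:defo}.

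For (i), I would proceed by induction on the size of a finite subset $\{z_1,\ldots,z_k\}\subset\cQ$, with base case Corollary \ref{cor:convfullbranch}. The branching property of $\SLE_{\kappa'}(\kappa'-6)$ says that $\lcB^\eps_{z_i}$ and $\lcB^\eps_{z_j}$ agree modulo time change until the first time $T^\eps_{ij}$ at which $z_i, z_j$ are separated, and Proposition \ref{prop:cleloopconv} together with the convergence of the driving process (Lemma \ref{lem:bigexsame}) implies that $T^\eps_{ij}$ converges jointly with $(\lcB^\eps_{z_i}, \lcB^\eps_{z_j})$ to the corresponding CLE$_4$ separation time. After separation the branches are independent $\SLE_{\kappa'}(\kappa'-6)$'s in the respective components, so by the inductive hypothesis and the strong Markov property the full collection converges jointly. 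For (ii), tightness of the orderings is automatic since they take values in the compact space $\{0,1\}^{\cQ\times\cQ}$. Let $((\lcB_z)_{z\in\cQ}, \tilde\cO)$ be any subsequential limit; the relations $\tilde\cO_{z,z}=1$ and $\tilde\cO_{z,w}=1-\tilde\cO_{w,z}$ hold at the prelimit level and pass to the limit.

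For (iii), I would verify that, conditionally on $(\lcB_z)_{z\in\cQ}$, each $\tilde\cO_{z,w}$ with $z\ne w$ is a fair coin and the dependence structure matches Lemma \ref{lem:defo}. The key observation is that the monocolored/bicolored assignment at a separation event in the $\SLE_{\kappa'}(\kappa'-6)$ picture is controlled by the local direction (counterclockwise vs clockwise) of the loop that $\eta^\eps$ makes at the pinch-off, which is in turn determined by the position of the force point --- equivalently, of $\theta^\eps$ --- immediately before the separation. A strong-Markov refinement of the mixing argument of Section \ref{subsec:whiz} (i.e.\ Lemma \ref{lem:uniform_equation} applied conditionally at the separation time) shows that this position is asymptotically uniform on the boundary of the current unexplored component, and the reflection symmetry $\theta\mapsto 2\pi-\theta$ then forces the resulting coin to be fair in the limit. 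For the dependence structure: if $w_1,w_2$ lie in the same pinched-off component at the moment they are separated from $z$, then $\tilde\cO_{z,w_1}=\tilde\cO_{z,w_2}$ already in the prelimit and the equality passes to the limit; otherwise, iterating the stopping-time mixing argument at each successive separation event yields asymptotic independence of the two coins.

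The hard part is the quantitative strong-Markov refinement of Lemma \ref{lem:uniform_equation} needed to produce genuine asymptotic conditional independence, rather than merely asymptotic uniformity of each individual coin. I would handle this by restarting the driving process $\theta^\eps$ at each successive separation time and applying a version of Lemma \ref{lem:uniform_equation} that is uniform in the initial conditions, exploiting the fact that the ``whizzing'' mixing of the force point happens on a short time scale (vanishing as $\kp\downarrow 4$) and is essentially insensitive to the starting configuration.
\end{proofsketch}
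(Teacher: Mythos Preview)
Your high-level structure is right --- joint convergence of the branching tree, then identification of the conditional law of the orderings --- and matches the paper's outline. But two of your key steps have genuine gaps that the paper handles by quite different means.

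\textbf{Separation times.} In (i) you assert that the separation time $T^\eps_{ij}$ converges jointly with the branches, citing Proposition \ref{prop:cleloopconv} and Lemma \ref{lem:bigexsame}. This does not follow. Carath\'eodory convergence of $\lcB^\eps_z$ gives only $\liminf\sigma^\eps_{z,w}\ge\sigma_{z,w}$; the obstruction is that $\eta^\eps_z$ could ``almost separate'' $z$ from $w$ at time $\approx\sigma_{z,w}$, then wander off for a macroscopic time before returning to complete the separation (a bottleneck). Loop and excursion convergence do not rule this out. The paper isolates this as Proposition \ref{prop:jointDsigma}, and its proof hinges on \eqref{eqn:bottleneck}, which is established only via the LQG/mating-of-trees coupling in Section \ref{sec:order_proof}. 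Without separation-time convergence your inductive branching argument does not close: you cannot restart the independent branches at a well-defined limiting time.

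\textbf{The fair-coin mechanism.} Your proposal in (iii) --- that $\cO^\eps_{z,w}$ is governed by the force-point position at the pinch-off, which mixes by a strong-Markov version of Lemma \ref{lem:uniform_equation}, and that $\theta\mapsto 2\pi-\theta$ symmetry then forces fairness --- does not match how $\cO^\eps_{z,w}$ is actually determined. The monocolored/bicolored assignment depends on the full coloring history of $\eta^\eps_z$ up to separation (which sides of the curve bound the component containing $z$), not merely on where the force point sits at that instant. Moreover, the whizzing argument of Section \ref{subsec:whiz} concerns the tip position during intervals when $\theta^\eps$ is near $0$, which is a different regime from a separation event. The paper's route is Proposition \ref{prop:order_simple}: condition on $\eta^\eps_z$ up to the near-separation time $\sigma^\eps_{z,w,\delta}$, apply the Markov property so the remainder is an $\SLE_{\kappa'}(\kappa'-6)$ with an explicit force point, and then reduce the conditional probability of $\cA^\eps$ to two symmetry statements --- one for chordal $\SLE_{\kappa'}$ via Theorem \ref{KS}, and one for space-filling $\SLE_{\kappa'}$ via the conformal-invariance trick of Lemma \ref{lem:pre-1/2}. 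Taking $\delta\downarrow 0$ then gives both fairness and conditional independence from $\lcTB_{z,w}$. The conditional-independence structure among several $\cO_{z,w_i}$ then comes for free from the branching property plus separation-time convergence, not from an iterated mixing argument.
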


\begin{proofof}{Lemma \ref{lem:defo}}
	The main observation is that if a joint law $((\lcB_z)_{z\in \cQ},\cO)$ as in the lemma exists, then for all $z,w,y\in \cQ$ we a.s.\ have \begin{equation}\label{order_prop}\{\cO_{z,w}=1\}\cap \{\cO_{w,y}=1\}\Rightarrow \{\cO_{z,y}=1\}.\end{equation}
	To verify this, we assume that $z,w,y$ are distinct (else the statement is trivial) with $\cO_{z,w}=1$ and $\cO_{w,y}=1$. Since $\cO_{w,z}=1-\cO_{z,w}=0$ this implies that $y$ and $z$ are \emph{not} separated from $w$ by $\lcB_w$ at the same time. If $\lcB_w$ separates $z$ from $w$ strictly before separating $y$ from $w$, then $\lcB_z$ separates $y$ and $w$ from $z$ at the same time, so $\cO_{z,y}=\cO_{z,w}=1$. If $\lcB_w$ separates $y$ from $w$ strictly before separating $z$ from $w$, then $\lcB_y$ separates $z$ and $w$ from $y$ at the same time, so $\cO_{z,y}=1-\cO_{y,z}=1-\cO_{y,w}=\cO_{w,y}=1$. In either case it must be that $\cO_{z,y}=1$.
	
We now show why this implies that for any $\{z_1, \cdots, z_k\}$ with $z_i\in \cQ$ distinct, there exists a unique a conditional law on $(\cO_{z_i,z_j})_{1\le i,j\le k}$ given $(\lcB_z)_{z\in \cQ}$, satisfying the requirements of the lemma. We argue by induction on the number of points. Indeed, suppose it is true with $1\le k \le n-1$ for some $n$ and take $\{z_1,\cdots, z_n\}$ in $\cQ$ distinct. We construct the conditional law of $(\cO_{z_i,z_j})_{1\le i,j\le n}$ given $(\lcB_z)_{z\in \cQ}$ as follows.
	\begin{itemize}
		\item To define $(\cO_{z_1,z_i})_{1\le i \le n}$:
		\begin{itemize}\item partition the indices $\{2,\cdots, n\}$ into equivalence classes $\{C_1,\dots, C_K\}$ such that $i\sim j$ iff $\lcB_{z_1}$ separates $z_1$ from $z_i$ and $z_j$ at the same time; \item for each equivalence class sample an independent Bernoulli$(1/2)$ random variable; \item set $\cO_{z_1,z_i}$ to be the random variable associated with class $[i]$ for every $i$. \end{itemize}
		\item Given $(\cO_{z_1,z_i})_{1\le i \le n}$ and $(\lcB_z)_{z\in \cQ}$, define $\cO_{z_i,z_j}$ with $[i]\ne [j]$ by setting it equal to  $\cO_{z_1,z_j}$ if $z_i$ and $z_1$ are separated from $z_j$ at the same time, or $\cO_{z_1,z_i}$ if $z_j$ and $z_1$ are separated from $z_i$ at the same time.
		\item For each $1\le l\le K$ consider the connected component $U_l\subset \D$ in the branching $\CLE_4$ exploration that contains the points $z_i$ with $[i]=C_l$  when they are separated from $z_1$. The $\CLE_4$ explorations inside these components are mutually independent, independent of the $\CLE_4$ exploration before this separation time, and each has the same law as $(\lcB_z)_{z\in \cQ}$ after mapping to the unit disk. Thus since each equivalence class contains strictly less than $n$ points, using the induction hypothesis, we can define $(\cO_{z_i,z_j})_{i\ne j, [i]=[j]=C_l}$ for $1\le l\le K$ such that:
		\begin{itemize}
			\item the collections for different $l$ are mutually independent;
			\item $(\cO_{z_i,z_j})_{i\ne j, [i]=[j]=C_l}$ for each $l$ is independent of the CLE$_4$ exploration outside of $U_l$, and after conformally mapping everything to the unit disk, is coupled the exploration inside $U_l$ as in the statement of Lemma \ref{lem:defo}.
		\end{itemize}
\end{itemize}
Using the induction hypothesis, it is straightforward to see that this defines a conditional law on $(\cO_{z_i,z_j})_{1\le i\ne j\le n}$ given $(\lcB_z)_{z\in \cQ}$ that satisfies the conditions of the Lemma. Moreover, note that the first two bullet points above, together with \eqref{order_prop}, define the law of $(\cO_{z_1,z_j})_{1\le j\le n}$ and $(\cO_{z_i,z_j})_{[i]\ne [j]}$ (satisfying the requirements) uniquely. Combining with the uniqueness in the induction hypothesis, it follows easily that the conditional law of $(\cO_{z_i,z_j})_{1\le i\ne j\le n}$ given $(\lcB_z)_{z\in \cQ}$ (satisfying the requirements) is unique.

Consequently, given $(\lcB_z)_{z\in \cQ}$, there exists a unique conditional law on the product space $\{0,1\}^{\cQ\times \cQ}$ equipped with the product $\sigma$-algebra, such that if $\cO=(\cO_{z,w})_{z,w\in \cQ}$ has this law then it satisfies the conditions above Lemma \ref{lem:defo}.

This concludes the existence and uniqueness statement of the lemma. 
The property \eqref{order_prop}  implies that $\cO$ does a.s.\ define an order on any finite subset of $\cQ$. 
\end{proofof}\\

In the coming subsections we will prove Proposition \ref{prop:convbranchingsleorder}. Since tightness of all the random variables in question is immediate (either by definition or from our previous work) it suffices to characterize any limiting law. We begin in Section \ref{sec:twopoints} by showing this {for the order of two points}; see just below for an outline of the strategy.
Then,  we will prove that the \emph{time} at which they are separated by the $\SLE_\kp(\kp-6)$ converges (for the $-\log \CR$ parameterization with respect to either of the points). This is important for characterizing joint limits, when there are three or more points being considered. It also turns out to be non-trivial, due to pathological behavior that cannot be ruled out when one only knows convergence of the SLE branches in the spaces $\lcS_z$.  We conclude the proof in a third subsection, and finally combine this with the results of Section \ref{sec:conv_clesle} to summarize the ``Euclidean'' part of this article in Proposition \ref{prop:cle-conv}.

\subsection{Convergence of order for two points}\label{sec:twopoints}
In this section we show that for two distinct points $z,w\in \D$, the law of the order in which they are visited by the space-filling SLE$_\kp$ $\eta^\eps$, converges to the result of a fair coin toss as $\kp\downarrow 4$. That is, $\cO_{z,w}^\eps$ converges to a Bernoulli$(1/2)$ random variable as $\eps\downarrow 0$. The rough outline of the proof is as follows

Recall that $\eta^\eps$ is determined by an $\SLE_\kp(\kp-6)$ branching tree, in which $\eta^\eps_z$ denotes the SLE$_\kp(\kp-6)$ branch towards $z$ (parameterized according to minus log conformal radius as seen from $z$). If we consider the time $\sigma^\eps_{z,w}$ at which $\eta^\eps_z$ separates $z$ and $w$, then for every $\eps>0$, $\cO_{z,w}^\eps$ is actually measurable with respect to $\eta^\eps_z([0,\sigma^\eps_{z,w}])$. 
So what we are trying to show is that this measurability turns to independence in the $\eps\downarrow 0$ limit. This means that we will not get very far if we consider the conditional law of $\cO_{z,w}^\eps$ given $\eta^\eps_z([0,\sigma^\eps_{z,w}])$, so instead we have to look at times just before $\sigma_{z,w}^\eps$. Namely, we will consider the times $\sigma_{z,w,\delta}^\eps$ that $w$ is sent first sent to within distance $\delta$ of the boundary by the  Loewner maps associated with $\eta^\eps_z$. We will show that for any \emph{fixed} $\delta\in (0,1)$, the conditional probability that $\cO_{z,w}^\eps=1$, given $\eta_z^\eps([0,\sigma_{z,w,\delta}^\eps])$, converges to $1/2$ as $\eps\to 0$. Knowing this for every $\delta$ allows us to reach the desired conclusion. 

To show that these conditional probabilities do tend to $1/2$ for fixed $\delta$, we apply the Markov property at time $\sigma_{z,w,\delta}^\eps$. This tells us that after mapping $(\lcB^\eps_z)_{\sigma_{z,w,\delta}^\eps}$ to the unit disc, the remainder of $\eta^\eps_z$ evolves as a radial $\SLE_\kp(\kp-6)$ with a force point somewhere on the unit circle. And we know the law of this curve: initially it evolves as a chordal $\SLE_\kp$ targeted at the force point, and after the force point is swallowed, it evolves as a radial $\SLE_\kp(\kp-6)$ in the to be discovered domain with force point starting adjacent to the tip. So we need to show that for such a process, the behavior is ``symmetric'' in an appropriate sense. 
In fact, we have to deal with two scenarios, according to whether the images of $z$ and $w$ are separated or not when the force point is swallowed. If they are separated, our argument becomes a symmetry argument for chordal $\SLE_\kp$. If they are not, our argument becomes a symmetry argument for space-filling $\SLE_\kp$.  
For a more detailed outline of the strategy, and the bulk of the proof, see Lemma \ref{lem:1/2}. 

At this point, let us just record the required symmetry property of space-filling $\SLE_\kp$ in the following lemma. 
\begin{lemma}\label{lem:pre-1/2} Let $\eta^\eps$ be a space-filling $\SLE_{\kp(\eps)}$ in $\D$, as above. Then for any $x\in \D$:
	$$\mathbb{P}(\eta^\eps \text{ hits } 0 \text{ before } x)
	\to \frac{1}{2} \text{ as } \eps\to 0.$$
\end{lemma}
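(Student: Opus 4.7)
The plan is to combine the Markov property of the radial $\SLE_{\kappa'}(\kappa'-6)$ branch targeted at $0$ with the ``whizzing'' behavior of the force point already exploited in Section \ref{subsec:whiz}, and to extract the $1/2$ in the $\eps\downarrow 0$ limit via a symmetry argument.

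First I would reformulate the event $\{\eta^\eps \text{ hits } 0 \text{ before } x\}$ in terms of the common trunk of the branching SLE. By the coloring/ordering construction of space-filling $\SLE_{\kappa'}$ recalled in Section \ref{sec:sf_sle}, $0$ is hit before $x$ iff, at the time $\sigma^\eps$ when the common branch $\eta^\eps_0 = \eta^\eps_x$ first separates $0$ from $x$, the component $U_0$ of the unexplored domain containing $0$ is the monocolored one---equivalently, the tip $\eta^\eps_0(\sigma^\eps)$ lies on $\partial U_x$ rather than on $\partial U_0$. Then, for a fixed small $\delta > 0$, I would localize using the Markov property: let $T^\eps_\delta$ be the first time that the image $g_t(x)$ of $x$ under the centered Loewner map associated with $\eta^\eps_0$ lies within Euclidean distance $\delta$ of $\partial \D$. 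Then $T^\eps_\delta < \sigma^\eps$ almost surely, and conditionally on $\eta^\eps_0|_{[0,T^\eps_\delta]}$ the remainder of the branch (after mapping the leftover domain back to $\D$ by $g_{T^\eps_\delta}$) is a fresh radial $\SLE_{\kappa'}(\kappa'-6)$ targeted at $0$, started from $w^\eps_\delta := g_{T^\eps_\delta}(\eta^\eps_0(T^\eps_\delta)) \in \partial \D$ with force point infinitesimally displaced from $w^\eps_\delta$, while $x$ is sent to some $y^\eps_\delta$ of modulus at least $1-\delta$.

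The crux of the proof is then to show that, conditionally on $y^\eps_\delta$, the angular position of $w^\eps_\delta$ relative to $y^\eps_\delta/|y^\eps_\delta|$ on $\partial \D$ becomes uniformly distributed as $\eps \downarrow 0$. Indeed, by \eqref{def:Wfromtheta} the argument of $w^\eps_\delta$ (relative to $1$) is $\int_0^{T^\eps_\delta}\cot((\theta^\eps_0)_s/2)\, ds$, and the same Bessel-comparison Fourier estimate that underlies Lemma \ref{lem:uniform_equation}---suitably iterated across the macroscopic excursions of $\theta^\eps_0$ up to $T^\eps_\delta$, as in the proof of Lemma \ref{lem:uniform}---forces the characteristic function of this angle to vanish on $\Z \setminus \{0\}$ in the limit, uniformly in the configuration at $T^\eps_\delta$. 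Granted this uniformity of $w^\eps_\delta$ on the circle, the force point sits on either of the two arcs of $\partial \D \setminus \{y^\eps_\delta/|y^\eps_\delta|\}$ with probability tending to $1/2$. Using Lemma \ref{lem:radial_chordal} to describe the remaining SLE as a chordal $\SLE_{\kappa'}$ aimed at the force point (run until the force point is swallowed) followed by a radial $\SLE_{\kappa'}(\kappa'-6)$ in the leftover domain with force point adjacent to the tip, one then concludes that in each such configuration the pinch separating $0$ from $y^\eps_\delta$ is equally likely to leave $0$ or $y^\eps_\delta$ in the component carrying the tip. Taking $\delta \downarrow 0$ then closes the argument, since all the relevant events are measurable with respect to the Markov decomposition at $T^\eps_\delta$.

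The main obstacle I expect is the last sub-step: converting ``force point uniform on $\partial \D$'' into ``monocolored side occurs with probability $1/2$''. Whether $y^\eps_\delta$ lies on the same or the opposite arc from the force point, relative to $w^\eps_\delta$, determines whether the separation of $0$ from $y^\eps_\delta$ is carried out by the chordal piece of the remaining SLE (before the force point is swallowed) or by the radial piece (after it is swallowed), and in each of these two scenarios a uniform-in-$\eps$ local symmetry estimate for the curve near its pinching point is required to conclude. It is this local chordal/radial analysis that carries the real probabilistic content of the lemma; once it is in place the Fourier mixing described above delivers the clean $1/2$.
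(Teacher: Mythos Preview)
Your proposal contains a concrete error and, more importantly, a circularity that prevents it from closing.

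The error is in the Markov step: after stopping at $T^\eps_\delta$ and mapping out by $g_{T^\eps_\delta}$, the remaining curve is indeed a radial $\SLE_{\kappa'}(\kappa'-6)$ from $w^\eps_\delta$, but its force point is \emph{not} infinitesimally displaced from the tip. The angular gap between tip and force point is $\theta^\eps_0(T^\eps_\delta)$, which has no reason to vanish at the stopping time you chose. So after the Markov stop you land exactly in the configuration of Lemma~\ref{lem:1/2} (tip at some $x_1$, force point at some genuinely separated $x_2$, target $y^\eps_\delta$ near $\partial\D$), not in a ``force point adjacent'' situation.

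The circularity is more serious. Even granting asymptotic uniformity of $w^\eps_\delta$ on $\partial\D$, that alone does not give the $1/2$: you still need to know that a space-filling $\SLE_{\kappa'}$ from a uniform boundary point visits $0$ before $y^\eps_\delta$ with probability tending to $1/2$, and averaging an unknown function over the starting angle tells you nothing about its value. Your ``main obstacle'' paragraph correctly identifies that the real content lies in the chordal/radial endgame analysis; but if you look at how the paper actually carries out that analysis (in the proof of Lemma~\ref{lem:1/2}), the ``same side'' scenario reduces \emph{precisely} to Lemma~\ref{lem:pre-1/2} after the force point is swallowed. So your route would need the very statement you are proving.

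The paper instead uses a short conformal-invariance trick that avoids all of this. The M\"obius automorphism of $\D$ fixing $1$ and sending $x\mapsto 0$ gives
\[
\BB P(\eta^\eps\text{ hits }0\text{ before }x)=1-\BB P(\eta^\eps\text{ hits }0\text{ before }\hat x)
\]
for some $\hat x$ with $|\hat x|=|x|$. So it suffices to show the probability depends only on $|x|$ in the limit, i.e.\ (after rotating) that $\BB P(\eta^\eps_\theta\text{ hits }0\text{ before }|x|)$ becomes independent of the starting angle $\theta$. This is exactly what the whizzing lemma buys directly: two $\SLE_{\kappa'}(\kappa'-6)$ branches from different boundary points can be coupled to coincide before $0$ and $|x|$ are separated, with probability $\to 1$ as $\eps\downarrow 0$. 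The mixing input you were reaching for is thus used not to symmetrize the endgame, but to erase the dependence on the starting point, after which the $1/2$ is pure algebra.
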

\begin{proof}
For this we use a conformal invariance argument. Namely, we notice that by conformal invariance of $\eta^\eps$, applying the map $$z\mapsto \frac{1-\bar{x}}{1-x}\frac{z-x}{1-\bar{x}z}$$ from $\D$ to $\D$ that sends $1$ to $1$ and $x$ to $0$, we have 
$$ \mathbb{P}[\eta^\eps \text{ hits } 0 \text{ before } x] = \mathbb{P}[\eta^\eps \text{ hits } \hat{x} \text{ before } 0]= 1- \mathbb{P}[\eta^\eps \text{ hits } 0\text{ before } \hat{x} ]$$
where $\hat{x}=-x(1-\bar{x})(1-x)^{-1}$ is the image of $0$ under the conformal map, and $|\hat{x}|=|x|$. Hence it suffices to show that	$$ \mathbb{P}[\eta^\eps \text{ hits } 0 \text{ before } x] - \mathbb{P}[\eta^\eps \text{ hits } 0 \text{ before } \hat{x}]\to 0$$ as $\eps\to 0$. By rotational invariance, if we write $\eta^\eps_{\theta}$ for a space-filling SLE$_\kp$ starting at $\e^{i\theta}$, then it is enough to show that
$$ \mathbb{P}[\eta^\eps_{\theta} \text{ hits } 0 \text{ before } |x|]-\mathbb{P}[\eta^\eps_{0} \text{ hits } 0 \text{ before } |x|]\to 0$$ as $\eps\to 0$, for any $\theta\in[0,2\pi]$. 

However, this is easily justified, because we can couple an $\SLE_{\kappa'}(\kp-6)$ from $1$ to $0$ and another from $\e^{i\theta}$ to $0$, so that they successfully couple (i.e., coincide for all later times) before $0$ is separated from $|x|$ with arbitrarily high probability (uniformly in $\theta$) as $\kp\downarrow 4$. This follows from Lemma \ref{lem:uniform_equation}, target invariance of the SLE$_\kp(\kp-6)$ and \eqref{eq:tente}; i.e., because in an arbitrarily small amount of time as $\kp\downarrow 4$, the $\SLE_{\kappa'}(\kp-6)$ will have swallowed every point on $\partial \D$. 
\end{proof}\\

Now we proceed with the set-up for the main result of this section (Proposition \ref{prop:order_simple} below). Recall that ${\lcB}_z\in {\lcS}$ is the sequence of domains formed by the branch of the uniform CLE$_4$ exploration towards $z$ in $\D$. For $w\ne z$, we write $\sigma_{z,w}$ for the first time that ${\lcB}_z$ separates $z$ from $w$ and let $\mathcal{O}_{z,w}$ be a Bernoulli random variable (taking values $\{0,1\}$ each with probability $1/2$) that is independent of $\{({\lcB}_z)_t \, ; \, t\in [0,\sigma_{z,w}]\}$. 

We define elements $${\lcTB}^\eps_{z,w}=\{ ({\lcB}^\eps_{z})_{t\wedge \sigma_{z,w}^\eps}\, ; \, t\ge 0\} \text{ and } {\lcTB}_{z,w}=\{ ({\lcB}_{z})_{t\wedge \sigma_{z,w}}\, ; \, t\ge 0\}$$ of $\lcS$. These are, respectively, the domain sequences formed by the $\SLE_\kp(\kp-6)$ and the uniform $\CLE_4$ exploration branches towards $z$, stopped when $z$ and $w$ become separated. By definition, they are parameterized such that $-\log \CR(0;({\lcTB}_{z,w}^\eps)_{t})=t\wedge \sigma_{z,w}^\eps$ for all $t$.
\begin{proposition}
	\label{prop:order_simple}
	Fix $z\ne w\in \cQ$. Then if $({\lcB},\cO)$ is a subsequential limit in law of $({\lcB}^\eps_z,\mathcal{O}^\eps_{z,w})$ (with respect to the product $\lcS_z$ $ \times$ discrete topology), $({\lcB},\cO)$ must satisfy the following property. If ${{\lcTB}}$ is equal to ${\lcB}$ stopped at the first time that $w$ is separated from $z$, then  $$({{\lcTB}},\cO) \overset{(law)}{=}({\lcTB}_{z,w},\mathcal{O}_{z,w}).$$
\end{proposition}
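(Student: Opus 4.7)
\begin{proofsketch}
The plan is to exploit the already-established convergence $\lcB^\eps_z \Rightarrow \lcB_z$ in $\lcS_z$ from Corollary \ref{cor:convfullbranch} for the marginal of $\lcB$, so that any subsequential limit satisfies $\lcB \overset{d}{=} \lcB_z$ and hence $\lcTB \overset{d}{=} \lcTB_{z,w}$. The real content is to show that, in the joint subsequential limit, $\cO$ is a fair coin independent of $\lcTB$. Concretely, it will suffice to prove that for every bounded continuous $F$ on the space of $\lcS$-valued paths,
\[
\mathbb{E}\bigl[F\bigl(\lcB^\eps_z|_{[0,\sigma^\eps_{z,w}]}\bigr)\,\cO^\eps_{z,w}\bigr] \;-\; \tfrac12\,\mathbb{E}\bigl[F\bigl(\lcB^\eps_z|_{[0,\sigma^\eps_{z,w}]}\bigr)\bigr] \;\longrightarrow\; 0
\]
as $\eps \downarrow 0$.

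The next step in the plan is to introduce intermediate stopping times $\sigma^\eps_{z,w,\delta}$ for $\delta \in (0,1)$, defined as the first time at which the image of $w$ under the Loewner map of $\lcB^\eps_z$ (normalized at $z$) comes within Euclidean distance $\delta$ of $\partial \D$. Because these intermediate times are ``interior'' stopping events, they behave continuously with respect to $\lcS_z$-convergence (unlike $\sigma^\eps_{z,w}$ itself), and $\sigma^\eps_{z,w,\delta} \nearrow \sigma^\eps_{z,w}$ as $\delta \downarrow 0$ (uniformly enough for our purposes). Thus it is enough to show that for each fixed $\delta$, the conditional probability $\mathbb{P}(\cO^\eps_{z,w}=1 \mid \lcB^\eps_z|_{[0,\sigma^\eps_{z,w,\delta}]})$ tends to $1/2$ in probability as $\eps \downarrow 0$, and then let $\delta \downarrow 0$.

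To establish the latter, I would apply the Markov property of the branching SLE at $\sigma^\eps_{z,w,\delta}$: after conformally mapping the remaining component to $\D$ with $z \mapsto 0$, the future of $\eta^\eps_z$ is a radial $\SLE_{\kp}(\kp-6)$ from some $a \in \partial \D$ to $0$, with a force point at some $f \in \partial \D$, and the image $w^\eps$ of $w$ has distance $\delta$ from $\partial \D$. Invoking Lemma \ref{lem:radial_chordal}, until $f$ is swallowed this process agrees in law (up to time change) with a chordal $\SLE_\kp$ from $a$ to $f$. Two cases arise: \textbf{(A)} the chordal $\SLE_\kp$ separates $z$ from $w^\eps$ before swallowing $f$, in which case $\cO^\eps_{z,w}$ is determined by on which side of the curve $w^\eps$ lies; \textbf{(B)} $f$ is swallowed first, and then the process continues as a radial $\SLE_{\kp}(\kp-6)$ in a component containing both $z$ and $w^\eps$, with force point starting adjacent to the tip. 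In Case B, Lemma \ref{lem:pre-1/2} directly gives that the probability $z$ is hit before $w^\eps$ by the resulting space-filling curve tends to $1/2$ as $\kp \downarrow 4$, uniformly in the configuration. Case A should be handled by a symmetry argument for chordal $\SLE_\kp$ targeted at $f$: the two sides of the curve should be visited with equal conditional probability in the limit.

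The main obstacle I anticipate is the uniformity in Case A: the configuration $(a,f,w^\eps)$ is essentially arbitrary, and $w^\eps$ may lie extremely close to $f$, so the chordal symmetry argument must be robust to degenerating geometry. A promising strategy is to again push the force point around to mix the tip (as in Lemma \ref{lem:uniform_equation}) so that the law of the tip at any macroscopic time becomes approximately uniform relative to $w^\eps$, then combine with the Case B argument using Lemma \ref{lem:pre-1/2}. Once Case A is settled, combining both cases and letting $\eps \downarrow 0$ and then $\delta \downarrow 0$ yields that $\cO$ is a fair coin independent of $\lcTB$, identifying the subsequential limit as $(\lcTB_{z,w}, \cO_{z,w})$ in distribution.
\end{proofsketch}
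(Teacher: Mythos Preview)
Your overall strategy matches the paper's: introduce the intermediate times $\sigma^\eps_{z,w,\delta}$, apply the Markov property there, and split into cases based on whether the force point is swallowed before or after $0$ and the image $w'$ of $w$ are separated. Case B is handled via Lemma \ref{lem:pre-1/2}, exactly as in the paper.

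The gap is in Case A. First, your statement that ``$\cO^\eps_{z,w}$ is determined by on which side of the curve $w^\eps$ lies'' is not quite right: the order is determined by whether the component containing $0$ at separation is \emph{monocolored}, which depends on which boundary arc (clockwise or counterclockwise from $x_1$ to $x_2$) the curve touches when it closes off the component, not merely on the side $w'$ falls. More importantly, your proposed tool for Case A --- the tip-mixing argument of Lemma \ref{lem:uniform_equation} --- applies to time intervals where $\theta^\eps$ is near $0$ (i.e., \emph{between} macroscopic excursions), whereas Case A takes place \emph{during} a chordal-type excursion, so there is no mixing available. The paper's actual mechanism is the convergence of chordal $\SLE_\kp$ to $\SLE_4$ (Theorem \ref{KS}): since $\SLE_4$ is simple, with high probability as $\kp\downarrow 4$ the chordal $\SLE_\kp$ from $x_1$ to $x_2$ enters an arbitrarily small neighborhood of $x_2$ without having separated $0$ or $w'$ from $x_2$; once there, whether the curve next hits the clockwise or counterclockwise boundary arc (which is what determines $\cA^\eps$ in the ``different sides'' scenario) has probability tending to $1/2$, again by Theorem \ref{KS} and the left/right symmetry of chordal SLE. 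The pointwise convergence (Lemma \ref{lem:1/2}) is then fed into Lemma \ref{lem:cond_exp_ssl} via dominated convergence. So the missing ingredient in your sketch is Theorem \ref{KS}; the mixing lemma is not the right lever here.
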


\noindent Note that this  \emph{does not} yet imply that the times at which $z$ and $w$ are separated converge.\\

To set up for the proof of this proposition, we define for $\eps,\delta>0$, ${\sigma}^\eps_{z,w,\delta}$ to be the first time $t$ that, under the conformal map $g_t[{\lcTB}_z^\eps]$, the image of $w$ is at distance $\delta$ from $\partial \D$. See Figure \ref{fig:almost_sep} for an illustration.  
Define $\sigma_{z,w,\delta}$ in the same way for $\eps=0$. Write ${\lcTB}_{z,w,\delta}^\eps$ and ${\lcTB}_{z,w,\delta}$ for the same things as ${\lcTB}_{z,w}^\eps$ and ${\lcTB}_{z,w}$, but with the time now cut off at $\sigma_{z,w,\delta}^\eps$ and $\sigma_{z,w,\delta}$ respectively. 

\begin{lemma}\label{lem:Dzw}
	\begin{enumerate}[(a)]
		\item $({\lcTB}_{z,w,\delta}^\eps,\sigma_{z,w,\delta}^\eps) \Rightarrow ({\lcTB}_{z,w,\delta},\sigma_{z,w,\delta})$ as $\eps\to 0$ for every fixed $\delta>0$. 
		\item $({\lcTB}_{z,w,\delta},\sigma_{z,w,\delta})\Rightarrow ({\lcTB}_{z,w},\sigma_{z,w})$ as $\delta\to 0$
	\end{enumerate}
\end{lemma}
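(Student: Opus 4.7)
The plan is as follows. For part (a), start from Corollary \ref{cor:convfullbranch}, which gives $\lcB_z^\eps \Rightarrow \lcB_z$ in $\lcS_z$, and pass to an almost surely convergent subsequence via Skorokhod embedding. Convergence in $\lcS_z$ yields uniform convergence of the inverse Loewner maps $g_t^\eps$ to $g_t$ on compact subsets of $\lcB_z(t)$, uniformly for $t$ in any compact interval $[0,T]$ with $T<\sigma_{z,w}$. Applied at the point $w$, this gives $|g_t^\eps(w)|\to|g_t(w)|$ uniformly on $[0,T]$ whenever $T<\sigma_{z,w}$. The remaining task is to deduce from this uniform convergence that $\sigma_{z,w,\delta}^\eps\to\sigma_{z,w,\delta}$, and then extract the joint convergence of the stopped chains via Remark \ref{rmk:stopped_loewner}.

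The key technical point is that $t\mapsto|g_t(w)|$ is continuous and strictly increasing on $[0,\sigma_{z,w})$. Monotonicity comes from the Schwarz lemma applied to the composition $g_s\circ g_t^{-1}:g_t(\lcB_z(s))\to\D$ (which fixes $0$) for $s<t$, which gives $|g_s(w)|\le|g_t(w)|$. Strict monotonicity follows from the radial Loewner equation on each interval where the driving function is continuous: there one computes $\partial_t\log|g_t(w)|^2=2\operatorname{Re}\bigl((W_t+g_t(w))/(W_t-g_t(w))\bigr)>0$ whenever $g_t(w)\ne W_t$, and this holds as long as $t<\sigma_{z,w}$. Since the CLE$_4$ exploration's $-\log\CR$ time accumulates only on intervals of continuous driving, this gives strict monotonicity. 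Continuity and strict monotonicity together make the first hitting time of level $1-\delta$ a continuous functional of the path on $[0,T]$ with $T\in(\sigma_{z,w,\delta},\sigma_{z,w})$ under uniform convergence. This yields $\sigma_{z,w,\delta}^\eps\to\sigma_{z,w,\delta}$ almost surely, and hence (a).

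For part (b), observe that $\delta\mapsto\sigma_{z,w,\delta}$ is non-increasing, so $\sigma_{z,w,\delta}$ is non-decreasing as $\delta\downarrow 0$ and bounded above by $\sigma_{z,w}$. By continuity of $t\mapsto|g_t(w)|$ together with $|g_t(w)|\to 1$ as $t\to\sigma_{z,w}$ (which is the very definition of $\sigma_{z,w}$), the limit must equal $\sigma_{z,w}$. Monotone convergence of the stopping times, combined with Remark \ref{rmk:stopped_loewner}, yields $\lcTB_{z,w,\delta}\to\lcTB_{z,w}$ in $\lcS_z$. The main obstacle I anticipate is the careful handling of strict monotonicity of $|g_t(w)|$ in the measure-driven setting of the CLE$_4$ exploration: the driving function has jumps between loops, but between jumps the ODE argument applies, and at jump times $g_t(w)$ is simply constant, so the first hitting time of $1-\delta$ remains a continuous functional in the end.
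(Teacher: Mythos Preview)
Your proposal is correct and follows essentially the same approach as the paper's proof, which is much terser: for (a) the paper simply couples so that $\lcB_z^\eps\to\lcB_z$ a.s.\ in $\lcS_z$ and asserts that the conclusion ``is clear from the definition of convergence in $\lcS_z$'', and for (b) it gives exactly your monotonicity argument. The strict monotonicity of $t\mapsto|g_t(w)|$ that you establish via the radial Loewner ODE is in fact recorded later in the paper as Lemma~\ref{lem:radial_flow_out}; your closing remarks about jump times are slightly off (in the $-\log\CR$ parametrization no time elapses at the driving function's discontinuities, so the ODE computation $\partial_t|g_t(w)|^2>0$ holds for a.e.\ $t$ and gives strict monotonicity directly), but this does not affect the argument.
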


\begin{proof}
	For (a) we use that ${\lcB}^\eps_{z}\Rightarrow {\lcB}_z$ in $\lcS_z$. Taking a coupling  $({\lcB}_z,({\lcB}^{\eps}_z)_{\eps>0})$ such that this convergence is almost sure, it is clear from the definition of convergence in $\lcS_z$ that, under this coupling, $({\lcTB}_{z,w,\delta}^\eps,\sigma_{z,w,\delta}^\eps)\to ({\lcTB}_{z,w,\delta},\sigma_{z,w,\delta})$ almost surely for every $\delta>0$.
	Statement (b) holds because $\sigma_{z,w,\delta}\to \sigma_{z,w}$ a.s.\ as $\delta\to 0$. Indeed, $\sigma_{z,w,\delta}$ is almost surely increasing in $\delta$ and bounded above by $\sigma_{z,w}$ so must have a limit $\sigma^*\le \sigma_{z,w}$ as $\delta\to 0$. On the other hand, $w$ cannot be mapped anywhere at positive distance from the boundary under $g_{\sigma^*}[{\lcB}_z]$, so it must be that $\sigma^*\ge \sigma_{z,w}$. 
\end{proof}\\

Thus we can reduce the proof of Proposition \ref{prop:order_simple} to the following lemma. 

\begin{lemma}\label{lem:cond_exp_ssl}
	For any continuous bounded function $F$ with respect to $\lcS_z$, and any fixed $\delta>0$, we have that
	\[  \mathbb{E}[\cO^\eps_{z,w} F({\lcTB}^\eps_{z,w,\delta})] \to  \frac{1}{2} \mathbb{E}[F({\lcTB}_{z,w,\delta})]\]
	as $\eps\to 0$.
\end{lemma}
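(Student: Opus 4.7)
The plan is to reduce Lemma~\ref{lem:cond_exp_ssl} to showing that the conditional expectation $\mathbb{E}[\cO^\eps_{z,w}\mid \lcTB^\eps_{z,w,\delta}]$ converges to $1/2$ in probability as $\eps\downarrow 0$: then combining with Lemma~\ref{lem:Dzw}(a) and bounded convergence gives the conclusion, since $F$ is bounded continuous on $\lcS_z$ and $\cO^\eps_{z,w}\in\{0,1\}$.

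To produce this conditional convergence, I would apply the strong Markov property of the radial Loewner flow at the stopping time $\sigma^\eps_{z,w,\delta}$. Let $g^\eps=g_{\sigma^\eps_{z,w,\delta}}[\lcTB^\eps_z]$ be the uniformizing map sending $(\lcB^\eps_z)_{\sigma^\eps_{z,w,\delta}}$ to $\D$: under $g^\eps$ we have $z\mapsto 0$, $w\mapsto \tilde w^\eps$ (at distance $\delta$ from $\partial\D$ by definition of $\sigma^\eps_{z,w,\delta}$), the tip of $\eta^\eps_z$ maps to some $\xi^\eps\in\partial\D$, and the force point to some $\tilde\xi^\eps\in\partial\D$, all $\lcTB^\eps_{z,w,\delta}$-measurable. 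The image of the continuation of $\lcB^\eps_z$ is a fresh radial $\SLE_\kp(\kp-6)$ from $\xi^\eps$ to $0$ in $\D$ with force point at $\tilde\xi^\eps$. By Lemma~\ref{lem:radial_chordal}, until $\tilde\xi^\eps$ is swallowed this continuation is (up to time change) a chordal $\SLE_\kp$ from $\xi^\eps$ to $\tilde\xi^\eps$, and by Remark~\ref{rmk:slek_markov} it subsequently proceeds as an independent radial $\SLE_\kp(\kp-6)$ in the unexplored component containing $0$ and $\tilde w^\eps$ with force point at the tip. Recall that $\cO^\eps_{z,w}=1$ iff, at the first time this continuation separates $0$ from $\tilde w^\eps$, the point $\tilde w^\eps$ lies on the left (red) side of the separating curve.

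Following the outline in the preamble, I would decompose into two scenarios: (A) the chordal piece already separates $0$ and $\tilde w^\eps$, or (B) it does not. On (B), after the force point is swallowed, the continuation is a fresh radial $\SLE_\kp(\kp-6)$ in a conformal image of $\D$, and the question reduces to whether the associated space-filling $\SLE_\kp$ visits $0$ before (the image of) $\tilde w^\eps$; a conditional version of Lemma~\ref{lem:pre-1/2}, proved by the same coupling of $\SLE_\kp(\kp-6)$ branches from different starting boundary points via Lemma~\ref{lem:uniform_equation} and target invariance, shows the conditional probability tends to $1/2$, uniformly in the pre-swallowing configuration. On (A), one needs a symmetry property of chordal $\SLE_\kp$ as $\kp\downarrow 4$: the chordal $\SLE_\kp$ from $\xi^\eps$ to $\tilde\xi^\eps$ is not left/right symmetric for fixed endpoints, but after applying a further conformal map to standardize the endpoints, the remaining dependence is on a single modulus, and the rapid winding of $W^\eps_0$ in \eqref{def:Wfromtheta} as $\kp\downarrow 4$ (as used in Lemma~\ref{lem:uniform_equation}) should allow a coupling with a mirror version of the continuation that swaps the role of left and right, yielding again a $1/2$ limit.

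The main obstacle is scenario (A): the symmetry of the chordal $\SLE_\kp$ segment is not manifest for fixed endpoints, so the $1/2$ must be extracted by showing that the precise configuration of $\xi^\eps,\tilde\xi^\eps,\tilde w^\eps$ on which $\mathbb{P}(\cO^\eps_{z,w}=1\mid \lcTB^\eps_{z,w,\delta})$ depends becomes effectively symmetrized as $\kp\downarrow 4$. Making this coupling argument quantitative and uniform enough that it survives conditioning on $\lcTB^\eps_{z,w,\delta}$ is the most delicate part of the argument; once it is in hand, the final assembly via Lemma~\ref{lem:Dzw}(a) and bounded convergence is routine.
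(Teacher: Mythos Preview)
Your overall structure matches the paper's: apply the Markov property at $\sigma^\eps_{z,w,\delta}$, write the conditional expectation of $\cO^\eps_{z,w}$ as a probability $\mathbb{P}(\cA^\eps)$ for the continuation $\wt\eta^\eps$ (a radial $\SLE_\kp(\kp-6)$ from $\xi^\eps$ with force point at $\tilde\xi^\eps$), then split according to whether the chordal phase separates $0$ from $\tilde w^\eps$, and use Lemma~\ref{lem:pre-1/2} when it does not. This is exactly how the paper organizes the argument (their Lemma~\ref{lem:1/2}).

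The genuine gap is in your scenario~(A). The mechanism you propose---``rapid winding of $W^\eps_0$ as in Lemma~\ref{lem:uniform_equation}'' allowing a coupling with a mirror version---does not apply here. That winding phenomenon occurs when $\theta^\eps_0$ is making tiny excursions near $0$ (tip and force point nearly coincident), and it produces uniformization of the \emph{starting point} of the next large excursion. In scenario~(A) you are instead studying a chordal $\SLE_\kp$ from a \emph{given} $\xi^\eps$ to a \emph{given} $\tilde\xi^\eps$, with $0$ and $\tilde w^\eps$ at fixed positions; there is no small-$\theta$ regime and no winding to exploit. The paper's actual tool for this case is external: Theorem~\ref{KS} (convergence of chordal $\SLE_\kp$ to $\SLE_4$ in the supremum topology). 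Since $\SLE_4$ hits $\partial\D$ only at its endpoints, the first boundary hit of $\wt\eta^\eps$ after getting close to $\tilde\xi^\eps$ is, as $\kp\downarrow 4$, equally likely to be on the clockwise or counterclockwise arc---this is a direct symmetry of chordal $\SLE_\kp$ in $\HH$ combined with the fact that the asymmetry between the two arc-distances becomes irrelevant (see \eqref{left_scenario} and the surrounding argument). That is the $1/2$ in scenario~(A).

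Theorem~\ref{KS} is also doing hidden work in your scenario~(B): you need to know that, on the event that the chordal piece does \emph{not} separate $0$ and $\tilde w^\eps$ when it first gets close to $\tilde\xi^\eps$, it will with high probability swallow the force point without ever returning near $0$ or $\tilde w^\eps$ (so that Lemma~\ref{lem:pre-1/2} can be applied cleanly in the post-swallowing domain). The paper controls this via \eqref{eq:star}, which again relies on the convergence to $\SLE_4$. Without invoking Theorem~\ref{KS}, neither half of your decomposition can be closed.
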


\begin{proofof}{Proposition \ref{prop:order_simple} given Lemma \ref{lem:cond_exp_ssl}}
	Consider a subsequential limit as in Proposition \ref{prop:order_simple}. Write $\wt{{\lcTB}}_\delta$ for ${\lcB}$ stopped at the first time that $w$ is sent within distance $\delta$ of $\partial \D$ under the Loewner flow. Then it is clear (by taking a coupling where the convergence holds a.s.) that $(\wt{{\lcTB}}_\delta, \cO)$ is equal to the limit in law of $({\lcTB}^\eps_{z,w,\delta}, \cO_{z,w}^\eps)$ as $\eps\to 0$ along the subsequence.
	
	On the other hand, Lemma \ref{lem:cond_exp_ssl} implies that the law of such a limit is that of ${\lcTB}_{z,w,\delta}$ together with an independent Bernoulli random variable. Indeed, any continuous bounded function with respect to the product topology on on $\lcS_z \times \{0,1\}$ is of the form $({\lcTB},x)\to \I_{\{x=1\}} F({\lcTB})+\I_{\{x=0\}} G({\lcTB})$ for $F,G$ bounded and continuous with respect to $\lcS_z$. Moreover, $\I_{\{x=0\}}G=G-\I_{\{x=1\}}G$ and we already know that $\mathbb{E}[G({\lcTB}_{z,w,\delta}^\eps)] \to \mathbb{E}[G({\lcTB}_{z,w,\delta})]$ as $\eps \to 0$.
	
So $(\tilde{\lcTB}_\delta,\cO)$ has the law of $\lcB_{z,w,\delta}$ plus an independent Bernoulli random variable for each $\delta>0$. Combining with (b) of Lemma \ref{lem:Dzw} yields the proposition.
\end{proofof} \\

The proof of Lemma \ref{lem:cond_exp_ssl} will take up the remainder of this subsection. An important ingredient is the following result of \cite{KS17}, about the convergence of $\SLE_\kp$ to $\SLE_4$ as $\kp\downarrow 4$. 

\begin{theorem}[Theorem 1.10 of \cite{KS17}]\label{KS}
Chordal $\SLE_{\kappa'}$ between two boundary points in the disk converges in law  to chordal $\SLE_4$ as $\kp\downarrow 4$. This is with respect to supremum norm on curves viewed up to time reparameterization.
\end{theorem}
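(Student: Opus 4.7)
My plan is to prove Theorem \ref{KS} by combining a trivial convergence of Loewner driving functions with a uniform-in-$\kappa'$ precompactness estimate in the framework of Kemppainen--Smirnov. Concretely, I would couple chordal $\SLE_{\kappa'}$ for every $\kappa'$ close to $4$ together with chordal $\SLE_4$ by using a single underlying standard Brownian motion $B$, so that the driving functions are $\sqrt{\kappa'}\,B_t$ and $2 B_t$ respectively. As $\kappa' \downarrow 4$, these driving functions converge almost surely, uniformly on compact time intervals, and by continuous dependence of the (chordal) Loewner flow on its driving function, the corresponding hulls converge in the Carath\'eodory sense viewed from every interior point. In particular the generating curves, parametrized by half-plane capacity, converge pointwise in time.

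To upgrade this to convergence in law with respect to the supremum norm modulo time reparametrization, I would apply the abstract precompactness framework developed in \cite{KS17}. The input this framework requires is a uniform geometric condition on the family of random curves, roughly a power-law bound on the probability that the curve makes an \emph{unforced} crossing of a thin topological annulus, with exponent and constant depending only on the modulus of the annulus. Once such a bound is established uniformly in $\kappa'$ for $\kappa' \in (4, 4+\delta)$, tightness in the space of continuous curves modulo reparametrization follows. Any subsequential limit in law must then have driving function $2 B_t$ by the first step, so must be chordal $\SLE_4$, giving the desired convergence.

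The main obstacle is therefore to establish the annulus-crossing estimate uniformly for $\kappa'$ near $4$. For a single value of $\kappa'$, one classically bounds the probability of an unforced $\SLE_{\kappa'}$ crossing of an annulus $\{r < |z - z_0| < R\}$ by an explicit quantity involving hypergeometric martingales for $\SLE_{\kappa'}$, decaying like $(r/R)^{\alpha}$ for an exponent $\alpha = \alpha(\kappa') > 0$. Since $\alpha$ depends continuously on $\kappa'$ and is strictly positive at $\kappa' = 4$ (where $\SLE_4$ is simple), a uniform bound with exponent $\alpha(4)/2$, say, should hold in a neighborhood of $\kappa' = 4$. The careful verification that the constants and exponents in these annulus estimates can indeed be taken uniformly in $\kappa'$ near $4$ is the core technical step, and is essentially the content of Theorem 1.10 of \cite{KS17}. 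With the uniform annulus estimate in hand, precompactness and identification of the limit are then routine.
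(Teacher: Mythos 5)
This statement is not proved in the paper at all: it is imported verbatim as Theorem 1.10 of \cite{KS17}, so there is no internal proof to compare against. Judged as a standalone argument, your outline does follow the actual Kemppainen--Smirnov strategy (convergence of driving processes, plus a crossing-estimate condition that is uniform in $\kappa'$ and yields tightness modulo reparametrization, plus identification of subsequential limits), so the architecture is the right one.

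However, as written the proposal has a genuine gap, and in fact a circularity: the one step you identify as ``the core technical step'' --- the unforced annulus-crossing bound with exponent and constants uniform for $\kappa'$ in a neighbourhood of $4$ --- is not established; it is asserted to be plausible by continuity of the exponent and then explicitly deferred to ``essentially the content of Theorem 1.10 of \cite{KS17}'', i.e.\ to the very theorem being proved. Uniformity of the constants in the hypergeometric/martingale estimates over a range of $\kappa'$ is exactly where the work lies, and it cannot be waved through by continuity of the exponent alone. A second, smaller gap is in the identification step: knowing that the coupled driving functions $\sqrt{\kappa'}B$ converge to $2B$ and that the hulls converge in the Carath\'eodory sense does not by itself tell you that a subsequential limit in the curve topology (supremum norm modulo reparametrization) is a Loewner curve driven by $2B$; this implication is part of the Kemppainen--Smirnov machinery (their main theorem relating tightness in the curve topology to convergence of driving processes and of the capacity parametrization), and it should be invoked explicitly rather than treated as ``routine''. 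In short: correct roadmap, but the proposal does not constitute a proof independent of the cited result.
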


\begin{proofof}{Lemma \ref{lem:cond_exp_ssl}} 
	Since $F$ is bounded, subsequential limits of  $\mathbb{E}[\cO^\eps_{z,w} F({\lcTB}_{z,w,\delta}^\eps)]$ always exist. Therefore, we only need to show that such a limit must be equal to $(1/2) \mathbb{E}[F({\lcTB}_{z,w,\delta})]$. 
	
\begin{figure}
	\centering
	\includegraphics[width=\textwidth]{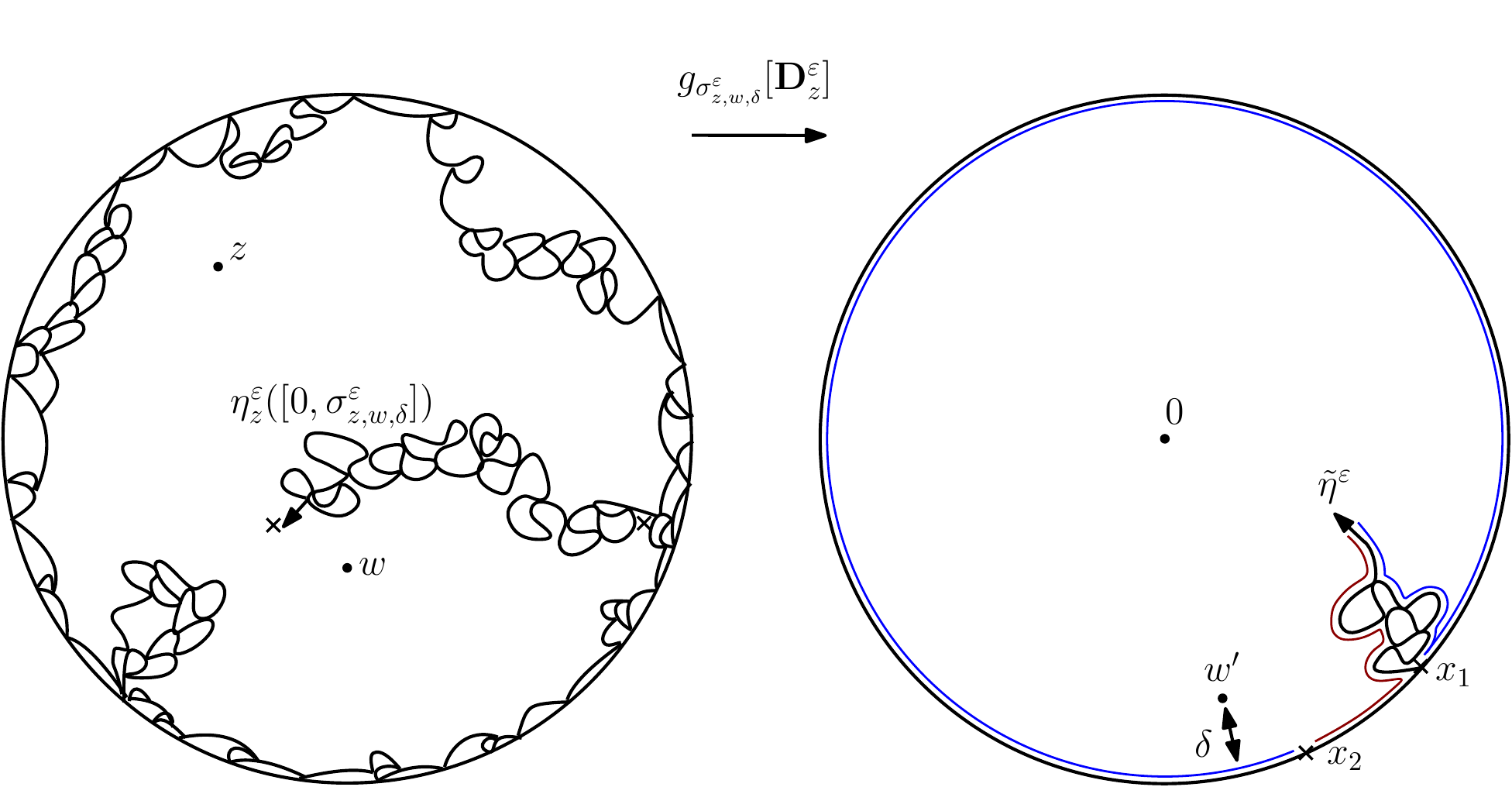}
	\caption{\emph{The SLE$_\kp(\kp-6)$ branch $\eta_z^\eps$, run up to time $\sigma_{z,w,\delta}^\eps$. This is the first time that under the Loewner map, $w$ is sent within distance $\delta$ of the boundary. The future of the curve has image  $\wt{\eta}^\eps$ under this map, and is an SLE$_\kp(\kp-6)$ starting from $x_1=\eta_z^\eps(\sigma_{z,w,\delta}^\eps)$  with a force point at $x_2\in \partial \D$. $z$ is visited before $w$ by the original space-filling $\SLE_\kp$ iff when $\wt{\eta}^\eps$ separates $0$ and $w'$ (the image of $w$), the component containing $0$ is ``monocolored''.}}
	\label{fig:almost_sep}
\end{figure}
	For this, we apply the map $g_{\sigma^\eps_{z,w,\delta}}[{\lcB}_z^\eps]$: recall that this is the unique conformal map from $({\lcB}_z^\eps)_{\sigma_{z,w,\delta}^\eps}$ to $\D$ that sends $z$ to $0$ and has positive real derivative at $z$, see Figure \ref{fig:almost_sep}. We then use the Markov property of $\SLE_{\kappa'}(\kp-6)$. This tells us that conditionally on ${\lcTB}^\eps_{z,w,\delta}$, the image of  $\eta^\eps_z$ under this map is that of an $\SLE_{\kappa'}(\kp-6)$ started at some $x_1\in \partial \D$ with a force point at $x_2\in \partial \D$ (where $x_1,x_2$ are measurable with respect to ${\lcTB}^\eps_{z,w,\delta}$). Let us call this curve $\wt{\eta}^\eps$. Let $w'$ be the image of $w$ under $g_{\sigma^\eps_{z,w,\delta}}[{\lcB}_z^\eps]$, which is also measurable with respect to $D^\eps_{z,w,\delta}$ and has $|w'|=1-\delta$ a.s.\
	Then the conditional expectation of $\cO^\eps_{z,w}$ given ${\lcTB}_{z,w,\delta}^\eps$  can be written as a probability for $\wt{\eta}^\eps$. Namely, it is just the probability that when $\wt{\eta}^\eps$ first separates $w'$ and $0$, the component containing $0$  either has boundary made up of entirely of the left hand side of $\wt{\eta}^\eps$ and the clockwise arc from $x_1$ to $x_2$, or the right hand side of $\wt{\eta}^\eps$ and the complementary counterclockwise arc. 
	We denote this event for $\wt{\eta}^\eps$ by $\cA^\eps$.  
	
	Therefore, by dominated convergence, Lemma \ref{lem:cond_exp_ssl} follows from Lemma \ref{lem:1/2} stated and proved below.
\end{proofof}

\begin{lemma}\label{lem:1/2}
Let $\wt{\eta}^\eps$ be	 an $\SLE_{\kappa'}(\kp-6)$ started at some $x_1\in \partial \D$ with a force point at $x_2\in \partial \D$.  
Fix $w'\in\D$. Let $\cA^\eps$   be the  event that when $\wt{\eta}^\eps$ first separates $w'$ and $0$, the component containing $0$  either has boundary made up of entirely of the left hand side of $\wt{\eta}^\eps$ and the clockwise arc from $x_1$ to $x_2$, or the right hand side of $\wt{\eta}^\eps$ and the complementary counterclockwise arc.  \begin{equation}  \label{eq:1/2} \mathbb{P}(\cA^\eps) \to \frac{1}{2} \text{ as } \eps\to 0 \; \textrm{(equivalently as $\kp\downarrow 4$)}.\end{equation} 
\end{lemma}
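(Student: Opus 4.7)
The plan is to split into cases depending on whether the force point $x_2$ is swallowed before or after $0$ and $w'$ are separated by $\tilde{\eta}^\eps$. Let $T_2^\eps$ denote the first time $\tilde{\eta}^\eps$ separates $0$ from $x_2$ and $\sigma^\eps$ the first time $\tilde{\eta}^\eps$ separates $0$ from $w'$. The strategy is to show, via two complementary symmetry arguments, that both conditional probabilities $\mathbb{P}(\cA^\eps \mid \sigma^\eps \le T_2^\eps)$ and $\mathbb{P}(\cA^\eps \mid \sigma^\eps > T_2^\eps)$ tend to $1/2$ as $\eps \downarrow 0$, so that the total probability tends to $1/2$ as well by the law of total probability.

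For the ``chordal case'' $\sigma^\eps \le T_2^\eps$, Lemma \ref{lem:radial_chordal} tells us that up to time $T_2^\eps$ the process $\tilde{\eta}^\eps$ agrees in law (modulo a time change) with chordal $\SLE_{\kappa'}$ from $x_1$ to $x_2$. By Theorem \ref{KS} this chordal process converges to chordal $\SLE_4$ from $x_1$ to $x_2$, which is a.s.\ a simple curve. Interpreting the event $\cA^\eps$ in terms of the chordal picture, the plan is to show that in the limit it reduces to a statement about which ``side'' of the simple $\SLE_4$ curve the point $0$ lies on at the time of separation, and then to invoke a symmetry of the chordal $\SLE_4$ law---for instance a reflection symmetry that exchanges the two arcs of $\partial \D$ together with the two sides of the curve, or the M\"obius involution swapping $x_1$ and $x_2$---to conclude that the conditional probability equals $1/2$ in the limit.

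For the ``space-filling case'' $\sigma^\eps > T_2^\eps$, I would apply the Markov property of radial $\SLE_{\kappa'}(\kp-6)$ at time $T_2^\eps$ (Remark \ref{rmk:slek_markov}): after conformally mapping the remaining $0$-component to $\D$, the future of $\tilde{\eta}^\eps$ has the law of a standard radial $\SLE_{\kappa'}(\kp-6)$ in $\D$ with force point infinitesimally close to the tip. In this mapped picture the event $\cA^\eps$ translates, via the coloring construction of the space-filling SLE, into the event that the associated space-filling $\SLE_{\kappa'}$ visits $0$ before the mapped image $\hat w$ of $w'$. A conformal-invariance argument combined with Lemma \ref{lem:pre-1/2} then gives that this probability tends to $1/2$ uniformly in $\hat w$ as $\eps \downarrow 0$, settling the space-filling case.

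The main obstacle, and the step I expect to require the most care, is the chordal case. One has to verify that the event $\cA^\eps$ is continuous enough under the limit $\kp \downarrow 4$ for Theorem \ref{KS} to be directly applicable---in particular, that the boundary structure of the $0$-component at $\sigma^\eps$ depends only on the macroscopic geometry of the limiting simple curve and is not corrupted by the (vanishing, but nonzero) near-boundary self-touchings of $\tilde{\eta}^\eps$ for $\kp$ slightly above $4$. Once this is in place, one must also pinpoint the correct exact symmetry of chordal $\SLE_4$ that produces the factor $1/2$ in a way consistent with the precise pairing (left $+$ clockwise arc) vs.\ (right $+$ counterclockwise arc) appearing in the definition of $\cA^\eps$.
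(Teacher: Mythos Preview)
Your overall decomposition into the ``chordal'' case $\sigma^\eps\le T_2^\eps$ and the ``space-filling'' case $\sigma^\eps>T_2^\eps$ matches the paper, and your treatment of the space-filling case via the Markov property at $T_2^\eps$ together with Lemma~\ref{lem:pre-1/2} is essentially the paper's argument for \eqref{right_scenario}.

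The gap is in your chordal case. Neither of the symmetries you propose can work: a reflection of $\D$ exchanging the two boundary arcs, or the M\"obius involution swapping $x_1$ and $x_2$, will move the interior points $0$ and $w'$ to new, unrelated positions, so the event $\cA^\eps$ is not mapped to its complement and there is no reason the conditional probability should be $1/2$. The paper's mechanism is different and more local. One first stops at an intermediate time $T_u^\eps$, the first entrance of $\wt\eta^\eps$ into a small ball $B_{x_2}(u)$, and uses Theorem~\ref{KS} to argue that (i) with high probability $0$ and $w'$ have not yet been separated by time $T_u^\eps$, and (ii) the curve does not return near $0$ or $w'$ after $T_u^\eps$. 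Thus, on the event that $0$ and $w'$ lie on opposite sides of $\wt\eta^\eps_u\cup\overline{B_{x_2}(u)}$, whether $\cA^\eps$ occurs is determined by which of the two boundary arcs the curve hits first after $T_u^\eps$. By the Markov property this reduces to a statement about a chordal $\SLE_{\kp}$ in $\HH$ from $0$ to $\infty$ hitting $[R,\infty)$ before $(-\infty,-L]$; the convergence to $\SLE_4$ (which hits neither half-line) plus the exact left--right symmetry of $\SLE_{\kp}$ from $0$ to $\infty$ then forces this probability to $1/2$ as $\kp\downarrow 4$. So the correct symmetry is not a global symmetry of the original configuration but the left--right symmetry of chordal $\SLE_{\kp}$ applied \emph{after} the Markov property, at a time when the tip is already near the target $x_2$; this is the step your proposal is missing.
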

Another way to describe the event $\cA^\eps$ is the following. If the clockwise boundary arc from $x_1$ to $x_2$ together with the left hand side of $\wt{\eta}^\eps$ is colored red, and the counterclockwise boundary arc together with the right hand side of $\wt{\eta}^\eps$ is colored blue (as in Figures \ref{fig:almost_sep} and \ref{fig:almost_sep_2}) then $\cA^\eps$ is the event that when $0$ and $w'$ are separated, the component containing $0$ is ``monocolored''. \\

\emph{Outline for the proof of Lemma \ref{lem:1/2}.}
Note that until the first time that $0$ is separated from $x_2$, $\wt{\eta}^\eps$ has the law (up to time reparameterization) of a chordal $\SLE_{\kappa'}$ from $x_1$ to $x_2$ in $\D$: see Lemma \ref{lem:radial_chordal}. Importantly, we know by Theorem \ref{KS} that this converges to chordal $\SLE_4$ as $\kp \downarrow 4$.
	
	This is the main ingredient going into the proof, for which the heuristic is as follows. If $\wt{\eta}^\eps$ is very close to a chordal SLE$_4$, then after some small initial time it should not hit the boundary of $\D$ again until getting very close to $x_2$. At this point either $w'$ and $0$ will be on the ``same side of the curve" (scenario on the right of Figure \ref{fig:almost_sep_2}) or they will be on ``different sides'' (scenario on the left of Figure \ref{fig:almost_sep_2}).
	
	\begin{itemize}
		\item In the latter case (left of Figure \ref{fig:almost_sep_2}), note that $\wt{\eta}$ is very unlikely to return anywhere near to $0$ or $w'$ before swallowing the force point at $x_2$. Hence, whether or not $\cA^\eps$ occurs depends only on whether the curve goes on to hit the boundary ``just to the left'' of $x_2$, or ``just to the right''. Indeed, hitting on one side will correspond to $0$ being in a monocolored red bubble when it is separated from $w'$, meaning that $\cA^\eps$ will occur, while hitting on the other side will correspond to $w'$ being in a monocolored blue bubble, and it will not. By the Markov property and symmetry, we will argue that each of these happen with (conditional) probability close to $1/2$.
		\item In the former case (right of Figure \ref{fig:almost_sep_2}), $\wt{\eta}$ will go on to swallow the force point $x_2$ before separating $0$ and $w'$, with high probability as $\kp\downarrow 4$. Once this has occurred, $\wt{\eta}^\eps$ will continue to evolve in the cut-off component containing $0$ and $w'$, as an $\SLE_\kp(\kp-6)$ with force point initially adjacent to  the tip. But then by mapping to the unit disk again, the conditional probability of $\cA^\eps$ becomes the probability that a space-filling $\SLE_\kp$ visits one particular point before another. This converges to $1/2$ as $\kp\downarrow 4$ by Lemma \ref{lem:pre-1/2}.
	\end{itemize}
	
	\begin{figure}
		\includegraphics[width=\textwidth]{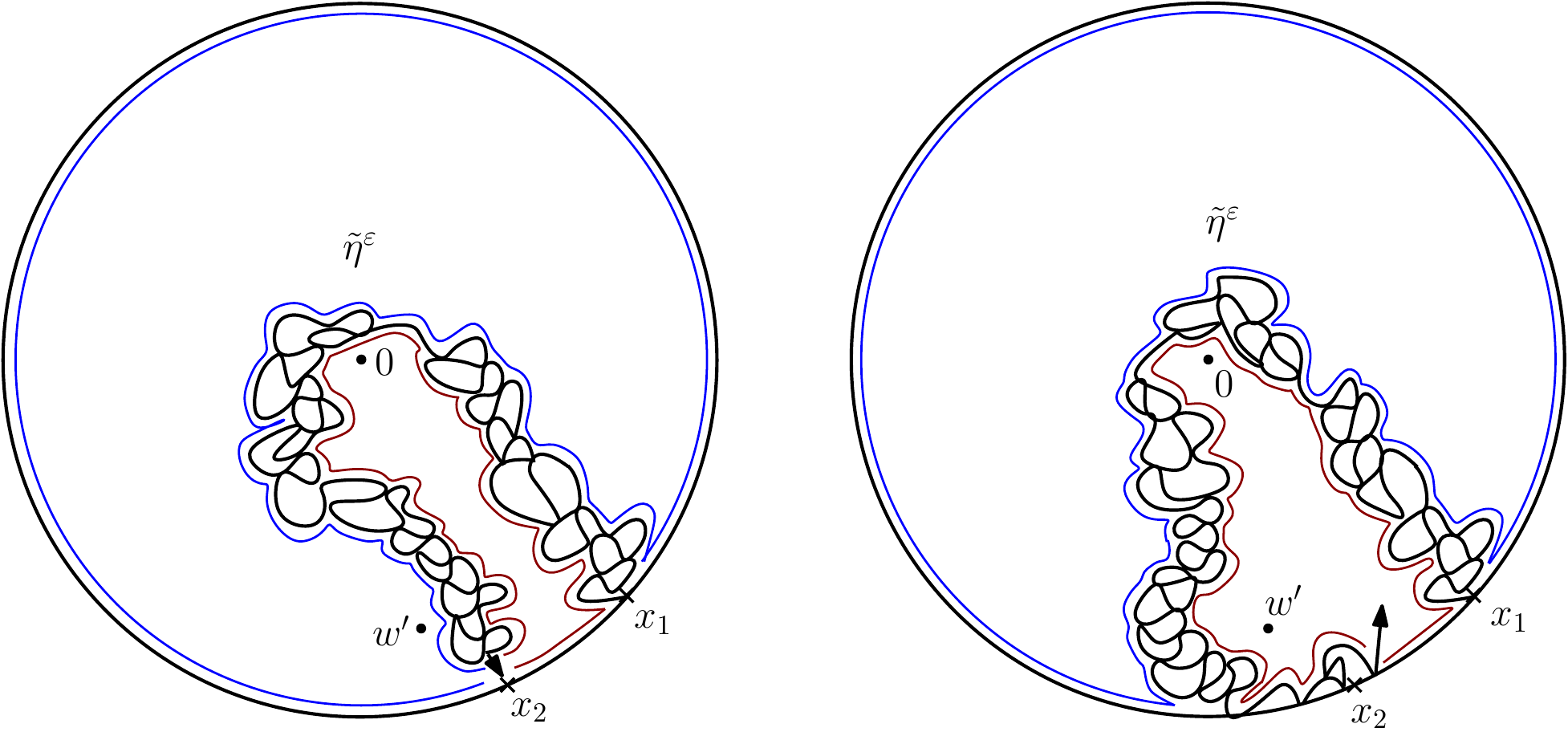}
		\caption{\emph{{Illustration of Lemma~\ref{lem:1/2}.} The two scenarios  that can occur when the force point $x_2$ is swallowed by $\wt{\eta}^\eps$. On the left, $0$ and $w'$ are on opposite sides of the curve (there is also an analogous scenario when $0$ is on the ``blue side'' and $w'$ is on the ``red side''). If this happens, we are interested whether $\wt{\eta}^\eps$ hits the blue or the red part of $\partial \D$ first. On the right, they are on the same side of the curve and we are interested in what happens after  $x_2$ is swallowed.}} \label{fig:almost_sep_2}
		\centering
	\end{figure}

	\begin{proofof}{Lemma \ref{lem:1/2}} Let us now proceed with the details. For $u>0$ small, let $\wt{\eta}^\eps_u$ be $\wt{\eta}^\eps$ run until the first entry time $T^\eps_u$ of $\D \cap B_{x_2}(u)$.  By Theorem \ref{KS}, the probability that $\wt{\eta}^\eps$ separates $0$ or $w'$ from $x_2$ before time $T^\eps_u$ tends to $0$ as $\eps\to 0$ for any fixed $u<|x_2-x_1|$. We write $E_{u,\text{b}}^\eps$ for this event. 
	
	We also {fix} a $u'>0$, chosen such that $x_1,0$ and $w'$ are contained in the closure of $\D \setminus B_{x_2}(u')$. Again from the convergence to SLE$_4$ we can deduce that
	
	\begin{equation}\label{eq:star} \mathbb{P}\left(\wt{\eta}^\eps \text{ revisits } \D \setminus B_{x_2}(u') \text{ after time }T^\eps_u \right)\to 0 \text{ as } u\to 0, \text{ uniformly in } \eps . \end{equation}
	The point of this is that $\wt{\eta}^\eps$ cannot ``change between the configurations in Figure \ref{fig:almost_sep_2}'' without going back into $\D\setminus B_{x_2}(u')$.
	Write:
	\begin{itemize} 
		\itemsep0em
		\item  $E_{u,\text{l}}^\eps$ for the intersection of $(E_{u,\text{b}}^\eps)^c$ and the event that \corr{$\wt{\eta}^\eps_u\cup \overline{B_{x_2}(u)}$} separates $0$ and $w'$ in $\D$, with $0$ on the left of $\wt{\eta}^\eps_u$;\item  $E_{u,\text{r}}^\eps$ for the same thing but with left replaced by right; \item  $E_{u,\text{s}}^\eps$ for the intersection of $(E_{u,\text{b}}^\eps)^c$ and the event that \corr{$\wt{\eta}_u^\eps\cup \overline{B_{x_2}(u)}$} does not separate $0$ and $w'$ in $\D$. 
	\end{itemize}
	Then we can decompose 
	\begin{align*}
		\mathbb{P}(\cA^\eps) & = &  \mathbb{E}[\mathbb{P}(\cA^\eps \, | \, E_{u,\text{b}}^\eps )\I_{E_{u,\text{b}}^\eps}+\mathbb{P}(\cA^\eps \, | \, E_{u,\text{l}}^\eps )\I_{E_{u,\text{l}}^\eps}+\mathbb{P}(\cA^\eps \, | \,E_{u,\text{r}}^\eps )\I_{E_{u,\text{r}}^\eps}+\mathbb{P}(\cA^\eps \, | \, E_{u,\text{s}}^\eps)\I_{E_{u,\text{s}}^\eps}] \nonumber \\
		& = & \underset{\textcircled{1}}{\mathbb{E}[{\cA}^\eps \I_{E_{u,\text{b}}^\eps}]} + \underset{\textcircled{2}}{\mathbb{E}[\mathbb{P}(\cA^\eps \, | \, E_{u,\text{l}}^\eps )\I_{E_{u,\text{l}}^\eps}]} + \underset{\textcircled{3}}{\mathbb{E}[\mathbb{P}(\cA^\eps \, | \,E_{u,\text{r}}^\eps )\I_{E_{u,\text{r}}^\eps}]} +\underset{\textcircled{4}}{\mathbb{E}[\mathbb{P}(\cA^\eps \, | \, E_{u,\text{s}}^\eps)\I_{E_{u,\text{s}}^\eps}]}
	\end{align*}
	By the observations of the previous paragraph, $\mathbb{P}(E_{u,\text{b}}^\eps)\to 0$ as $\eps\to 0$ for any fixed $u$, and therefore also \begin{equation} \label{1to 0}\textcircled{1} \to 0 \text{ as } \eps\to 0 \text{ for any fixed } u. \end{equation}
	
	Let us now describe what is going on with the terms $\textcircled{2},\textcircled{3}$ and $\textcircled{4}$. The term $\textcircled{2}$ corresponds to the left scenario of Figure \ref{fig:almost_sep_2}, and the term $\textcircled{3}$ corresponds to the same scenario, but when $0$ and $w'$ lie on opposite sides of the curve to those illustrated in the figure. We will show that: 
	\begin{equation}\label{left_scenario}
	 \lim_{u\to 0} \lim_{\eps\to 0} \, (\textcircled{2} + \textcircled{3}) =  \frac{1}{2} \mathbb{P}(\SLE_4 \text{ from } x_1 \text{ to } x_2 \text{ in } \D \text{ separates } w' \text{ and } 0)=: \frac{p}{2}
	\end{equation}
The term $\textcircled{4}$ corresponds to the scenario on the right of Figure \ref{fig:almost_sep_2}. We will show that: 
	\begin{equation}\label{right_scenario} \lim_{u\to 0}\lim_{\eps \to 0} \, \textcircled{4} = \frac{1}{2}(1-p)=\frac{1}{2}\mathbb{P}(\SLE_4 \text{ from } x_1 \text{ to } x_2 \text{ in } \D \text{ does not separate } w' \text{ and } 0).\end{equation} 
	Combining \eqref{left_scenario}, \eqref{right_scenario}, \eqref{1to 0} and the decomposition $\mathbb{P}(\cA^\eps)=\textcircled{1}+\textcircled{2}+\textcircled{3}+\textcircled{4}$ gives  \eqref{eq:1/2}, and thus completes the proof. So all that remains is to show \eqref{left_scenario} and \eqref{right_scenario}. 
	\smallskip
	
	\emph{Proof of \eqref{left_scenario}.} First, by \eqref{eq:star}, we can pick $u$ small enough such that the differences 
	\begin{align*} & \left(\textcircled{2} - \mathbb{E}[\mathbb{P}(\wt{\eta}^\eps\corr{|_{[T_u^\eps,\infty)}} \text{ hits the clockwise arc between } x_1 \text{ and } x_2 \text{ first }\, | \, E_{u,\text{l}}^\eps )\, \I_{E_{u,\text{l}}^\eps}] \right) \text{ and } \\ &
		\left(\textcircled{3} - \mathbb{E}[\mathbb{P}(\wt{\eta}^\eps\corr{|_{[T_u^\eps,\infty)}} \text{ hits the counterclockwise arc between } x_1 \text{ and } x_2 \text{ first } \, | \, E_{u,\text{r}}^\eps )\,\I_{E_{u,\text{r}}^\eps}]\right) \end{align*}
	are arbitrarily small, uniformly in $\eps$. All we are doing here is using the fact that if $u$ is small enough, $\wt\eta^\eps$ will not return anywhere close to $0$ or $w'$ after time $T_u^\eps$. This allows us to reduce the problem to estimating conditional probabilities for chordal $\SLE_\kp$.  To estimate these probabilities (the conditional probabilities in  the displayed equations above) we can use Theorem \ref{KS}, \corr{plus symmetry. In particular, Theorem \ref{KS} implies that for a chordal SLE$_\kp$ curve on $\HH$ from $0$ to $\infty$, the probability that it hits $[R,\infty)$ before $(-\infty,-L]$ for any fixed $L,R\in (0,\infty)$ can be made arbitrary close to the probability that it hits $[\max(L,R),\infty)$ before $(-\infty,-\max(L,R)]$ as $\kp\downarrow 4$. This is because SLE$_4$ does not hit the boundary apart from at the end points and the convergence is in the uniform topology. Since the probability that chordal SLE$_\kp$ in $\HH$ from $0$ to $\infty$ hits $[\max(L,R),\infty)$ before $(-\infty,-\max(L,R)]$ is $1/2$ for every $\kp$ by symmetry, we see that the probability of hitting $[R,\infty)$ before $(-\infty,-L]$ converges to $1/2$ as $\kp \downarrow 4$.}  
	
	We use this to observe, by conformally mapping to $\HH$ that  $$\mathbb{P}\left(\wt{\eta}^\eps\corr{|_{[T_u^\eps,\infty)}} \text{ hits the clockwise arc between } x_1 \text{ and } x_2 \text{ first  }  \, | \, \wt{\eta}^\eps([0,T_u^\eps])\right)\to \frac{1}{2} $$ almost surely as $ \eps\to 0$. Using this along with dominated convergence, we obtain \eqref{left_scenario}.
	\smallskip

	\emph{Proof of \eqref{right_scenario}.}
	Write $E^\eps$ for the event that $\wt{\eta}^\eps$ swallows the force point $x_2$ before separating $0$ and $w'$. Then we can rewrite \textcircled{4} as 
	\begin{equation}
		\mathbb{E}[\cA^\eps (\I_{E^\eps_{u,s}}-\I_{E^\eps})]+\mathbb{E}[\cA^\eps \I_{E^\eps}].
	\end{equation} 
	Applying \eqref{eq:star} shows that the first term tends to $0$ as $u\to 0$, uniformly in $\eps$. 
Let us now show that the second tends to $(1/2)(1-p)$ as $\eps\to 0$.

To do this, we condition on $\wt{\eta}^\eps$ run up to the time $T^\eps_0$ that the force point $x_2$ is swallowed. Conditioned on this initial segment we can use the Markov property of $\SLE_\kp(\kp-6)$ to describe the future evolution of $\wt{\eta}^\eps$. Indeed, it is simply that of a radial $\SLE_{\kappa'}(\kp-6)$ started from $\wt{\eta}^\eps(T_0^\eps)\in \partial \D$ and targeted towards $0$, with force point located infinitesimally close to the starting point. 
Viewing the evolution of $\wt{\eta}^\eps$ after time $T_0^\eps$ as one branch of a space-filling $\SLE_{\kappa'}$ we then have 
	\begin{equation*}\mathbb{E}[\cA^\eps \I_{E^\eps}]  =  \mathbb{E}[\mathbb{P}(\text{space-filling SLE}_\kp \text{ started from } \wt{\eta}^\eps(T_0^\eps) \text{ hits } 0 \text{ before } w') \I_{E^\eps}] \end{equation*} which we further decompose as 
	\[ \frac{1}{2} \mathbb{P}(E^\eps) + \mathbb{E}\left[\left(\mathbb{P}(\text{space-filling SLE}_\kp \text{ started from } \wt{\eta}^\eps(T_0^\eps) \text{ hits } 0 \text{ before } w')-1/2\right) \I_{E_\eps}\right]. \]
	Since the first term  above tends to $(1/2)(1-p)$ as $\eps\to 0$, it again suffices by dominated convergence (and by applying a rotation) to show that for any $x\in \D$:
	$$\mathbb{P}(\eta^\eps \text{ hits } 0 \text{ before } x)
	\to \frac{1}{2} \text{ as } \eps\to 0.$$ 
	
This is precisely the statement of Lemma \ref{lem:pre-1/2}.	Thus we conclude the proof of \eqref{right_scenario}, and therefore Lemma \ref{lem:1/2}.
\end{proofof}

\subsection{Convergence of separation times}
We now want to prove that for $z\ne w$ the \emph{actual separation times} $\sigma_{z,w}^\eps$ converge to the separation time $\sigma_{z,w}$ in law (jointly with the exploration) as $\eps\to 0$. The difficulty is as follows. Suppose we are on a probability space where $\eta^\eps_z$ converges a.s.\ to $\eta_z$. Then we can deduce (by Lemma \ref{lem:Dzw}) that any limit of $\sigma_{z,w}^\eps$ must be greater than or equal to $\sigma_{z,w}$.  But it still could be the case that $z$ and $w$ are ``almost separated'' at some sequence of times that converge to $\sigma_{z,w}$ as $\eps\downarrow 0$, but that the  $\eta_z^\eps$ then go on to do something else for a macroscopic amount of time before coming back to finally separate $z$ and $w$. Note that in this situation the $\eta_z^\eps$ would be creating ``bottlenecks'' at the almost-separation times, so it would not contradict Proposition \ref{prop:order_simple}).

The main result of this subsection is the following.
\begin{proposition}\label{prop:jointDsigma} For any $z\ne w \in \cQ$
	\begin{equation}
		\label{sep_time_conv}(\lcB_z^\eps,\sigma_{z,w}^\eps)\Rightarrow (\lcB_z,\sigma_{z,w})\end{equation} as $\eps\to 0$, with respect to \cart convergence in $\lcS$ in the first co-ordinate, and convergence in $\R$ in the second.
	
\end{proposition}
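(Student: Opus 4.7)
The strategy is to establish joint tightness of $(\lcB_z^\eps, \sigma_{z,w}^\eps)$, take any subsequential limit $(\lcB_z, \sigma^*)$, and show $\sigma^* = \sigma_{z,w}$ almost surely. Tightness of $\sigma_{z,w}^\eps$ is immediate from the deterministic bound $\sigma_{z,w}^\eps \le \tau_z^\eps$ together with Corollary~\ref{cor:convfullbranch}; joint tightness with $\lcB_z^\eps$ follows from tightness of each coordinate. By Skorokhod embedding we may assume $(\lcB_z^\eps, \sigma_{z,w}^\eps) \to (\lcB_z, \sigma^*)$ almost surely along some subsequence.

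The lower bound $\sigma^* \ge \sigma_{z,w}$ is easy: for each fixed $\delta>0$, the deterministic inequality $\sigma_{z,w,\delta}^\eps \le \sigma_{z,w}^\eps$ together with Lemma~\ref{lem:Dzw}(a) gives $\sigma_{z,w,\delta} \le \sigma^*$ along the subsequence, and sending $\delta \to 0$ via Lemma~\ref{lem:Dzw}(b) yields $\sigma_{z,w} \le \sigma^*$.

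The upper bound $\sigma^* \le \sigma_{z,w}$ is the main obstacle, and is the reason this proposition is deferred. The difficulty is that Carath\'{e}odory convergence of $\lcB_z^\eps$ viewed from $z$ does not see what happens near $w$, so it cannot directly preclude ``bottleneck'' behavior: a scenario in which the image $(g_t^\eps[\lcB_z^\eps])(w)$ repeatedly approaches $\partial\D$ without reaching it (so that $\eta_z^\eps$ almost but not quite separates $z$ from $w$) and the branch drifts for a macroscopic amount of $-\log\CR$ time between each near-separation before eventually actually separating them. My plan is to rule this out by invoking the mating of trees correspondence developed in Section~\ref{sec:LQG}. Concretely, decorating $\lcB_z^\eps$ with an independent critical-LQG disk and switching to the quantum-area parameterization, the separation time $\sigma_{z,w}^\eps$ can be recast as a continuous functional of the pair of Brownian motions that encode the quantum boundary lengths along the branches. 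Combining convergence of these Brownian motions after the affine renormalization mentioned in the introduction with convergence of the time change between $-\log\CR$ and quantum area, continuity of the functional will force $\sigma^* = \sigma_{z,w}$.

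A more direct, LQG-free argument would require an $\eps$-uniform estimate of the form: if $|g_t^\eps[\lcB_z^\eps](w)|=1-\delta$ at some time $t$, then $\eta_z^\eps$ separates $z$ from $w$ within additional $-\log\CR$ time $h(\delta)$, with $h(\delta)\to 0$ as $\delta\to 0$ uniformly in $\eps$. After applying the Markov property of $\SLE_{\kp'}(\kp'-6)$ at time $\sigma_{z,w,\delta}^\eps$, this reduces to an estimate on an SLE$_{\kp'}(\kp'-6)$ in $\D$ with force point on $\partial\D$, showing that it separates $0$ from an arbitrary point at distance $\delta$ from the boundary quickly in $-\log\CR$; such an estimate seems substantially more delicate than the mating-of-trees route, which is why the latter is preferred.
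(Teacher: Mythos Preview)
Your setup, tightness, and lower bound match the paper's proof, modulo one slip: $\sigma_{z,w}^\eps \le \tau_z^\eps$ fails when $w$ lies inside the first loop around $z$. The paper (Remark~\ref{rmk:sep_time_tight}) instead bounds $\sigma_{z,w}^\eps$ by $-\log\CR$ of the first \emph{nested} $\CLE_{\kp}$ loop containing $z$ but not $w$.

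The genuine gap is in the upper bound. You propose to recast $\sigma_{z,w}^\eps$ as a continuous functional of the mating-of-trees Brownian motions and pass to the limit, but neither ingredient is available: $Z^\eps$ is parameterized by quantum area and encodes only boundary-length data, so locating the separation of two \emph{fixed Euclidean} points $z,w$ requires reconstructing $(h^\eps,\eta^\eps)$ from $Z^\eps$, which is measurable but not continuous; and the ``convergence of the time change between $-\log\CR$ and quantum area'' you invoke is nowhere established and would be at least as hard as the proposition itself. The paper's use of LQG is quite different. It argues by contradiction via two auxiliary events: $\wt S_{\delta,r}^\eps$ (some $r$-square is separated from $z$, with $z$ visited first, during $[\sigma_{z,w,\delta}^\eps,\sigma_{z,w,\delta}^\eps+s]$) and $S_{\delta,r}^\eps$ (same on $[\sigma_{z,w,\delta}^\eps,\sigma_{z,w}^\eps]$). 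The first has probability tending to $1$ as $r\to 0$ by Lemma~\ref{lem:septime_endofloop} and the coin-toss structure of the order variables; the second has probability tending to $0$ (equation~\eqref{eqn:bottleneck}) by an LQG argument whose mechanism is \emph{geometric}, not functional-analytic. Namely, on $S_{\delta,r}^\eps$ the monocolored component containing $z$ at the separation time still contains $w$ but has $w$ within conformal distance $\delta$ of its boundary. Sampling an independent LQG disk and a point $y^\eps\sim\mu^\eps$, with positive probability $y^\eps$ falls in the disconnected square, and then Theorem~\ref{thm:MOT} forces the component containing $z$ to be a quantum disk --- but Lemma~\ref{cor:disc_mapped} says quantum disks, uniformly in $\eps$, cannot push macroscopic $\mu^\eps$-mass into a $10\delta$-neighbourhood of the boundary under their canonical embedding, contradicting the presence of $B_w(v)$ there. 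So LQG enters as a structural no-bottleneck property of the cut-off surfaces, not as a continuity statement for $\sigma_{z,w}^\eps$.
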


\begin{remark}\label{rmk:sep_time_tight}
	It is easy to see that $\sigma_{z,w}^\eps$ is tight in $\eps$ for any fixed $z\ne w\in \D$. For example, this follows from Corollary \ref{cor:cleloopconv}, which implies that minus the log conformal radius, seen from $z$, of the first $\CLE_{\kp}$ loop containing $z$ and not $w$, is tight. Since $\sigma_{z,w}^\eps$ is bounded above by this minus log conformal radius, tightness of $\sigma_{z,w}^\eps$ follows.
\end{remark}
There is one situation where convergence of the separation times is already easy to see from our work so far. Namely, when $z$ and $w$ are separated (in the limit) at a time when a CLE$_4$ loop has just been drawn. 
More precisely: 

\begin{lemma}\label{lem:septime_endofloop}
Suppose that $\eps_n\downarrow 0$ is such that $$(\lcB_{z}^{\eps_n},\lcB_w^{\eps_n}, 
\sigma_{z,w}^{\eps_n}, \sigma_{w,z}^{\eps_n},\cO_{z,w}^{\eps_n})\Rightarrow (\lcB_z, \lcB_w^*,
\sigma_{z,w}^*,\sigma_{w,z}^*,\cO^*) \text{ as } n\to \infty$$ (where at this point we know that $\lcB_z,\lcB_w^*$ have the same \emph{marginal} laws as $\lcB_z,\lcB_w$, but not necessarily the same joint law). Then on the event that $\lcB_z$ separates $w$ from $z$ at a time $\sigma_{z,w}$ when a $\CLE_4$ loop $\Loop$ is completed,  we have that almost surely: \begin{compactitem} \item $\sigma_{z,w}^*=\sigma_{z,w}$; \item $\lcB_w^*$ is equal to $\lcB_z$ (modulo time reparameterization), up to the time $\sigma_{w,z}$ that $z$ is separated from $w$; \item $\sigma_{w,z}^*=\sigma_{w,z}$; and \item 
conditionally on the above event occurring, $\cO^*$ is  independent of $\lcB_z,\lcB_w^*$ and has the law of a Bernoulli$(\frac{1}{2})$ random variable.
\end{compactitem} \end{lemma}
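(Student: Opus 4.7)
The plan is to prove the four assertions in order. The key input, beyond the convergence already established, is Corollary~\ref{cor:cleloopconv}, which upgrades $\lcS_z$-convergence of the SLE branch to joint convergence with the nested CLE loops (in Hausdorff distance) and their interior bubbles (in Carath\'eodory topology). The second and third assertions will then follow from target invariance of the branching exploration, and the fourth from Proposition~\ref{prop:order_simple} combined with the Markov property of the CLE$_4$ exploration at a loop-completion time. The main obstacle is the upper bound in the first assertion: without the loop-completion hypothesis one cannot rule out pathological behavior in which $\eta_z^{\eps_n}$ approaches $w$ very closely, retreats, and returns only much later to actually separate them, producing a limit $\sigma_{z,w}^* > \sigma_{z,w}$. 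The loop-completion hypothesis is precisely what forces the separation to occur via the clean nested-loop structure controlled by Corollary~\ref{cor:cleloopconv}.

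For the first assertion $\sigma_{z,w}^* = \sigma_{z,w}$, the lower bound $\liminf_n \sigma_{z,w}^{\eps_n} \ge \sigma_{z,w}$ holds in general: combine $\sigma_{z,w,\delta}^{\eps_n} \le \sigma_{z,w}^{\eps_n}$ with Lemma~\ref{lem:Dzw}, letting $n\to\infty$ and then $\delta \to 0$. For the matching upper bound on our event, write $\Loop = \Loop_{z,i}$ for some (random) index $i$ and pass to a further subsequence so that, via Corollary~\ref{cor:cleloopconv} and Skorokhod embedding, one has the almost-sure joint convergence $(\Loop_{z,i}^{\eps_n}, \bub_{z,i}^{\eps_n}) \to (\Loop, \bub_{z,i})$ in (Hausdorff $\times$ Carath\'eodory). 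On our event $w \notin \bar{\bub_{z,i}}$, and since $\Loop$ has zero Lebesgue measure and $w \in \cQ$ is a fixed point, $w$ also lies at strictly positive distance from $\Loop$ almost surely. A standard path-connectivity argument combining the Hausdorff convergence of loops with the Carath\'eodory convergence of bubbles then gives $w \notin \bub_{z,i}^{\eps_n}$ for all $n$ large, hence $\sigma_{z,w}^{\eps_n} \le \tau_{z,i}^{\eps_n} := -\log\CR(z; \bub_{z,i}^{\eps_n})$. Carath\'eodory convergence of the bubbles gives $\tau_{z,i}^{\eps_n} \to \tau_{z,i} = \sigma_{z,w}$, completing the first assertion.

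For the second assertion, target invariance of the branching $\SLE_{\kp}(\kp-6)$ identifies $\lcB_z^{\eps_n}$ and $\lcB_w^{\eps_n}$ as the same sequence of domains modulo the reparameterization $t \mapsto -\log\CR(w; (\lcB_z^{\eps_n})_t)$, valid up to the physical separation of $z$ and $w$. Since this reparameterization map is continuous in the Carath\'eodory topology on domains containing $w$, passing to the joint limit via Skorokhod transfers the identification to $\{(\lcB_w^*)_s\,;\, s \le \sigma_{w,z}\} = \{(\lcB_z)_t\,;\, t \le \sigma_{z,w}\}$ modulo reparameterization, and simultaneously yields the third assertion $\sigma_{w,z}^* = \sigma_{w,z}$.

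Finally, Proposition~\ref{prop:order_simple} applied to $(z,w)$ gives that $\cO^*$ is Bernoulli$(1/2)$ and independent of $\{(\lcB_z)_t\,;\, t \le \sigma_{z,w}\}$; since our event is measurable with respect to this stopped branch, conditioning on it does not alter the law of $\cO^*$. The Markov property of the CLE$_4$ exploration at the loop-completion time says that the continuation of $\lcB_z$ inside $\bub_{z,i}$ is conditionally independent of the past given the stopped branch, and a routine conditioning argument then promotes independence of $\cO^*$ from the stopped branch to independence from all of $\lcB_z$. The coupling from the second assertion makes $\{(\lcB_w^*)_s\,;\, s \le \sigma_{w,z}\}$ a deterministic function of the stopped $\lcB_z$, and a further application of the Markov property at the separation time for $\lcB_w^*$ (which now explores the exterior component containing $w$) handles its continuation, yielding independence of $\cO^*$ from $\lcB_w^*$ as well.
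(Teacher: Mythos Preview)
Your overall plan matches the paper's, and the argument for the first assertion is essentially the paper's (the paper additionally reduces to the case $\Loop=\Loop_{z,1}$ by first switching the roles of $z$ and $w$ if $\Loop$ surrounds $w$ rather than $z$, then invoking the Markov property; your choice of a random index $i$ is fine but you should note this symmetry reduction). There are, however, two genuine gaps.

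\textbf{Third assertion.} Your continuity-of-reparameterisation argument does not produce the upper bound $\sigma_{w,z}^*\le\sigma_{w,z}$. The convergence $\lcB_z^{\eps_n}\to\lcB_z$ is Carath\'eodory \emph{seen from $z$}; it gives no control over $-\log\CR(w;\,\cdot\,)$ as $t\uparrow\sigma_{z,w}^{\eps_n}$, because near separation the $\eps_n$-curve could in principle do a large amount of work close to $w$ while barely changing $\CR(z;\,\cdot\,)$. Concretely, you are trying to interchange $\lim_n$ and $\lim_{t\uparrow\sigma_{z,w}^{\eps_n}}$ of $-\log\CR(w;(\lcB_z^{\eps_n})_t)$, and nothing you have written justifies this. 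The paper closes the gap by bringing in the extra input $\tau_{0,z}^{\eps_n}\to\tau_{0,z}$ from Corollary~\ref{rmk:convfullbranch}: once one knows $\sigma_{z,w}^{\eps_n}\in(\tau_{0,z}^{\eps_n},\tau_z^{\eps_n}]$ for large $n$, the $w$-component at the separation time \emph{contains} the $w$-component of $(\lcB_z^{\eps_n})_{\tau_{0,z}^{\eps_n}}\setminus\Loop_z^{\eps_n}$, and this latter domain has conformal radius from $w$ converging to $\e^{-\sigma_{w,z}}$ by Hausdorff convergence of the loop together with $\lcS_z$-convergence at the pre-loop time. That explicit sandwiching is what yields $\sigma_{w,z}^*\le\sigma_{w,z}$, and the second assertion then follows.

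\textbf{Fourth assertion.} The Markov property of the limiting $\CLE_4$ exploration says that the continuation of $\lcB_z$ after $\sigma_{z,w}$ is conditionally independent of the \emph{past of $\lcB_z$}; it says nothing about conditional independence from $\cO^*$, which is not part of the $\CLE_4$ filtration. Knowing that $\cO^*\perp(\text{stopped branch})$ and that $(\text{future})\perp(\text{stopped branch})$ given the stopped branch does \emph{not} yield $\cO^*\perp(\text{full }\lcB_z)$: one can couple $\cO^*$ non-trivially with the future while keeping both marginals correct. The paper instead works at the $\eps_n$ level: $\cO_{z,w}^{\eps_n}$ is measurable with respect to $\eta_z^{\eps_n}|_{[0,\sigma_{z,w}^{\eps_n}]}$, hence genuinely independent of the normalised future $g_{\sigma_{z,w}^{\eps_n}}[\lcB_z^{\eps_n}]((\lcB_z^{\eps_n})_{s+\sigma_{z,w}^{\eps_n}})_{s\ge 0}$, and this independence passes to the limit. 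The same $\eps_n$-level reasoning handles the future of $\lcB_w^{\eps_n}$.
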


\begin{proof}  Without loss of generality, by switching the roles of $z$ and $w$ if necessary and by the Markov property of the explorations, it suffices to consider the case that $\cL=\cL_z$ is the outermost $\CLE_4$ loop (generated by $\lcB_z$) containing $z$. 
	
	By Skorokhod embedding together with Corollary \ref{rmk:convfullbranch} and Proposition \ref{prop:cleloopconv}, we may assume that we are working on a probability space where the convergence assumed in the lemma holds almost surely, jointly with the convergence $\Loop_z^{\eps_n} \to \Loop_z$ (in the Hausdorff sense), $\bub_z^{\eps_n}=(\lcB_{z}^{\eps_n})_{\tau_z^{\eps_n}}\to \bub_z=(\lcB_z)_{\tau_z}=\mathrm{int}(\Loop_z)$ (in the Carth\'{e}odory sense) and $(\tau_{0,z}^{\eps_n},\tau_z^{\eps_n}) \to (\tau_{0,z},\tau_z)$.  (Recall the definitions of these times from Section \ref{def:tauz}). We may also assume that the convergence $\sigma_{z,w,\delta}^{\eps_n}\to \sigma_{z,w,\delta}$ holds almost surely as $n\to \infty$ for all rational $\delta>0$.
	
	Now we restrict to the event $E$ that $\lcB_z$ separates $z$ from $w$ at time $\tau_z$, so that in particular 
	 $w$ is at positive distance from $\Loop_z\cup (\lcB_z)_{\tau_z}=\overline{(\lcB_z)_{\tau_z}}$. The Hausdorff convergence $\Loop_z^{\eps_n} \to \Loop_z$ thus implies that $w\in \D\setminus \bub_z^{\eps_n}$ for all $n$ large enough (i.e., $w$ is outside of the first $\CLE_{\kp(\eps_n)}$ loop containing $z$), and therefore that $\sigma_{z,w}^{\eps_n}\le\tau^{\eps_n}_z$ for all $n$ large enough (i.e., separation occurs no later than this loop closure time). Since $\sigma_{z,w}^*$ is defined to be the almost sure limit of $\sigma_{z,w}^{\eps_n}$ as $n\to \infty$, and we have assumed that $\tau_z^{\eps_n}\to \tau_z$ almost surely, this implies that $\sigma_{z,w}^*\le \tau_z$ almost surely on the event $E$. On the other hand, we know that $\sigma_{z,w}^{\eps_n}\ge \sigma_{z,w,\delta}^{\eps_n}$  and $\sigma_{z,w,\delta}^{\eps_n}\to \sigma_{z,w,\delta}$ as $n\to \infty$ for all rational positive $\delta$, so that $\sigma_{z,w}^*\ge \sigma_{z,w,\delta}$ for all $\delta$ and therefore $\sigma_{z,w}^*\ge \lim_{\delta\to } \sigma_{z,w,\delta}=\sigma_{z,w}=\tau_z$ almost surely.	Together this implies that $\sigma_{z,w}=\tau_z=\sigma_{z,w}^*$ on the event $E$.
	
	Next, observe that by the same argument as in the penultimate sentence above, we have $\sigma_{w,z}^*\ge \sigma_{w,z}$ with probability one. Moreover, we saw that on the event $E$, $w\in \D\setminus \bub_z^{\eps_n}$ for all $n$ large enough. But we also have that $\sigma_{z,w}^{\eps_n}\to \tau_z$, so that $\sigma_{z,w}^{\eps_n}>\tau_{0,z}^{\eps_n}$  and therefore $w\in (\lcB^{\eps_n}_{z,w})_{\tau_{0,z}^{\eps_n}}\setminus \bub_z^{\eps_n}$ for all $n$ large enough. Hence, $$ \sigma_{w,z}^*=\lim_n \sigma_{w,z}^{\eps_n} \le \lim_n -\log \CR(w,(\lcB^{\eps_n}_{z,w})_{\tau_{0,z}^{\eps_n}}\setminus \bub_z^{\eps_n})=-\log \CR(w,(\lcB_{z})_{\corr{\tau_{0,z}}}\setminus \bub_z)=\sigma_{w,z}.$$ Combining the two inequalities above gives the third bullet point of the lemma, and since  $\lcB_{w,z}^{\eps_n}$ and $\lcB_{z,w}^{\eps_n}$ agree up to time parameterization until $z$ and $w$ are separated for every $n$, we also obtain the second bullet point.

	For the final bullet point,  if we write $\lcB_{z,w}$ for $\lcB_z$ stopped at time $\sigma_{z,w}$, we already know from the previous subsection that the law of $\cO^*$ given  $\lcB_{z,w}$ is fair Bernoulli. Moreover, since $\cO^{\eps_n}_{z,w}$ and $(g_{\sigma_{z,w}^{\eps_n}}[\lcB_{z}^{\eps_n}]((\lcB_z^{\eps_n})_{s+\sigma_{z,w}^{\eps_n}}) \, ; \, s\ge 0)$ are independent for every $n$, it follows that $\cO^*$ is independent of $(g_{\sigma_{z,w}^*}[\lcB_{z}]((\lcB_z)_{s+\sigma_{z,w}^*})\, ; \,s\ge 0)$. So in general (i.e., without restricting to the event $E$) we can say that, given $(g_{\sigma_{z,w}^*}[\lcB_{z}]((\lcB_z)_{s+\sigma_{z,w}^*}) \, ; \, s\ge 0)$ and $((\lcB_z)_t \, ; \, t\le \sigma_{z,w})$, $\cO^*$ has the conditional law of a Bernoulli$(1/2)$ random variable. Since the event $E$ (that $\sigma_{z,w}=\tau_z$) is measurable with respect to $((\lcB_z)_t \, ; \, t\le \sigma_{z,w})$, and we have already seen that $\sigma_{z,w}=\sigma_{z,w}^*$ on this event, we deduce the final statement of the lemma.
\end{proof}\\

 \begin{proofof}{Proposition \ref{prop:jointDsigma}}
By tightness (Remark \ref{rmk:sep_time_tight}), and since we already know the convergence in law  of $(\lcB_z^{\eps}, (\sigma^\eps_{z,w,\delta})_{\delta>0})$ to $(\lcB_z,(\sigma_{z,w,\delta})_{\delta>0})$ , it suffices to prove that any joint subsequential limit in law of $(\lcB_z,(\sigma_{z,w,\delta})_{\delta>0},\sigma^*_{z,w}) \text{ of } (\lcB_z^{\eps}, (\sigma^\eps_{z,w,\delta})_{\delta>0}, \sigma_{z,w}^\eps)$ has $\sigma^*_{z,w}=\sigma_{z,w}$ almost surely. So let us assume that we have such a subsequential limit (along some sequence $\eps_n\downarrow 0$) and that we are working on a probability space where the convergence holds almost surely. As remarked previously, since $\sigma_{z,w}^{\eps_n}\ge \sigma_{z,w,\delta}^{\eps_n}$ for each $\delta>0$ and $\lim_\delta\lim_n\sigma_{z,w,\delta}^{\eps_n}=\lim_\delta \sigma_{z,w,\delta}=\sigma_{z,w}$, we already know that $\sigma_{z,w}^*\ge \sigma_{z,w}$ almost surely. So we just need to prove that 
$\mathbb{P}(\sigma_{z,w}+s\le\sigma_{z,w}^*)=0$, or alternatively, that $\lim_{\delta\to 0} \mathbb{P}(\sigma_{z,w,\delta}+s\le \sigma_{z,w}^*)=0$
for any $s>0$ fixed. Since $\sigma_{z,w,\delta}$ and $\sigma_{z,w}^*$ are the almost sure limits of $\sigma^{\eps_n}_{z,w,\delta}$ and $\sigma_{z,w}^{\eps_n}$ as $n\to \infty$, it is sufficient to prove that for each $s>0$
$$\limsup_{\delta\to 0}\limsup_{\eps\to 0} \mathbb{P}(\sigma_{z,w,\delta}^{\eps}+s\le \sigma_{z,w}^{\eps})=0.$$
The strategy of the proof is to use Lemma \ref{lem:septime_endofloop} to say that (when $\delta$ and $\eps$ are small), $\eta_z^\eps$ will separate lots of $\CLE_\kp$ loops from $z$ during the time interval $[\sigma_{z,w,\delta}^\eps,\sigma_{z,w,\delta}^\eps+s]$. Then we will argue that this is very unlikely to happen during the time interval $[\sigma_{z,w,\delta}^\eps,\sigma_{z,w}^\eps]$, which means that $\sigma_{z,w}^\eps<\sigma_{z,w,\delta}^\eps+s$ with high probability.

More precisely, let us assume from now on that $s>0$ is fixed, and write $\cS_r$ for the collection of faces (squares) of $r\Z^2$ that intersect $\D$. We write $\wt S_{\delta,r}^\eps$ for the event that  there exists $S\in \cS_r$ that is separated by $\eta_z^\eps$ from $z$ during the interval $[\sigma_{z,w,\delta}^\eps,\sigma_{z,w,\delta}^\eps +s]$ \emph{and} such that $z$ is visited by the space-filling $\SLE_\kp$ before $S$. We write $S_{\delta,r}^\eps$ for the same event but with the interval [$\sigma_{z,w,\delta}^\eps, \sigma_{z,w}^\eps]$ instead.
So if the event $\{\sigma_{z,w,\delta}^\eps+s\le \sigma_{z,w}^\eps\}$ occurs, then either $S_{\delta,r}^\eps$ occurs or $\wt S_{\delta,r}^\eps$ does not. Hence, for any $r>0$:
$$\limsup_{\delta\to 0}\limsup_{\eps\to 0} \mathbb{P}(\sigma_{z,w,\delta}^{\eps}+s\le \sigma_{z,w}^{\eps})\le \limsup_{\delta\to 0} \limsup_{\eps\to 0} \mathbb{P}(S_{\delta,r}^\eps)+ \limsup_{\delta\to 0}\limsup_{\eps\to 0}\mathbb{P}((\wt S_{\delta,r}^\eps)^c).$$ \corr{We will show that 
	\begin{equation} \label{eq:kexp}
	\liminf_{\delta\downarrow 0}\liminf_{\eps\downarrow 0}\mathbb{P}(\wt S_{\delta,r}^\eps)\to 1 \text{ as } r\to 0, 
	\end{equation}and that for any $r>0$,
\begin{equation}
\label{eqn:bottleneck}
\lim_{\delta\downarrow 0} \lim_{\eps\downarrow 0} \mathbb{P}(S_{\delta,r}^\eps) =0.
\end{equation} 

Let us start with \eqref{eq:kexp}. First, Lemma \ref{lem:septime_endofloop} tells us that since many $S\in \cS_r$ will be separated from $z$ by the CLE$_4$ exploration during the time interval $[\sigma_{z,w},\sigma_{z,w}+s]$ as $r\downarrow 0$, the same will be true for the space filling SLE$_\kp$ on the time interval $[\sigma_{z,w,\delta}^\eps, \sigma_{z,w,\delta}^\eps+s)$ when $\eps, \delta$ are small. More precisely, for any fixed $k\in \mathbb{N}$, $\delta>0$, the lemma implies that $$\liminf_{\eps\downarrow 0} \mathbb{P}(\eta_z^\eps([\sigma_{z,w,\delta}^\eps,\sigma_{z,w,\delta}^\eps+s]) \text{ separates } k \text{ squares in } \cS_r \text{ from } z) \ge p_{\delta,k,r} $$where $p_{\delta,k,r}$ is the probability that $\lcB_z$ disconnects at least $k$ squares in $\cS_r$ from $z$ by distinct $\CLE_4$ loops during the time interval $[\sigma_{z,w,\delta},\sigma_{z,w,\delta}+s]$. Moreover, since $\sigma_{z,w,\delta}\to \sigma_{z,w}$ as $\delta\to 0$ almost surely, $\liminf_{\delta\downarrow 0} p_{\delta,k,r}$ is equal to the probability $p_{k,r}$ that $\lcB_z$ disconnects at least $k$ squares in $\cS_r$ from $z$ by distinct $\CLE_4$ loops during the time interval $[\sigma_{z,w},\sigma_{z,w}+s]$. Note that since $s$ is positive (and fixed), $p_{k,r}\to 1$ as $r\to 0$ for any fixed $k$. 

This is almost exactly what we need. However, recall that although $\tilde{S}_{\delta,r}^\eps$ only requires one $S\in \cS_r$ to be disconnected from $z$ by $\eta_z^\eps([\sigma_{z,w,\delta}^\eps,\sigma_{z,w,\delta}^\eps+s])$, it also requires that this $z$ is visited by the space filling SLE$_\kp$ before $S$. This is why we ask for $k$ squares to be separated, because then by Lemma \ref{lem:septime_endofloop}, whether they are visited before or after $z$ converges to a sequence of independent coin tosses. Namely, for any $k\in \mathbb{N}$, \begin{align*} \liminf_{\delta\downarrow 0}\liminf_{\eps\downarrow 0}\mathbb{P}(\wt S_{\delta,r}^\eps) & \ge (1-2^{-k}) \liminf_{\delta\downarrow 0}\liminf_{\eps\downarrow 0} \mathbb{P}(\eta_z^\eps([\sigma_{z,w,\delta}^\eps,\sigma_{z,w,\delta}^\eps+s]) \text{ separates } k \text{ squares in } \cS_r \text{ from } z) \\
	& \ge (1-2^{-k})\liminf_{\delta\downarrow 0} p_{\delta,k,r} \\
	& \ge (1-2^{-k}) p_{k,r}.
\end{align*}
The $\liminf$ as $r\to 0$ of the left-hand side above is therefore greater than or equal to $(1-2^{-k})\lim_{r\to 0}p_{k,r}=(1-2^{-k})$ for every $k$. Since $k$ was arbitrary this concludes the proof of \eqref{eq:kexp}.

Hence, to conclude the proof of the proposition, it suffices to justify \eqref{eqn:bottleneck}.}
Although this is a statement purely about SLE, it turns out to be somewhat easier to prove using the connection with Liouville quantum gravity in \cite{DMS14}. Thus we postpone the proof of \eqref{eqn:bottleneck} to Section \ref{sec:order_proof}, at which point we will have introduced the necessary objects and stated the relevant theorem of \cite{DMS14}. Let us emphasise that this proof will rely only on \cite{DMS14} and basic properties of Liouville quantum gravity (and could be read immediately by someone already familiar with the theory) so it is safe from now on to treat Proposition \ref{prop:jointDsigma} as being proved. 
\end{proofof}\\

\subsection{Convergence of the partial order: proof of Proposition \ref{prop:convbranchingsleorder}}

Recall that Proposition \ref{prop:convbranchingsleorder}, stated at the very beginning of Section \ref{sec:conv_order}, asserts the joint convergence of the branching $\SLE_\kp$ and the collection of order variables to the limit $$((\lcB_z)_{z\in \cQ}, (\cO_{z,w})_{z,w\in \cQ})$$ defined in Lemma \ref{lem:defo}. Completing the proof is now simply a case of putting together our previous results.

  \begin{proofof}{Proposition \ref{prop:convbranchingsleorder}}
	The following three claims are the main ingredients.
\begin{itemize}
	\item[\textbf{Claim 1:}] $({\lcB}_z^\eps)_{z\in \cQ}\Rightarrow ({\lcB}_z)_{z\in \cQ}$. 
	
	\noindent \textit{Proof:} This follows from Corollary \ref{cor:convfullbranch}, Proposition \ref{prop:jointDsigma} and the fact that for every $\eps$ and $z,w\in \cQ$, ${\lcB}_z^\eps$ and ${\lcB}_w^\eps$ agree (up to time change) until $z$ and $w$ are separated, and then evolve independently. 
	
	\item[\textbf{Claim 2:}] For any $z, w\in \cQ$, \corr{$({\lcB}_z^\eps, {\lcB}_w^\eps, \cO_{z,w}^\eps)\Rightarrow ({\lcB}_z, {\lcB}_w, \cO_{z,w})$}.
	
	\noindent  \textit{Proof:}  \corr{As usual, due to tightness, it is enough to show that any subsequential limit $({\lcB}_z^*, {\lcB}_w^*, \cO^*)$ of $({\lcB}_z^\eps, {\lcB}_w^\eps, \cO_{z,w}^\eps)$, along a sequence $\eps_n\downarrow 0$, has the correct joint distribution. In fact, we may assume that $$({\lcB}_z^{\eps_n}, {\lcB}_w^{\eps_n}, \sigma_{z,w}^{\eps_n},\sigma_{w,z}^{\eps_n},\cO_{z,w}^{\eps_n})\Rightarrow ({\lcB}_z^*, {\lcB}_w^*,\sigma_{z,w}^*,\sigma_{w,z}^*, \cO^*)$$ and verify the same statement, where  by Proposition \ref{prop:jointDsigma} and Claim 1 above, we already know that $$(\lcB_z^*,\lcB_w^*,\sigma_{z,w}^*,\sigma_{w,z}^*)\overset{(d)}{=} (\lcB_z,\lcB_w,\sigma_{z,w},\sigma_{w,z})$$ (in particular, $\lcB_z^*$ and $\lcB_w^*$  agree up to time reparametrization until $z$ and $w$ are separated at times $\sigma_{z,w}^*$, $\sigma_{w,z}^*$). 
		
		Now, Proposition \ref{prop:order_simple} implies that, given  ${\lcB}_z^*$ and ${\lcB}_w^*$ stopped at times $\sigma_{z,w}^*,\sigma_{w,z}^*$ respectively, the conditional law of $\cO^*$ is fair Bernoulli. On the other hand, since $$\cO^{\eps_n}_{z,w}\, , \, (g_{\sigma_{z,w}^{\eps_n}}[\lcB_{z}^{\eps_n}]((\lcB_z^{\eps_n})_{s+\sigma_{z,w}^{\eps_n}}) \, ; \, s\ge 0) \text{ and } (g_{\sigma_{w,z}^{\eps_n}}[\lcB_{w}^{\eps_n}]((\lcB_w^{\eps_n})_{s+\sigma_{w,z}^{\eps_n}}) \, ; \, s\ge 0)$$ are mutually independent for every $n$, it follows that $\cO^*$ is independent of $$(g_{\sigma_{z,w}^*}[\lcB^*_{z}]((\lcB_z)_{s+\sigma_{z,w}^*})\, ; \,s\ge 0)\, , \, (g_{\sigma_{w,z}^*}[\lcB^*_{w}]((\lcB_w)_{s+\sigma_{w,z}^*})\, ; \,s\ge 0).$$ This provides the claim. }
	
	\item[\textbf{Claim 3:}] For any $z,w\in \cQ$, $(({\lcB}_y^\eps)_{y\in \cQ}, \cO^\eps_{z,w})\Rightarrow (({\lcB}_y)_{y\in \cQ}, \cO_{z,w})$.
	
	\noindent  \textit{Proof:} \corr{The same argument as for Claim 2 above extends directly to this slightly more general setting (we omit the details).}
\end{itemize}
	
	With Claim 1 in hand (and the argument proving Lemma \ref{lem:defo}) all we need to show is that for any subsequential limit in law $(({\lcB}_z)_{z\in \cQ}, (\cO^*_{z,w})_{z,w\in \cQ})$ of $(({\lcB}_z^\eps)_{z\in \cQ}, (\cO^\eps_{z,w})_{z,w\in \cQ})$  as $\eps\to 0$, the \emph{conditional law} of $(\cO^*_{z,w})_{z,w\in \cQ}$ given $({\lcB}_z)_{z\in \cQ}$ satisfies the bullet points above Lemma \ref{lem:defo}. 
	That is: (a) $\cO^*_{z,z}=1$ for all $z\in \cQ$; (b) $\cO^*_{z,w}=1-\cO^*_{w,z}$ for all $z,w\in \cQ$ distinct; (c) $\cO^*_{z,w}$ is (conditionally) Bernoulli$(1/2)$ for any such $z,w$; and (d) for all $z,w_1,w_2\in \cQ$ with $z\ne w_1, w_2$, if ${\lcB}_z$ separates $z$ from $w_1$ at the same time as it separates $z$ from $w_1$ then $\cO^*_{z,w_1}=\cO^*_{z,w_2}$; otherwise $\cO^*_{z,w_1}$ and $\cO^*_{z,w_2}$ are (conditionally) independent. 
	
	Observe that (a) and (b) follow by definition of the $\cO_{z,w}^\eps$, and (c) follows from Claim 3 above. The first case of (d) also follows by definition, and the second follows from the definition of $\cO_{z,w_1}^\eps, \cO_{z,w_2}^\eps$ together with the branching property of $({\lcB}_z^\eps)_{z\in \cQ}$ and the convergence of the separation times.
\end{proofof}

\subsection{Joint convergence of SLE, CLE and the order variables}

The results of Sections \ref{sec:conv_clesle} and \ref{sec:conv_order} give the final combined result: 
\begin{proposition}
	\begin{eqnarray*} & (({\lcB}^\eps_z)_{z\in \cQ},(\cL^\eps_{z,i})_{z\in \cQ, i\ge 1}, (\bub^\eps_{z,i})_{z\in \cQ, i\ge 1},(\mathcal{O}^\eps_{z,w})_{z,w\in \cQ} ) & \\ & \Rightarrow & \\ & (({\lcB}_z)_{z\in \cQ},(\cL_{z,i})_{z\in \cQ, i\ge 1}, (\bub_{z,i})_{z\in \cQ, i\ge 1},(\mathcal{O}_{z,w})_{z,w\in \cQ} )& \end{eqnarray*}
	as $\eps\downarrow 0$, with respect to the product topology $$\prod_\cQ \lcS_z \times \prod_{\cQ\times \N} \text{Hausdorff} \times \prod_{\cQ\times \N} \text{\cart viewed from } z \times \prod_{\cQ\times\cQ} \text{discrete}.$$
	\label{prop:cle-conv}
\end{proposition}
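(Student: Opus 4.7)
The plan is to combine the two main convergence results already established: Proposition \ref{prop:convbranchingsleorder}, which gives joint convergence of the branching SLE branches and the order variables, and Corollary \ref{cor:cleloopconv}, which gives, for each fixed $z\in\cQ$, joint convergence of $\lcB^\eps_z$ together with its nested CLE loops and bubbles. The task is to upgrade these two partial results into a single joint statement.

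First I would verify tightness of the whole tuple in the product topology: $(\lcB^\eps_z)_{z\in\cQ}$ is tight by Proposition \ref{prop:convbranchingsleorder}; the loops $\cL^\eps_{z,i}$ are tight in Hausdorff distance because they are all contained in $\overline{\D}$; the bubbles $\bub^\eps_{z,i}$ are tight in the Carath\'eodory topology (each is a simply connected subdomain of $\D$ containing $z$, with tight conformal radius from Corollary \ref{cor:cleloopconv}); and the order variables take values in the finite set $\{0,1\}$. Hence by Prokhorov, along any subsequence $\eps_n\downarrow 0$ we may extract a further subsequence along which the whole tuple converges in law to some limit $((\lcB_z)_{z},(\cL^*_{z,i}),(\bub^*_{z,i}),(\cO_{z,w}))$.

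The key observation is that in the CLE$_4$ limit, \emph{the nested loops and bubbles are deterministic measurable functions of the Loewner chain}. Indeed, as recalled in Section \ref{sec:sletocle}, one has $\bub_{z,i}=(\lcB_z)_{\tau_{z,i}}$ for stopping times $\tau_{z,i}$ that can be read off directly from $\lcB_z$ (these are the successive loop-closure times of the exploration), and $\cL_{z,i}=\partial\bub_{z,i}$. Write $F_z$ for the resulting measurable map. By Corollary \ref{cor:cleloopconv}, the marginal law of $(\lcB_z,(\cL^*_{z,i})_i,(\bub^*_{z,i})_i)$ coincides with that of $(\lcB_z,F_z(\lcB_z))$; since the latter is a deterministic function of its first coordinate, one deduces
\[((\cL^*_{z,i})_i,(\bub^*_{z,i})_i)=F_z(\lcB_z)\text{ a.s.}\]
Applying this simultaneously for every $z\in\cQ$ shows that, in any subsequential limit, the whole family of loops and bubbles is almost surely determined by the branching SLE.

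This pins down the full joint law: the pair $((\lcB_z)_{z},(\cO_{z,w}))$ has the law from Proposition \ref{prop:convbranchingsleorder}, and conditionally on this pair, $((\cL^*_{z,i})_{z,i},(\bub^*_{z,i})_{z,i})$ equals $(F_z(\lcB_z))_{z\in\cQ}$ almost surely. Every subsequential limit therefore has the same law, which is precisely the one described in the statement of Proposition \ref{prop:cle-conv}, so the convergence follows. The main (and essentially only) conceptual point is the deterministic measurability of loops and bubbles as functions of the Loewner chain in the $\kp=4$ limit, in contrast to $\kp>4$; given this, the proof is a routine bookkeeping exercise built on the already-established single-target and no-loop convergences.
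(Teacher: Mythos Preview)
Your proof is correct and follows essentially the same approach as the paper's: establish tightness from the individual convergences, then use that the nested loops and bubbles $(\cL_{z,i},\bub_{z,i})$ are deterministic measurable functions of the branching exploration $(\lcB_z)_{z\in\cQ}$ in the $\kp=4$ limit, so that Proposition~\ref{prop:convbranchingsleorder} together with Corollary~\ref{cor:cleloopconv} pins down any subsequential limit. The paper's proof is simply a terser version of what you wrote.
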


\begin{proof}
	Since we know that all the individual elements in the above tuples converge, the laws are tight in $\eps$. Combining Proposition \ref{prop:convbranchingsleorder} and Corollary \ref{cor:cleloopconv} \corr{(in particular, using that $(\cL_{z,i})_{z\in \cQ, i\ge 1}, (\bub_{z,i})_{z\in \cQ, i\ge 1}$ are deterministic functions of $({\lcB}_z)_{z\in \cQ}$)} ensures that any subsequential limit has the correct law.
\end{proof}

\section{Liouville quantum gravity and mating of trees}

\subsection{Liouville quantum gravity}\label{sec:LQG}

Let $D\subset \C$ be a {simply connected} domain with harmonically non-trivial boundary. For $f,g\in C^\infty(D)$ define the Dirichlet inner product by 
\eqbn
(f,g)_\nabla = \frac{1}{2\pi} \int_D \nabla f(z) \cdot \nabla g(z)\, d^2\hspace{-0.05cm}z.
\eqen
Let $H(D)$ be the Hilbert space closure of the subspace of functions $f\in C^\infty(D)$ for which $(f,f)_\nabla<\infty$, where we identify two functions that differ by a constant. Letting $(f_n)$ be an orthonormal basis for $H(D)$, the free boundary GFF $h$ on $D$ is defined by 
\eqbn
h = \sum_{n=1}^{\infty}
\alpha_n f_n,
\eqen
where $(\alpha_n)$ is a sequence of i.i.d.\ standard normal random variables \corr{and the convergence is almost sure in the space of generalized functions modulo constants}. The free boundary GFF is only defined modulo additive constant here, but we remark that there are several natural ways to fix the additive constant\corr{, for example, by requiring that testing the field against a fixed test function gives zero. If this is done in an arbitrary way (i.e., picking some arbitrary test function in the previous sentence) the resulting field almost surely lives in the space $H^{-1}_{\text{loc}}(D)$: this is the space of generalized functions whose restriction to any bounded \nina{domain} $U\subset D$ is an element of the Sobolev space $H^{-1}(U)$.} See \cite{SheGFF,Ber16} for more details.

Let $\cS=\R\times(0,\pi)$ denote the infinite strip. By, for example, \cite[Lemma 4.3]{DMS14}, $H(\cS)$ has an orthogonal decomposition $H(\cS)=H_1(\cS)\oplus H_2(\cS)$, where $H_1(\cS)$ is the subspace of $H(\cS)$ consisting of functions (modulo constants) which are constant on vertical lines of the form $u+[0,\im \pi]$ and $H_1(\cS)$ is the subspace of $H(\cS)$ consisting of functions which have mean zero on all such vertical lines. This leads to a decomposition $h=h_1+h_2$ of the free boundary GFF $h$ on $\cS$, where $h_1$ (resp.\ $h_2$) is the projection of $h$ onto $H_1(\cS)$ (resp.\ $H_2(\cS)$). We call $h_2$ the \emph{lateral component} of $h$.

Now let $D\subset\C$ be as before, and let $\frk h$ be an instance of the free-boundary Gaussian free field (GFF) on $D$ with the additive constant fixed in an arbitrary way. Set $h=\frk h+f$, where $f$ is a (possibly random) continuous function on $D$.  For $\delta>0$ and $z\in D$ let $h_\delta(z)$ denote the average of $h$ on the circle $\partial B_\delta(z)$ if $B_\delta(z)\subset D$; otherwise set $h_\delta(z)=0$. For $\gamma\in (\sqrt{2},2)$ and $\eps=2-\gamma$ the field $h$ induces an area measure $\mu_h^\eps$ on $D$, which is defined by the following limit in probability for any bounded open set $A\subseteq D$:
$$
\mu_h^\eps(A)  = \lim_{\delta\rta 0} (2\eps)^{-1}\int_A \exp\Big(\gamma h_\delta(z)\Big)\delta^{\gamma^2/2} \, d^2\hspace{-0.05cm}z.
$$
Note that the definitions for $\eps>0$ differ by a factor of $2\eps$ from the definitions normally found in the literature. This is natural in the context of this paper, where we will be concerned with taking $\eps\downarrow 0$ (see below). Indeed, for $\gamma=2$ (which will correspond to the limit as $\eps\downarrow 0$) we define:
$$
\mu_{h}(A) = \lim_{\delta\rta 0} \int_A \Big(-h_\delta +\log(1/\delta)\Big)\exp(2h_\delta(z))\delta \, d^2\hspace{-0.05cm}z.$$

If $f$ extends continuously to $\partial D$, boundary measures $\nu^\eps_h$ and $\nu_h$ can be defined similarly by
\begin{align*}
	\nu_{h}^{\eps}(A) & = \lim_{\delta\rta 0}\, (2\eps)^{-1}\int_A \exp\Big(\frac{\gamma}{2}h_\delta(z)\Big)\delta^{\gamma^2/4} \, dz,\\
	\nu_{h}(A) & = \lim_{\delta\rta 0}\, \int_A \Big(-\frac{h_\delta}{2} +\log(1/\delta)\Big)\, \delta \, \exp(h_\delta(z)) \, dz.
\end{align*}
See \cite{DS11, Ber17, Pow18chaos} for proofs of these facts.

A pair $(D,h)$ 
defines a so-called \emph{$\gamma$-Liouville quantum gravity (LQG) surface}. More precisely, a $\gamma$-LQG surface is an equivalence class of pairs $(D,h)$ where {$D$ is as above and $h$ is a distribution, and} we define two pairs $(D_1,h_1)$ and $(D_2,h_2)$ to be equivalent if there is a conformal map $\phi:D_1\to D_2$ such that
\begin{equation}\label{eq:coc}
	h_1 = h_2\circ\phi+Q_\gamma \log|\phi'|,\qquad Q_\gamma:=2/\gamma+\gamma/2.
\end{equation}
With this definition, {if $h_1,h_2$ are absolutely continuous with respect to a GFF plus a continuous function we have} $\mu_{h_2}^\eps=\phi_*(\mu_{h_1}^\eps)$ and $\nu_{h_2}^\eps=\phi_*(\nu_{h_1}^\eps)$ for $\eps\in (0,2-\sqrt{2})$. 
The analogous identities also hold for $\eps=0$. 

The \emph{LQG disk} is an LQG surface of special interest, since it arises in scaling limit results concerning random planar maps, for example, \cite{BMdisk,GMdisk}.
The following is our definition of the unit boundary length $\gamma$-LQG disk in the subcritical case. Our field is equal to $-2\gamma^{-1}\log(2\eps)$ plus the field defined in, e.g.\ \cite{DMS14}: this is because we want it to have boundary length $1$ for our definition of $\nu_h^\eps$ (which is $(2\eps)^{-1}$ times the usual one).

\begin{definition}[unit boundary length $\gamma$-LQG disk for $\gamma\in (\sqrt{2},2)$]	\label{def:disk}
	Let $h_2$ be a field on the strip $\cS=\R\times  (0,\im \pi)$ with the law of the lateral component of a free boundary GFF on $\cS$.  
	Let $h_1^\eps$ be a function on $\cS$ such that $h^\eps_1(s+\im y)=\cB^\eps_s$, where:
	\begin{itemize}
		\item[(i)] $(\cB^\eps_s)_{s\geq 0}$ has the law of $B_{2s}-(2/\gamma-\gamma/2)s$ conditioned to be negative for all time, for $B$ a standard Brownian motion started from $0$;
		\item[(ii)] $(\cB^\eps_{-s})_{s\geq 0}$ is independent of $(\cB^\eps_s)_{s\geq 0}$ and satisfies $(\cB^\eps_{-s})_{s\geq 0}\eqD(\cB^\eps_s)_{s\geq 0}$.
	\end{itemize}
	Set $h_{\op s}^\eps=h_1^\eps+h_2$ and let ${\wh h^\eps}$ be the distribution on $\cS$ whose law is given by
	\eqb
	h_{\op s}^\eps-2\gamma^{-1}\log \nu^\eps_{h_{\op s}^\eps}(\partial \cS) 
	\qquad
	\text{reweighted\,\,by\,\,}
	\nu^\eps_{h^\eps_{\op s}}(\partial \cS)^{4/\gamma^2-1}. 
	\label{eq:reweight}
	\eqe
	Then the surface defined by $(\cS,{\wh h^\eps})$ has the law of a unit boundary length $\gamma$-LQG disk.
\end{definition}

See \cite[Definition 2.4 and Remark 2.5]{HS19} for a proof that the above does correspond to $-2\gamma^{-1}\log(2\eps)$ + the unit boundary length disk of \cite{DMS14}. Note that (see for example \cite[Lemma 4.20]{DMS14}) $\nu_{h_{\op s}}^\eps(\partial \cS)$ is finite for each fixed $\eps>0$, so that the above definition makes sense. In fact, we can say something stronger, {namely Lemma \ref{lem:tail} just below.  We remark that the power $1/17$ in the lemma has not been optimized.}

\begin{lemma}\label{lem:tail}
	There exists $C\in (0,\infty)$ not depending on $\eps\in (0,2-\sqrt{2})$ such that  $$\BB P[ \nu_{h_{\op s}^\eps}^\eps(\partial \cS)>x]\le Cx^{-1/17}  \text{ for all } x\ge 1.$$ \corr{Moreover, for any fixed $x$, $\BB P[ \nu_{h_{\op s}^\eps}^\eps((-\infty,-K)\cup(K\cup \infty)\times \im \{0,\pi\})>x]\to 0$ as $K\to \infty$, uniformly in $\eps$.} 	
	
	\corr{Finally,} if $h_{\op s}$ is defined in the same way as $h_{\op s}^\eps$ above but instead letting $(\cB_s)_{s\geq 0}$ have the law of $(-\sqrt{2})$ times a 3-dimensional Bessel process, then we also have that
	$$\BB P[ \nu_{h_{\op s}}(\partial \cS)>x]\le Cx^{-1/17} \text{ for all } x\ge 1.$$
	
\end{lemma}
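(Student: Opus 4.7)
The plan is to bound $\mathbb{E}[\nu^\eps_{h_{\op s}^\eps}(\partial \cS)^p]$ uniformly in $\eps \in [0, 2-\sqrt{2})$ for some small $p > 0$ (e.g.\ $p = 1/17$), and then apply Markov's inequality.

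Since $h_1^\eps(s+\im y) \equiv \cB_s^\eps$ is constant along vertical cross-sections of $\cS$ and is independent of $h_2$, the Cameron--Martin shift for boundary GMC applied conditionally on $\cB^\eps$ gives
\[
\nu^\eps_{h_{\op s}^\eps}(\partial \cS) = \int_\R e^{(\gamma/2) \cB^\eps_s}\, dN^\eps(s),
\]
where $N^\eps := \nu^\eps_{h_2}|_{\partial \cS}$ is the boundary GMC of the lateral GFF on $\cS$ (both boundary components parameterized by $\R$), independent of $\cB^\eps$. For $p \in (0,1)$, subadditivity $(x+y)^p \le x^p+y^p$ and a crude sup-bound on each unit interval give
\[
\nu^\eps_{h_{\op s}^\eps}(\partial \cS)^p \le \sum_{k \in \Z} e^{p(\gamma/2)\sup_{s \in [k,k+1]}\cB^\eps_s}\, N^\eps([k,k+1])^p.
\]
Taking expectations and using independence together with horizontal translation invariance of $h_2$,
\[
\mathbb{E}[\nu^\eps_{h_{\op s}^\eps}(\partial \cS)^p] \le \mathbb{E}[N^\eps([0,1])^p]\,\sum_{k \in \Z} \mathbb{E}\bigl[e^{p(\gamma/2)\sup_{s\in[k,k+1]}\cB^\eps_s}\bigr].
\]
The GMC prefactor is uniformly bounded in $\eps$ for $p$ small, by the standard subcritical-to-critical GMC moment estimates with our $(2\eps)^{-1}$ normalization.

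For the $\cB^\eps$ factor, by the $s \leftrightarrow -s$ symmetry it suffices to sum over $k \ge 0$. For $\gamma<2$, $\cB^\eps_s$ is a Brownian motion with drift $-\alpha_\gamma := -(2/\gamma-\gamma/2)$ at time $2s$, Doob conditioned via $h(x)=1-e^{2\alpha_\gamma x}$ to stay negative; the starting-from-$0$ transition density is then explicit and a short Gaussian integral yields
\[
\mathbb{E}[e^{\beta \cB^\eps_k}] \le C_\beta\, k^{-3/2}, \qquad k \ge 1,
\]
uniformly in $\eps \in [0, 2-\sqrt{2})$, the limit $\alpha_\gamma \downarrow 0$ recovering the Bessel-3 density for $\cB_s = -\sqrt{2}\,R_{2s}$. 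Because the $h$-transformed drift on $(-\infty,0)$ is strictly negative, strong Markov at time $k$ and stochastic domination give $\sup_{s\in[k,k+1]}\cB^\eps_s \le \cB^\eps_k + \max_{u\in[0,1]}W_u$ for an independent Brownian motion $W$, so the same $k^{-3/2}$ decay survives after taking $\mathbb{E}[e^{p(\gamma/2)\cdot}]$. Summing gives a uniform-in-$\eps$ bound on the $p$-th moment, and Markov's inequality yields the first tail bound. The uniform decay in $K$ is obtained by restricting the sum to $|k|>K$, and the critical statement ($\gamma=2$) is identical with $\cB$ in place of $\cB^\eps$, the same density and moment bounds applying in the limit $\alpha\downarrow 0$.

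The main obstacle is establishing the uniform-in-$\eps$ estimate $\mathbb{E}[e^{\beta \cB^\eps_k}] = O(k^{-3/2})$: the Doob $h$-transform has $h(0)=0$, so the starting-from-$0$ density must be obtained through an $x \uparrow 0$ limit carried out jointly with $\alpha_\gamma \downarrow 0$. An alternative that avoids the explicit density computation would be a direct stochastic comparison of $\cB^\eps$ with the critical ($\alpha=0$) process $\cB$, reducing everything to the single Bessel-3 estimate.
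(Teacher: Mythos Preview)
Your proposal is essentially correct and follows the same decomposition as the paper: split $\partial\cS$ into unit intervals, bound the boundary measure on each by $(\sup e^{(\gamma/2)\cB^\eps})\cdot N^\eps([k,k+1])$, use independence and translation invariance of the lateral component, and control the GMC factor via Remy's uniform moment bounds. The main stylistic difference is that you bound a $p$-th moment and apply Markov, whereas the paper writes a direct union/tail bound $\mathbb{P}[\sum a_k^\eps b_k^\eps>x]\le \sum_k(\mathbb{P}[a_k^\eps>\cdots]+\mathbb{P}[b_k^\eps>\cdots])$; both lead to the same conclusion.

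The one place where the paper is more efficient is exactly the ``alternative'' you mention in your last paragraph: rather than computing the Doob $h$-transform density and carrying out the $x\uparrow 0$, $\alpha_\gamma\downarrow 0$ limit, the paper invokes the stochastic domination of $\cB^\eps$ by $(-\sqrt{2})$ times a Bessel-3 process (citing \cite[Lemma 12.4]{KRV18}), which immediately reduces the uniform-in-$\eps$ estimate on $\sup_{[k,k+1]}\cB^\eps$ to a single Bessel-3 bound. This bypasses the obstacle you flagged. The paper then controls the Bessel-3 infimum via the representation $Z=|(W_1,W_2,W_3)|$, which plays the same role as your strong-Markov/negative-drift argument for passing from $\cB^\eps_k$ to the supremum over $[k,k+1]$.
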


\begin{proof} 
	Let us first deal with the subcritical measures. In this case, we write
	$$b_k^\eps=\nu_{h_2}^\eps([k,k+1]\times \{0,\im\pi\})$$ for $k\in \Z$. Then the law of $b_k^\eps$ does not depend on $k$ since the law of $h_2$ is translation invariant, see for example \cite[Remark 5.48]{Ber16}. Furthermore, by \cite[Theorem 1.1]{REMY}, $\mathbb{E}((b_0^\eps)^q)$ is uniformly bounded in $\eps$ for any $q<1$. (The result of \cite{REMY} shows uniform boundedness of the moment for a field that differs from $h_2$ in $[0,1]\times \{0\}$ or $[0,1]\times \{\im \pi\}$ by a centered Gaussian function with uniformly bounded variance.) Letting $a_k^\eps=\sup_{s\in[k,k+1]} e^{(\gamma/2) \cB^\eps_s}$ we then have that $$\nu_{h_{\op s}^\eps}^\eps(\partial \cS)\le \sum_{k\in \Z} a_k^\eps b_k^\eps.$$ Thus, since $\sum_{k\in\Z} (|k|\vee 1)^{-2}<10$, a union bound gives
	\eqb
	\begin{split}
		\BB P[ \nu_{h_{\op s}^\eps}^\eps(\partial \cS)>x]	
		\leq \sum_{k\in\Z} \left(
		\BB P[ a_k^\eps > x^{1/2}(|k|\vee 1)^{-4} ]+
		\BB P[ b_k^\eps > 0.1x^{1/2}(|k|\vee 1)^{2} ]\right).
	\end{split}
	\label{eq18}
	\eqe
	Taking $q=3/4$ (for example), using the uniform bound on $\mathbb{E}((b_k^\eps)^q)$ and applying Chebyshev's inequality gives that $ \sum_{k\in \Z}	\BB P[ b_k^\eps > 0.1x^{1/2}(|k|\vee 1)^{2} ] \le c_0 x^{-3/8}$ for some universal constant $c_0$.
	Furthermore, since $\cB^\eps$ is stochastically dominated by $(-\sqrt{2})$ times a three dimensional Bessel process, see \cite[Lemma 12.4]{KRV18}, we have that for $(Z(t))_{t\geq 0}$ such a process and $(W(t))_{t\geq 0}$ a standard linear Brownian motion:
	\eqbn
	\begin{split}
		\BB P[ a_k^\eps > x^{1/2}(|k|\vee 1)^{-4} ] 
		&\leq \BB P\Big[ \inf_{s\in[k,k+1]} Z(s)
		< \gamma^{-1} \log\big(x^{-1/2}(|k|\vee 1)^{4}\big)
		\Big]\\
		&\leq \BB P\Big[ \inf_{s\in[k,k+1]} |W(s)|
		< \gamma^{-1} \log\big(x^{-1/2}(|k|\vee 1)^{4}\big)
		\Big]^3
	\end{split}
	\eqen
	for all $x$ and $k${, where we used to get the second inequality that $Z\eqD|(W_1,W_2,W_3)|$ for $W_1,W_2,W_3$ independent copies of $W$.}
	The probability on the right side is 0 if $|k|\leq x^{1/8}$ and otherwise it is bounded above by $c_1|k|^{-1/2} \gamma^{-1} \log\big(x^{-1/2}(|k|\vee 1)^{4}\big)$ where $c_1$ is another universal constant. Therefore, for a final universal constant $c_2>0$,
	$$
	\sum_{k\in\Z}\BB P[ a_k^\eps > x^{1/2}(|k|\vee 1)^{-4} ] 
	\leq 2\sum_{k\in\Z\,:\,|k|>x^{1/8}} \Big(c_1|k|^{-1/2} \gamma^{-1} \log\big(x^{-1/2}(|k|\vee 1)^{4}\big)\Big)^3
	\leq c_2 x^{-1/17}.$$
	\corr{The same bounds yield the second statement of the lemma}.
	
\corr{Finally,} exactly the same proof works in the case of the critical measure, using \cite[Section 1.1.1]{REMY} to see that $b_k=\nu_{h_2}([k,k+1])$ has a finite $q$th moment, that does not depend on $k$ by translation invariance.
\end{proof}\\

We may now define the critical unit boundary length LQG disk as follows.
\begin{definition}[unit boundary length 2-LQG disk]\label{def:critical-disk}
	Letting $h_{\op s}$ be as in Lemma \ref{lem:tail} we define the unit boundary length 2-LQG disk to be the surface $(\cS, \wh h)$, where
	$$\wh h:=h_{\op s}-\log \nu_{h_{\op s}}(\partial \cS).$$
\end{definition}

{Note that $\nu_{h_{\op s}}(\partial S)$ is finite by Lemma \ref{lem:tail}.}

\begin{remark}
	Readers may have previously encountered the above as the definition of a quantum disk with \emph{two marked boundary points}. A quantum surface with $k$ marked points is 
	an equivalence class of $(D,h,x_1,\dots, x_k)$ with $x_1,\dots, x_k\in \overline{D}$, using  the  equivalence relation described by \eqref{eq:coc}, but with the additional requirement that  $\phi$ maps marked points to marked points. In this paper we will use Definitions \ref{def:disk} and \ref{def:critical-disk} to define specific equivalence class representatives of quantum disks, but we will always consider them as quantum surfaces without any marked points. That is, we will consider their equivalence classes under the simple relation \eqref{eq:coc}.
\end{remark}

The following lemma says that the subcritical disk converges to the critical disk as $\eps\downarrow 0$ (equivalently, $\gamma\uparrow 2$). {We say that a sequence of measures $(\bar\mu_n)_{n\in\N}$ on a metric space $E$ (equipped with the Borel $\sigma$-algebra) converges weakly to a measure $\bar\mu$ if for all $A\subseteq E$ such that $\bar\mu(\partial A)=0$ we have $\bar\mu_n(A)\rta \bar\mu(A)$.}

\begin{lemma}\label{lem:disk-conv} 
	For $\eps>0$ let $\wh{h}^\eps$ be the field of Definition \ref{def:disk} and $\wh{h}$ be the field of Definition \ref{def:critical-disk}. Then
	$(\wh{h}^\eps,\mu^\eps_{\wh{h}^\eps},\nu^\eps_{\wh{h}^\eps})\Rightarrow (\wh{h},\mu_{\wh{h}},\nu_{\wh{h}})$, where the first coordinate is equipped with the \corr{$H^{-1}_{\mathrm{loc}}(\cS)$} topology and the second and third coordinates are equipped with the weak topology of measures on $\cS$ and $\partial \cS$ respectively. 
	\label{prop:disk-conv}
\end{lemma}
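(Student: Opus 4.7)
The plan is to first establish the un-normalised convergence
\[(h_{\op s}^\eps,\mu^\eps_{h^\eps_{\op s}},\nu^\eps_{h^\eps_{\op s}})\Rightarrow(h_{\op s},\mu_{h_{\op s}},\nu_{h_{\op s}}),\]
and then to transfer this through the shift $h^\eps_{\op s}\mapsto h^\eps_{\op s}-2\gamma^{-1}\log\nu^\eps_{h^\eps_{\op s}}(\partial\cS)$ and the reweighting by $\nu^\eps_{h^\eps_{\op s}}(\partial\cS)^{4/\gamma^2-1}$ that build $\wh h^\eps$ from $h^\eps_{\op s}$.

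For the field convergence, since the lateral component $h_2$ is common to both constructions, it suffices to verify that $\cB^\eps\Rightarrow\cB$ uniformly on compact subsets of $\R$. On each half-line this is the classical Doob $h$-transform statement that a Brownian motion with vanishing negative drift $-(2/\gamma-\gamma/2)\downarrow 0$ conditioned to remain negative for all time converges to $-\sqrt 2$ times a three-dimensional Bessel process (equivalently, to minus a Brownian motion conditioned to stay positive), the identification coming from the limit of the $h$-functions $1-e^{2(2/\gamma-\gamma/2)x}$ and continuity of the associated SDEs.

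To upgrade to joint convergence of the LQG measures, I will use Skorokhod embedding to work on a probability space where $h^\eps_{\op s}\to h_{\op s}$ almost surely in $H^{-1}_{\text{loc}}(\cS)$, and then apply the convergence of subcritical GMC to the critical derivative measure from \cite{APS18two} on each bounded rectangle of $\cS$; the correction $h^\eps_1-h_1$ is a continuous function converging uniformly on compacts and can be absorbed into the base measure through the standard GMC-under-a-continuous-drift argument. To pass from bounded rectangles to the full strip, I will invoke the tightness-at-infinity statement of Lemma \ref{lem:tail}.

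The last step is to feed the above convergence into the normalisation and reweighting. Writing $L^\eps:=\nu^\eps_{h^\eps_{\op s}}(\partial\cS)$ and $L:=\nu_{h_{\op s}}(\partial\cS)$, the behaviour of the subcritical and critical GMCs under a constant shift of the field gives
\[\mu^\eps_{\wh h^\eps}=(L^\eps)^{-2}\mu^\eps_{h^\eps_{\op s}},\quad\nu^\eps_{\wh h^\eps}=(L^\eps)^{-1}\nu^\eps_{h^\eps_{\op s}},\]
and exact critical analogues. Combined with the previous step and the continuous mapping theorem, this yields the convergence of $(\wh h^\eps,\mu^\eps_{\wh h^\eps},\nu^\eps_{\wh h^\eps})$ to $(\wh h,\mu_{\wh h},\nu_{\wh h})$ \emph{before} the reweighting. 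Because $4/\gamma^2-1\downarrow 0$, the reweighting factor $(L^\eps)^{4/\gamma^2-1}$ tends to $1$ in probability; uniform integrability is immediate from the moment bound $\sup_\eps\mathbb E[(L^\eps)^p]<\infty$ for $p<1/17$ in Lemma \ref{lem:tail}, so weak convergence survives the reweighting, concluding the argument. The step I expect to be hardest is the middle one: applying the critical-GMC convergence of \cite{APS18two} to our particular base field (a free-boundary GFF plus a non-trivial random continuous drift $h^\eps_1$), and then passing from local to global convergence on the infinite strip; Lemma \ref{lem:tail} has been crafted precisely to enable the latter.
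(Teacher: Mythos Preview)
Your proposal is correct and follows essentially the same route as the paper: first prove the un-normalised convergence $(h_{\op s}^\eps,\mu^\eps_{h^\eps_{\op s}},\nu^\eps_{h^\eps_{\op s}})\Rightarrow(h_{\op s},\mu_{h_{\op s}},\nu_{h_{\op s}})$ by coupling the lateral components, using \cite{APS18two} on bounded sets, and invoking Lemma~\ref{lem:tail} to pass to the whole strip; then handle the additive shift and the reweighting via $L^\eps\to L$ and uniform integrability of $(L^\eps)^{4/\gamma^2-1}$ from Lemma~\ref{lem:tail}. The only cosmetic difference is that the paper proves $\cB^\eps\Rightarrow\cB$ by a first-hitting-time-of-$-\delta$ argument rather than a direct $h$-transform/SDE continuity argument, but both are standard and valid.
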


\begin{proof}
	To conclude it is sufficient to prove the following, for an arbitrary sequence $\eps_n\downarrow 0$:
	\begin{enumerate}[(i)]
		\item  we have convergence in law along the sequence $\epn$ if we replace $\wh h$ by $h_{\op s}$, and $\wh h^\epn$ by $h_{\op s}^\epn$ for every $n$; and 
		\item  there exists a coupling of the $(\nu_{h^\epn_{\op s}})$ such that $\nu_{h^\eps_{\op s}}^\epn(\partial \cS)^{4/\gamma^2-1}\rta 1$ in $L^1$ as $n\to \infty$.
	\end{enumerate}
	To see (i), first observe that the processes $\cB^\eps$ converge to $\cB$ in law as $\eps\to 0$, with respect to the topology of uniform convergence on compacts of time. \corr{Indeed for any fixed $\delta>0$, if $T_\delta^\eps$ (resp.\ $T_\delta$) is the first time that $\cB^\eps$ (resp.\ $\cB$) hits $-\delta$, it is easy to see that $\cB^\eps(\cdot+T_\delta^\eps)$ converges to $\cB(\cdot+T_\delta)$ in law in the specified topology as $\eps\to 0$: a consequence of the fact that the drift coefficient in $\cB^\eps$ goes to $0$, and by applying the Markov property at time $T_\delta^\eps, T_\delta$. Moreover, $T_\delta, T_\delta^\eps$ converge to $0$ in probability as $\delta\to 0$, uniformly in $\eps$: this is true since $T_\delta,T_\delta^\eps$ are stochastically dominated by their counterparts for non-conditioned (drifted) Brownian motion, and the result plainly holds for the non-conditioned versions. Combining these observations yields the assertion.}
	
	We may therefore couple ${h_{\op s}^\epn}$ and $h_{\op s}$ so that their lateral components are identical, and the components that are constant on vertical lines converge a.s.\ on compacts as $n\to \infty$. For this coupling, the result of \cite{APS18two} implies that \eqb\nu^\epn_{h_{\op s}^\epn}(A)\to \nu_{h_{\op s}}(A) \text{ and } \mu^\epn_{h_{\op s}^\epn}(U)\to \mu_{h_{\op s}}(U)\label{eq:aps18}\eqe in probability as $n\to \infty$, for any bounded subsets $A\subset \partial \cS$ and $U\subset \cS$.  More precisely \cite[Section 4.1.1-4.1.2]{APS18two} proves that $\nu_{h}^\epn(A)\to \nu_h^\epn(A)$, when $h$ is a specific field on $\cS$ that differs from $h_{\op s}$ by a bounded continuous function on $A$ (similarly for $\mu$). Since adding a continuous function $f$ to $h$ modifies the boundary measure locally by $\exp((\gamma/2)f)$ and the bulk measure by $\exp(\gamma f)$ we deduce \eqref{eq:aps18}. To conclude that $$(h_{\op s}^\epn,\nu^\epn_{h_{\op s}^\epn}, \mu^\epn_{h_{\op s}^\epn})\to (h_{\op s},\nu_{h_{\op s}},\mu_{h_{\op s}})$$ in probability for this coupling (with the correct topology), and thus complete the proof of (i), it remains to show that $\nu_{h_{\op s}^\epn}^n(\partial \cS)\to \nu_{h_{\op s}}(\partial \cS)$ and $\mu_{h_{\op s}^\epn}^n(\cS)\to \mu_{h_{\op s}}( \cS)$ in probability as $n\to \infty$. For this, 
	\corr{we use the second assertion of  Lemma \ref{lem:tail}} 
	\corr{together with the fact that } $\nu_{h_{\op s}}(\cS)=\lim_{K\to \infty} \nu_{h_{\op s}}((-K,K)\times \im \{0,\pi\})$ by definition. Combining with \eqref{eq:aps18} yields the desired conclusion for the boundary measures. A similar argument can be applied for the bulk measures, where we may use, for example \cite[Theorem 1.2]{AG19} or \cite[Theorem 1.2]{ARS} to get the uniform $q$th moment bound for $q<1$ as in the proof of \ref{lem:tail}.

	For (ii), first observe that $$\nu_{h^{\eps_n}_{\op s}}^{\eps_n}(\partial \cS)^{4/\gamma^2-1}\Rightarrow 1$$ in law since $$4/\gamma^2-1\rta 0 \text{ and } \nu_{h^{\eps_n}_{\op s}}^{\eps_n}(\partial \cS)\rta\nu_{h_{\op s}}(\partial \cS).$$ Furthermore, Lemma \ref{lem:tail} gives the uniform integrability of $\nu_{h^\eps_{\op s}}^\eps(\partial \cS)^{4/\gamma^2-1}$  in $\eps$. Combining these two results we get (ii).
\end{proof}

\begin{remark}\label{rmk:disk-conv3}
	We reiterate that $\mu_{\wh{h}}(\cS)<\infty$ and $\nu_{\wh{h}}(\partial\cS)=1$ almost surely. Moreover, we have the convergence $\mu_{\wh h^\eps}^\eps(\cS)\Rightarrow \mu_{\wh h}(\cS)<\infty$ as $\eps\to 0$.
\end{remark}

\begin{remark}\label{rmk:disk-conv2} 
	For $b>0$ we define the $b$-boundary length disk to be a surface with the law of $(\cS,h^b)$, where $h^b=h+2\gamma^{-1}\log(b)$ for $h$ as in Definition \ref{def:disk} or \ref{def:critical-disk}. 
	Lemma \ref{prop:disk-conv} also holds if we assume all the disks are $b$-boundary length disks.
\end{remark}

The fields that appear in the statement of our main theorem are defined as follows.
\begin{definition} 
	We define fields $h^\eps$ (resp. $h$) to be parameterizations of unit boundary length $\gamma$-LQG disks (resp. the 2-LQG disk) by $\D$ instead of $\cS$.
	More specifically we take $\phi:\D\to \cS$ to be the conformal map from $\cS$ to $\D$ that sends $+\infty,-\infty,\im\pi$ to $1,-1,\im$, respectively. Then we set 
	$$h^\eps=\wh h^\eps \circ\phi+Q_\gamma \log|\phi'| \text{ and } h=\wh h \circ\phi+2\log|\phi'|,$$ where $\wh h^\eps$ (resp. $\wh{h}$) is the field in the strip $ \cS$ corresponding to Definition \ref{def:disk} (resp. Definition \ref{def:critical-disk}). 
	\label{conv:disk}
\end{definition}

\begin{remark}\label{rmk:disk-conv} Lemma \ref{lem:disk-conv} clearly also implies the convergence  $$(h^\eps,\mu^\eps_{h^\eps}, \nu_{h^\eps}^\eps)\Rightarrow (h,\mu_h,\nu_h)$$ as $\eps\to 0$ (with respect to $\corr{H^{-1}_{\mathrm{loc}}(\D)}$ convergence in the first coordinate, and weak convergence of measures on $\D,\partial \D$ in the final coordinates).
\end{remark}

In fact, it implies the convergence of various embeddings of quantum disks. Of particular use to us will be the following:

\begin{lemma}\label{cor:disc_mapped}
	Suppose that for each $\eps$, $\wh{h}^\eps$ is as in Remark \ref{rmk:disk-conv2} for some $b>0$ and that $\wt{h}^\eps$ is defined by: 
	choosing a point $z^\eps$ from $\mu^\eps_{\wh{h}^\eps}$ in $\cS$; defining $\psi^\eps:\cS\to \D$ conformal such that $\psi^\eps(z^\eps)=0$ and $(\psi^\eps)'(z^\eps)>0$;
	and setting $$\wt{h}^\eps:= \wh{h}^\eps \circ (\psi^\eps)^{-1}\corr{+Q_\gamma\log |((\psi^\eps)^{-1})'|.}$$
	
	Suppose similarly that $(\wt{h}, \wt{\mu})$ is defined by: 
	taking the field $\wh{h}$ in Remark \ref{rmk:disk-conv2} with the same $b>0$, picking a point $z$ from $\mu_{\wh{h}}$; taking $\psi:\cS\to \D$ conformal with $\psi'(z)>0$ and $\psi(z)=0$; and setting $$\wt{h}=\wh{h}+\psi^{-1}
\corr{+2\log|(\psi^{-1})'|}\, , \, \wt{\mu}=\mu_{\wt{h}}.$$
	
	Then as $\eps\to 0$, we have that
	$$(\wt h^\eps,\mu^\eps_{\wt h^\eps})\Rightarrow (\wt h, \wt \mu).$$
	Moreover, for any $m>0$  \begin{equation}
		\label{eq:nomasstoboundary}
		\mathbb{P}(\mu^\eps_{\wt{h}^\eps}( \D\setminus (1-\delta) \D)>m) \to 0 \text{ as } \delta\to 0 
	\end{equation} uniformly in $\eps$. This convergence is also uniform over $b\in[0,C]$ for any $0<C<\infty$.
\end{lemma}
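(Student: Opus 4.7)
The plan is to derive the convergence $(\wt h^\eps, \mu^\eps_{\wt h^\eps}) \Rightarrow (\wt h, \wt \mu)$ from Lemma \ref{lem:disk-conv} via a Skorokhod coupling, and to reserve the bulk of the work for the uniform-in-$\eps$ boundary mass bound \eqref{eq:nomasstoboundary}, which I expect to be the main obstacle.

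For the convergence statement, by Lemma \ref{lem:disk-conv} together with Remark \ref{rmk:disk-conv2}, we have $(\wh h^\eps, \mu^\eps_{\wh h^\eps}, \nu^\eps_{\wh h^\eps}) \Rightarrow (\wh h, \mu_{\wh h}, \nu_{\wh h})$ jointly as $\eps \to 0$. Using Skorokhod's representation theorem, place the disks on a common probability space where this convergence is almost sure. Since $\mu^\eps_{\wh h^\eps}(\cS) \to \mu_{\wh h}(\cS) \in (0,\infty)$ a.s.\ (Remark \ref{rmk:disk-conv3}) and $\mu_{\wh h}$ is a.s.\ non-atomic, the normalized probability measures converge weakly a.s., and one can couple the samples so that $z^\eps \to z$ a.s.\ in $\cS$. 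The normalized conformal maps $\psi^\eps: \cS \to \D$ (sending $z^\eps$ to $0$ with positive derivative) then converge to $\psi$ locally uniformly on $\cS$, along with all derivatives. Combining this with the LQG coordinate change formula and the continuity of the LQG measure construction in the field (on compacts, given locally uniform convergence of the smooth part and $H^{-1}_{\mathrm{loc}}$ convergence of the random part), one obtains $\wt h^\eps \to \wt h$ in $H^{-1}_{\mathrm{loc}}(\D)$ and $\mu^\eps_{\wt h^\eps} = (\psi^\eps)_* \mu^\eps_{\wh h^\eps} \to \psi_* \mu_{\wh h} = \wt \mu$ at least vaguely on $\D$. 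Upgrading this vague convergence to weak convergence on $\D$ amounts to ruling out mass escaping to $\partial \D$, which is exactly what \eqref{eq:nomasstoboundary} provides.

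The main obstacle is \eqref{eq:nomasstoboundary} itself, which must be established with uniformity in $\eps$ and hence cannot be deduced from the Skorokhod argument above. By conformal covariance of the quantum area,
\begin{equation*}
\mu^\eps_{\wt h^\eps}(\D \setminus (1-\delta)\D) = \mu^\eps_{\wh h^\eps}(U^\eps_\delta), \qquad U^\eps_\delta := (\psi^\eps)^{-1}(\D \setminus (1-\delta)\D),
\end{equation*}
and $U^\eps_\delta$ is a neighborhood in $\cS$ of $\partial \cS \cup \{\pm \infty\}$. Split it into the portion outside a horizontal window $[-M,M] \times [0,\im \pi]$ and the portion inside. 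The first piece is bounded by an area analog of Lemma \ref{lem:tail}, proved by the same moment-bound strategy but using uniform-in-$\eps$ moment bounds on the area of a fixed unit square under the lateral component $h_2$ (available from \cite{AG19} and \cite{ARS}). For the second piece, a tightness argument shows that with high probability uniformly in $\eps$ the root $z^\eps$ stays at positive distance from $\partial \cS$ inside $[-M,M] \times [0,\im\pi]$; this yields a uniform modulus of continuity for $\psi^\eps$ on the window, so that $U^\eps_\delta$ restricted to the window lies in an $\eta(\delta)$-Euclidean neighborhood of $\partial \cS \cap [-M,M] \times \{0,\im\pi\}$ with $\eta(\delta) \downarrow 0$. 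The uniform integrability of the area measure near $\partial \cS$ on a compact window (once again from \cite{AG19,ARS}) then bounds its mass there uniformly in $\eps$. Finally, the uniformity in $b \in [0,C]$ is routine since shifting the field by $2\gamma^{-1}\log b$ rescales the area and boundary length measures by bounded powers of $b$, so all the above estimates hold with constants controlled on $[0,C]$; the genuinely delicate step is the uniform area tail estimate in the strip.
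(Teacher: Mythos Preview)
Your Skorokhod coupling argument for the convergence $(\wt h^\eps,\mu^\eps_{\wt h^\eps})\Rightarrow(\wt h,\wt\mu)$ matches the paper's proof essentially verbatim. The difference lies entirely in how you handle \eqref{eq:nomasstoboundary}, and here you have reversed the logical order compared to the paper and thereby made your life considerably harder.

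You claim that the Skorokhod argument gives only vague convergence of $\mu^\eps_{\wt h^\eps}$, that \eqref{eq:nomasstoboundary} is needed to upgrade it to weak convergence, and that \eqref{eq:nomasstoboundary} ``cannot be deduced from the Skorokhod argument above''. The paper shows that this last assertion is false. In the coupling one already has $\mu^\eps_{\wt h^\eps}(\D)=\mu^\eps_{\wh h^\eps}(\cS)\to\mu_{\wh h}(\cS)=\wt\mu(\D)$ a.s.\ by Remark~\ref{rmk:disk-conv3}, and vague convergence together with convergence of total mass is weak convergence. With weak convergence in hand, \eqref{eq:nomasstoboundary} drops out in one line: if it failed along some $\eps_n\to 0$, $\delta_n\to 0$, then in a Skorokhod coupling along that sequence one would have, for every fixed $\delta>0$, $\mu^{\eps_n}_{\wt h^{\eps_n}}(\D\setminus(1-\delta_n)\D)\le \mu^{\eps_n}_{\wt h^{\eps_n}}(\D\setminus(1-\delta)\D)\to \wt\mu(\D\setminus(1-\delta)\D)$ a.s., and the right side vanishes as $\delta\to 0$. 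This is the paper's entire argument for \eqref{eq:nomasstoboundary}.

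Your alternative route (pulling back to the strip, splitting into a far-away piece controlled by an area version of Lemma~\ref{lem:tail} and a compact-window piece controlled via tightness of $z^\eps$ and distortion of $\psi^\eps$) is not wrong, and would in fact yield uniformity over \emph{all} $\eps\in(0,2-\sqrt 2)$ rather than just small $\eps$. But it requires several auxiliary estimates that the paper never writes down, and none of them are needed for the application in Corollary~\ref{cor:nobottleneck}. The single observation you are missing is that convergence of total mass is already part of Lemma~\ref{lem:disk-conv}; once you use it, \eqref{eq:nomasstoboundary} becomes a consequence of the convergence rather than an independent obstacle.
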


\begin{proof}
	We assume that $b=1$; the result for other $b$ and the uniform convergence in \eqref{eq:nomasstoboundary} follows immediately from the definition in Remark \ref{rmk:disk-conv2}. 
	
	The proof then follows from Lemma \ref{lem:disk-conv}. We take a coupling where the convergence is almost sure: in particular, the fields $\wh h^\eps$ converge almost surely to $\wh h$ in $H^{-1}_{\corr{\mathrm{loc}}}(\cS)$ and the measures $\mu_{\wh h^\eps}^\eps$ converge weakly almost surely to $\mu_{\wh h}$ in $\cS$. 
	This means that we can sample a sequence of $z^\eps$ from the $\mu_{\wh h^\eps}^\eps$ and $z$ from $\mu_{\wh h}$, such that $z^\eps\to z\in \cS$ almost surely. 
	Since $z\in \cS$ is at positive distance from $\partial \cS$, this implies that the conformal maps $\psi^\eps$ converge to $\psi$ almost surely on compacts of $\cS$ and therefore that $\wt h^\eps \to \wt h$ in $H^{-1}_{\corr{\mathrm{loc}}}(\D)$ and $\mu_{\wt h^\eps}^\eps$ converges weakly to $\wt \mu$.
	Finally, \eqref{eq:nomasstoboundary} follows from the convergence proved above, and the fact that it holds for the limit measure $\mu_{\wt h}$.
\end{proof}

{Later, we will also need to consider fields obtained from the field $\wt h^\eps$ of Lemma \ref{cor:disc_mapped} via a random rotation. For this purpose, we record the following remark.}
\begin{remark}\label{rmk:changing_lengths}
	Suppose that $h_n$ are a sequence of fields coupled with some rotations $\theta_n$ such that $\bar{h}_n=h_n\circ \theta_n-2\gamma_n^{-1}\log \nu_{h_n}(\partial \D)$ has the law of $\wt h^{\eps_n}$ from Lemma \ref{cor:disc_mapped} with $b=1$, for some $\epn\downarrow 0$, $\gamma_n=\gamma(\eps_n)$. Suppose further that $(h_n, \nu_{h_n}(\partial \D),\mu_{h_n}(\D))\Rightarrow (h,\nu^*, \mu^*)$ in $H^{-1}_{\corr{\mathrm{loc}}}(\D)\times \R \times \R$ as $n\to \infty$. Then $\nu^*=\nu_h(\partial \D)$ and $\mu^*=\mu_h(\D)$ almost surely.  Indeed, $(h_n, \nu_{h_n}(\partial \D),\mu_{h_n}(\D),\theta_n,\bar{h}_n)$ is tight in $n$, and any subsequential limit $(h,\nu^*,\mu^*,\theta,\bar{h})$ has $(h,\nu^*,\mu^*)$ coupled as above. Since $\mu_{h_n}(A)=(\nu_{h_n}(\partial \D))^2\mu_{\bar{h}_n}(\theta_n^{-1}(A))$ for every $n$ and $A\subset \D$ it follows from Lemma \ref{cor:disc_mapped} that $\mu^*=(\nu^*)^2\mu_{\bar{h}}(\D)$ and $\nu_{\bar{h}}(\partial \D)=1$ a.s. On the other hand, it is not hard to see that $\bar{h}$ must be equal to $h\circ\theta-\log \nu^*$ a.s., which implies the result.
\end{remark}

\subsection{Mating of trees}

Mating of trees theory, \cite{DMS14}, provides a powerful encoding of LQG and SLE in terms of Brownian motion. 
We will state the version in the unit disk $\D$ below. 

Let $\alpha\in(-1,1)$ and let $Z^{(c)}$ be $c$ times a standard planar Brownian motion with correlation $\alpha>0$, started from $(1,0)$ or $(0,1)$. Condition on the event that $Z$ first leaves the first quadrant at the origin $(0,0)$; this is a zero probability event but can be made sense of via a limiting procedure, see for example \cite[Proposition 4.2]{AG19}. We call the resulting conditioned process (restricted until the time at which the process first leaves the first quadrant) a \emph{Brownian cone excursion with correlation $\alpha$}. Note that we use the same terminology for the resulting process for any $c$ and either choice of $(1,0)$ or $(0,1)$ for the starting point.  

To state the mating-of-trees theorem (disk version) we first introduce some notation. Let $(\D,h^\eps)$ denote a unit boundary length $\gamma$-LQG disk for $\gamma\in(\sqrt{2},2)$, embedded as described in Definition \ref{conv:disk}. Let $\eta^\eps$ denote a space-filling SLE$_{\kappa'}$ in $\D$, starting and ending at 1, which is independent of $h$. \corr{Recall that this is defined from a branching SLE$_\kp$ as described in Section \ref{sec:sf_sle}, where the branch targeted towards $z\in \D$ is denoted by $\eta_z^\eps$ (one can obtain $\eta_z^\eps$) from $\eta^\eps$ by deleting time intervals on which $\eta^\eps$ is exploring regions of $\D$ that have been disconnected from $z$).} Parametrize $\eta^\eps$ by the area measure induced by $h$. Let $Z^\eps=(L^\eps,R^\eps)$ denote the process started at $(0,1)$ and ending at $(0,0)$ which encodes the evolution of the left and right boundary lengths of $\eta^\eps$: see Figure \ref{fig:spacefilling_bl}.

\begin{figure}
	\centering
	\includegraphics[width=\textwidth]{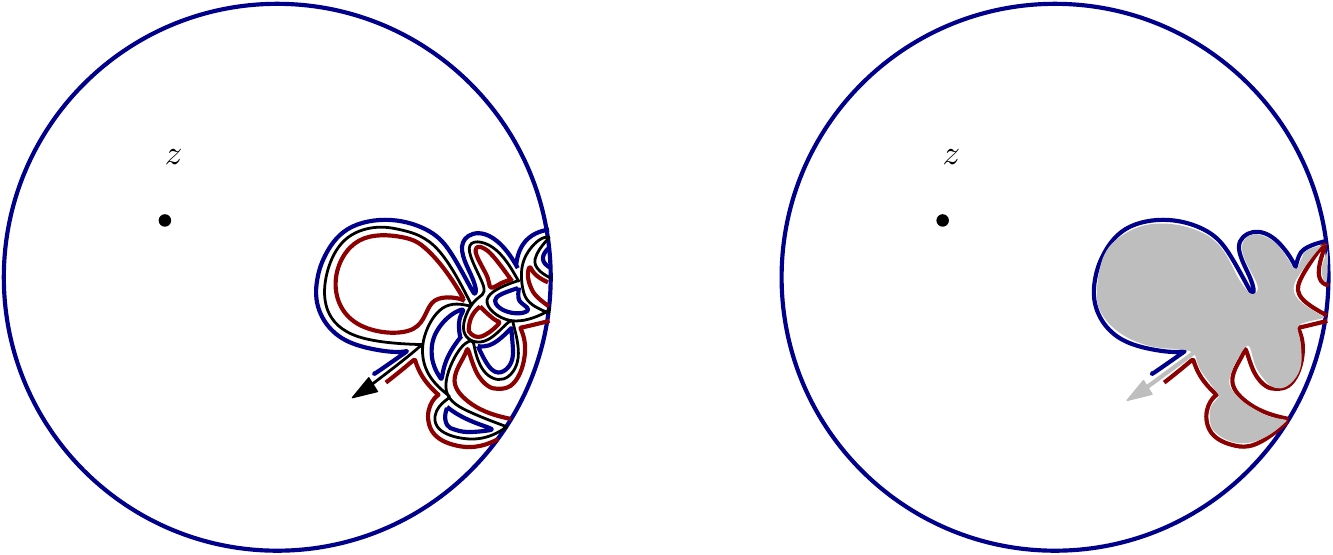}
	\caption{The left figure is an illustration of the branch of a space-filling SLE$_\kp$ ($\kp>4$) towards some point $z\in \D$, and stopped at some time before it reaches $z$. The space-filling SLE itself will fill in the monocolored components that are separated from $z$ as it creates them, so if $t$ is equal to the total $\gamma$-LQG area of the grey shaded region on the right figure, then the  space-filling SLE has visited precisely this grey region at time $t$. We then define the left (resp.\ right) boundary length of the space-filling SLE at time $t$ to be the $\gamma$-LQG boundary length of the red (resp.\ blue) curve shown on the right figure.} 
	\label{fig:spacefilling_bl}
\end{figure}

{The following theorem follows essentially from \cite{DMS14}. For precise statements, see 
	\cite[Theorem 2.1]{FDMOT} for the law of $Z^\eps$ and see \cite[Theorem 7.3]{FDMOT} for the law of the monocolored components.}
\begin{theorem}[\cite{DMS14,FDMOT}]
	In the setting above, $Z^\eps$ has the law of a Brownian cone excursion with correlation $-\cos(\pi\gamma^2/4)$. The pair $(h^\eps,\eta^\eps)$ is measurable with respect to the $\sigma$-algebra generated by $Z^\eps$. Furthermore, if $z$ is sampled from $\mu_{h^\eps}^\eps$ renormalized to be a probability measure, then the monocolored complementary components of $\eta^\eps_z$ define independent $\gamma$-LQG disks conditioned on their $\gamma$-LQG boundary lengths {and areas}, i.e., if we condition on the ordered sequence of boundary lengths {and areas} of the monocolored domains $U$ disconnected from $z$ by $\eta^\eps_z$ then the corresponding LQG surfaces $(U,h|_U)$ are independent $\gamma$-LQG disks with the given boundary lengths and areas.
	\label{thm:MOT}
\end{theorem}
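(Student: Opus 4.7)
\begin{proofsketch}
The plan is to deduce all three assertions from the infinite-volume mating of trees of \cite{DMS14}, and then pass to the finite-volume disk setting via a conditioning/limiting procedure as in \cite{AG19,FDMOT}. The starting point I would take as a black box is \cite[Theorem 1.9]{DMS14}: on a $\gamma$-quantum cone $(\C,H)$ decorated by an independent whole-plane space-filling $\SLE_{\kappa'}$ parametrized by $\gamma$-LQG area, the joint left/right boundary length process $Z=(L,R)$ is a two-sided planar Brownian motion with correlation $-\cos(\pi\gamma^2/4)$, and $(H,\eta)$ modulo rotation is measurable with respect to $\sigma(Z)$. The correlation can be checked via the boundary-length scaling of quantum wedges welded along SLE$_{\kappa'}$, and the measurability comes from the ``forested wedge'' conformal welding construction that reconstructs the cone-decorated surface from the peanosphere Brownian motion.

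Next I would specialize to the disk. The idea is to identify the unit boundary length $\gamma$-LQG disk of Definition~\ref{def:disk} as the surface cut out of a quantum cone by the space-filling curve during a single ``cone excursion'' of $Z$ in the first quadrant started from $(0,1)$ (or $(1,0)$) and conditioned to leave at the origin. Concretely, on the cone the excursions of $Z$ away from the corner encode pockets swallowed by the branches of space-filling SLE, and these pockets carry the law of quantum disks with prescribed boundary length; this is made rigorous via the Bessel/GFF characterization of the disk of Definition~\ref{def:disk} (as in \cite{DMS14,AG19}). Restricting attention to a single such excursion and applying an appropriate rescaling gives that: (a) the corresponding restriction of $Z$ is a Brownian cone excursion of correlation $-\cos(\pi\gamma^2/4)$, yielding the law of $Z^\eps$; and (b) the embedded disk $(\D,h^\eps)$ together with the embedded space-filling loop $\eta^\eps$ is measurable with respect to that restricted cone excursion, yielding the measurability statement. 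The conversion between Definition~\ref{conv:disk}'s embedding and the embedding coming from the cone is just a random coordinate change that is itself measurable with respect to $Z^\eps$.

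For the third statement I would use the (quantum) Markov property of space-filling SLE$_{\kappa'}$ along its branches. After sampling $z$ from the normalized $\mu_{h^\eps}^\eps$-measure on $\D$, the branch $\eta^\eps_z$ cuts off monocolored bubbles in the same way as on the cone: each such bubble corresponds to a sub-excursion of $Z^\eps$ in which one coordinate returns to its running infimum while the other records the boundary length traced. By the welding/characterization of quantum disks used in step two, each such bubble, conditionally on its boundary length and area, is an independent $\gamma$-LQG disk, and these disks are jointly independent because the corresponding sub-excursions of $Z^\eps$ are conditionally independent given their lengths and increments. The statement for $\eta^\eps_z$ then follows from the target-invariance relationship between $\eta^\eps$ and $\eta^\eps_z$, which says that the bubbles disconnected by $\eta^\eps_z$ are exactly the monocolored components of $\eta^\eps$ on one side of the branch to $z$. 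I expect the main technical obstacle to be in step two: being careful that the cone excursion really produces the specific disk of Definition~\ref{def:disk} (with the $(2\eps)^{-1}$ normalization of $\nu^\eps_h$ and the additive constant fixed as in Definition~\ref{conv:disk}), rather than some marked-point variant or a disk rescaled by a random factor. That identification is exactly the content of \cite[Section 2]{FDMOT} and \cite[Section 4]{AG19}, which I would follow closely.
\end{proofsketch}
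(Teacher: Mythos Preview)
The paper does not give its own proof of this theorem; it is stated as a result from the literature, with the sentence ``The following theorem follows essentially from \cite{DMS14}. For precise statements, see \cite[Theorem 2.1]{FDMOT} for the law of $Z^\eps$ and see \cite[Theorem 7.3]{FDMOT} for the law of the monocolored components.'' Your sketch correctly summarizes the strategy of those references---deducing the disk statement from the infinite-volume cone result of \cite{DMS14} via the excursion/conditioning argument of \cite{AG19,FDMOT}---so there is nothing to compare against in the present paper.
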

\begin{remark}\label{rmk:varZ}
	In fact, we now know from \cite{ARS} that the variance $c^2$ of the Brownian motion from which the law of $Z^\eps$ can be constructed is equal to $1/(\eps\sin(\pi\gamma^2/4))$, where $\gamma=\gamma(\eps)=2-\eps$. In particular, the variance is of order $\eps^{-2}$. 
\end{remark}

For each fixed $z\in\D$ there is a natural parametrization of $\eta^\eps_z$ called its \emph{quantum natural parametrization} which is defined in terms of $Z^\eps$ as follows. First define $\frk t=\inf\{ t\geq 0\,:\,\eta^\eps(t)=z \}$ to be the time at which $\eta^\eps$ first hits $z$. Then let $\cI^{\eps,\frk t}$ denote the set of $s\in[0,\frk t]$ for which we \emph{cannot} find a cone excursion $J\subset[0,\frk t]$ 
\corr{(that is, $J=[t_1,t_2]\subset [0,\frk t]$ such that $(X^\eps_s,Y^\eps_s)\ge (X^\eps_{t_2},Y^\eps_{t_2})$ on $J$, and either $X^\eps_{t_1}=X^\eps_{t_2}$ or $Y^\eps_{t_1}=Y^\eps_{t_2}$)} such that $s\in J$. We call the times in $\cI^{\eps,\frk t}$ \emph{ancestor-free times relative to time $\frk t$.} It is possible to show (see \cite[Section 1.4.2]{DMS14}) that the local time of $\cI^{\eps,\frk t}$ is well-defined.\footnote{This local time (and the corresponding local time for $\eps=0$ defined below) is only defined up to a deterministic multiplicative constant. We fix this constant in the proof of Lemma \ref{lem:BM-conv}.} Let $(\ell^{\eps,\frk t}_t)_{t\geq 0}$ denote the increasing function describing the local time of $\cI^{\eps,\frk t}$ such that $\ell^{\eps,\frk t}_0=0$ and $\ell^{\eps,\frk t}_t= \ell^{\eps,\frk t}_{\frk t}$ for $t\geq \frk t$. Then let $T^{\eps,\frk t}_t$ for $t\in[0,\ell_{\frk t}^{\eps,\frk t}]$ denote the right-continuous inverse of $\ell^{\eps,\frk t}$. 

\begin{definition}[Quantum natural parametrization]
	With the above definitions $$(\eta^\eps_{z}(T^{\eps,\frk t}_t))_{t\in[0,\ell_{\frk t}^{\eps,\frk t}]}$$ defines a parametrization of $\eta^\eps_z$ which is called its quantum natural parametrization. \label{def:QNT}
\end{definition}

\subsection{Convergence of the mating of trees Brownian functionals}\label{sec:Bfs}

Let $Z^\eps$ be the process from Theorem \ref{thm:MOT} and let 
$X^\eps=(A^\eps,B^\eps)$, where
$$
A^\eps_t=a_\eps(L^\eps_t+R^\eps_t),\qquad
B^\eps_t=R^\eps_t-L^\eps_t,\qquad 
a_\eps^2=\frac{1+\cos(\pi\gamma^2/4)}{1-\cos(\pi\gamma^2/4)},
t\geq 0.
$$

\begin{figure}[h]
	\centering
	\includegraphics[width=\textwidth]{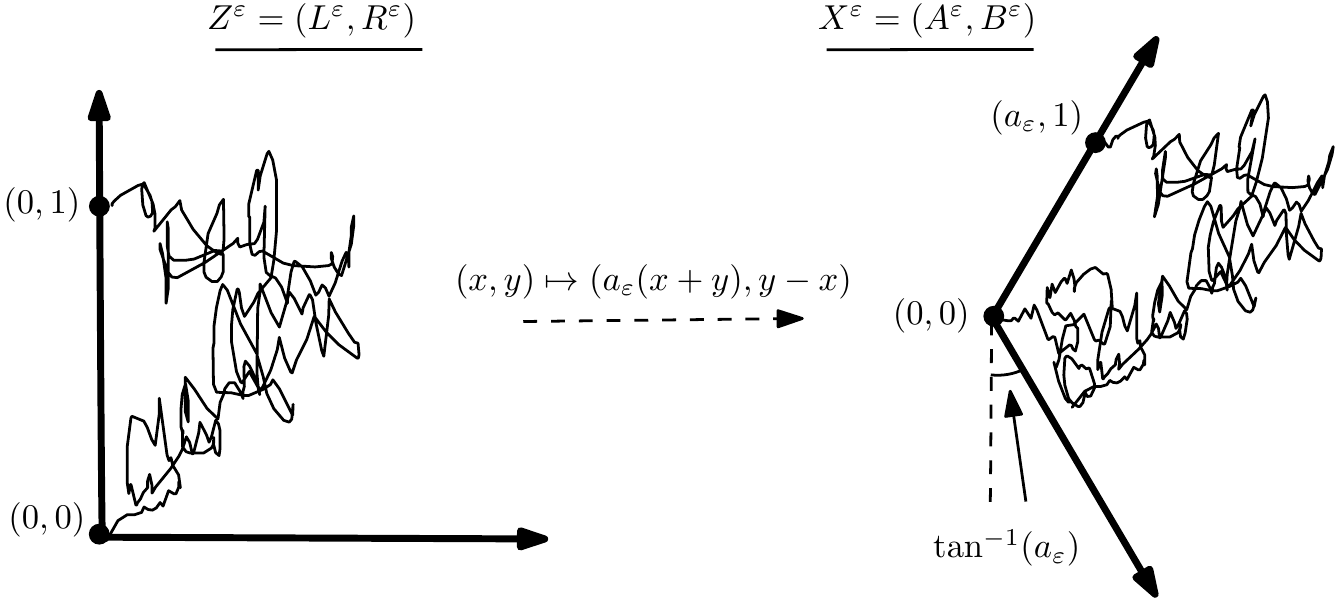}
	\caption{\corr{The transformation from $Z^\eps$ to $X^\eps$.}}
\end{figure}
	
Note that $a_\eps=\eps\pi/2+o(\eps)$ and that $X^\eps$ is an uncorrelated Brownian excursion with variance {$2(1+\cos(\pi\gamma^2/4))(\eps \sin(\pi\gamma^2/4))^{-1}=\pi+o(\eps)$} in the cone $\{z\in \C: \arg(z)\in [-\pi/2+\tan^{-1}(a_\eps),\pi/2-\tan^{-1}(a_\eps))\}$, starting from $(a_\eps,1)$ and ending at the origin.
Also define the processes
$\wh X^{\eps,\frk t}=(\wh A^{\eps,\frk t},\wh B^{\eps,\frk t})$ for each $\frk t<\mu^\eps(\D)$, by setting
$$\wh X^{\eps,\frk t}_t= X^{\eps,\frk t}_{T^{\eps,\frk t}_t}\, ; \, t> 0. $$

We will prove in this subsection that all the quantities defined above have a joint limit in law as $\eps\downarrow 0$. 
Namely, let us consider an uncorrelated Brownian excursion $X=(A,B)$ in the right half plane from $(0,1)$ to $(0,0)$; {the process can for example be constructed via a limiting procedure where we condition a standard planar Brownian motion from $(0,1)$ to $(0,0)$ on first leaving $\{z\,:\,\op{Re}(z)>-\delta \}$ at a point $\wh z$ where $|\op{Im}(\wh z)|<\delta$}. For $\frk{t}$ less than the total duration of $X$, let $\cI^{\frk t}\subset[0,\frk t]$ denote the set of times at which $A$ has a backward running infimum relative to time $\frk t$, i.e., $s\in\cI^{\frk t}$ if $A_u>A_s$ for all $u\in(s,\frk t]$. Let $(\ell^{\frk t}_t)_{t\geq 0}$ denote the increasing function describing the local time of $\cI^{\frk t}$ such that $\ell^{\frk t}_0=0$ and $\ell^{\frk t}_t= \ell^{\frk t}_{\frk t}$ for $t\geq \frk t$. Then let $T^{\frk t}$ denote the right-continuous inverse of $\ell^{\frk t}$, and define 
$\wh X^{\frk t}=(\wh A^{\frk t},\wh B^{\frk t})$ by 
$\wh X^{\frk t}_t= X^{\frk t}_{T^{\frk t}_t}$.\\

We set $$\BM^\eps=(X^\eps,(\cI^{\eps,\frk t})_{\frk t}, (\ell^{\eps,\frk t})_{\frk t}, (T^{\eps,\frk t})_{\frk t}, 
(\wh X^{\eps,\frk t})_{\frk t} )$$ and $$\BM=(X,(\cI^{\frk t})_{\frk t}, (\ell^{\frk t})_{\frk t}, (T^{\frk t})_{\frk t}, 
(\wh X^{\frk t})_{\frk t} )$$
where the indexing is over $\frk t\in \R_+\cap \mathbb{Q}$.

Then we have the following convergence. 

\begin{lemma}\label{lem:BM-conv}
	$\BM^\eps\Rightarrow \BM$ as $\eps\downarrow 0${, where we use the Hausdorff topology on the second coordinate and the Skorokhod topology on the remaining coordinates.}
\end{lemma}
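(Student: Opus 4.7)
The plan is to first establish the marginal convergence $X^\eps\Rightarrow X$, then pass the ancestor-free sets, local times, inverses, and time-changed processes through via Skorokhod embedding and semi-continuity arguments, and finally upgrade to the countable joint family in $\frk t$ by diagonal extraction.

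First, I would prove $X^\eps\Rightarrow X$. Recall that $X^\eps$ is an uncorrelated Brownian excursion with variance $\pi+o(\eps)$ in the cone of angular opening $\pi-2\tan^{-1}(a_\eps)$, from $(a_\eps,1)$ to $(0,0)$. Since $a_\eps=\eps\pi/2+o(\eps)\downarrow 0$, the cone opens to the right half-plane, the starting point converges to $(0,1)$, and the variance converges to $\pi$. Convergence to the half-plane Brownian excursion $X$ then follows from the explicit renormalization construction of cone excursions (used for instance in \cite[Proposition 4.2]{AG19}), together with continuity of this construction in the cone aperture and the starting point. By Skorokhod's theorem I then assume that $X^\eps\to X$ almost surely uniformly on compacts.

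Next, for each fixed rational $\frk t>0$, I would show the a.s.\ Hausdorff convergence $\cI^{\eps,\frk t}\to\cI^{\frk t}$. In terms of $X^\eps=(A^\eps,B^\eps)$, the cone-excursion condition for $Z^\eps$ on an interval $[t_1,t_2]$ reads
\[A^\eps_u - A^\eps_{t_2}\ \ge\ a_\eps\,|B^\eps_u - B^\eps_{t_2}|\quad\text{for all }u\in[t_1,t_2],\]
with the corresponding endpoint equality at $t_1$. Since $a_\eps\downarrow 0$ and $B^\eps$ is bounded on compacts, this collapses in the limit to $A_u\ge A_{t_2}$ on $[t_1,t_2]$ with $A_{t_1}=A_{t_2}$, i.e.\ to the complement of the backward-running-infimum set $\cI^{\frk t}$ of $A$ on $[0,\frk t]$. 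Two semi-continuity estimates then yield Hausdorff convergence: (i) if $s^\eps\in\cI^{\eps,\frk t}$ and $s^\eps\to s$, and if $A_{t_2}<A_s$ for some $t_2\in(s,\frk t]$, one can manufacture an actual cone excursion of $Z^\eps$ containing $s^\eps$ for small $\eps$ by taking $t_2^\eps$ close to $t_2$ and $t_1^\eps$ the last preceding time at which $L^\eps$ (or $R^\eps$) drops back to $L^\eps_{t_2^\eps}$ (resp.\ $R^\eps_{t_2^\eps}$); this contradicts ancestor-freeness and hence forces $s\in\cI^{\frk t}$; (ii) for $s\in\cI^{\frk t}$ at which $A$ has a \emph{strict} backward infimum (these form a dense subset of $\cI^{\frk t}$ almost surely, since $A$ behaves locally like a Brownian motion), strictness makes the ancestor-free condition robust under the $O(a_\eps)$ perturbation produced by the $B^\eps$-term, allowing one to construct $s^\eps\in\cI^{\eps,\frk t}$ close to $s$.

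The main obstacle will then be converting this Hausdorff convergence of random sets into Skorokhod convergence of the local times $\ell^{\eps,\frk t}\to\ell^{\frk t}$, since local time is not in general a continuous functional of Hausdorff limits. I would handle this by fixing the outstanding multiplicative constant so that, in the limit, $\ell^{\frk t}$ is the standard Brownian local time of $A_\cdot-\inf_{[\cdot,\frk t]}A$ at zero, and $\ell^{\eps,\frk t}$ is the analogous local time coming from the excursion theory of the cone Brownian motion $Z^\eps$ as set up in \cite{DMS14}. The desired convergence then reduces to a local-time convergence statement for $A^\eps_\cdot-\inf_{[\cdot,\frk t]}A^\eps$, modulo a correction involving $a_\eps|B^\eps_\cdot-B^\eps_{t_2}|$ that tends to $0$ uniformly on compacts in probability; this reduced statement follows from uniform convergence $A^\eps\to A$ by standard arguments (e.g.\ applying Tanaka's formula, or approximating the local time as a limit of $\delta^{-1}$ times the Lebesgue measure of a $\delta$-neighborhood and controlling the $\delta\downarrow 0$ regime uniformly in $\eps$). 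Once $\ell^{\eps,\frk t}\to\ell^{\frk t}$ is in hand, the inverse $T^{\eps,\frk t}\to T^{\frk t}$ follows in the Skorokhod topology, and $\wh X^{\eps,\frk t}=X^\eps\circ T^{\eps,\frk t}\to X\circ T^{\frk t}=\wh X^{\frk t}$ by composition, using continuity of $X$. Finally, to obtain joint convergence over all $\frk t\in\R_+\cap\mathbb{Q}$, I would apply a diagonal extraction: given any subsequence $\eps_n\downarrow 0$ one can extract a further subsequence along which the rational-indexed convergence holds almost surely at every $\frk t$ simultaneously, and since the joint law is determined by these countably many marginals coupled with $X$, this yields $\BM^\eps\Rightarrow\BM$.
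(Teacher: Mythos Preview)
Your overall strategy is reasonable and your characterization of the cone-excursion condition in $(A^\eps,B^\eps)$ coordinates is correct, but the local time step contains a genuine gap. You propose reducing convergence of $\ell^{\eps,\frk t}$ to convergence of the local time at zero of $A^\eps_\cdot-\inf_{[\cdot,\frk t]}A^\eps$, ``modulo a correction involving $a_\eps|B^\eps_\cdot-B^\eps_{t_2}|$''. This reduction is not valid: for $\eps>0$, the ancestor-free set $\cI^{\eps,\frk t}$ and the backward-running-infimum set of $A^\eps$ (call it $\wt\cI^{\eps,\frk t}$) are genuinely different regenerative sets, with $\wt\cI^{\eps,\frk t}\subsetneq\cI^{\eps,\frk t}$ and different Hausdorff dimensions ($1/2$ versus $2/\gamma^2$). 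Their local times therefore have different scaling exponents in $t$, and no uniformly vanishing additive correction can bridge them. Tanaka's formula gives you the local time of $\wt\cI^{\eps,\frk t}$, not of $\cI^{\eps,\frk t}$, so the argument does not close.

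The paper avoids this by first passing to the infinite-volume setting (two-sided planar Brownian motion), where $\cI^{\eps,0}$ is the zero set of a Bessel process and admits an explicit Poisson point process description with intensity $dx\times y^{-\alpha(\eps)}\,dy$, $\alpha(\eps)=1+2/\gamma^2$. The $x$-coordinate of the PPP \emph{is} the local time, so convergence of the PPP (immediate from $\alpha(\eps)\to 3/2$) yields Hausdorff convergence of $\cI^{\eps,0}$ and Skorokhod convergence of $\ell^{\eps,0}$ simultaneously, with the multiplicative normalization fixed by the PPP itself. Joint convergence with $X^\eps$ then uses the inclusion $\wt\cI^{\eps,0}\subset\cI^{\eps,0}$ together with equality of the limiting marginal laws to force $\cI^{0}=\wt\cI^{0}$. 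Only afterwards is the finite-volume statement recovered by embedding the cone excursion inside the two-sided process via the natural excursion measure. If you want to salvage your direct approach, you would need an intrinsic representation of $\ell^{\eps,\frk t}$ that is stable under $\eps\downarrow 0$; the running-infimum local time of $A^\eps$ is not the right object.
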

\begin{proof}
	First we consider the infinite volume case where $X^\eps$ is a two-sided planar Brownian motion started from $0$, with  the same variance and covariance as before, namely variance $2(1+\cos(\pi\gamma^2/4))(\eps \sin(\pi\gamma^2/4))^{-1}=\pi+o(\eps)$ and covariance $0$.
	In this infinite volume setting we define $(\cI^{\eps,{\frk t}})_{\frk t}, (\ell^{\eps,{\frk t}})_{\frk t}, (T^{\eps,{\frk t}})_{\frk t}, 
	(\wh X^{\eps,{\frk t}})_{\frk t}$ similarly to before, such that for $\eps\in (0,2-\sqrt{2})$, 
	$\cI^{\eps,{\frk t}}\subset(-\infty,{\frk t})$ is the set of ancestor-free times relative to 
	time ${\frk t}$, $\ell^{\eps,{\frk t}}:\R\to(-\infty,0]$ is an increasing process given by the local time of $\cI^{\eps,{\frk t}}$ satisfying 
	$\ell^{\eps,{\frk t}}_s\equiv 0$ for $s\geq {\frk t}$, $T^{\eps,{\frk t}}:(-\infty,0)\to(-\infty,0)$ is the 
	right-inverse of $\ell^{\eps,{\frk t}}$, and $\wh X^{\eps,{\frk t}}_s=X^{\eps}_{T^{\eps,{\frk t}}_s}$. We make a similar adaptation of the definition to the infinite volume setting for $\eps=0$; in particular, $X$ is ($\sqrt{\pi}$ times) a {standard} uncorrelated two-sided Brownian motion planar motion. By translation invariance in law of $X^\eps$ and $X$, and since $X^\eps$ and $X$ determine the rest of the objects in question, it is sufficient to show convergence for ${\frk t}=0$. 
	
	\nina{First we claim that for all $\eps\in[0,2-\sqrt{2})$ we can sample $\cI^{\eps,0}$ by considering a PPP in the second quadrant with intensity $dx\times y^{-\alpha(\eps)}dy$ for $\alpha(\eps)=1+2/(2-\eps)^2=1+2/\gamma^2$, such that the points $(x,y)$ of this PPP are in bijection with the complementary components of $\cI^{\eps,0}$ with $y$ representing the length of the component and $x$ representing the relative ordering of the components. (In the case $\eps=0$, $\cI^{0,0}$ refers to $\cI^0$.) For $\eps=0$ the claim follows since $A$ restricted to the complementary components of $\cI^0$ has law given by the Brownian excursion measure. For $\eps\in(0,2-\sqrt{2})$ the claim follows from \cite{DMS14}: It is explained in \cite[Section 1.4.2]{DMS14} that $\cI^{\eps,0}$ has the law of the zero set of some Bessel process, which verifies the claim modulo the formula for $\alpha(\eps)$. The dimension of $\cI^{\eps,0}$ is $2/\gamma^2$ \cite[Table 1 and Example 2.3]{GHM-KPZ}, and we get the formula for $\alpha(\eps)$ by adding 1 to this number.}
	
	\nina{Next} we argue that the marginal law of $\cI^{\eps,0}$ converges to the marginal law of $\cI^{0}$. \nina{Consider the definition of these sets via PPP as described in the previous paragraph.} Since $\lim_{\eps\rta 0}\alpha(\eps)=\alpha(0)=3/2$, the PPP for $\eps\in (0,2-\sqrt{2})$ converge in law to the PPP for $\eps=0$ on all sets bounded away from $y=0$. 
	\nina{This implies that for any compact interval $I$ we have convergence in law of $\cI^{\eps,0}\cap I$ to $\cI^{0}\cap I$ for the Hausdorff distance}.
	
	\nina{Now} we will argue that if $\wt\cI^{\eps,0}\subset(-\infty,0)$ denotes the backward running infima of $A^\eps$ relative to time 0, then
	\eqbn
	(X^\eps,\cI^{\eps,0},\wt\cI^{\eps,0})\Rightarrow (X,\cI^{0},\cI^{0}).
	\eqen
	Since $(X^\eps,\nina{\wt\cI^{\eps,0}})\Rightarrow (X,\cI^{0})$ and 
	$\nina{\cI^{\eps,0}}\Rightarrow \cI^{0}$, we only need to prove that for any a.s.\ subsequential limit $(X,\cI^{0},\wt\cI^{0})$ we have $\cI^0=\wt\cI^0$ a.s. Observe that $\wt\cI^{\eps,0}\subset\cI^{\eps,0}$ \nina{since $\wt\cI^{\eps,0}$ denotes the backward running infima of $A^\eps$, $\cI^{\eps,0}$ denotes the set of ancestor-free times of $A^\eps$ relative to time 0, and a time which is a backward running infimum of $A^\eps$ relative to time 0 cannot be inside a cone excursion, hence it is ancestor-free.} \nina{The observation $\wt\cI^{\eps,0}\subset\cI^{\eps,0}$} implies that 
	$\wt\cI^{0}\subset\cI^{0}$ a.s.\ \nina{in any subsequential limit $(X,\cI^{0},\wt\cI^{0})$.} Since $\wt\cI^{0}\eqD\cI^{0}$, this implies that $\cI^0=\wt\cI^0$ a.s.
	
	Next we will argue that $(\cI^{\eps,0},\ell^{\eps,0},T^{\eps,0})
	\Rightarrow 
	(\cI^{0},\ell^{0},T^{0})$, assuming we choose the multiplicative constant consistently when defining $\ell^{\eps,0}$ and $\ell^{0}$. The convergence result follows again from the construction of $\cI^{\eps,0}$ and $\cI^0$ via a PPP, since the $x$ coordinate of the PPP defines the local time (modulo multiplication by a deterministic constant). 
	
	Using that $(\cI^{\eps,0},\ell^{\eps,0},T^{\eps,0})
	\Rightarrow 
	(\cI^{0},\ell^{0},T^{0})$, that $\cI^{\eps,0}$ and $\cI^{0}$ determine the other two elements in this tuple, and that $(X^\eps,\cI^{\eps,0})\Rightarrow(X,\cI^{0})$, we get
	$$
	(X^\eps,\cI^{\eps,0},\ell^{\eps,0},T^{\eps,0})
	\Rightarrow 
	(X^0,\cI^{0},\ell^{0},T^{0}).
	$$ 
	We conclude that the lemma holds in the infinite volume setting by using that
	$$\wh X^{\eps,0}_s=X^{\eps}_{T^{\eps,0}_s} \text{ and  }\wh X_s=X_{T^{0}_s}.$$
	
	To conclude the proof we will transfer from the infinite volume setting to the finite volume setting. Let us start by recalling that there is a natural infinite measure $\theta_\eps$ on Brownian excursions in the cone $\cC_\eps:=\{z\in \C: \arg(z)\in (-\pi/2+\tan^{-1}(a_\eps),\pi/2-\tan^{-1}(a_\eps))\}$  which is uniquely characterized (modulo multiplication by a constant) by the following property. Let $X^\eps$ be as in the previous paragraph, let $\delta>0$ and let $J_\eps=[t_1,t_2]\subset\R_-$ be the interval with largest left end point $t_1$ of length at least $\delta$ during which $X^\eps$ makes an excursion in the cone $\cC_\eps$. Here a cone excursion in $\cC_\eps$ is a path starting at $(ba_\eps,b)+z_0$ for some $b>0$ and $z_0\in\C$, ending at $z_0$, and otherwise staying inside $z_0+\cC_\eps$. Define \begin{equation}\label{XYeq}
		Y^\eps_t=(X^\eps_{t+t_1}-X^\eps_{t_2})
	\end{equation} 
	for $t\in[0,t_2-t_1]$ so that $Y^\eps$ is a path that starts at $(ba_\eps,b)$ for some $b>0$, ends at the origin, and otherwise stays inside $\cC_\eps$. Then $Y^\eps$ has law $\theta_\eps$ restricted to excursions of length at least $\delta$. (Here and in the rest of the proof, when we work with a non-probability measure of finite mass, we will often assume that it been renormalized to be a probability measure.) 
	See \cite{Shi85}. 
	
	The measure $\theta_\eps$ allows a disintegration $\theta_\eps=\int_0^\infty \theta_\eps^b\,db$, where a path sampled from $\theta_\eps^b$ a.s.\ starts at $(ba_\eps,b)$. Furthermore, for $b,b'>0$, a path sampled from $\theta_\eps^b$ and rescaled by $b'/b$ so it ends at $(b'a_\eps,b')$ (and with Brownian scaling of time), has law $\theta_\eps^{b'}$. Finally, an excursion sampled from $\theta_\eps^1$ is equal in law to the excursion in the statement of the lemma. See \cite{AG19}. 
	
	Let us now use these facts to complete the proof.	We define a function $f^\eps$ such that for $X^\eps$ a two-sided planar Brownian motion as above we have $f^\eps(X^\eps)=((\cI^{\eps,\frk t})_{\frk t}, (\ell^{\eps,\frk t})_{\frk t}, (T^{\eps,\frk t})_{\frk t}, 
	(\wh X^{\eps,\frk t})_{\frk t} )$ a.s. For $Y^\eps$ a Brownian cone excursion in $\cC_\eps$ starting at $(a_\eps,1)$ we define $f^\eps(Y^\eps)$ such that $(Y^\eps,f^\eps(Y^\eps))$ is equal in law to the tuple $\BM^\eps$ in the theorem statement. We also extend the definition of $f^\eps$ to the case of Brownian excursions $Y^\eps$ in $\cC_\eps$ starting at $(ba_\eps,b)$ for general $b>0$ in the natural way.

	Now let $Y^\eps$ be coupled with $X^\eps$ as in \eqref{XYeq} for some fixed $\delta>0$, and let $E^\eps$ be the event that $Y^\eps$ starts at $(ba_\eps,b)$ for $b\in [1,2]$. Define $f,E$ similarly for $\eps=0$. We claim that
	\eqb
	(X^\eps,f^\eps(X^\eps),Y^\eps,f^\eps(Y^\eps),E^\eps)
	\Rightarrow
	(X,f(X),Y,f(Y),E)
	\label{eq:finite-infinite}
	\eqe
	as $\eps\to 0$. In fact, this claim is immediate since if $(X^\eps,f^\eps(X^\eps))$ converges to $(X,f(X))$ then (by convergence of  $\cI^{\eps,0}$) we also have convergence of the interval $J_\eps$, which further gives convergence of $
	(Y^\eps,f^\eps(Y^\eps),E^\eps)$ to $(Y,f(Y),E)$.
	
	With $Y^\eps$ as in the previous paragraph let $\wt Y^\eps$ denote a random variable which is obtained by conditioning on $E^\eps$ and then applying a Brownian rescaling of $Y^\eps$ so that $\wt Y^\eps$ starts at $(a_\eps,1)$. We get from \eqref{eq:finite-infinite} that $(\wt Y^\eps, f^\eps(\wt Y^\eps)) \Rightarrow (\wt Y,f(\wt Y))$. Note that if we condition the excursions in the statement of the lemma to have duration at least $\delta$, then these have exactly the same laws as $(\wt Y^\eps, f^\eps(\wt Y^\eps),\wt Y,f(\wt Y))$ conditioned to have duration at least $\delta$. Thus the lemma follows upon taking $\delta\to 0$, since the probability that the {considered} excursions 
	{have} duration at least $\delta$ tends to $1$, uniformly in $\eps$.
\end{proof} 

 \subsection{Proof of \eqref{eqn:bottleneck}}\label{sec:order_proof}

\corr{Let us first recall the statement of \eqref{eqn:bottleneck}. We have fixed $z,w\in \D$, and as usual, $\eta^\eps$ denotes a space filling SLE$_\kp$ in $\D$, while $\eta^\eps_z$ denotes the branch in the associated branching SLE$_\kp$ towards $z$, parameterized by $-\log$ conformal radius seen from $z$. For $\delta>0$, we have defined the times $\sigma_{z,w,\delta}^\eps$ that $w$ is sent first sent to within distance $\delta$ of $\partial \D$ by the  Loewner maps associated with $\eta^\eps_z$, and $\sigma_{z,w}^\eps=\sigma_{z,w,0}^\eps$ to be the first time that $z$ and $w$ are separated by $\eta^\eps_z$.  For $r>0$, we  denote the collection of faces (squares) of $r\Z^2$ that intersect $\D$ by $\cS_r$. Finally, we write $S_{\delta,r}^\eps$ for the event that  there exists $S\in \cS_r$ that is separated by $\eta_z^\eps$ from $z$ during the interval $\sigma_{z,w,\delta}^\eps, \sigma_{z,w}^\eps]$  \emph{and} such that $z$ is visited by the space-filling $\SLE_\kp$ $\eta^\eps$, before $S$. The statement of \eqref{eqn:bottleneck} is then that
	\begin{equation*}
		\lim_{\delta\downarrow 0} \lim_{\eps\downarrow 0} \mathbb{P}(S_{\delta,r}^\eps) =0.
	\end{equation*} }

The mating of trees theorem (Theorem \ref{thm:MOT}) together with the convergence proved in the previous subsection now make the proof of this statement reasonably straightforward. Indeed, in plain language, it says that the probability of an $\SLE_\kp(\kp-6)$ branch almost separating two points $z$ and $w$ (where ``almost'' is encoded by a small parameter $\delta$) but then going on to separate a bicolored component of macroscopic size from $z$ at some time $t$ \emph{strictly} before separating $z$ from $w$, goes to $0$ as $\delta\to 0$, uniformly in $\kp$. The idea is to couple this SLE with an independent $\gamma$-LQG disk and note that if the event mentioned above were to occur, then the component $U$ containing $z$ and $w$ at time $t$ would have a small ``bottleneck'' and hence define a very strange distribution of $\gamma$-LQG mass when viewed as a $\gamma$-LQG surface. On the other hand, if we sample several points from the $\gamma$-LQG area measure on the disk, then one of these is likely to be in the bicolored component separated from $z$ and $w$ at time $t$. So the mating of trees theorem says that $U$ should really look like a quantum disk, and in particular, have a rather well behaved distribution of $\gamma$-LQG mass {without bottlenecks}. This contradiction will lead us to the proof of \eqref{eqn:bottleneck}.

Let us now get on with the details. For $\eps\in(0,2-\sqrt{2})$ we consider a CLE$_{\kp}$ exploration alongside an \emph{independent} unit boundary length quantum disk $h^\eps$ as in Definition \ref{conv:disk}. We write $\mu^\eps$ for its associated LQG area measure and let $y^\eps$  be a point in $\D$ sampled from $\mu^\eps$ normalized to be a probability measure. We let $z\in \cQ$ be fixed.
\begin{corollary}\label{cor:nobottleneck}
	Consider the event $A^\eps_{\delta,m,v}$ that: 
	\begin{itemize}\item $\mathcal{O}_{z,y^\eps}^\eps=1$ (i.e., the component containing $z$ when $y^\eps$ and $z$ are separated is monocolored);
		\item when ${\lcTB}_{z,y^\eps}^\eps$ (this monocolored component) 
		is mapped to $\D$, with a point in the interior chosen proportionally to $\mu^\eps|_{{\lcTB}_{z,y^\eps}^\eps}$ sent to $0$, the resulting quantum mass of $\D\setminus (1-10\delta \D)$ is greater than $m$.
	\end{itemize} Then for every $m$ we have that
	\[ \lim_{\delta\to 0}\limsup_{\eps\to 0}\mathbb{P}(A_{\delta,m,v}^\eps)=0.\]
\end{corollary}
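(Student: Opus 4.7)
The plan is to use Theorem \ref{thm:MOT} (mating of trees) to identify the monocolored component containing $z$ as an independent $\gamma$-LQG disk, and then invoke Lemma \ref{cor:disc_mapped} to rule out too much quantum mass being concentrated near its boundary. Write $V^\eps := (\lcB_z^\eps)_{\sigma^\eps_{z,y^\eps}}$ for this component, and set $b^\eps = \nu^\eps_{h^\eps}(\partial V^\eps)$, $a^\eps = \mu^\eps_{h^\eps}(V^\eps)$.

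Since $y^\eps$ is sampled from $\mu^\eps_{h^\eps}$ renormalized and $h^\eps$ is independent of $\eta^\eps$, I will apply Theorem \ref{thm:MOT} with target point $y^\eps$. On the event $\{\cO^\eps_{z,y^\eps}=1\}$ the region $V^\eps$ is precisely a monocolored complementary component of $\eta^\eps_{y^\eps}$, so conditionally on $(b^\eps, a^\eps)$ and on everything outside $V^\eps$, the surface $(V^\eps, h^\eps|_{V^\eps})$ is an independent $\gamma$-LQG disk with these parameters. Marginalizing over $a^\eps$ (whose conditional law given $b^\eps$ matches the natural area distribution of a $b^\eps$-boundary length disk, cf.\ Remark \ref{rmk:disk-conv2}) yields that, conditional on $b^\eps$ alone, $(V^\eps, h^\eps|_{V^\eps})$ is a $b^\eps$-boundary length $\gamma$-LQG disk in the sense of Definition \ref{def:disk}.

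The interior point chosen proportionally to $\mu^\eps_{h^\eps}|_{V^\eps}$ is then a fresh sample from this disk's area measure, so after conformally mapping $V^\eps$ to $\D$ and sending this point to $0$ we are exactly in the setup of Lemma \ref{cor:disc_mapped} with $b=b^\eps$. The bound \eqref{eq:nomasstoboundary}, together with its uniformity over $b\in[0,C]$, gives
\[
\lim_{\delta\downarrow 0}\sup_{\eps}\sup_{b\in[0,C]}\mathbb{P}\bigl(\mu^\eps_{\wt h^\eps}(\D\setminus(1-10\delta)\D)>m\,\big|\,b^\eps=b,\,\cO^\eps_{z,y^\eps}=1\bigr)=0
\]
for any fixed $m,C>0$. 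It remains to show that $b^\eps$ is tight. For this I will exploit the mating-of-trees encoding: $b^\eps$ can be written as a continuous functional of the Brownian excursion $\BM^\eps$ (essentially an increment of $\wh B^{\eps,\frk t}$ read at the stopping time at which $z$ is separated from $y^\eps$), and then Lemma \ref{lem:BM-conv} yields $b^\eps\Rightarrow b$ for some a.s.\ finite $b$.

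Combining everything, given $m,\eta>0$ I will pick $C$ large so that $\sup_\eps \mathbb{P}(b^\eps>C)<\eta/2$ and then $\delta_0$ small so that the conditional probability displayed above is at most $\eta/2$ for $\delta<\delta_0$ and every $b\leq C$; splitting on $\{b^\eps\leq C\}$ will give $\limsup_\eps \mathbb{P}(A^\eps_{\delta,m,v})<\eta$ for $\delta<\delta_0$, which is the assertion. The main technical obstacle is the identification in the second paragraph that conditioning on $b^\eps$ alone (together with monocoloredness) produces the natural $b^\eps$-boundary length disk rather than some reweighted version; this requires a careful accounting of the joint $(b^\eps,a^\eps)$-law of monocolored components, but is a routine consequence of Theorem \ref{thm:MOT} once unpacked.
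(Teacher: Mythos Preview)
Your approach is correct and uses the same three ingredients as the paper (Theorem \ref{thm:MOT}, Lemma \ref{lem:BM-conv}, and Lemma \ref{cor:disc_mapped}), but with a small twist: the paper union-bounds over \emph{all} monocolored components of $\eta^\eps_{y^\eps}$ with mass $\ge m$---their number is controlled via tightness of the total mass $\mu^\eps_{h^\eps}(\D)$, and their boundary lengths are bounded by the largest jump of $\wh B^{\eps,\frk t}$---whereas you work directly with the single component $V^\eps$ containing $z$. Your route is a bit more direct, at the price of the ``technical obstacle'' you flag.

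Two imprecisions are worth noting. First, $b^\eps$ is \emph{not} a continuous functional of $\BM^\eps$ alone: picking out which jump of $\wh B^{\eps,\frk t}$ corresponds to $V^\eps$ requires knowing $t^\eps_z$, which depends on $(\loops^\eps,\LQG^\eps)$. Tightness is nonetheless immediate by bounding $b^\eps$ above by the largest jump of $\wh B^{\eps,\frk t}$, which is exactly what the paper does. Second, the obstacle you flag is not really about accounting for the joint $(b^\eps,a^\eps)$-law (and Remark \ref{rmk:disk-conv2} does not address it). The cleaner resolution is that the domain $V^\eps$ and the event $\{\cO^\eps_{z,y^\eps}=1\}$ are determined by $\loops^\eps$, which is independent of $h^\eps$; hence selecting this particular monocolored component among those cut off from $y^\eps$ introduces no bias on the quantum surface it carries, and Theorem \ref{thm:MOT} applies directly.
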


\begin{proof}
	Theorem \ref{thm:MOT} says that the monocolored components separated from $y^\eps$ by $\eta^\eps_{y^\eps}$ are quantum disks conditionally on their boundary lengths and areas. Moreover, we know that the total mass of the original disk $h^\eps$ converges in law to something a.s.\ finite as $\eps\to 0$, by Lemma \ref{lem:disk-conv} and Remark \ref{rmk:disk-conv3}. Recalling the definition of $\wh{B}$ from Section \ref{sec:Bfs}, we also know that the largest quantum boundary length among all monocolored components separated from $y^\eps$ has law given by the largest jump of $\wh{B}^{\frk t}$, for $\frk t$ chosen uniformly in $(0,\mu^\eps(\D))$. Indeed, if $\frk t$ corresponds to $y^\eps$ as in the paragraph above Definition \ref{def:QNT}, then $\frk{t}$ is a uniform time in $(0,\mu^\eps(\D))$ and the jumps of $\wh{B}^{\frk t}$ 
	are precisely the quantum boundary lengths of the monocolored components disconnected from $y^\eps$.  By Lemma \ref{lem:BM-conv} we may deduce that the law of this largest jump 
	converges to something a.s.\ finite as $\eps\to 0$. Thus, by choosing $N,L$ large enough, we may work on an event with arbitrarily high probability (uniformly in $\eps$) where there are fewer than $N$ monocolored components separated for $y^\eps$ with mass $\ge m$, and where they all have $\nu^\eps$ boundary length less than $L$.   Lemma \ref{cor:disc_mapped} then provides the result. 
\end{proof}\\

We also need one more elementary property of radial Loewner chains to assist with the proof of \eqref{eqn:bottleneck}. 

\begin{lemma}\label{lem:radial_flow_out}
	Consider the image $(g_t(z))_{t\ge 0}$ of a point $z\in \D$ under the radial Loewner flow $(g_t)_{t\ge 0}=(g_t[\lcB])_{t\ge 0}$ 
	corresponding to $\lcB\in\lcS$.  Then  with probability one, $|g_t(z)|$ is a non-decreasing function of time (until the point $z$ is swallowed).
\end{lemma}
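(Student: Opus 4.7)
The plan is to observe that this is really a deterministic statement about any Loewner chain in $\lcS$: once we fix the driving measure $\lambda$, the quantity $|g_t(z)|^2$ will be shown to be a non-decreasing function of $t$ on $[0, t_z)$ by a direct computation, and the ``with probability one'' qualifier follows automatically since $\lcB$ is random.

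First, I would disintegrate the driving measure: since the marginal of $\lambda$ on $[0,\infty)$ is Lebesgue measure, we may write $\lambda(ds, du) = ds \, \mu_s(du)$ where $\mu_s$ is a probability measure on $\partial \D$ for almost every $s$. From the integral form of the radial Loewner equation \eqref{eq:loew_int}, we then have, for $z \in \D$ and almost every $t < t_z$,
\[ \partial_t g_t(z) = \int_{\partial \D} g_t(z) \, \frac{u + g_t(z)}{u - g_t(z)} \, d\mu_t(u). \]

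Next I would compute $\partial_t |g_t(z)|^2 = \overline{g_t(z)} \, \partial_t g_t(z) + g_t(z) \, \overline{\partial_t g_t(z)}$. Writing $w = g_t(z)$ and using $|u| = 1$ together with the elementary identity
\[ \frac{u+w}{u-w} + \frac{\bar u + \bar w}{\bar u - \bar w} = \frac{(u+w)(\bar u - \bar w) + (\bar u + \bar w)(u - w)}{|u - w|^2} = \frac{2(|u|^2 - |w|^2)}{|u - w|^2} = \frac{2(1 - |w|^2)}{|u - w|^2}, \]
one obtains
\[ \partial_t |g_t(z)|^2 = 2 |g_t(z)|^2 \bigl( 1 - |g_t(z)|^2 \bigr) \int_{\partial \D} \frac{d\mu_t(u)}{|u - g_t(z)|^2}. \]
Since $|g_t(z)| < 1$ for $t < t_z$, the right-hand side is non-negative; integrating in $t$ gives the monotonicity of $|g_t(z)|^2$, and therefore of $|g_t(z)|$, on $[0, t_z)$.

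I do not anticipate a serious obstacle here: the lemma is essentially a direct calculation with the radial Loewner equation, mirroring the analogous (classical) fact for chordal Loewner evolutions in the upper half-plane. The only mildly technical points are the disintegration of $\lambda$ and the exchange of differentiation with the integral against $\mu_t$, both of which are standard for Loewner chains with Lebesgue time marginal (and can be found, e.g., in \cite{QLE}).
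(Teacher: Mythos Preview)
Your proof is correct and follows essentially the same approach as the paper: a direct computation of $\partial_t |g_t(z)|^2$ from the Loewner equation, showing it is non-negative. The paper's version assumes a single driving function $W_t$ and simply notes that $\Re\big((W_t+g_t(z))/(W_t-g_t(z))\big)\ge 0$ for $g_t(z)\in\D$; your measure-driven formulation is slightly more general (and arguably closer to the lemma as stated for arbitrary $\lcB\in\lcS$), but the idea is identical.
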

\begin{proof}
	From the radial Loewner equation one can compute directly that, until the point $z$ is swallowed, $$\partial_t (|g_t(z)|^2) = 2 |g_t(z)| \Re \big(\frac{W_t+g_t(z)}{W_t-g_t(z)}\big).$$
	Since $\Re((1+x)/(1-x))>0$ for any $x\in \D$, the right-hand side above must be positive.
\end{proof}\\

\begin{proofof}{\eqref{eqn:bottleneck}}  
	Fix $r>0$ and suppose  that $\mathbb{P}(S_{\delta,r}^\eps)\ge a$ for some $a>0$. Recall that $S_{\delta,r}^\eps$ is the event that  there exists $S\in \cS_r$ that is separated by $\eta_z^\eps$ from $z$ during the interval $[\sigma_{z,w,\delta}^\eps, \sigma_{z,w}^\eps]$ {and} such that the disconnected component containing $z$ is monocolored.  Let  $h^\eps, \mu^\eps,y^\eps$ be as above Corollary \ref{cor:nobottleneck}, and let $a'=\inf_{\eps> 0}\min_{S\in \cS_r}\mathbb{P}(y^\eps\in S)$. Then $a'$ is strictly positive, due to the convergence result Lemma \ref{conv:disk}, plus the fact that $\min_{S\in \cS_r}\mathbb{P}(y\in S)>0$ when $y$ is picked from the critical LQG area measure for a critical unit boundary length disk. By independence, we then have $\mathbb{P}(E_\delta^\eps)\ge aa'$, where $E_{\delta}^\eps$ is the event that $\sigma_{z,y^\eps}\in [\sigma_{z,w,\delta}^\eps,\sigma_{z,w}^\eps]$ and $\cO_{z,y}^\eps=1$. 
	
	We can also choose $v,m$ small enough and $K$ large enough that on an event $F_{m,v,K}^\eps$ with probability $\ge 1-aa'/2$, uniformly in $\eps$: 
	\begin{itemize}
		\itemsep0em
		\item $B_z(v)\subset l_z^\eps$ (resp. $B_w(v)\subset l_w^\eps$) where $l_z$ (resp. $l_w^\eps$) is the first nested $\CLE_{\kp}$ bubble containing $z$ (resp. $w$) that is entirely contained in $B_z(|z-w|/3))$ (resp. $B_w(|z-w|/3)$;
		\item $B_z(v)$ and $B_w(v)$ have $\mu$-mass greater than or equal to $m$;  
		\item if we map $l_z^\eps$ (resp. $l_w^\eps$) to $\D$ with $z$ (resp. $w$) sent to $0$, then the images of $B_z(v)$ and $B_w(v)$ are contained in $(1/2)\D$;
		\item $\mu^\eps(\D)\le K$.
	\end{itemize}
	Again this is possible because such $v,m,K$ can be chosen when $\eps=0,\kp=4$, and we can appeal to the convergence results Proposition \ref{prop:cleloopconv} and Lemma \ref{conv:disk}. Note that on the event $F_{v,m,K}^\eps$:
	\begin{itemize}
		\itemsep0em
		\item[(i)] $B_w(v)$ and $B_z(v)$ are contained in $({\lcB}_{z}^\eps)_t$ for all $t\in(\sigma^\eps_{z,w,\delta},\sigma^\eps_{z,w})$;
		\item[(ii)] for any $t\in(\sigma^\eps_{z,w,\delta},\sigma^\eps_{z,w})$ and conformal map sending $({\lcB}_{z}^\eps)_t$ to $\D$ with $z'\in B_z(v)$ sent to $0$, the image of $B_w(v)$ is contained in a $10\delta$ neighbourhood of $\partial \D$. 
	\end{itemize}
	Point (ii) follows because any such conformal map can be written as the composition of a conformal map  $({\lcB}_{z}^\eps)_t$ to $\D$ sending $z$ to 0, and then a conformal map from $\D\to \D$ sending the image of $z'$, which lies in $(1/2)\D$, to $0$.  By Lemma \ref{lem:radial_flow_out}, $v$ is sent to distance $\le \delta$ from the boundary by the first of these two maps. The third bullet point in the definition of $F_{v,m,K}$ then implies that the whole of $B_w(v)$ is actually sent within distance $4\delta$ of $\partial \D$. Distortion estimates near the boundary for the second conformal map allow one to deduce (ii).
	
	To finish the proof, we consider the event $E_\delta^\eps \cap F_{m,v,K}^\eps$ which has probability $\ge aa'/2$ by  construction. Conditionally on this event, if we sample a point from $\lcB_{z,y^\eps}^\eps$ according to the measure $\mu^\eps$, then this point will lie in $B_z(v)$ with conditional probability $\ge m/K$.  If this happens, then upon mapping to the unit disk with this point sent to the origin, a set of $\mu^\eps$ mass $\ge m$ (namely $B_z(v)$) will necessarily be sent to $\D\setminus (1-10\delta)\D$ (see point (ii) above). Note that $m/K$ is a function $c(a)$ of $a$ only (and in particular does not depend on $\eps,\delta$).
	
	So in summary, if $\mathbb{P}(S_{\delta,r}^\eps)\ge a$, then $\mathbb{P}(A_{\delta, m,v}^\eps)>aa'c(a)$ for some $m(a),v(a),c(a)$ depending only on $a$, where $A_{\delta,m,v}^\eps$ is as in Corollary \ref{cor:nobottleneck}. This means that if \eqref{eqn:bottleneck} does not hold, then $\lim_{\delta\to 0} \limsup_{\eps\to 0} \mathbb{P}(A^\eps_{\delta, m,v})>0$ for some $m,v$. This contradicts Corollary \ref{cor:nobottleneck}, and hence \eqref{eqn:bottleneck} is proved.
	
\end{proofof}

\section{Mating of trees for $\kappa=4$ and  joint convergence of CLE, LQG, and Brownian motions as $\kappa'\downarrow 4$}
\label{sec:joint-conv}
Before stating the main theorems, let us briefly take stock of the progress so far. 
Recall that to each $\eps\in (0,2-\sqrt{2})$ we associate $\kp=\kp(\eps)=16/(2-\eps)^2$, and write $(\lcB_z^\eps)_{z\in \cQ}$ for the  SLE$_{\kappa'}(\kappa'-6)$ branches from 1 to $z$ in a branching SLE$_\kp$ in $\D$. These are generated by curves $(\eta^\eps_z)_{z\in \cQ}$, so that $(\lcB_z^\eps)_t$ is the connected component of $\D\setminus \eta_z^\eps$ containing $z$ for every $z$ and $t$. Recall that this branching SLE defines a nested CLE$_{\kappa'}$ which we denote by $\Gamma^{\eps}$, and a space-filling $\SLE_\kp$ which we denote by $\eta^\eps$. {The space-filling SLE$_\kp$} $\eta^\eps$ then determines an order on the points in $\cQ$: for $z,w\in \cQ$ we denote by $\cO_{z,w}^\eps$ the random variable that is $1$ if $z$ is visited before $w$ by $\eta^\eps$ (or $z=w$) and $0$ otherwise. We combine these and set
$$\loops^\eps=((\lcB^\eps_z)_z,\Gamma^\eps,(\cO_{z,w}^{\eps})_{z,w})$$
for each $\eps$, where $z,w$ are indexed by $\cQ$.

When $\kp=4$ we have analogous objects. We write $\Gamma$ for a nested CLE$_4$ in $\D$, and we assume that $\Gamma$ is coupled with a branching uniform $\CLE_4$ exploration that explores its loops. We write
$\lcB_z$ for the branch towards each $z\in \cQ$ in this exploration. Finally, we define a collection of independent coin tosses $(\cO_{z,w})_{z,w\in \cQ}$ as described at the start of Section \ref{sec:conv_order}. Combining these, we set 
$$ \loops=((\lcB_z)_z,\Gamma,(\cO_{z,w})_{z,w}).$$

The processes $\lcB^\eps_z,\lcB_z$ are each parameterized by $-\log $ conformal radius seen from $z$, and equipped with the topology of $\lcS_z$ for every $z\in \cQ$. 
{The loop ensembles} $\Gamma^\eps,\Gamma$ are equipped with the topology of Hausdorff convergence for the countable collection of loops surrounding each $z\in\cQ$.

We also consider, for each $\eps$, a unit boundary length Liouville quantum gravity disk as in Definition \ref{conv:disk}, 
{\emph{independent of}} 
$\loops^\eps$, and write
$$	\LQG^\eps=(\mu^\eps_{h^\eps},\nu^\eps_{h^\eps},h^\eps)$$ 
for the associated area measure, boundary length measure and field. 
We denote by  $$\LQG=(\mu_h,\nu_h,h)$$
its critical counterpart, which we also sample independently of $\loops$.
We equip the fields with the $H^{-1}(\D)$ topology, and the measures with the weak topology for measures on $\D$ and $\partial \D$ respectively.

Then by Remark \ref{rmk:disk-conv}, Proposition \ref{prop:cle-conv}, and the independence of $\loops^\eps$ and $\LQG^\eps$ (resp.\ $\loops$ and $\LQG$), we have that:
\begin{proposition}
	\label{prop:p1}
	$(\loops^\eps,\LQG^\eps)\Rightarrow (\loops,\LQG)$
	as $\eps\to 0$.
\end{proposition}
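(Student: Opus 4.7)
The proof is essentially immediate from the two ingredients cited in the last two sentences before the statement, so I will aim for a clean deduction rather than any substantive new work. The plan is to combine the marginal convergences with independence using the standard fact that, on Polish spaces, product measures converge weakly as soon as their marginals do. All the coordinate spaces in play here (the $\lcS_z$, the Hausdorff space of compact subsets of $\overline{\D}$, the discrete space $\{0,1\}^{\cQ\times\cQ}$, $H^{-1}_{\mathrm{loc}}(\D)$, and the spaces of finite Borel measures on $\D$ and $\partial\D$ under the weak topology) are Polish, so the argument applies without subtlety.

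More concretely, I would first establish tightness of the family $(\loops^\eps,\LQG^\eps)_{\eps}$ as $\eps\downarrow 0$. This follows coordinate-wise: tightness of $(\loops^\eps)_\eps$ is part of Proposition \ref{prop:cle-conv}, while tightness of $(\LQG^\eps)_\eps$ is part of Remark \ref{rmk:disk-conv}. Since $\loops^\eps$ and $\LQG^\eps$ are independent under the coupling that defines them, the joint law is the product of the marginal laws, so joint tightness is automatic from marginal tightness.

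It remains to identify any subsequential limit. Let $(\loops^*,\LQG^*)$ be a subsequential limit in law of $(\loops^\eps,\LQG^\eps)$ along some $\eps_n\downarrow 0$. For any bounded continuous functions $F,G$ on the respective product spaces, independence gives
\[
\mathbb{E}\!\left[F(\loops^{\eps_n})\,G(\LQG^{\eps_n})\right]
=\mathbb{E}\!\left[F(\loops^{\eps_n})\right]\,\mathbb{E}\!\left[G(\LQG^{\eps_n})\right].
\]
Passing to the limit along $\eps_n$ and applying Proposition \ref{prop:cle-conv} and Remark \ref{rmk:disk-conv} to the two marginal expectations on the right, and the definition of joint weak convergence on the left, yields
\[
\mathbb{E}\!\left[F(\loops^*)\,G(\LQG^*)\right]
=\mathbb{E}\!\left[F(\loops)\right]\,\mathbb{E}\!\left[G(\LQG)\right].
\]
This identifies the law of $(\loops^*,\LQG^*)$ as the product of the laws of $\loops$ and $\LQG$, i.e.\ as the law of $(\loops,\LQG)$, since these are independent under the target coupling. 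Uniqueness of subsequential limits together with tightness gives the desired joint convergence, concluding the proof. No step is expected to be difficult; the only thing to check is that the topologies involved make the ``independence passes to the limit'' step legitimate, which is standard for Polish spaces.
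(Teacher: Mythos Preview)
Your proposal is correct and follows exactly the same approach as the paper: the paper's proof consists of the single sentence preceding the proposition, citing Proposition~\ref{prop:cle-conv}, Remark~\ref{rmk:disk-conv}, and independence. You have simply spelled out the standard product-measure argument in more detail than the paper does.
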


Additionally, for every $\eps\in (0,2-\sqrt{2})$ by the mating of trees theorem, Theorem \ref{thm:MOT}, $(\loops^\eps,\LQG^\eps)$ determines a collection of Brownian observables
$$\BM^\eps=(X^\eps,(\cI^{\eps,\frk t})_{\frk t}, (\ell^{\eps,\frk t})_{\frk t}, (T^{\eps,\frk t})_{\frk t}, 
(\wh X^{\eps,\frk t})_{\frk t} )$$
as explained in Section \ref{sec:Bfs}. Recall that $X^\eps$ is $\sqrt{\pi}$ times an uncorrelated Brownian excursion in the cone $\{z\in \C: \arg(z)\in [-\pi/2+\tan^{-1}(a_\eps),\pi/2-\tan^{-1}(a_\eps))\}$, starting from $(a_\eps,1)$ and ending at the origin, where $a_\eps=\sqrt{(1+\cos(\pi\gamma^2/4))/(1-\cos(\pi\gamma^2/4)))}=\pi\eps/2+o(\eps)$. The indexing of the above processes is over $\frk{t}\in \mathbb{R}_+\cap \mathbb{Q}$. If we also write $$\BM=(X,(\cI^{\frk t})_{\frk t}, (\ell^{\frk t})_{\frk t}, (T^{\frk t})_{\frk t}, 
(\wh X^{\frk t})_{\frk t} ),$$
for a tuple with law as described in Section \ref{sec:Bfs}, then by Lemma \ref{lem:BM-conv} we  have that: 
\begin{proposition}
	\label{prop:p2}
	$\BM^\eps\Rightarrow \BM$
	as $\eps\to 0$.
\end{proposition}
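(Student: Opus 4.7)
The plan is essentially immediate: Proposition \ref{prop:p2} is a direct restatement, in the context of Section \ref{sec:joint-conv}, of Lemma \ref{lem:BM-conv} proved in Section \ref{sec:Bfs}. The tuple $\BM^\eps$ defined above via the mating-of-trees correspondence coincides with the tuple constructed in Section \ref{sec:Bfs}: both use the process $X^\eps = (A^\eps, B^\eps)$ obtained by the affine transformation of the boundary length process $Z^\eps$, the same ancestor-free sets $\cI^{\eps,\frk t}$, local times $\ell^{\eps,\frk t}$, inverse local times $T^{\eps,\frk t}$, and time-changed processes $\wh X^{\eps,\frk t}$. Likewise the limiting tuple $\BM$ is the same as before. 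So the first step of the plan is simply to unpack the definitions and observe this identification.

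For context, I would recall that the proof of Lemma \ref{lem:BM-conv} (the nontrivial content) proceeds in two steps. First, one works in an infinite-volume setting in which $X^\eps$ is replaced by a two-sided planar Brownian motion with the same (uncorrelated) covariance, variance $\pi + o(\eps)$. By translation invariance it suffices to prove convergence at $\frk t = 0$; the ancestor-free set $\cI^{\eps,0}$ admits a Poisson point process representation on the second quadrant with intensity $dx \times y^{-\alpha(\eps)} \, dy$ where $\alpha(\eps) = 1 + 2/\gamma^2 \to 3/2$ as $\eps \downarrow 0$. Convergence of these PPPs on regions bounded away from $y = 0$ yields Hausdorff convergence of $\cI^{\eps,0}$ on compact intervals; the local time $\ell^{\eps,0}$ corresponds to the $x$-coordinate of the PPP; and $T^{\eps,0}$ and $\wh X^{\eps,0}$ are continuous functionals of what one already has. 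A short argument comparing $\cI^{\eps,0}$ with the set of backward running infima $\wt\cI^{\eps,0}\subseteq \cI^{\eps,0}$ (which have the same subsequential limit law) upgrades this to joint convergence with $X^\eps$.

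The second and more delicate step transfers from the infinite-volume setting to the cone excursion setting via the disintegration of the Brownian cone excursion measure $\theta_\eps = \int_0^\infty \theta_\eps^b \, db$. One extracts cone excursions of duration at least $\delta$ as subpaths of the two-sided Brownian motion, uses Brownian scaling to rescale excursions starting at $(ba_\eps, b)$ into excursions starting at $(a_\eps, 1)$, and then sends $\delta \to 0$. The main subtlety in this step is ensuring that the excursion-extraction and rescaling commute with the Hausdorff / Skorokhod convergence of the whole tuple $\BM^\eps$; this follows since each of the components of $\BM^\eps$ is a continuous functional (up to the rescaling) of $(X^\eps, \cI^{\eps,0})$. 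Once this has been carried out for Lemma \ref{lem:BM-conv}, Proposition \ref{prop:p2} is an immediate corollary and requires no further work.
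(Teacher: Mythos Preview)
Your proposal is correct and matches the paper's approach exactly: the paper's proof of Proposition \ref{prop:p2} is nothing more than the sentence ``by Lemma \ref{lem:BM-conv} we have that,'' and your identification of the tuples (together with the accurate summary of the lemma's proof for context) is precisely what is needed.
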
 Here, $\cI^{\eps,\frk t},\cI^{\frk t}$ are equipped with the Hausdorff topology, and the stochastic processes in the definition of $\BM^\eps, \BM$ are equipped with the Skorokhod topology.\\

We now wish to describe the \emph{joint} limit of 	$(\loops^\eps, \LQG^\eps, \BM^\eps)$ as $\eps\to 0$.  For this, we first need to introduce a little notation. 

For 
{$z, w\in\cQ$, $z\ne w$,}
we can consider the first time $\sigma_{z,w}^\eps$ (defined by $\loops^\eps$)  at which $z$ and $w$ are in different complementary components of $\D\setminus \eta_z^\eps$. We let $U^\eps=U^\eps(z,w)\subset\D$  denote the component  which is visited first by the space-filling SLE$_\kp$ $\eta^\eps$. We say that $U^\eps=U^\eps(z,w)$ is the \emph{monocolored} component when $z$ and $w$ are separated. 
Let us define $$\mathfrak{U}^\eps_z:=\{U\subset \D: U=U^\eps(z,w) \text{ for some } z\ne w \text{ with } \cO^\eps_{z,w}=0\}$$
to be the set of monocolored components separated from $z$ by $\eta_z^\eps$. Note that these are naturally ordered, according to the order that they are visited by $\eta^\eps$. In fact, we may also associate orientations to the elements of $\frk U_z^\eps$: we say that $U\in \frk U_z^\eps$ is ordered clockwise (resp.\ counterclockwise) if the boundary of $U$ is visited by $\eta_z^\eps$ in a clockwise (resp.\ counterclockwise) order, and in this case we write $\mathrm{sgn}(U)=-1$ (resp.\ $+1$).

\begin{remark}\label{rmk:MOT}
	For $\eps\in (0,2-\sqrt{2})$, by Theorem \ref{thm:MOT} and the definitions above, we have that: \begin{compactitem}
		\item the duration of $Z^\eps$ is equal to $\mu_{h^\eps}^\eps(\D)$, hence $X^\eps=0$ for all $t\ge \mu^\eps_{h^\eps}(\D)$ almost surely;
		\item for $z\in \cQ$, the time $t_z^\eps$ at which $\eta^\eps$ visits $z$ is almost surely given by $\mu_{h^\eps}^\eps(\cup_{U\in \frk{U}_{z}^\eps} U)=\sum_{\frk{U}_{z}^\eps} \mu_{h^\eps}^\eps(U)$;
		\item  the ordered $\nu_{h^\eps}^\eps$ boundary lengths of the components of $\frk{U}_z^\eps$ are almost surely equal to the ordered  jumps of $(\wh B^{\eps,t_{z}^\eps})$, and the sign of each jump is equal to the sign of the corresponding element of $\frk{U}_z^\eps$;
		\item the ordered $\mu^\eps_{h^\eps}$ masses of the components of $\frk{U}_z^\eps$ are almost surely equal to the ordered jumps of $T^{\eps,t_z^\eps}$. 
	\end{compactitem}
\end{remark}

We can also define analogous objects associated with the CLE$_4$ exploration: if $z$ and $w$ are separated at time $\sigma_{z,w}$ by the $\CLE_4$ exploration branch towards $z$, and $\cO_{z,w}=1$ we set $U(z,w)=(\lcB_z)_{\sigma_{z,w}}$; if $\cO_{z,w}={0}$ we set $U(z,w)=(\lcB_w)_{\sigma_{w,z}}$. {The set} $\frk{U}_z$ is then defined in exactly the same way. Note that in this case the elements of $\frk{U}_z$ are ordered by declaring that $U$ comes before $U'$ iff $U=U(z,w)$ and $U'=U(z,w')$ for $w\ne w'$ such that $\cO_{w',w}=0$. We now say that $U\in \frk U_z$ is ordered clockwise (resp.\ counterclockwise) if there is an even (resp.\ odd) number of loops which enclose $U$, and write $\mathrm{sgn}(U)=-1$ (resp. $+1$). 

The main ingredient that will allow us to describe the joint limit {of $(\loops^\eps, \LQG^\eps, \BM^\eps)$} is the following:

\begin{proposition}
	\label{prop:p3}
	Given $(\loops^\eps, \LQG^\eps)$, denote by $ z^\eps$ a point sampled from $\mu^\eps_{h^\eps}$ in $\D$ (normalised to be a probability measure) and given $(\loops, \LQG)$, denote by $z$ a point sampled in the same way from $\mu_{h}$. For given $\delta>0$, write $(U_1^{\eps},\dots, U_{N^\eps}^\eps)$ for the ordered components of $\frk{U}_{z^\eps}^{\eps}$ with $\mu^\eps_{h^\eps}$ area $\ge \delta$, and define $(U_1,\dots, U_N)$ similarly for the ordered components of $\frk{U}_{z}$ with $\mu_h$ area $\ge \delta$. Suppose that $w_i^\eps$ for $1\le i \le N_\eps$ (resp.\ $w_i$ for $1\le i \le N$) are sampled from $\mu^\eps|_{U_i^\eps}$ (resp.\ $\mu|_{U_i}$) normalized to be probability measures, and $g_i^\eps:U_i^\eps \to \D$ (resp.\ $g_i:U_i\to \D$) are the conformal maps that send $w_i^\eps$ to $0$ (resp. $w_i$ to $0$) with positive real derivative at $w_i^\eps$ (resp.\ $w_i$). Set $\mathrm{sgn}(U_i^\eps)=w_i^\eps=0$ (resp.\  $\mathrm{sgn}(U_i)=w_i=0$) and $g_i^\eps(h^\eps)$ (resp.\ $g_i(h)$) to be the $0$ function for $i>N^\eps$ (resp.\ $i>N$).  
	Then	$$(\loops^\eps, \LQG^\eps,  z^\eps,(\mathrm{sgn}(U_i^\eps))_{i\ge 1}, (w_i^\eps)_{i\ge 1},
	(g_i^\eps(h^\eps))_{i\ge 1})
	\Rightarrow (\loops,\LQG,z,(\mathrm{sgn}(U_i))_{i\ge 1}, (w_i)_{i\ge 1},
	(g_i(h))_{i\ge 1})$$
	as $\eps\to 0$.\footnote{with respect to the Euclidean topology in the third coordinate, and the topology in the final coordinates defined such that $((s_i^n)_{i\ge 1}, (w_i^n)_{i\ge 1}, (h^n_i)_{i\ge 1})\to ((s_i)_{i\ge 1}, (w_i)_{i\ge 1}, (h_i)_{i\ge 1})$ as $n\to \infty$ iff the number of non-zero components on the left hand side is equal to the number $N_n$ of non-zero components on the right hand side for all $n$ large enough, and the first $N$ components converge in the product discrete $\times$ Euclidean $\times$ $H^{-1}(\D)$ topology.} The fields $g_i^\eps(h^\eps)$ and $g(h)$ above are defined using the change of coordinates formula \eqref{eq:coc}. 
\end{proposition}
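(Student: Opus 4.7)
My plan is to establish tightness of the full tuple and then identify all subsequential limits, using the mating-of-trees Theorem~\ref{thm:MOT} together with Propositions~\ref{prop:p1} and~\ref{prop:jointDsigma}, Lemma~\ref{lem:BM-conv}, and Lemma~\ref{cor:disc_mapped}. Tightness of $(\loops^\eps,\LQG^\eps,z^\eps)$ is immediate from Proposition~\ref{prop:p1}; the count $N^\eps$ is bounded by $\mu^\eps_{h^\eps}(\D)/\delta$, which is tight by Remark~\ref{rmk:disk-conv3}; the signs and points $w_i^\eps$ are trivially tight; and tightness of each $g_i^\eps(h^\eps)$ in $H^{-1}_{\mathrm{loc}}(\D)$ will follow from the fact (via Theorem~\ref{thm:MOT}) that $(U_i^\eps,h^\eps|_{U_i^\eps})$ is a subcritical quantum disk conditional on its boundary length and area, together with Lemma~\ref{cor:disc_mapped}. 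To characterize any limit I will Skorokhod-embed onto a space where $(\loops^{\eps_n},\LQG^{\eps_n})\to(\loops,\LQG)$ a.s.\ along a subsequence, jointly with $\BM^{\eps_n}\to\BM$; weak convergence of $\mu^{\eps_n}_{h^{\eps_n}}$ to the non-atomic, full-support measure $\mu_h$ then lets me couple $z^{\eps_n}\to z$ a.s.

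Next, Propositions~\ref{prop:jointDsigma} and~\ref{prop:cle-conv}, applied along rational approximants to $z$, give convergence of the branch $\lcB^{\eps_n}_{z^{\eps_n}}$ together with all separation times and order variables $\cO^{\eps_n}_{z^{\eps_n},w}$, and therefore of the ordered monocolored components $U_i^{\eps_n}$ with their signs. Remark~\ref{rmk:MOT} identifies the ordered areas and boundary lengths of these components as ordered jumps of $T^{\eps_n,t^{\eps_n}_{z^{\eps_n}}}$ and $\wh B^{\eps_n,t^{\eps_n}_{z^{\eps_n}}}$ respectively, whose joint convergence is part of Lemma~\ref{lem:BM-conv}. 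Since the limiting jump-size distribution is atomless, the truncation at area $\ge\delta$ will a.s.\ give $N^{\eps_n}=N$ eventually, together with joint convergence of $(U_i,\mathrm{sgn}(U_i),\mu_h(U_i),\nu_h(\partial U_i))_{i\le N}$ in the appropriate sense.

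Finally, conditional on $(\loops^{\eps_n},z^{\eps_n})$ and on the boundary length and area data, Theorem~\ref{thm:MOT} says the surfaces $(U_i^{\eps_n},h^{\eps_n}|_{U_i^{\eps_n}})$ are mutually independent subcritical quantum disks of the prescribed type. Sampling $w_i^{\eps_n}$ from $\mu^{\eps_n}_{h^{\eps_n}}|_{U_i^{\eps_n}}$ and pushing forward via $g_i^{\eps_n}$ gives exactly the random embedding appearing in Lemma~\ref{cor:disc_mapped}, modulo a random rotation handled as in Remark~\ref{rmk:changing_lengths}. I will apply that lemma componentwise and exploit conditional independence across $i$ (given the converging boundary data) to upgrade the marginal convergences to the joint convergence $(w_i^{\eps_n},g_i^{\eps_n}(h^{\eps_n}))\to(w_i,g_i(h))$, with $(U_i,h|_{U_i})$ a critical LQG disk of the limiting boundary length and area. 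The hardest part of the argument will be exactly this last step of bookkeeping: making rigorous the joint, rather than merely marginal, convergence of all the embedded disks together with the bulk data $(\loops,\LQG,z)$. The mechanism I will rely on is that, conditional on the convergent boundary lengths and areas, the remaining randomness decouples across $i$ by Theorem~\ref{thm:MOT}, so that a componentwise application of Lemma~\ref{cor:disc_mapped} is enough.
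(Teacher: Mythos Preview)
Your overall architecture (tightness, then identification of subsequential limits via Skorokhod embedding) matches the paper's, and your use of Lemma~\ref{lem:BM-conv} to control areas, boundary lengths, and hence $N^{\eps_n}=N$ eventually, is a reasonable variant of what the paper does. The genuine gap is in your final step, where you claim that conditional independence from Theorem~\ref{thm:MOT} together with a componentwise application of Lemma~\ref{cor:disc_mapped} identifies the limit as $(w_i,g_i(h))$. That argument only shows that any subsequential limit $h_i^*$ of $g_i^{\eps_n}(h^{\eps_n})$ has the \emph{law} of a critical quantum disk with the right boundary length and area, conditionally independent across $i$ given this data. It does \emph{not} show the pathwise identity $h_i^*=g_i(h)$, i.e.\ that the limiting field is the pushforward, via the specific conformal map $g_i$, of the \emph{same} field $h$ appearing in the limiting $\LQG$. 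Knowing that two random variables have the same conditional law is not the same as knowing they coincide almost surely; and you cannot appeal to a critical version of Theorem~\ref{thm:MOT} to say $(U_i,h|_{U_i})$ is a priori a critical disk, since that is essentially what the paper is building toward.

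The paper closes this gap differently: instead of going through the law of the disks, it shows directly that the conformal maps converge, $(g_i^{\eps_n})^{-1}\to g_i^{-1}$ uniformly on compacts of $\D$. This uses that the limiting $w_i$ a.s.\ avoids $\Gamma$ (so one can find rational points $x,y\in\cQ$ with $U^{\eps_n}(z^{\eps_n},w_i^{\eps_n})=U^{\eps_n}(x,y)$ for all large $n$), and then the \cart convergence of $(\lcB_y^{\eps_n})_{\sigma^{\eps_n}_{y,x}}$, which is already part of $\loops^{\eps_n}\to\loops$ via Proposition~\ref{prop:jointDsigma}. From $(g_i^{\eps_n})^{-1}\to g_i^{-1}$ and $h^{\eps_n}\to h$ in $H^{-1}_{\mathrm{loc}}$, the identity $h_i^*=g_i(h)$ follows at once. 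Your proposal also omits the preliminary fact that the limiting $w_i$ lie off $\Gamma$ (the paper's step~(2)), without which $g_i$ is not even well defined. If you want to salvage your route, the cleanest fix is to drop the appeal to Lemma~\ref{cor:disc_mapped} for \emph{identification}, and instead argue that once $U_i^{\eps_n}\to U_i$ in the \cart sense and you have coupled $w_i^{\eps_n}\to w_i$ (via weak convergence of $\mu^{\eps_n}_{h^{\eps_n}}|_{U_i^{\eps_n}}$), convergence of the uniformizing maps is automatic; Lemma~\ref{cor:disc_mapped} and Remark~\ref{rmk:changing_lengths} are then only needed, as in the paper, to control total masses and boundary lengths.
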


In other words, the ordered and signed sequence of monocolored quantum surfaces separated from $z^{\eps_n}$ converges almost surely, as a sequence of quantum surfaces {(see above \eqref{eq:coc})} to the ordered sequence of monocolored quantum surfaces separated from $z$ as $n\to \infty$.

From this, we can deduce our main theorem.
\begin{theorem}\label{thm_main} 
	$(\loops^\eps, \LQG^\eps, \BM^\eps)$ converges jointly in law to {a tuple} $(\loops, \LQG, \BM)$ as $\eps\downarrow 0$.
	In the limiting tuple, $\loops, \LQG, \BM$ have {marginal} laws as above, $\loops$ and $\LQG$ are independent, and $(\loops, \LQG)$ determines $\BM$.
	
	Furthermore, we have the following explicit description of the correspondence between $(\loops,\LQG)$ and $\BM$ in the limit. Suppose that $z\in\D$ is sampled from the critical Liouville measure $\mu$  normalized to be a probability measure. Then
	\begin{itemize} \item $X_t=0$ for all $ t\ge \mu(\D)$ almost surely and the conditional law of \begin{equation}\label{eq:tz}t_z:= \mu_h\left( \cup_{U\in\frk{U}_z} U \right)\end{equation}
		given $(\loops, \LQG, \BM)$ is uniform on $(0,\mu(\D))$,
		\item $X_{t_z}=(A_{t_z},B_{t_z})$ satisfies {the following for a deterministic constant $c>0$}:
		\begin{equation}\label{eq:AB} A_{t_z}={c}\liminf_{\delta\to 0} \delta N_\delta 
			\text{ and } B_{t_z}=1+\sum_{U\in \frk U_z} \mathrm{sgn}(U) \nu_h(U)\end{equation}
		almost surely,	where for $\delta>0$, $N_\delta$ is the number of domains $U\in \frk{U}_z$ such that $\nu_h(\partial U)\in (\delta/2,\delta)$,
		\item the ordered collection $(\mu_h(U),\mathrm{sgn}(U)\nu_h(\partial U))_{U\in \frk{U}_z}$ is almost surely equal to the ordered collection of jumps of $(T^{t_z},\wh B^{t_z})$ (where $(T^{t_z},\wh B^{t_z})$ are defined from $\BM$ as in Section \ref{sec:Bfs}).
	\end{itemize}
\end{theorem}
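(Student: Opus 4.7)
The plan is to combine Propositions \ref{prop:p1}, \ref{prop:p2}, and \ref{prop:p3} with the subcritical mating-of-trees correspondence recorded in Remark \ref{rmk:MOT} to identify the joint law of any subsequential limit. Tightness of $(\loops^\eps, \LQG^\eps, \BM^\eps)$ as $\eps \downarrow 0$ is immediate from Propositions \ref{prop:p1} and \ref{prop:p2}, so via Prokhorov and Skorokhod embedding I would work on a probability space where some subsequential limit $(\loops, \LQG, \BM)$ is attained almost surely along a sequence $\eps_n \downarrow 0$. Using Proposition \ref{prop:p3} I would then enrich the coupling by sampling $z^{\eps_n}$ from $\mu^{\eps_n}_{h^{\eps_n}}$ normalized, coupled so that $z^{\eps_n} \to z$ almost surely and the signed, ordered sequence of monocolored components separated from $z^{\eps_n}$ converges as a sequence of quantum surfaces to the analogous sequence for $z$.

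Next, I would apply Remark \ref{rmk:MOT}. For each $n$, the time $t_{z^{\eps_n}}^{\eps_n} := \mu^{\eps_n}_{h^{\eps_n}}(\cup_{U \in \frk U^{\eps_n}_{z^{\eps_n}}} U)$ is the moment at which $\eta^{\eps_n}$ first reaches $z^{\eps_n}$ and is conditionally uniform on $(0, \mu^{\eps_n}(\D))$; moreover the ordered jumps of $\wh B^{\eps_n, t_{z^{\eps_n}}^{\eps_n}}$ and $T^{\eps_n, t_{z^{\eps_n}}^{\eps_n}}$ coincide with the signed $\nu^{\eps_n}$-boundary lengths and the $\mu^{\eps_n}$-areas of the monocolored components in $\frk U^{\eps_n}_{z^{\eps_n}}$. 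Passing these identifications to the limit using Proposition \ref{prop:p3} together with the almost sure convergence $\BM^{\eps_n} \to \BM$ yields formula \eqref{eq:tz}, the uniformity of $t_z$ on $(0, \mu(\D))$, the third bullet point (jump-to-surface correspondence), and the formula for $B_{t_z}$ (by summing all jumps of $B$ on $[0, t_z]$ starting from $B_0 = 1$). The identity $X_t = 0$ for $t \ge \mu(\D)$ is inherited directly from its analogue $X^{\eps_n}_t = 0$ for $t \ge \mu^{\eps_n}(\D)$.

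The principal obstacle will be the formula $A_{t_z} = c \liminf_{\delta \to 0} \delta N_\delta$. Unlike $B$, the coordinate $A$ encodes a local time on the ancestor-free set $\cI^{t_z}$ and is not a pure jump functional of $\wh B^{t_z}$, so it cannot be read off directly from the surface data. My plan is to exploit two ingredients. First, $A^{\eps_n}_{t_{z^{\eps_n}}^{\eps_n}} = a_{\eps_n}(L^{\eps_n} + R^{\eps_n})_{t_{z^{\eps_n}}^{\eps_n}}$ can be expressed as a small-jump sum of $\wh B^{\eps_n, t_{z^{\eps_n}}^{\eps_n}}$. Second, as shown in the proof of Lemma \ref{lem:BM-conv}, the lengths of the complementary intervals of $\cI^{\eps_n, t_{z^{\eps_n}}^{\eps_n}}$ are the atoms of a Poisson point process with explicit intensity $y^{-\alpha(\eps_n)}\,dy$ where $\alpha(\eps_n) \to 3/2$, and conditional on an interval of length $y$, the corresponding $\wh B$-jump scales like $y^{1/2}$. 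Applying Campbell's formula and tracking the scales as $\eps_n \downarrow 0$ should show that $\delta N_\delta$ concentrates around a constant multiple of the local time $\ell^{t_z}_{t_z}$, while the same calculation at the pre-limit level forces $A^{\eps_n}_{t_{z^{\eps_n}}^{\eps_n}} \to c\,\ell^{t_z}_{t_z}$ as $\eps_n \downarrow 0$, with the deterministic constant $c$ coming from the normalizations in Remark \ref{rmk:varZ} and Lemma \ref{lem:BM-conv}. The $\liminf$ (rather than $\lim$) accounts for the Poissonian fluctuations around the expected jump count.

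Finally, independence of $\loops$ and $\LQG$ in the limit is inherited from the pre-limit independence. Measurability of $\BM$ with respect to $(\loops, \LQG)$ follows from the explicit correspondence: for each rational $z \in \cQ$ the quantities $t_z$, $X_{t_z}$, and the jumps of $(T^{t_z}, \wh B^{t_z})$ are functionals of $(\loops, \LQG)$, and the family $((T^{t_z}, \wh B^{t_z}))_{z \in \cQ}$ together with $X$ determines $\BM$ as visible from the definitions in Section \ref{sec:Bfs}. Since this characterization is deterministic, every subsequential limit has the same joint law, which upgrades the subsequential convergence to full convergence of $(\loops^\eps, \LQG^\eps, \BM^\eps)$.
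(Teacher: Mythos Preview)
Your overall strategy---tightness from Propositions \ref{prop:p1} and \ref{prop:p2}, subsequential limits, Proposition \ref{prop:p3} to pass the mating-of-trees identities of Remark \ref{rmk:MOT} to the limit, and then deducing measurability from the explicit formulas---matches the paper's proof. There are, however, two places where you diverge in a way that either overcomplicates matters or leaves a real gap.

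\textbf{The formula for $A_{t_z}$.} Your plan to prove $A_{t_z}=c\liminf_{\delta\to 0}\delta N_\delta$ via a pre-limit Campbell calculation on the $\eps_n$-level Poisson point processes is unnecessary. The paper's route is much shorter: once (iii) is established (the ordered jumps of $(T^{t_z},\wh B^{t_z})$ equal the ordered $(\mu_h(U),\mathrm{sgn}(U)\nu_h(\partial U))$), formula \eqref{eq:AB} is a statement purely about the limiting half-plane Brownian excursion $X=(A,B)$. Since $\cI^{t_z}$ is the set of backward running infima of $A$ relative to $t_z$, one has $A_{t_z}=\ell^{t_z}_{t_z}$ (the local-time normalization fixed in Lemma \ref{lem:BM-conv}), and this local time is a.s.\ recoverable as $c\liminf_{\delta\to 0}\delta\cdot\#\{\text{jumps of }\wh B^{t_z}\text{ in }(\delta/2,\delta)\}$ by standard subordinator/excursion theory. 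By (iii) those jump sizes are exactly the $\nu_h(\partial U)$, so the formula follows. No $\eps$-level asymptotics are needed.

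\textbf{Measurability of $\BM$ with respect to $(\loops,\LQG)$.} Your final paragraph shifts from $z$ sampled from $\mu_h$ to fixed $z\in\cQ$, and then asserts that the family of $(T^{t_z},\wh B^{t_z})$ over $z\in\cQ$ determines $\BM$. This is not justified: for deterministic $z$ the theorem's identities are not directly available, and it is not clear that the collection $\{t_z:z\in\cQ\}$ is dense in $(0,\mu_h(\D))$. The paper instead argues as follows. Let $\BM,\BM'$ be conditionally i.i.d.\ given $(\loops,\LQG)$, each satisfying (i)--(iv), and sample a single $z$ from $\mu_h$ independently of $(\BM,\BM')$ given $(\loops,\LQG)$. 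Then $X_{t_z}$ and $X'_{t_z}$ are both given by the same $(\loops,\LQG,z)$-measurable formula \eqref{eq:AB}, so $X_{t_z}=X'_{t_z}$ a.s. Since $t_z$ is conditionally uniform on $(0,\mu_h(\D))$ and $X,X'$ are continuous, this forces $X=X'$ identically, hence $\BM=\BM'$. This is the measurability, and the same reasoning applied to two different subsequential limits gives uniqueness of the limiting law.

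\textbf{A smaller gap.} Proposition \ref{prop:p3} gives convergence of the fields $g_i^\eps(h^\eps)$ as elements of $H^{-1}(\D)$, not directly of their total boundary lengths and areas. To pass Remark \ref{rmk:MOT} to the limit you need these scalars to converge as well. The paper handles this via Remark \ref{rmk:changing_lengths} (a consequence of Lemma \ref{cor:disc_mapped}): since each $g_i^\eps(h^\eps)$ is, after a rotation and additive renormalization, a quantum disk, convergence of the field forces convergence of $\mu$ and $\nu$ totals. You should insert this step when you say ``passing these identifications to the limit''.
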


Notice that \begin{equation}\label{eq:Al}A_{t_z}=\wh A_{\ell^{t_z}_{t_z}}=\ell^{t_z}_{t_z}\end{equation} is the limit as $\eps\to 0$ of the total length of the $\SLE_\kp(\kp-6)$ branch towards $z$ in the quantum natural parameterization. We can therefore view $A_{t_z}$ as a limiting ``quantum natural distance'' of $z$ from the boundary of the disk. In a similar vein, we record in Table \ref{table1} some of the correspondences between the $\CLE_4$ decorated critical LQG disk with order variables $(\loops, \LQG)$ and the Brownian excursion $\BM$, where $z,w$ are points sampled from the critical LQG measure $\mu_h$ in the bulk. 
\begin{table*}[h]\centering
	\begin{tabular}{@{}l l@{}} \toprule
		$\BM$ & $(\loops, \LQG)$  \\ \midrule
		duration of $X$ & $\mu_h(\D)$  \\ 
		$\{t_w<t_z\}$ & $\{\cO_{w,z}=1\}=$``$w$ ordered before $z$'' \\
		$t_z$ & $\mu_h(\overline{\{w\in \cQ: O_{w,z}=1\}})=$``quantum area of points ordered before $z$''  \\
		$A_{t_z}$ & quantum natural distance of $z$ from $\partial \D$  \\
		jumps of $\wh B^{t_z}$ & LQG boundary lengths of ``components ordered before $z$''\\
		sign of jump & parity of $
		\#\ \{\CLE_4 \text{ loops surrounding component}\}$ \\
		jumps of $T^{t_z}$ & LQG areas of ``components ordered before $z$''\\
		CRT encoded by $A$ & $\CLE_4$ exploration branches  parameterized by quantum natural distance \\ \bottomrule
	\end{tabular}
	\label{table1}
\end{table*}

\begin{proofof}{Theorem \ref{thm_main} given Proposition \ref{prop:p3}}
	Since we know the marginal convergence of each component of $(\loops^\eps, \LQG^\eps, \BM^\eps)$, we know that the triple is tight in $\eps$. Thus our task is to characterize any subsequential limit $(\loops, \LQG, \BM)$ of $(\loops^\eps, \LQG^\eps,\BM^\eps)$. Note that Proposition \ref{prop:p1} already tells us that $(\loops, \LQG)$ are independent, and Proposition \ref{prop:p2} tells us that the marginal law of $\BM$ is that of a Brownian half plane excursion plus associated observables. 
	
	To characterize the law of $(\loops, \LQG, \BM)$ we will prove that if $z\in \D$ is sampled according to $\mu_h$ in $\D$, conditionally independently of the rest of $(\loops, \LQG, \BM)$ then: 
	\begin{enumerate}[(i)]
		\item the duration of $X$ is equal to $\mu_h(\D)$ almost surely;
		\item
		$t_z$ defined by \eqref{eq:tz} is conditionally uniform on $(0,\mu_h(\D))$ given $(\loops, \LQG,\BM)$;
		\item the ordered collection $(\mu_h(U),\mathrm{sgn}(U)\nu_h(\partial U))_{U\in \frk{U}_z}$ is almost surely equal to the ordered collection of jumps of $(T^{t_z},\wh B^{t_z})$ (defined from $\BM$ as in Section \ref{sec:Bfs}); and
		\item $A_{t_z}, B_{t_z}$ satisfy \eqref{eq:AB} almost surely.
	\end{enumerate}
	Let us remark already that the above claim is enough to complete the proof of the theorem. Indeed, suppose that $(\loops, \LQG,\BM)$ is a subsequential limit in law of  $(\loops^\eps, \LQG^\eps, 
	\BM^\eps)$ as $\eps\to 0$ and let $(\loops, \LQG, \BM, \BM')$ be coupled so that $(\loops,\LQG,\BM)$ is equal in law to $(\loops,\LQG, \BM')$, while $\BM,\BM'$ are conditionally independent given $\loops, \LQG$. Further sample $z$ from $\mu_h$ in $\D$, conditionally independently of the rest of $(\loops, \LQG, \BM,\BM')$, so that (i)-(iv) hold for $(\loops, \LQG, \BM, z)$ and for $(\loops, \LQG, \BM\nina{'}, z)$ (with $X,A,B$ replaced by their counterparts $X',A',B'$ for $\BM'$.) Then by (i)-(ii) and since $X(\BM)$, $X(\BM')$ are almost surely continuous, if $\mathbb{P}(\BM\ne \BM')$ were strictly positive then $\mathbb{P}(X(\BM)_{t_z}\ne X(\BM')_{t_z})$ would be strictly positive as well. This would contradict (iii) and (iv), so we conclude that $\BM=\BM'$ almost surely. This means that $\BM$ is determined by $(\loops, \LQG)$, and the explicit description in the statement of the {theorem} also follows immediately. 
	
	\corr{The same argument implies that the law of any subsequential limit is unique. More concretely, suppose that $\eps_n$, ${\eps}'_n$ are two sequences tending to $0$ as $n\to \infty$, such that $(\loops^{\eps_n},\LQG^{\eps_n},\BM^{\eps_n})\Rightarrow (\loops, \LQG,\BM)$ and $(\loops^{\eps'_n},\LQG^{\eps'_n},\BM^{\eps'_n})\Rightarrow (\loops',\LQG',\BM')$ as $n\to \infty$. Then we can also take a joint subsequential limit of $(\loops^{\eps_n},\LQG^{\eps_n},\BM^{\eps_n},\loops^{\eps'_n},\LQG^{\eps'_n},\BM^{\eps'_n})$; call it $(\loops,\LQG,\BM,\loops',\LQG',\BM')$ where necessarily $\loops=\loops'$ and $\LQG=\LQG'$, since we already know the convergence $(\loops^\eps,\LQG^\eps)\Rightarrow (\loops,\LQG)$. Repeating the argument of the previous paragraph gives that $\BM=\BM'$ almost surely. In particular, the marginal law of $(\loops',\LQG',\BM')$ is the same as that of $(\loops,\LQG,\BM)$.}
	
	So we are left to justify the above claim.	To this end, let 
	\begin{equation}\label{E:triple} (\loops, \LQG, \BM)
	\end{equation} be a subsequential limit, along some subsequence of $\eps$. By Proposition \ref{prop:p3} and passing to a further subsequence  if necessary we may extend this to the convergence \begin{gather}(\loops^{\epn},\LQG^{\epn}, z^{\epn},  \BM^{\epn},\big((\mathrm{sgn}(U_i^{\corr{\epn,\delta}}))_{i\ge 1},
		(g_i^{\corr{\epn,\delta}}(h^\epn))_{i\ge 1}\big)_{\delta\in \BB Q\cap (0,1)} )\nonumber \\ \Rightarrow \nonumber \\(\loops, \LQG,z,\BM, \big((\mathrm{sgn}(U_i^{\corr{\delta}}))_{i\ge 1}, (g_i^{\corr{\delta}}(h))_{i\ge 1}\big)_{\delta \in \BB Q \cap (0,1)})\label{eq:ssmain}\end{gather}
	along some $\epn\downarrow 0$, where for every $\delta {\in \BB Q \cap (0,1)}$ the joint law of  \corr{$$(\loops^{\epn},\LQG^{\epn}, z^{\epn},  \BM^{\epn},\big((\mathrm{sgn}(U_i^{\corr{\epn,\delta}}))_{i\ge 1},
			(g_i^{\corr{\epn,\delta}}(h^\epn))_{i\ge 1}\big)_{\delta\in \BB Q\cap (0,1)} )) \text{ and }(\loops, \LQG, z, (\mathrm{sgn}(U_i^{\corr{\delta}})_{i\ge1}, g_i^{\corr{\delta}}(h)_{i\ge 1}))$$} 
	are as in Proposition \ref{prop:p3} \corr{(now with the depedence on $\delta$ indicated for clarity)} and the joint law of $(\loops, \LQG, \BM)$ is the one assumed in \eqref{E:triple}. Note that the conditional law of $z$ given $(\loops,\LQG,\BM)$ is that of a sample from $\mu_h$, since the same is true at every approximate level and since $\mu^{\epn}_{h^\epn}$ converges as part of $\LQG^{\epn}$. 
	
	We next argue that the convergence \eqref{eq:ssmain} necessarily implies the joint convergence 
	\begin{gather}(\loops^{\epn},\LQG^{\epn}, z^{\epn},
		\BM^{\epn},\big((\mathrm{sgn}(U_i^{\corr{\epn,\delta}}))_{i\ge 1},(g_i^{\corr{\epn,\delta}}(h^\epn))_{i\ge 1}, 
		(\mu^\epn_{h^\epn}(U_i^{\corr{\epn,\delta}}))_{i\ge 1}, (\nu_{h^\epn}^\epn(\partial U_i^{\corr{\epn,\delta}}))_{i\ge 1 }\big)_{\delta\in \BB Q\cap (0,1)}) \nonumber \\
		\Rightarrow \nonumber \\ (\loops, \LQG,z,
		\BM, \big((\mathrm{sgn}(U_i^{\corr{\delta}}))_{i\ge 1}, (g_i^{\corr{\delta}}(h))_{i\ge 1}, (\mu_h(U_i^{\corr{\delta}}))_{i\ge 1 }, (\nu_{h}(\partial U^{\corr{\delta}}_i))_{i\ge 1 }\big)_{\delta\in \BB Q\cap (0,1)})\label{eq:ssmain2}\end{gather}
	as $n\to \infty$, where the initial components are exactly as in \eqref{eq:ssmain}. Indeed, we know that the tuple on the left is tight in $n$, {because the first six terms are tight by above and both $(\mu^\epn_{h^\epn}(U_i^{\corr{\epn,\delta}}))_{i\ge 1}$ and $(\nu_{h^\epn}^\epn(\partial U_i^{\corr{\epn,\delta}}))_{i\ge 1 }$  are sequences with only a tight number of non-zero terms, and with all non-zero terms bounded by convergent quantities in $(\LQG^\epn,\BM^\epn)$.}
	On the other hand, for any fixed $\delta$, $i$ and $n$,  $$\mu_{h^\epn}^\epn(U_i^{\corr{\epn,\delta}})=\mu_{g_i^{\corr{\epn,\delta}}(h^\epn)}^\epn(\D) \text{ and } \nu_{h^\epn}^\epn(\partial U_i^{\corr{\epn,\delta}})=\nu_{g_i^{\corr{\epn,\delta}}(h^\epn)}^\epn(\partial \D),$$ so by Theorem \ref{thm:MOT},  $(g_i^{\corr{\epn,\delta}}(h^\epn), \mu_{h^\epn}^\epn(U_i^{\corr{\epn,\delta}}), \nu_{h^\epn}^\epn(\partial U_i^{\corr{\epn,\delta}}))$  is a sequence of  $\gamma(\epn)$-quantum disks together with their quantum boundary lengths and areas.  We can therefore apply  Remark \ref{rmk:changing_lengths} to deduce that any subsequential limit in law $(g_i(h),\mu^*,\nu^*)$ of \\ $(g_i^{\corr{\epn,\delta}}(h^\epn), \mu_{h^\epn}^\epn(U_i^{\corr{\epn,\delta}}), \nu_{h^\epn}^\epn(\partial U_i^{\corr{\epn,\delta}}))$ must be equal to  $$(g_i^{\corr{\delta}}(h),\mu_{g_i^{\corr{\delta}}(h)}(\D),\nu_{g_i^{\corr{\delta}}(h)}(\partial \D))=(g_i^{\corr{\delta}}(h),\mu_h(U_i^{\corr{\delta}}),\nu_h(\partial U_i^{\corr{\delta}})).$$ This concludes the proof of \eqref{eq:ssmain2}.

	So to summarize, if we have any subsequential limit $(\loops, \LQG, \BM)$ of $(\loops^\eps,\LQG^\eps,\BM^\eps)$ we can couple it with $z$ (whose conditional law given $(\loops,\LQG,\BM)$ is that of a sample from $\mu_h$) and with $(U_i,g_i)_{i\ge 1}$ for every positive $\delta\in \BB Q$, such that the joint convergence \eqref{eq:ssmain2} holds along some subsequence $\epn\downarrow 0$. By Skorokhod embedding we may assume that this convergence is almost sure, and so just need to prove that (i)-(iv) hold for the limit. This essentially follows from Remark \ref{rmk:MOT} and the convergence of the final coordinates in \eqref{eq:ssmain2}; we give the details for each point below.
	\begin{enumerate}[(i)]
		\item This holds since $X^{\epn}=0$ for all $t\ge \mu^{\epn}(\D)$ almost surely for every $n$, and $(\mu^{\epn}_{h^\epn}(\D),X^{\epn}) \to (\mu_h(\D),X)$ almost surely.
		\item The convergence of the areas in \eqref{eq:ssmain2} implies that $$t_{z^\epn}^\epn=\sum_{\frk{U}_{z^\epn}^\epn} \mu^\epn_{h^\epn}(U)$$ converges almost surely to $t_z$ defined in \eqref{eq:tz} along the subsequence $\epn\downarrow 0$. On the other hand, $t_z^\epn$ is conditionally uniform on $(0,\mu^\epn_{h^\epn}(\D))$ given $(\loops^\epn, \LQG^\epn, \BM^\epn)$ for every $n$. 
		\item The ordered collection of jumps of $(T^{{\epn},t^{\epn}_{z^{\epn}}},\wh B^{{\epn},t_{z^{\epn}}^{\epn}})$ converge almost surely to the ordered collection of jumps of $(T^{t_z},\wh B^{t_z})$ on the one hand, by definition of the convergence $(\BM^{\epn},z^{\epn})\to (\BM,z)$ (and by considering a sequence $z^n\in \cQ$ converging to $z$). On the other hand, they are equal to the ordered collection $(\mu^{\epn}_{h^\epn}(U),\mathrm{sgn}(U)\nu_{h^{\epn}}^{\epn}(\partial U))_{U\in \frk{U}_z^{\epn}}$ for every $n$. Since this latter collection converges almost surely to the ordered collection $(\mu_h(U),\mathrm{sgn}(U)\nu_h(\partial U))_{U\in \frk{U}_z}$, we obtain (iii). 
		\item This follows from (iii) and the fact that the marginal law of $X=(A,B)$ is that of a Brownian excursion in the right half plane. Specifically, the first coordinate of $X$ at a given time $t$ can a.s.\ be recovered from the jumps of its inverse local time at backwards running infima with respect to time $t$, see \eqref{eq:Al}, and the second coordinate can also be recovered from the collection of its signed jumps when reparameterized by this inverse local time. When $t=t_z$, the values are recovered exactly using the formula \eqref{eq:AB} after using (iii) to translate between $(\mu_h(U),\mathrm{sgn}(U)\nu_h(\partial U))_{U\in \frk{U}_z}$ and $(T^{t_z},\wh B^{t_z})$. 
	\end{enumerate}
\end{proofof}

\subsection{Proof of Proposition \ref{prop:p3}}

\corr{In this subsection, $\delta$ is fixed, so we omit it from the notation (just as in the statement of Proposition \ref{prop:p3}).} Since the convergence of $\mu_{h^\eps}^\eps$ to $\mu_h$ is included in the convergence of $(\loops^\eps, \LQG^\eps)$ to $(\loops, \LQG)$ it is clear (for example by working on a probability space where the convergence holds almost surely) that $(\loops^\eps, \LQG^\eps, z^\eps)\Rightarrow (\loops,\LQG, z)$ as $\eps\to 0$. From here, the proof proceeds via the following steps. 

\begin{enumerate}[(1)]
	\item The tuples on the left hand side in Proposition \ref{prop:p3} are tight in $\eps$,{ so we may take a subsequential limit $(\loops, \LQG, z, (s_i)_{i\ge 1}, (w_i)_{i\ge 1}, (h_i)_{i\ge 1})$ (that we will work with for the remainder of the proof)}.
	\item $w_i\in \D\setminus \Gamma$ \corr{(i.e. $w_i$ is not on any nested CLE$_4$ loop)} for all $i$ a.s.
	\item If $\wt g_i:U(z,w_i) {\to} \D$ are conformal with $\wt g_i(w_i)=0$ and $\wt g_i'(w_i)>0$, then $h_i=\wt g_i(h)$ for each $i$ a.s.\footnote{Once we have point (5), it follows that these are equal to the $(g_i)_{i=1}^N$.}
	\item Given $(\loops,\LQG,z)$, the $w_i$ are conditionally independent and distributed according to $\mu_h$ in each $U(z,w_i)$.
	\item $\{U\in \frk{U}_z: \mu_h(U)\ge \delta\}=\{U(z,w_i)\}_{i\ge 1}$ a.s., where the set on the left is ordered as usual.
	\item $s_i=\mathrm{sgn}(U(z,w_i))$ for each $i$ a.s.
\end{enumerate}
These clearly suffice for the proposition. \\

\begin{proofof}{(1)}
	Tightness of the first five components follows from the fact that $(\loops^\eps, \LQG^\eps, z^\eps)\Rightarrow (\loops,\LQG, z)$ as $\eps\to 0$, plus the tightness of the quantum boundary lengths in $\frk{U}_z^\eps$ (recall that these converge when $\BM^\eps$ converges). To see the tightness of $(g_i^\eps(h^\eps))_{i\ge 1}$ we note that there are at most $\mu^\eps_{h^\eps}(D)/\delta$ non-zero terms, where $\mu^\eps_{h^\eps}(\D)$ is tight in $\eps$. Moreover, each non-zero $g_i^\eps(h^\eps)$ has the law of $\wt{h}^\eps\circ \theta^\eps+a^\eps$, where $\wt{h}^\eps$ is as in Lemma \ref{cor:disc_mapped},  $\theta^\eps$ are random rotations (which automatically form a tight sequence in $\eps$) and $a^\eps$ are some tight sequence of real numbers. This implies the result by Lemma \ref{cor:disc_mapped}.
\end{proofof}\\

\begin{proofof}{(2)} Suppose that $(y_j^\eps)_{j\ge 1}$ are sampled conditionally independently according to $\mu^\eps_{h^\eps}$ in $\D$, normalized to be a probability measure. Then $(\loops^\eps, \LQG^\eps, (y_j^\eps)_{j\ge 1})\Rightarrow (\loops, \LQG, (y_j)_{j\ge 1})$ where the $(y_j)_{j\ge 1}$ are sampled conditionally independently from $\mu_h$ and almost surely all lie in $\D\setminus \Gamma$. On the other hand, since $\loops^\eps$ and $\LQG^\eps$ are independent, one can sample $(w_i^\eps)_{i\ge 1}$ by taking $(\loops^\eps,\LQG^\eps,(y_j^\eps)_{j\ge 1})$ and then setting $w_i^\eps=y_j^\eps$ for each $i$, with $j=\min\{k:y_k\in U_i^\eps\}$.
\end{proofof}\\

\begin{proofof}{(3)}
	By Skorokhod's theorem, we may work on a probability space where we have the almost sure convergence 
	\begin{equation}\label{eq:sko}(\loops^\epn, \LQG^\epn,  z^\epn,(\mathrm{sgn}(U_i^\epn))_{i}, (w_i^\epn)_{i},
		(g_i^\epn(h^\epn))_{i})\to (\loops, \LQG, z, (s_i)_{i}, (w_i)_{i}, (h_i)_{i})\end{equation} 
	along a sequence $\epn\downarrow 0$. It is then natural to expect, since the $w_i^\epn$ converge a.s.\ to the $w_i$ and $\loops^\epn$ converges a.s.\ to $\loops$, that the maps $g_i^\epn$ will converge to $\wt g_i$ described in (3). Since $h^\epn$ also converges a.s.\ to $h$ (as part of the convergence $\LQG^\epn\to \LQG$) it therefore follows $h_i$ will a.s.\ be equal to \corr{$\wt g_i(h)$} for each $i$. 
	This is the essence of the proof. However, one needs to take a little care with the statement concerning the convergence $g_i^{\epn} \to \wt g_i$, since the domains $U_i^\epn$ and $U(z,w_i)$ are defined in terms of points that are \emph{not} necessarily in $\cQ$, while the convergence of $\loops^\eps\to \loops$ is stated in terms pairs of points in $\cQ$.

	To carry out the careful argument, let us fix $i\ge 1$. Since $w_i\in \D\setminus \Gamma$ a.s.\ by (2), there exists $r>0$ and $y\in \cQ$ such that $B(y,r)\subset B(w_i,2r)\subset U(z,w_i)=(\lcB_{w_i})_{\sigma_{w_i,z}}$. By taking $r$ smaller if necessary, we can also find $x\in \cQ$ with $B(x,r)\subset  B(z,2r)\subset (\lcB_z)_{\sigma_{z,w_i}}$. Note that $\cO_{z,w_i}=\cO_{x,y}=0$ by definition. Due to the almost sure convergence $z^\epn\to z$, $w^\epn_i \to w_i$, and $\loops^\epn\to \loops$ it then follows that  $U^\epn(z^\epn,w_i^\epn)=U^\epn(x, y)=(\lcB_y^\epn)_{\sigma^\epn_{y,x}}$, and $\cO^\epn_{x,y}=\cO^\epn_{z^\epn,w_i^\epn}=0$ for all $n$ large enough. Moreover, we know that the maps $f^\epn:\D \to U^\epn(z^\epn, w_i^\epn)=(\lcB^\epn_{y})_{\sigma_{y,x}}$ with $f^\epn(0)=y$, $(f^\epn)'(0)>0$ converge on compacts of $\D$ to $f:\D\to U(x,y)=(\lcB_y)_{\sigma_{y,x}}$ sending $0$ to $y$ and with $f'(0)>0$. 
	
	On the other hand, $(\wt g_i)^{-1}=f\circ \phi$ where $\phi:\D\to \D$ sends $0\mapsto f^{-1}(w_i)$ and has $\phi'(0)>0$, and $(g_i^\epn)^{-1}=f^\epn\circ \phi^\epn$ for each $\epn$, where $\phi^\epn:\D\to \D$ has $\phi^\epn(0)=(f^\epn)^{-1}(w_i^\epn)$ and $(\phi^\epn)'(0)>0$. Since $w_i^\epn\to w_i$ almost surely, and the $w_i^\epn$ are uniformly close to $y$ and bounded away from the boundary of $U^\epn(x,y)$, this implies that $(g_i^{\epn})^{-1}$ converges to $\wt g_i^{-1}$ uniformly on compacts of $\D$. In turn, this implies that $h_i$ restricted to any compact of $\D$ is equal to $\wt g_i(h)$, which verifies that $h_i=g_i(h)$ a.s.
\end{proofof}\\

\begin{proofof}{(4)} For this it suffices to prove  that for each $i$, $$(\loops^\epn, \LQG^\epn,  z^\epn, w_i^\epn, g_i^\epn(h^\epn),\mu^\epn_{g_i^\epn(h^\epn)})\Rightarrow (\loops, \LQG, z,  w_i, h_i,\mu_{h_i})$$ as $n\to \infty$, where the convergence of the final components is in the sense of weak convergence for measures on $\D$. Note that if we work on a space where all but the last components converge a.s., as in (3), then the proof of (3) shows that $h_i=\wt g_i(h)$ and that $(g_i^\epn)^{-1}\to (\wt g_i)^{-1}$ a.s.\ when restricted to compact subsets of $\D$. This implies the a.s.\ convergence of  the measures \corr{$\mu^\epn_{g_i^\epn(h^\epn)}$ to $\mu_{h_i}$} when restricted to compact subsets of $\D$. On the other hand, $\mu_{g_i^\epn(h^\epn)}(\D)$ is a tight sequence in $n$, and by Remark \ref{rmk:changing_lengths}, any subsequential limit $(\loops, \LQG, z, w_i, h_i, m)$ of $(\loops^\epn, \LQG^\epn,  z^\epn, w_i^\epn, g_i^\epn(h^\epn),\mu^\epn_{g_i^\epn(h^\epn)}(\D))$ has $m=\mu_{h_i}(\D)$ a.s. Combining these observations yields the result.
	
\end{proofof}\\

\begin{proofof}{(5)}
	As in (3) we assume that we are working on a probability space where we have almost sure convergence along a sequence $\epn\downarrow 0$, so we need to show that the limiting domains $U(z,w_i)$ are precisely the elements of $\frk U_z$ that have $\mu_h$ area greater than or equal to $\delta$. The same argument as for (4) gives that each $U(z,w_i)$ is a component of $\frk{U}_z$ with $\mu_h$ area greater than or equal to $\delta$. So it remains to show that they are the only such elements of $\frk U_z$. 
	
	For this, suppose that $U\in \frk{U}_z$ has $\mu_h(U)\ge \delta$. Then $\mu_h(U)=\delta+r$ for some $r>0$ with probability $1$. Choosing $w\in \cQ$, $a>0$ such that $U=U(z,w)\supset B(w,a)$ it is easy to see that $U(z,w)$ is the a.s.\ \cart limit seen from $w$ of $U^\epn(z^\epn, w)$ as $\epn\to 0$. Using the convergence of $\mu^\epn_{h^\epn}$ to $\mu_h$ and Corollary \ref{cor:cart_inclusion}, we therefore see that $\lim_n \mu_{h^\epn}^\epn(U^\epn(z^\epn,w))\ge \mu_h(U(z,w))=\delta+r$ and so $U^\epn(z^\epn, w)=U_i^\epn=U^\epn(z^\epn,w_i^\epn)$ for some $i$ and all $n$ large enough. From here we may argue as in the proof of (3) to deduce that the \cart limit of $U^\epn(z^\epn,w_i^\epn)$ is equal to $U(z,w_i)$. Thus, since $U=U(z,w)$ is the \cart limit of $U^\epn(z^\epn,w)$ which is equal to $U^\epn(z^\eps,w_i^\epn)$ for all $n$ large enough, we conclude that $U=U(z,w_i)$.
	
	The fact that the orders of the collections in (3) coincide follows from the convergence of the order variables as part of $\loops^\eps\to \loops$ (and the argument we have now used several times that allows one to transfer from $z^\eps, w_i^\eps$ to points in $\cQ$: we omit the details).
\end{proofof}\\

\begin{proofof}{(6)} Let us work under almost sure convergence as in the proof of (3), fix $i\ge 1$ and define $x,y,r$ as in the proof of (3). By Proposition \ref{prop:convbranchingsleorder}, we know that $\sigma^\epn_{y,x}\to \sigma_{y,x}$ almost surely as $n\to \infty$, and that $\mathrm{sgn}(U_i^\epn)$ is determined by the 
	{number of loops nested around $y$ which $\lcB^\epn_y$ discovers} 
	before \emph{or at} time $\sigma^\epn_{y,x}$ (see the definition of CLE loops from the space-filling/branching $\SLE_\kp$ in Section \ref{sec:sletocle}). If $\sigma_{y,x}$ occurs between two such times for $\lcB_y$, it is clear from the a.s.\ convergence of $\sigma^\epn_{y,x}$ and $\lcB_{y}^\epn$ that the number of loop closure times for $\lcB^{\epn}_y$ occurring before or at $\sigma^\epn_{y,x}$ converges to the number of loop closure times for $\lcB_{y,x}$ occurring before or at time $\sigma_{y,x}$. If $\sigma_{y,x}$ is a loop closure time for $\lcB_y$, the result follows from Lemma \ref{lem:septime_endofloop}.
\end{proofof}

\subsection{Discussion and outlook}

The results obtained above open the road to several very natural questions related to the critical mating of trees picture. We will describe some of those below. Roughly, they can be stated as follows: 

\begin{enumerate}
	\item Can one obtain a version of critical mating of trees where there is bi-measurability between the decorated LQG surface and the pair of Brownian motions (with possibly additional information included)?
	\item There is an interesting relation to growth-fragmentation processes studied in \cite{ADS}. Can one combine these two point of views in a fruitful way?
	\item The Brownian motion $A$ encodes a distance of each point to the boundary, and in particular between any CLE$_4$ loop and the boundary. What is its relation to the CLE$_4$ metric introduced in \cite{SWW19}?
	\item Can one prove convergence of observables in critical FK-decorated random planar maps towards	the observables in the critical mating-of-trees picture?
\end{enumerate}

Let us finally mention that there are also other interesting questions in the realm of critical LQG, e.g.\ the behaviour of height-functions on top of critical planar maps, which are certainly worth exploring too.

\subsubsection{Measurability}\label{sec:meas}

In the subcritical mating of trees, i.e., when $\kappa' > 4$, $\gamma < 2$ and we consider the coupling $(\loops, \LQG, \BM)$ described in the introduction or in Section 5 (for simplicity without subscripts), \cite{DMS14} proves that in the infinite-volume setting the pair $(\loops, \LQG)$ determines $\BM$ and vice-versa. In particular, $(\loops, \LQG)$ can be obtained from $\BM$ via a measurable map. This result is extended to the finite volume case of LQG disks in \cite{AG19}. 

By contrast, some of this measurability gets lost when we consider our critical setting.
The easier direction to consider is whether $(\loops, \LQG)$ determine $\BM$. In the subcritical case this comes basically from the construction, and it does not matter what we really mean by $\loops$: the nested CLE$_{\kappa'}$, the space-filling SLE$_{\kappa'}$ and the radial exploration tree of CLE$_{\kappa'}$  are all measurable with respect to one another.
This, however, gets more complicated in the critical case. First, the question of whether the nested CLE$_4$ determines the uniform exploration tree of CLE$_4$ is already not straightforward; this is a theorem of an unpublished work \cite{SWW19}. Moreover, the nested CLE$_4$ no longer determines the space-filling exploration from Section \ref{sec:conv_order}: indeed, we saw that to go from the uniform exploration tree to the ordering on points, some additional order variables are needed. These order variables are, however, the only missing information when going from $(\loops, \LQG)$ to $\BM$: the conclusion of Theorem \ref{thm_main} is that when we include the order variables in $\loops$ (in other words consider the space-filling exploration) then indeed $\BM$ is measurable with respect to $(\loops, \LQG)$.

In the converse direction, things are trickier. 
In the coupling considered in this paper, $\BM$ does not determine the pair $(\loops, \LQG)$; however, we conjecture that $(\loops, \LQG)$ is determined modulo a countable number of ``rotations''.
Informally, one can think of these rotations as follows: a rotation is an operation where we stop the CLE$_4$ exploration at a time when the domain of exploration is split into two domains $D$ and $D'$, we consider the LQG surfaces $(D,h)$ and $(\D\setminus D,h)$, and we conformally weld these two surfaces together differently. The field and loop ensemble $(\wh\loops, \wh\LQG)$ of the new surface will be different than the the pair $(\loops, \LQG)$ of the original surface, but their law is unchanged if we choose the new welding appropriately (e.g.\ if we rotate by a fixed amount of LQG length), and $\BM$ is pathwise unchanged. Therefore performing such a rotation gives us two different pairs $(\loops, \LQG)$ and $(\wh\loops, \wh\LQG)$ with the same law, and which are associated with the same $\BM$. We believe that these rotations are the only missing part needed to obtain measurability in this coupling. In fact, by considering a different CLE$_4$ exploration, where loops are pinned in a predetermined way (e.g.\ where all loops are pinned to some trunk, such as in e.g.\ \cite{lehmkuehler21}), one could imagine obtaining a different coupling of $(\loops, \LQG, \BM)$, where $\BM$ does determine $(\loops, \LQG)$. 

\subsubsection{Growth fragmentation}\label{secGF} We saw below the statement of Theorem \ref{thm_main} how certain observables in the Brownian excursion $\BM$ map to observables (e.g., quantum boundary lengths and areas of discovered $\CLE$ loops) in $(\loops, \LQG)$, when we restrict to a single uniform CLE$_4$ exploration branch. Given the definition of the branching $\CLE_4$ exploration (recall that the explorations towards any two points coincide exactly until they are separated by the discovered loops and then evolve independently) this is one way to define an entire branching process from the Brownian excursion. 

In fact, this embedded branching process was already described completely, and independently, in an earlier work of A\"{i}dekon and Da Silva \cite{ADS}. Namely, given $X=(A,B)$ with law as in Theorem \ref{thm_main}, one can consider for any $a\ge 0$ the countable collection of excursions of $X$ to the right of the vertical line with horizontal component $a$. Associated with each such excursion is a total displacement (the difference between the vertical coordinate of the start and end points) and a sign (depending on which of these coordinates is larger). In \cite{ADS}, the authors prove that if one considers the evolution of these signed displacements as $a$ increases, then one obtains a signed \emph{growth fragmentation} process with completely explicit law. The fact that this process is a growth fragmentation means, roughly speaking, that it can be described by the evolving ``mass'' of a family of cells: the mass of the initial cell evolves according to a positive self-similar Markov process, and every time this mass has a jump, a new cell with exactly this mass is introduced into the system. Each such new cell initiates an independent cell system with the same law. In the setting of signed growth fragmentations, masses may be both positive and negative.

In the coupling $(\loops, \LQG, \BM)$, such a growth fragmentation is therefore naturally embedded in $\BM$. It corresponds to a parameterization of the branching uniform $\CLE_4$ exploration by quantum natural distance from the boundary (i.e., by the value of the $A$ component), and branching occurs whenever components of the disk become disconnected in the exploration. At any given time, the absolute mass of a fragment is equal to the quantum boundary length of the corresponding component, and the sign of the fragment is determined by the number of $\CLE_4$ loops that surround this component.

{Let us also mention that growth fragmentations in the setting of CLE on LQG were also studied in \cite{MSWfrag,MSWfrag2}, and coincide with the growth fragmentations obtained as scaling limits from random planar map explorations in  \cite{BBCK}. Taking $\kappa \rta 4$ in these settings (either $\kappa \uparrow 4$ in \cite{MSWfrag} or $\kappa \downarrow 4$ in \cite{MSWfrag2}) is also very natural and would give other insights about $\kappa =4$ than those obtained in this paper. Lehmkuehler takes this approach in \cite{lehmkuehler21}.} 

\subsubsection{Link with the conformally invariant metric on $\CLE_4$}

Recall the uniform CLE$_4$ exploration from Section 2.1.5, which was introduced by Werner and Wu \cite{WW13}. Werner and Wu interpret the time $t$ at which a loop $\cL$ of the CLE$_4$ $\Gamma$ is added, with the time parameterization \eqref{pppcle}, as the distance of $\cL$ to the boundary $\partial\D$; we refer to it here as the \emph{\corr{CLE$_4$ exploration} distance} of $\cL$ to $\partial\D$. In an unpublished work, Sheffield, Watson, and Wu \cite{SWW19} prove that this distance is the distance as measured by a conformally invariant metric on $\Gamma\cup\{\partial\D \}$. This metric is conjectured to be the limit of the adjacency metric on CLE$_\kp$ loops as $\kp\downarrow 4$. It is also argued in \cite{SWW19} that the uniform exploration of $\Gamma$ is determined by $\Gamma$.

Our process $A$ also provides a way to measure the distance of a CLE$_4$ loop $\cL$ to $\partial\D$, as we previously discussed below \eqref{eq:Al} in the case of a point. Namely, for an arbitrary point $z$ enclosed by $\cL$ define 
\begin{equation}
	t(\cL):= \mu_h\left( \cup_{U\in\frk{U}_z} U \setminus \op{int}(\cL) \right),
\end{equation}
where $\op{int}(\cL)\subset\D$ is the domain enclosed by $\cL$. It is not hard to see that $t(\cL)$ does not depend on the choice of $z$. We call $A_{t(\cL)}$ the \emph{quantum natural distance} of $\cL$ to $\partial\D$. Note that $A_{t(\cL)}$ can also be defined similarly as in \eqref{eq:AB} by counting the number of CLE$_4$ loops of length in $(\delta/2,\delta)$ that are encountered before $\cL$ in the CLE$_4$ exploration and then sending $\delta\rta 0$ while renormalizing appropriately. We remark that, in contrast to the \corr{CLE$_4$ exploration distances}, we do \emph{not} expect that the quantum natural distances to the boundary defined here correspond to a conformally invariant metric on $\Gamma$.

It is natural to conjecture that the \corr{CLE$_4$ exploration} distance and the quantum natural distance are related via a Lamperti type transform
\eqb
A_{t(\cL)}=c_0\int_0^T \nu_h(\partial D_t)\,dt.
\label{eq:lamperti}
\eqe
for some deterministic constant $c_0>0$, where $T$ is the \corr{CLE$_4$ exploration} distance of a loop $\cL$ from $\partial \D$ and for $t\in [0,T)$, $D_t$ is the connected component containing $\cL$ of $\D$ minus the loops at \corr{CLE$_4$ exploration} distance less than $t$ from $\partial \D$.
This is natural 
\nina{since the distances are invariant under the application of a conformal map (where the field $h$ is modified as in \eqref{eq:coc})},
since the CLE$_4$ exploration is \emph{uniform} for both distances (so if two loops $\cL,\cL'$ have \corr{CLE$_4$ exploration} distance $t,t'$, respectively, to $\partial\D$ then $t<t'$ if and only if $A_{t(\cL)}<A_{t(\cL')}$), and 
since the left and right sides of \eqref{eq:lamperti} transform similarly upon adding a constant $c$ to the field $h$ (namely, both sides are multiplied by $e^{c}$). Proving or disproving \eqref{eq:lamperti} is left as an open problem. 
We remark that several earlier papers \cite{Sha16,Sh16,Ben18,HS19,GM19} have proved uniqueness of lengths or distances in LQG via an axiomatic approach, with axioms of a rather similar flavor to the above, but these proofs do not immediately apply to our setting.

\subsubsection{Discrete models} 
The mating of trees approach to Liouville quantum gravity coupled with CLE is inspired by certain random walk encodings of random planar maps decorated by  statistical physics models. The first such encoding is the hamburger/cheeseburger bijection of Sheffield~\cite{She16HC} for random planar maps decorated by the critical Fortuin–Kasteleyn  random cluster  model (FK-decorated planar map). 

In the FK-decorated planar map each configuration is a planar map with an edge subset, 
whose weight is assigned according to the critical FK model with parameter $q>0$. 
Sheffield encodes this model by five-letter words whose symbol set consists of  hamburger, cheeseburger,  hamburger order, cheeseburger order, and fresh order. 
The fraction $p$ of fresh orders within all orders is given by $\sqrt q=\frac{2p}{1-p}$. As we read the word, a hamburger (resp.\ cheeseburger) will be consumed by either a hamburger (resp.\ cheeseburger) order or a fresh order, in a last-come-first-serve manner.  In this setting, the discrete analog of our Brownian motion $(A,B)$ 
is the net change in the burger count and the burger discrepancy since time zero, which we denote by $(\cC_n,\cD_n)$.

It was proved in~\cite{She16HC} that  $\eps(\cC_{t/\eps^2},\cD_{t/\eps^2})$ converges in law to $(B^1_{t}, B^2_{\alpha t})$, where $B^1,B^2$ are independent standard one-dimensional Brownian motions and $\alpha=\max\{1-2p, 0\}$. When $p\in (0,\frac12)$, the correlation of $(B^1_{t}+B^2_{\alpha t},B^1_{t}- B^2_{\alpha t})$ is the same as for the left and right boundary length processes of space filling $\SLE_\kp$ decorated $\gamma$-LQG (cf.\ Theorem \ref{thm:MOT}) 
where  $q=2+ 2\cos (8\pi/\kappa')$ and $\gamma^2=16/\kappa'$. This is consistent with the conjecture that under these parameter relations, LQG coupled with $\CLE$ (equivalently, space filling $\SLE$) is the scaling limit of the 
FK-decorated planar map for $q\in (0,4)$. Indeed, based on the Brownian motion convergence in~\cite{She16HC}, it was shown in~\cite{GMS19,GS-FK2,GS-FK3}
that geometric quantities such as loop lengths and areas converge as desired. 

When $q=4$ and $p=\frac12$, we have $B^2_{\alpha t}=0$, just as in the $\kappa'\downarrow 4$ limit of LQG coupled with $\CLE$, where the correlation of the left and right boundary length processes tend to 1. 
We believe that  the process  $(\eps\cC_{t/\eps^2}, \mathrm{Var}[\cD_{\eps^{-2}}]^{-1} \cD_{t/\eps^2})$  converges in law to 
$(B^1_{t}, B^2_{t})$; moreover, based on this convergence and results in our paper,  it should be possible to extract the convergence of the loop lengths and areas for FK decorated planar map to  the corresponding observables  in critical LQG coupled with $\CLE_4$.  We leave this as an open question. It would also be very interesting to identify the order of the normalization  $\mathrm{Var}[\cD_{\eps^{-2}}]^{-1}$, which is related to the asymptotic of the partition function of the FK-decorated planar map with $q=4$. 

Another model of decorated random planar maps that is believed to converge (after uniformisation) to CLE decorated LQG is the O($n$) loop model, where the critical case $\kappa=4$ corresponds to $n=2$. It is therefore also interesting to ask whether our Brownian half plane excursion $\BM$ can be obtained as a scaling limit of a suitable boundary length exploration process in this discrete setting. In fact, a very closely related question was considered in \cite{BCM18}, where the authors identify the scaling limit of the perimeter process in peeling explorations of infinite volume critical Boltzmann random planar maps (see \cite{BBG} for the relationship between these maps and the O(2) model). Modulo finite/infinite volume differences, this scaling limit - which is a Cauchy process - corresponds to a single ``branch'' in our Brownian motion (see Section \ref{secGF}).

\bibliographystyle{alpha}
\bibliography{EP_bibliography}
\end{document}